\documentclass[11pt,oneside,reqno]{amsart}
\usepackage[utf8x]{inputenc}

\usepackage{textcomp}
\usepackage{amsmath,amssymb}
\usepackage{mathrsfs}
\usepackage[T2A]{fontenc}
\usepackage{amsthm}
\usepackage{epigraph}

\usepackage[linktocpage]{hyperref}
\usepackage{amsfonts,graphics,amsthm,amsfonts,amscd,latexsym}
\usepackage{epsfig}
\usepackage{flafter}
\usepackage{mathtools}
\usepackage{comment}
\usepackage{stmaryrd}
\usepackage{enumitem}
\usepackage{textcomp}
\usepackage{epigraph}
\usepackage{longtable}
\usepackage{tikz-cd}
\hypersetup{
colorlinks=true,
linkcolor=red,
citecolor=green,
filecolor=blue,
urlcolor=blue
}
\usepackage{tikz}
\usetikzlibrary{quotes,babel}

\usetikzlibrary{graphs,positioning,arrows,shapes.misc,decorations.pathmorphing}

\tikzset{
>=stealth,
every picture/.style={thick},
graphs/every graph/.style={empty nodes},
}

\tikzstyle{vertex}=[
draw,
circle,
fill=black,
inner sep=1pt,
minimum width=5pt,
]
\usepackage[position=top]{subfig}
\usepackage[alphabetic]{amsrefs}
\usepackage{amssymb}
\usepackage{color}

\setlength{\textwidth}{\paperwidth}
\addtolength{\textwidth}{-2in}
\calclayout

\usetikzlibrary{decorations.pathmorphing}
\tikzstyle{printersafe}=[decoration={snake,amplitude=0pt}]

\newcommand{\mult}{\operatorname{mult}}

\newcommand{\coreg}{\mathrm{coreg}\,}
\newcommand{\coregG}{\mathrm{coreg}_G\,}

\newcommand{\ZZ}{\mathbb{Z}}
\newcommand{\CC}{\mathbb{C}}
\newcommand{\QQ}{\mathbb{Q}}

\newcommand{\CG}{\ensuremath{\mathfrak{C}}}
\newcommand{\DG}{\ensuremath{\mathfrak{D}}}
\newcommand{\AG}{\ensuremath{\mathfrak{A}}}
\newcommand{\SG}{\ensuremath{\mathfrak{S}}}

\newcounter{theoremsec}[section]
\newcounter{theoremint}

\def\O#1.{\mathcal {O}_{#1}}
\def\pr #1.{\mathbb P^{#1}}
\def\af #1.{\mathbb A^{#1}}
\def\ses#1.#2.#3.{0\to #1\to #2\to #3 \to 0}
\def\xrar#1.{\xrightarrow{#1}}
\def\K#1.{K_{#1}}
\def\bA#1.{\mathbf{A}_{#1}}
\def\bM#1.{\mathbf{M}_{#1}}
\def\bL#1.{\mathbf{L}_{#1}}
\def\bB#1.{\mathbf{B}_{#1}}
\def\bK#1.{\mathbf{K}_{#1}}
\def\subs#1.{_{#1}}
\def\sups#1.{^{#1}}

\newcommand{\OOO}{\mathscr{O}}

\usepackage{tikz}
\usetikzlibrary{matrix,arrows,decorations.pathmorphing}

\newtheorem{theorem}{Theorem}[subsection]

\newtheorem{theoremintr}[theoremint]{Theorem}
\newtheorem{lemma}[theorem]{Lemma}
\newtheorem{lemmasec}[theoremsec]{Lemma}
\newtheorem{proposition}[theorem]{Proposition}
\newtheorem{propositionsec}[theoremsec]{Proposition}
\newtheorem{propositionint}[theoremint]{Proposition}
\newtheorem{corollary}[theorem]{Corollary}
\newtheorem{corollarysec}[theoremsec]{Corollary}
\newtheorem{corollaryint}[theoremint]{Corollary}

\newtheorem{conjecture}[theorem]{Conjecture}

\theoremstyle{definition}
\newtheorem{definition}[theorem]{Definition}
\newtheorem{definitionsec}[theoremsec]{Definition}
\newtheorem{example}[theorem]{Example}
\newtheorem{examplesec}[theoremsec]{Example}

\newtheorem{questionsec}[theoremsec]{Question}

\newtheorem{remark}[theorem]{Remark}
\newtheorem{remarksec}[theoremsec]{Remark}

\theoremstyle{remark}

\numberwithin{equation}{section}

\usepackage[all]{xy}
\newcounter{rownumber}[figure]
\setcounter{rownumber}{0}

\newcounter{rownumber-irr}[figure]
\setcounter{rownumber-irr}{0}

\newcounter{rownumber-p1}[figure]
\setcounter{rownumber-p1}{0}

\setlength\epigraphwidth{.8\textwidth}

\setlength\epigraphrule{0pt}

\begin{document}

\title{$G$-coregularity of del Pezzo surfaces}

\author[K.~Loginov]{Konstantin Loginov}

\author[V.~Przyjalkowski]{Victor Przyjalkowski}

\author[A.~Trepalin]{Andrey Trepalin}

\address{\emph{Konstantin Loginov}
\newline
\textnormal{Steklov Mathematical Institute of Russian Academy of Sciences, Moscow, Russia.}
\newline
\textnormal{National Research University Higher School of Economics, Russian Federation, Laboratory of Algebraic Geometry, NRU HSE.}
\newline
\textnormal{Laboratory of AGHA, Moscow Institute of Physics and Technology.}
\newline
\textnormal{\texttt{loginov@mi-ras.ru}}}

\address{\emph{Victor Przyjalkowski}
\newline
\textnormal{Steklov Mathematical Institute of Russian Academy of Sciences, Moscow, Russia.}
\newline
\textnormal{National Research University Higher School of Economics, Russian Federation, Laboratory of Mirror Symmetry, NRU HSE.}
\newline
\textnormal{\texttt{victorprz@mi-ras.ru, victorprz@gmail.com}}}
\thanks{2010
	    \emph{Mathematics Subject Classification}: 14J45.
	    \newline
	    \indent
		\emph{Keywords}: Fano varieties, coregularity, complements, dual complex.
        \newline
		}
		
\address{\emph{Andrey Trepalin}
\newline
\textnormal{Steklov Mathematical Institute of Russian Academy of Sciences, Moscow, Russia.}
\newline
\textnormal{National Research University Higher School of Economics, Russian Federation, Laboratory of Algebraic Geometry, NRU HSE.}
\newline
\textnormal{\texttt{trepalin@mccme.ru}}}
		
\maketitle

\epigraph{\quad \quad \quad \quad \quad \quad \quad \quad \quad \quad \quad \quad \quad \quad \quad \quad \quad \quad \quad \quad \quad \quad \quad \quad \emph{Это ж-ж-ж --- неспроста!}\\ \quad \quad \quad \quad \quad \quad \quad \quad \quad \quad \quad \quad \quad \quad \quad \quad \quad \quad \quad \quad \quad \quad \quad \quad \, Винни-Пух}

\begin{abstract}
We introduce and study the notion of $G$-coregularity of algebraic varieties endowed with an action of a finite group $G$.
We compute $G$-coregularity of smooth del Pezzo surfaces of~degree at~least~$6$, and give a characterization of groups that can act on conic bundles \mbox{with $G$-coregularity~$0$}.
We describe the relations between the notions of $G$-coregularity, $G$-log-canonical thresholds, $G$-birational rigidity, and exceptional quotient singularities.
\end{abstract}
\setcounter{tocdepth}{1}

\tableofcontents
\section*{Introduction}
We work over an algebraically closed field $\mathbb{K}$ of characteristic $0$.

A smooth (or mildly singular) projective variety $X$ is called a Fano variety if its anti-canonical class $-K_X$ is ample.
A natural way to study its geometry is by looking at the elements of the anti-pluricanonical linear system $|-mK_X|$ for $m\geq 1$. The classical question here is: how singular can these elements be?

We answer this question  in terms of coregularity. The definition of coregularity is given using the~following invariants. The dual complex of a divisor on a variety is a topological space that captures its combinatorial complexity, see Section \ref{sec-dual-complex} for the definition.
The notion of regularity of~a~log-canonical pair $(X, D)$ on a variety $X$ was introduced by V. Shokurov in \cite[Proposition-Definition  7.11]{Sh00}. By~definition, it is the dimension of the dual complex of the boundary divisor $D$ of this pair, computed on~some log resolution.
V. Shokurov also introduced the notion of complete regularity of a pair $(X, D)$, which means the~maximum of regularities of all pairs that consist of log-canonical complements of $K_X+D$.
For brevity, if $X$ is a normal projective variety with at worst klt singularities,
we use the term regularity $\mathrm{reg}(X)$ of $X$ instead of complete regularity of $(X, 0)$, see Definition~\ref{defin-regularity} for details. The coregularity, defined by J. Moraga  \cite{Mo24}, is the dual notion defined as
\[
\mathrm{coreg}(X)=\dim X - 1 - \mathrm{reg}(X).
\]
The coregularity of a variety $X$ belongs to the set $\{0,1,\ldots, \dim X\}$ (here we use the con\-ven\-tion~\mbox{$\dim \varnothing = -1$}). In particular, in the case of coregularity $0$ the dual complex of some divisor which is $\mathbb{Q}$-linearly equivalent to the anti-canonical divisor on $X$ has maximal possible dimension. From several points of view, this case seems to be most interesting.
It is expected that ``most'' smooth Fano varieties have coregularity $0$. This expectation is confirmed in the two- and three-dimensional cases, see~\cite{ALP24}.

In this paper, we extend the notion of coregularity to the case of a variety $X$ with group action. We define the notion of $G$-coregularity, denoted by $\mathrm{coreg}_G(X)$. By definition, \emph{$G$-regularity} is
the maximum of~regularities of~all pairs that consist of $G$-invariant log-canonical complements on a given variety, see Definition~\ref{defin-regularity} for details. Similarly, we define \emph{$G$-coregularity} as the dual notion.
To~sum up, \mbox{$G$-coregularity} measures combinatorial complexity of $G$-invariant divisors in the (pluri-)anti-canonical linear systems.
To illustrate this notion, observe that while $\mathbb{P}^1$ has coregularity $0$, its $G$-coregularity depends on the finite group $G$.

\begin{propositionint}[{see Example~\ref{example: coregularity of P1}}]
Let $G$ be a finite subgroup in $\operatorname{PGL}_2(\mathbb{K})=\operatorname{Aut}(\mathbb{P}^1)$. 
Then $\coregG(\mathbb{P}^1)=0$ if and only if $G$ is cyclic or dihedral, and otherwise  $\coregG(\mathbb{P}^1)=1$.
\end{propositionint}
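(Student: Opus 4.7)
The plan is to reduce the computation of $\coregG(\mathbb{P}^1)$ to an orbit-size analysis on $\mathbb{P}^1$ and then apply the classification of finite subgroups of $\mathrm{PGL}_2(\mathbb{K})$. Since $\mathbb{P}^1$ is a smooth curve, the identity is already a log resolution of any log-canonical pair $(\mathbb{P}^1, D)$ with $D = \sum a_i p_i$; hence the dual complex consists of the discrete set of those $p_i$ with $a_i = 1$. Consequently $\reg_G(\mathbb{P}^1) \in \{-1, 0\}$ and $\coregG(\mathbb{P}^1) \in \{0, 1\}$. Any log-canonical complement of $K_{\mathbb{P}^1}$ is an effective $\mathbb{Q}$-divisor of degree $2$ with all coefficients in $[0, 1]$, so the question becomes: does there exist a $G$-invariant such divisor with at least one coefficient equal to $1$?

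The key reduction is that if $D$ is $G$-invariant and has coefficient $1$ at a point $p$, then by $G$-invariance $D$ has coefficient $1$ at every point of $G\cdot p$, contributing $|G\cdot p|$ to $\deg D = 2$. Thus such a $D$ exists if and only if $G$ admits an orbit on $\mathbb{P}^1$ of size at most $2$.

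Now I apply the classification of finite subgroups of $\mathrm{PGL}_2(\mathbb{K})$: cyclic $C_n$, dihedral $D_{2n}$, $A_4$, $S_4$, and $A_5$. For $C_n$ a generator has two fixed points on $\mathbb{P}^1$, giving two orbits of size $1$; take $D = p_1 + p_2$. For $D_{2n}$ these two fixed points are swapped by any reflection, forming a single orbit of size $2$; take $D$ equal to this orbit. In both cases $\coregG(\mathbb{P}^1) = 0$. For the three exceptional groups, the minimal orbit sizes on $\mathbb{P}^1$ are $4$, $6$, $12$ respectively, classically realised as the vertex orbits under the identification with the rotation groups of the Platonic solids; all exceed $2$, so no coefficient-$1$ complement exists and $\coregG(\mathbb{P}^1) = 1$. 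To exhibit that $\reg_G = -1$ is actually attained (not vacuous), one may take $D = \tfrac{2}{|O|}\sum_{q \in O} q$ for $O$ any orbit of size $\geq 2$; the coefficients lie in $(0,1]$, so $(\mathbb{P}^1, D)$ is log canonical.

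The main technical ingredient is the classification of finite subgroups of $\mathrm{PGL}_2(\mathbb{K})$ together with the orbit-size data for $A_4$, $S_4$, $A_5$, both of which are classical. The remaining argument is just the elementary $G$-invariance observation that coefficient-$1$ points must form a union of complete $G$-orbits, and no substantial obstacle is expected.
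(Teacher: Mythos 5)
Your proposal is correct and follows essentially the same route as the paper's own argument (Example~\ref{example: coregularity of P1}): reduce to the existence of a $G$-orbit of cardinality at most $2$ on $\mathbb{P}^1$, then invoke the classification of finite subgroups of $\mathrm{PGL}_2(\mathbb{K})$ and the fact that the minimal orbit sizes for $\AG_4$, $\SG_4$, $\AG_5$ exceed $2$. Your write-up merely spells out in more detail the degree-$2$ bookkeeping and the non-vacuousness of the case $\mathrm{reg}_G=-1$, which the paper leaves implicit.
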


In this paper, we concentrate on the two-dimensional case. We study smooth two-dimensional Fano varieties, that is, smooth del Pezzo surfaces, as well as smooth rational surfaces that admit a~structure of a conic bundle. These two classes of surfaces naturally appear in the ($G$-equivariant) minimal model program for surfaces. The main invariant of a smooth del Pezzo surface $S$ is its degree~${d=(-K_S)^2}$. It is well known that $1\leqslant d\leqslant 9$. If $d=9$ then $S\cong\mathbb{P}^2$. In this case, we~have the~following result (the notation is explained in Theorem \ref{theorem: PGL3-classification}).

\begin{propositionint}[see Proposition \ref{proposition: dP9}]
\label{proposition: dP9-int}
Let $G$ be a finite subgroup in $\mathrm{PGL}_3(\mathbb{K})=\mathrm{Aut}(\mathbb{P}^2)$.
\begin{enumerate}
\item
If $G$ has type $(\mathrm{A})$, $(\mathrm{B}1)$, $(\mathrm{C})$,or $(\mathrm{D})$ then $\coregG(\mathbb{P}^2)=0$;
\item
if $G$ has type $(\mathrm{B}2)$, $(\mathrm{E})$, or $(\mathrm{H})$ then $\coregG(\mathbb{P}^2)=1$;
\item
if $G$ has type $(\mathrm{F})$, $(\mathrm{G})$, $(\mathrm{I})$, or $(\mathrm{K})$ then $\coregG(\mathbb{P}^2)=2$.
\end{enumerate}
\end{propositionint}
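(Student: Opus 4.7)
The plan is to argue type-by-type through the eleven cases in the classification of finite subgroups of $\mathrm{PGL}_3(\mathbb{K})$ stated in Theorem~\ref{theorem: PGL3-classification}. Since $\dim\mathbb{P}^2=2$, the dual complex of a log resolution of any log-canonical complement of $K_{\mathbb{P}^2}$ has dimension in $\{-1,0,1\}$, corresponding to $\coregG(\mathbb{P}^2)\in\{2,1,0\}$. For each $G$ I~would prove an upper bound by writing down an explicit $G$-invariant $\mathbb{Q}$-divisor $B\sim_\mathbb{Q}-K_{\mathbb{P}^2}$ such that $(\mathbb{P}^2,B)$ is lc with dual complex of the claimed dimension, and the matching lower bound by invariant-ring analysis together with an lc-singularity check. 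For the coregularity-zero types (A), (B1), (C), (D) the upper bound is exhibited by a $G$-invariant triangle of lines: in (A) the three coordinate lines are preserved by any diagonal group; in (B1), (C), (D) a triangle is preserved as a set (its sides possibly permuted by $G$), and the sum of its sides is an snc $G$-invariant member of $|-K_{\mathbb{P}^2}|$ whose dual complex has dimension~$1$.

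For types (B2), (E), (H), no $G$-invariant triangle of lines exists, but one can still produce a $G$-invariant lc complement with a $0$-dimensional dual complex. For the Hessian group in type~(E) I~would use the Hesse pencil: the $12$ Hesse lines form four triangles making up a single $G$-orbit under the quotient $\mathfrak{A}_4$, so no individual triangle is preserved, yet $B=\tfrac{1}{4}\sum_{i=1}^{12}L_i\sim_\mathbb{Q}-K_{\mathbb{P}^2}$ is $G$-invariant and $(\mathbb{P}^2,B)$ is lc with $9$ isolated $0$-dimensional lc centers at the base points of the pencil, where four of the Hesse lines concur. For (B2) and (H) an analogous construction uses the preserved flag or an invariant conic--line configuration; ruling out $\coregG(\mathbb{P}^2)=0$ reduces to checking in the invariant ring $\mathbb{K}[x_0,x_1,x_2]^G$ that no reducible invariant divisor of degree~$3m$ assembles into a triangular snc boundary.

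For the remaining types (F)$=\mathrm{PSL}_2(7)$, (G)$=\mathfrak{A}_5$, (I)$=6\cdot\mathfrak{A}_6$ (Valentiner), and (K), the group has no fixed point, no invariant line, and no invariant conic. Here I~would identify the lowest-degree $G$-invariant curves---the Klein quartic for (F), an icosahedral sextic for~(G), the Valentiner sextic for~(I), and the analogous invariant for~(K)---check that they are smooth, and use the classical descriptions of $\mathbb{K}[x_0,x_1,x_2]^G$ due to Klein, Wiman, and Valentiner to verify that every $G$-invariant effective $\mathbb{Q}$-divisor $B\sim_\mathbb{Q}-K_{\mathbb{P}^2}$ has coefficients forcing $(\mathbb{P}^2,B)$ to be klt. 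Consequently every $G$-invariant lc complement has empty dual complex, and $\coregG(\mathbb{P}^2)=2$.

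The main obstacle will be the lower-bound direction in the intermediate and rigid cases: one must rule out every $G$-invariant lc complement with higher-dimensional dual complex, ranging over all $m\geq 1$ in $|-mK_{\mathbb{P}^2}|$, not just a few candidates. This requires a uniform treatment of the invariant rings of the Hessian, Klein, Wiman, and Valentiner groups together with a careful lc-threshold computation for each reducible invariant divisor they produce; making this exhaustive check airtight---especially for the rigid types (F), (G), (I), (K), where one needs to control invariants in every degree---is the technically delicate step.
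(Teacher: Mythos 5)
Your treatment of types (A), (B1), (C), (D) via a $G$-invariant triangle of lines is exactly the paper's argument. But the rest of the plan has a concrete error and a genuine gap. The error: for type (E) your proposed boundary $B=\tfrac14\sum_{i=1}^{12}L_i$ built from the Hesse lines does \emph{not} have lc centers at the nine base points. At each base point exactly four of the twelve lines concur, and the log canonical threshold of four concurrent lines in a surface is $\tfrac12$; since $\tfrac14<\tfrac12$ (equivalently, the exceptional divisor of the blow-up has log discrepancy $2-4\cdot\tfrac14=1>0$), the pair $(\mathbb{P}^2,B)$ is klt, its dual complex is empty, and it witnesses nothing about $\coregG(\mathbb{P}^2)\leqslant 1$. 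The paper instead produces the upper bound for type (E) by locating a $G$-invariant \emph{smooth cubic} $C$ in the Hesse pencil (the index-two normal subgroup preserves every member, so the quotient $\CG_2$ acting on the pencil $\cong\mathbb{P}^1$ has a fixed member, which is irreducible and smooth because $G$ is transitive and primitive); then $(\mathbb{P}^2,C)$ is a log Calabi--Yau pair whose dual complex is a point.

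The gap is in all of your lower bounds. For (B2), (E), (H) you propose to rule out $\coregG=0$ by checking that no invariant divisor of degree $3m$ ``assembles into a triangular snc boundary''; this is both unexecuted and insufficient as a criterion, since coregularity $0$ can also be attained on non-triangular configurations (for instance an invariant irreducible nodal cubic has dual complex $\mathbb{S}^1$ after resolution). The paper avoids invariant-theoretic case analysis entirely: if $\coregG(S)=0$, then the subgroup $N\subset G$ preserving every component of the resolved boundary is abelian of rank at most $2$ and has a fixed point, and $G/N$ embeds into a dihedral group (Proposition~\ref{prop-coreg-0-arbitrary-dimension} and Lemma~\ref{corollary: coreg-0 criterion}); this immediately excludes (B2) (via the binary tetrahedral subgroup), (E) (the only candidate $N\cong\CG_3^2$ has no fixed point), and the simple groups (H), (I), (K), while (F), (G) are excluded because they contain a subgroup of type (E). Likewise, for (F), (G), (I), (K) your plan to verify $\mathrm{lct}_G>1$ by controlling invariants in every degree is precisely the delicate exhaustive computation the paper sidesteps: it uses Lemma~\ref{lemma: coreg 1 fixed} to reduce $\coregG=1$ to the existence of a $G$-invariant irreducible subvariety (or invariant pair) of dimension at most $1$, and then rules out points, lines, pairs, conics (no faithful action of these groups on $\mathbb{P}^1$), and cubics (a singular one has a fixed point; a smooth one forces a normal abelian subgroup of rank at most $2$ with cyclic quotient, which these groups lack). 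You would need to either adopt such a structural argument or genuinely carry out the degree-by-degree analysis, which your proposal only gestures at.
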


For a smooth del Pezzo surface $S$ of degree $d\geqslant 6$ endowed with a faithful action of a finite group~$G$ (note that we do not assume that the surfaces are $G$-minimal), we obtain the classification of~groups~$G$ which have coregularity $0$, $1$, and $2$.
In the case when $d=8$ and $S$ is a quadric surface, that is, $S\cong\mathbb{P}^1\times\mathbb{P}^1$, see Proposition \ref{proposition: dP8}. In the case when $d=8$ and $S$ is isomorphic to a Hirzebruch surface~$\mathbb{F}_1$, see Proposition \ref{proposition: F1}. In the cases when $d=7$ or $d=6$, we always have $\mathrm{coreg}_G(S)=0$ for any finite group~${G\subset \mathrm{Aut}(S)}$ as shown in Proposition \ref{proposition: dP7} and Proposition \ref{proposition: dP6}.
The obtained results can be summarized as follows.

\begin{theoremintr}[see Theorem \ref{main-thm-del-pezzo-sec}]
\label{main-thm-del-pezzo}
Assume that $S$ is a smooth del Pezzo surface of degree $d\geqslant 6$, and $G$ is a finite subgroup in $\operatorname{Aut}(S)$. Then
\begin{enumerate}
\item
$\mathrm{coreg}_G(S)=0$ if and only if $G$ belongs to the normalizer of a torus $(\mathbb{K}^*)^2$ in $\mathrm{Aut}(S)$;
\item
if $\mathrm{coreg}_G(S)>0$ then
$\mathrm{coreg}_G(S)=1$ if and only if there exists a $G$-invariant curve $C$ on $S$ such that $-K_S-C$ is effective;
\item
$\mathrm{coreg}_G(S)=2$ otherwise.
\end{enumerate}
\end{theoremintr}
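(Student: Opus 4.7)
The plan is to deduce Theorem \ref{main-thm-del-pezzo} by combining the case-by-case classifications in Propositions \ref{proposition: dP9}, \ref{proposition: dP8}, \ref{proposition: F1}, \ref{proposition: dP7}, and \ref{proposition: dP6} for degrees $d = 9, 8, 7, 6$ respectively, and reinterpreting their outcomes uniformly. For part (1), I would first fix a maximal torus $T \cong (\mathbb{K}^*)^2 \subset \operatorname{Aut}(S)$, which exists for every del Pezzo surface of degree $d \geq 6$ since these are all toric, and identify its normalizer $N_{\operatorname{Aut}(S)}(T)$ explicitly in each case: for $\mathbb{P}^2$ it is the monomial subgroup $(\mathbb{K}^*)^2 \rtimes \mathfrak{S}_3$ of $\mathrm{PGL}_3(\mathbb{K})$; for $\mathbb{P}^1 \times \mathbb{P}^1$ it is generated by the diagonal torus, the two pointwise involutions exchanging $0$ and $\infty$ on each factor, and the factor swap; for $\mathbb{F}_1$ it is the analogous subgroup of $\operatorname{Aut}(\mathbb{F}_1)$; and for $d = 7, 6$ the full group $\operatorname{Aut}(S)$ already lies in the torus normalizer. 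Then I would match these against the groups recorded in each Proposition as having coregularity $0$, guided by the general principle that $\mathrm{coreg}_G(S) = 0$ on a surface corresponds to the existence of a $G$-invariant toric log Calabi--Yau pair $(S, D)$, which forces $G$ to preserve the associated toric structure.

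For parts (2) and (3), I would use the following interpretation of coregularity at most $1$ in dimension two: $\mathrm{coreg}_G(S) \leq 1$ if and only if there is a $G$-invariant log-canonical complement $(S, D)$ whose dual complex is nonempty, i.e.\ whose boundary $D$ contains a divisorial component $C$ appearing with coefficient $1$. After symmetrising over $G$, such a $C$ may be assumed to be $G$-invariant; the remaining condition $D \sim_{\mathbb{Q}} -K_S$ with $D - C \geq 0$ amounts to saying that $-K_S - C$ is effective. Combined with part (1), this produces the split between coregularity $1$ and coregularity $2$ stated in (2) and (3), and I would verify directly in each of Propositions \ref{proposition: dP8} and \ref{proposition: F1} (the only cases where coregularity $2$ occurs for $d \geq 6$) that the groups recorded as having coregularity $2$ are precisely those admitting no such $G$-invariant curve.

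The main obstacle is the book-keeping in part (1): checking that the lists of groups in each Proposition really coincide with the finite subgroups of $N_{\operatorname{Aut}(S)}(T)$. The $\mathbb{P}^1 \times \mathbb{P}^1$ case is the most delicate, as one must track how $G$ intertwines the two $\mathbb{P}^1$-factors together with the swap involution; and in the $\mathbb{P}^2$ classification one must verify that precisely the types $(\mathrm{A})$, $(\mathrm{B}1)$, $(\mathrm{C})$, $(\mathrm{D})$ consist of monomial subgroups. Once this dictionary is set up, the three-way dichotomy of the theorem is just a uniform reformulation of what the per-degree propositions already establish case by case.
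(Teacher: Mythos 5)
Your overall strategy --- assembling the per-degree Propositions \ref{proposition: dP9}, \ref{proposition: dP8}, \ref{proposition: F1}, \ref{proposition: dP7}, \ref{proposition: dP6} and reformulating their outcomes uniformly --- is exactly what the paper does (via Corollary \ref{cor-coreg-0-dp6} for part (1) and a case-by-case check plus Proposition \ref{proposition: coregularity=lct} for parts (2)--(3)). However, the key equivalence you propose for parts (2) and (3) has a genuine gap. You assert that $\mathrm{coreg}_G(S)\leq 1$ holds \emph{if and only if} there is a $G$-invariant lc complement $(S,D)$ whose boundary $D$ contains a divisorial component $C$ with coefficient $1$ \emph{on $S$ itself}. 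The forward implication is false as a general principle: a nonempty dual complex only guarantees an lc place, which may be a divisor exceptional over $S$ (for instance, $D=\tfrac12 C$ with $C$ a sextic having an ordinary point of multiplicity $4$ gives an lc place supported on the exceptional divisor of a blow-up, while $D^{=1}=0$). This is precisely why the paper's Lemma \ref{lemma: coreg 1 fixed} only concludes the existence of a $G$-invariant \emph{subvariety of dimension at most $n-1$} (possibly a point), and why the proof of Theorem \ref{main-thm-del-pezzo-sec} does not argue abstractly but instead exhibits, in each case where $\mathrm{coreg}_G(S)=1$, an explicit $G$-invariant curve (a line, conic or cubic on $\mathbb{P}^2$; a pair of fibers or a curve of class $F_1+F_2$ on $\mathbb{P}^1\times\mathbb{P}^1$; the $(-1)$-curve on $\mathbb{F}_1$) extracted from the classification propositions. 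For the converse half of (2)--(3) the paper uses Proposition \ref{proposition: coregularity=lct}: $\mathrm{coreg}_G(S)=2$ forces $\mathrm{lct}_G(S)>1$, which rules out any $G$-invariant curve $C$ with $-K_S-C$ effective; your sketch leaves this step implicit and would need it spelled out, since a pair $(S,C+E)$ with $E$ an effective $G$-invariant representative of $-K_S-C$ need not be lc, so it is not itself a complement.

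A secondary caution on part (1): the ``general principle'' that $\mathrm{coreg}_G(S)=0$ forces $G$ to preserve a toric structure is false in general --- the paper's Example \ref{ex-no-G-toric-model} gives a degree-$2$ del Pezzo surface with $\mathrm{coreg}_G=0$ that is not $G$-crepant equivalent to a toric pair. Since you ultimately verify part (1) by matching the explicit lists of groups against the normalizer of the torus (which is what Corollary \ref{cor-coreg-0-dp6} does, with Proposition \ref{prop-toric-varieities} supplying the easy direction), this does not break your argument, but the heuristic should not be presented as the reason the matching works.
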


For $d\leqslant 5$, that is, for non-toric del Pezzo surfaces, the situation is more complicated. First of~all, the automorphism group of a smooth del Pezzo surface with $d\leqslant 5$ is finite.
The finite groups faithfully acting on smooth del Pezzo surfaces were classified in \cite{DI09}.
To look through all the possibilities for $G$ acting on a smooth del Pezzo surface with $d\leqslant 5$ seems to be a challenging task.

Our next result is the characterization of groups acting on a conic bundle with $G$-coregularity $0$. It turns out that the class of possible groups $G$ in this case is quite restrictive.

\begin{propositionint}[see Proposition~\ref{proposition: CB}]
Let $S$ be a smooth rational surface endowed with a faithful action of a finite group $G$. Assume that $S$ admits a structure of a $G$-equivariant conic bundle. Then there exists an exact sequence
\[
1\to G_F\to G\to G_B\to 1.
\]
If $\mathrm{coreg}_G(S)=~0$, then $G_F$ is either cyclic or dihedral, and $G_B$ is either cyclic or dihedral.
\end{propositionint}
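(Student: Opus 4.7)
My plan is to handle the exact sequence as an immediate consequence of $G$-equivariance, and then to attack the cyclic/dihedral constraints on $G_F$ and $G_B$ by restricting, respectively pushing forward (via the canonical bundle formula), a $G$-equivariant coregularity-$0$ complement. The key external input is the statement about $\mathbb{P}^1$ recalled in Example~\ref{example: coregularity of P1}, which identifies the $\coreg_{H}$-zero subgroups $H\subset\operatorname{PGL}_2(\mathbb{K})$ as exactly the cyclic and dihedral ones. For the exact sequence: since $\pi\colon S\to B$ is $G$-equivariant and $S$ is rational, $B\cong\mathbb{P}^1$, and the induced homomorphism $G\to\operatorname{Aut}(B)$ has image $G_B$ by definition and kernel $G_F$ consisting of the elements that preserve every fiber, so $G_F$ acts faithfully on a general fiber $F\cong\mathbb{P}^1$.

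Now assume $\coregG(S)=0$, fix a $G$-invariant lc complement $(S,D)$ with $K_S+D\sim_\mathbb{Q} 0$, and pass to a $G$-equivariant dlt modification $(Y,D_Y)$ with $\dim\mathcal{D}(Y,D_Y)=1$ on which the conic bundle lifts to $\pi_Y\colon Y\to B$. To show $G_F$ is cyclic or dihedral I would restrict to a general fiber $F=\pi_Y^{-1}(b)\cong\mathbb{P}^1$: by flatness and transversality, $(F,D_Y|_F)$ is a $G_F$-invariant lc pair with $K_F+D_Y|_F\sim_\mathbb{Q} 0$ of degree $2$. Any coefficient-$1$ point on $F$ gives $\mathrm{coreg}_{G_F}(\mathbb{P}^1)=0$, whence Example~\ref{example: coregularity of P1} forces $G_F$ to be cyclic or dihedral; such a point is produced by any horizontal coefficient-$1$ component $H$ of $D_Y$, since then $H\cdot F\in\{1,2\}$ yields a nonempty $G_F$-invariant subset of $F$ of degree at most $2$. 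For $G_B$ I would apply the canonical bundle formula to $\pi_Y$: as the general fibre is rational, the moduli part vanishes and one obtains an effective $G_B$-invariant $\mathbb{Q}$-divisor $D_B$ on $B$ with $K_B+D_B\sim_\mathbb{Q} 0$ making $(B,D_B)$ lc, the coefficient of $b\in B$ in $D_B$ equaling $1$ as soon as $(Y,D_Y)$ has an lc center contained in $\pi_Y^{-1}(b)$. The resulting coefficient-$1$ points on $B\cong\mathbb{P}^1$ then give $\mathrm{coreg}_{G_B}(\mathbb{P}^1)=0$ and the same Example~\ref{example: coregularity of P1} concludes that $G_B$ is cyclic or dihedral.

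The main obstacle I anticipate is the horizontal-versus-vertical dichotomy: to invoke the argument for $G_F$ one needs a \emph{horizontal} coefficient-$1$ component in $D_Y$, whereas for $G_B$ one needs a \emph{vertical} lc center. Starting from an arbitrary $G$-invariant coregularity-$0$ complement neither need appear, but note that a $1$-simplex of $\mathcal{D}(Y,D_Y)$ is an intersection of two coefficient-$1$ components and two vertical fiber components can meet only inside a single reducible fiber; by $G_B$-equivariance such fibers sit above a finite $G_B$-invariant subset of $B$, so the ``all vertical'' case immediately yields the coefficient-$1$ points on $B$ needed for $G_B$, while the ``all horizontal'' case gives the horizontal coefficient-$1$ component needed for $G_F$. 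The remaining work is to exclude, or handle by direct construction of an alternative complement, the asymmetric cases; here I would invoke Shokurov-style lifting of $n$-complements from the generic fiber of $\pi_Y$ together with the combinatorics of the $G$-action on the $1$-dimensional complex $\mathcal{D}(Y,D_Y)$, which is a circle or an interval mapping naturally to $B$ via $\pi_Y$.
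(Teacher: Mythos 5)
Your setup (the exact sequence, the adjunction $\deg \widetilde{D}|_F=2$ on a general fiber, and the reduction of both constraints to the existence of a $G_F$-, resp.\ $G_B$-, invariant set of cardinality at most $2$ on $\mathbb{P}^1$) agrees with the paper's Proposition \ref{proposition: CB}, and your treatment of $G_F$ when a horizontal coefficient-$1$ component exists is exactly the paper's second case. Your use of the canonical bundle formula for $G_B$ is a genuinely different and arguably more uniform route than the paper's: since $\coregG(S)=0$ forces a $1$-simplex of the dual complex, i.e.\ a zero-dimensional lc center, which is necessarily contained in a fiber, the discriminant divisor $D_B$ always acquires a coefficient-$1$ point, and $\deg D_B=2$ bounds the resulting $G_B$-orbit by $2$; this replaces the paper's three-way case analysis (one horizontal component meeting vertical ones; two intersecting horizontal components; two disjoint horizontal components joined through special fibers) by a single argument, at the cost of invoking the canonical bundle formula and the vanishing of the moduli part for genus-$0$ fibrations.

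The genuine gap is the one you flag but do not close: the case where $\widetilde{D}^{=1}$ is entirely vertical. This case cannot be excluded (a chain of vertical rational curves of length $\geqslant 2$ in a single reducible fiber, with all horizontal components of coefficient $<1$, is a perfectly good coregularity-$0$ complement), and in it your fiber-restriction argument says nothing about $G_F$: the restriction $D_Y|_F$ to a general fiber has no coefficient-$1$ points, so Example \ref{example: coregularity of P1} does not apply. Neither ``direct construction of an alternative complement'' nor ``Shokurov-style lifting from the generic fiber'' obviously produces a horizontal coefficient-$1$ component here. The paper closes this case by a different mechanism: the subgroup $H\subset G_F$ of index at most $2$ preserving each component of the chain fixes an intersection point $P$ of two adjacent components, hence by Lemma \ref{lem-faithful-action} acts faithfully on the two-dimensional tangent space $T_P\widetilde{S}$; since $\AG_4$, $\SG_4$, $\AG_5$ admit no faithful two-dimensional representations, $H$ is cyclic or dihedral, and since these three groups also have no cyclic or dihedral subgroups of index $2$, the group $G_F$ itself is cyclic or dihedral. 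Some argument of this kind (a fixed point plus a faithful tangent representation, or an equivalent) is indispensable, and your proposal is incomplete without it.
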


If $X$ is a smooth variety and $D$ is a boundary $\mathbb{Q}$-divisor such that $K_X+D\sim_{\mathbb{Q}} 0$ and $(X, D)$ is log-canonical, then $(X, D)$ form a log
Calabi--Yau pair, see Section \ref{sect-log-CY}.
Log Calabi--Yau pairs of coregularity $0$ are important from the point of view of mirror symmetry,
see~\cite{KKP17},~\cite{Prz17}, \cite{HKP20},~\cite{GrossSiebert}.
In \cite{GHK15} it is proved that a log Calabi–Yau surface pair $(X, D)$ of coregularity zero with integral divisor $D$ is crepant equivalent (see Definition \ref{def:crep-bir-isom}) to a toric pair. This is no~longer the case in the $G$-equivariant setting, see Example \ref{ex-no-G-toric-model}. We also note that the theory of dual complexes of Calabi--Yau pairs was applied to the study of finite group actions on rationally connected varieties of dimension $3$ in \cite{Lo24}.

We explain the relation of $G$-coregularity with other invariants of varieties with a group action.
First of all, we note the relation of $G$-coregularity with $G$-log-canonical thresholds. Let us denote the $G$-equivariant log-canonical threshold of a $G$-variety $X$ by $\mathrm{lct}_G(X)$, see Section \ref{subsec-G-lct} for definition.
Using the result of Birkar \cite[Theorem 1.7]{B21}, in Proposition \ref{lem-G-lct-attained} we show that the $G$-log-canonical threshold is attained on a $G$-invariant divisor under the assumption that $\mathrm{lct}_G(X)\leq 1$. This allows us to prove the following criterion.


\begin{propositionint}[{see Proposition \ref{proposition: coregularity=lct}}]
Let $X$ be a klt Fano variety of dimension $n$, and let ${G\subset \mathrm{Aut}(X)}$ be a finite group.
Then $\coregG(X)=n$ if and only if $\mathrm{lct}_G(X)>1$.
\end{propositionint}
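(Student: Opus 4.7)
The plan is to reformulate the statement as: $\coregG(X)=n$ if and only if every $G$-invariant log-canonical complement $(X,D)$ (with $D\geqslant 0$ a $G$-invariant $\mathbb{Q}$-divisor, $D\sim_{\mathbb{Q}} -K_X$, and $(X,D)$ lc) is in fact klt, i.e.\ has empty dual complex. For the easy direction $(\Leftarrow)$, I take $\mathrm{lct}_G(X)>1$ and any $G$-invariant lc complement $(X,D)$. By the very definition of $\mathrm{lct}_G$ one has $\mathrm{lct}(X,D)\geqslant\mathrm{lct}_G(X)>1$, so $(X,cD)$ is lc for some $c>1$; since $D$ is effective, this forces $(X,D)$ to be klt, whence its dual complex is empty and $\coregG(X)=n$.

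For the direction $(\Rightarrow)$ I argue by contrapositive: assuming $\mathrm{lct}_G(X)\leqslant 1$, I produce a $G$-invariant lc complement with non-empty dual complex. By Proposition~\ref{lem-G-lct-attained} the threshold is attained on some $G$-invariant effective $D\sim_{\mathbb{Q}} -K_X$; setting $t:=\mathrm{lct}_G(X)\leqslant 1$, the pair $(X,tD)$ is lc but not klt. If $t=1$, then $(X,D)$ itself is the required complement. If $t<1$, I enlarge $tD$ by a $G$-invariant auxiliary divisor as follows. For $N$ sufficiently divisible the $G$-linearized line bundle $\mathcal{O}_X(-N(1-t)K_X)$ has base-point-free linear system $|{-N(1-t)K_X}|^G$ of $G$-invariant sections (for instance via ampleness of the descended sheaf on the quotient $X/G$). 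A general member $\bar E$ of this linear system is reduced and avoids any prescribed finite collection of proper subvarieties of $X$; in particular it contains none of the finitely many non-klt centers of $(X,tD)$. A standard Bertini-type argument then shows that $(X,tD+\tfrac{1}{N}\bar E)$ is log-canonical with exactly the same non-klt places as $(X,tD)$; it is $G$-invariant, $\sim_{\mathbb{Q}} -K_X$ and not klt, so its dual complex is non-empty and $\coregG(X)\leqslant n-1<n$.

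The hard part will be the Bertini step in the case $t<1$: one must enlarge $tD$ to a genuine anti-canonical $G$-invariant complement without destroying log-canonicity or killing the log-canonical place. This reduces to base-point-freeness of a $G$-invariant complete linear system on a sufficiently ample $G$-linearized line bundle, which is standard since $G$ is finite and is really the only place where the group action enters the argument beyond the input from Proposition~\ref{lem-G-lct-attained}.
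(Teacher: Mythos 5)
Your proposal is correct and follows essentially the same route as the paper: the forward direction is the same tautological observation that $\mathrm{lct}_G(X)>1$ forces every $G$-invariant complement to be klt, and the converse direction likewise invokes Lemma~\ref{lem-G-lct-attained} to realize the threshold on a $G$-invariant divisor $D$ and then completes $tD$ to a full anticanonical complement by adding a general $G$-invariant ample $\mathbb{Q}$-divisor avoiding the non-klt centers. The paper compresses your Bertini step into the phrase ``some general ample $G$-invariant $\mathbb{Q}$-divisor $H$,'' but the argument is the same.
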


This result and Proposition \ref{proposition: dP8} allows us to characterize finite subgroups ${G\subset \operatorname{Aut}(\mathbb{P}^1\times\mathbb{P}^1)}$ with $\mathrm{lct}_G(\mathbb{P}^1\times\mathbb{P}^1)>1$. In what follows, we denote the cyclic group by $\CG_n$, the dihedral group by $\DG_{2n}$, the~alternating group by $\AG_n$, and the~symmetric group by~$\SG_n$.

\begin{corollaryint}[{see Corollary \ref{corol-quadric-lct>1}}]
\label{corol-quadric-lct>1-int}
Let $C_1 \cong C_2 \cong \mathbb{P}^1$, $X \cong C_1 \times C_2$, and let $G\subset \operatorname{Aut}(X)$ be~a~finite group. Denote by $G_0 \subset G$ a subgroup of elements, acting trivially on $\operatorname{Pic}\left(X\right)$. Then the projections~${X \rightarrow C_i}$ are $G_0$-equivariant. These projections induce homomorphisms
$$
f_i \colon G_0 \rightarrow \operatorname{Aut}(C_i) \cong \mathrm{PGL}_2(\mathbb{K}).
$$
Denote the image $f_i(G_0)$ by $A_i$.

One has $\mathrm{lct}_G (\mathbb{P}^1 \times \mathbb{P}^1)>1$ if and only if each $A_i$ is isomorphic to $\AG_4$, $\SG_4$, or $\AG_5$, and the action of $G_0$ on $\mathbb{P}^1 \times \mathbb{P}^1$ is not diagonal (in particular it holds if at least two groups among $A_1$, $A_2$, and~$G_0$ are not isomorphic).
\end{corollaryint}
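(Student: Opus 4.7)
The plan is to obtain this corollary as a direct consequence of the two previously stated results; no new geometric input is required. Since $\mathbb{P}^1\times\mathbb{P}^1$ is smooth, hence klt, applying Proposition~\ref{proposition: coregularity=lct} with $n=2$ gives
\[
\mathrm{lct}_G(\mathbb{P}^1\times\mathbb{P}^1)>1 \iff \coregG(\mathbb{P}^1\times\mathbb{P}^1)=2.
\]
So it suffices to identify the finite subgroups $G\subset\operatorname{Aut}(\mathbb{P}^1\times\mathbb{P}^1)$ for which $G$-coregularity attains its maximal value $2$, and this is precisely the information that Proposition~\ref{proposition: dP8} provides.

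The next step is to match the structural data of the corollary---the subgroup $G_0$, the images $A_i$, and the fiber-product description---with the notation used in Proposition~\ref{proposition: dP8}. By definition $G_0$ is the kernel of the $G$-action on $\operatorname{Pic}(\mathbb{P}^1\times\mathbb{P}^1)\cong\mathbb{Z}^2$, so its elements preserve each ruling; the two projections are therefore $G_0$-equivariant and induce the homomorphisms $f_i\colon G_0\to\operatorname{Aut}(C_i)\cong\mathrm{PGL}_2(\mathbb{K})$ with images $A_i$. Applying Goursat's lemma to the injection $G_0\hookrightarrow A_1\times A_2$ yields $G_0\cong A_1\triangle_R A_2$ for a common quotient $R$ of $A_1$ and $A_2$. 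The $G_0$-action is \emph{diagonal} exactly when, up to conjugation in $\mathrm{PGL}_2\times\mathrm{PGL}_2$, the embedding $G_0\hookrightarrow A_1\times A_2$ factors through the diagonal $A\hookrightarrow A\times A$; in particular this forces $A_1\cong A_2\cong R\cong G_0$.

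The conclusion is then immediate from the relevant case of Proposition~\ref{proposition: dP8}: $\coregG(\mathbb{P}^1\times\mathbb{P}^1)=2$ if and only if each $A_i$ is one of $\AG_4,\SG_4,\AG_5$ and the $G_0$-action is not diagonal. The parenthetical ``in particular'' clause---that non-diagonality is already ensured when two of $A_1$, $A_2$, $G_0$ are non-isomorphic---follows because a diagonal action forces all three to be isomorphic. The main obstacle is therefore no more than the careful bookkeeping in the middle step: correctly translating between the fiber-product description of $G_0$ in the corollary and the notation of Proposition~\ref{proposition: dP8}, and distinguishing the diagonal case (which lands in $\coregG=1$, not $2$) from the non-diagonal one.
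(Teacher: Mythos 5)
Your proposal is correct and follows exactly the paper's route: the paper derives Corollary \ref{corol-quadric-lct>1} immediately by combining Proposition \ref{proposition: coregularity=lct} (which converts $\mathrm{lct}_G>1$ into $\coregG=2$) with the classification in Proposition \ref{proposition: dP8}. Your additional bookkeeping on matching $R$ with $G_0$ in the diagonal case is a harmless elaboration of what the paper leaves implicit.
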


We also note the following relation between $G$-coregularity and exceptional quotient singularities with respect to the group $G$.
Recall that a singularity $x\in X$ is \emph{exceptional}, if there exists at~most one divisor over $X$ with log discrepancy $0$ with respect to any boundary $\mathbb{Q}$-divisor $D$ such that the~pair~$(X, D)$ is log-canonical, see Definition \ref{defin-exceptional}. A singularity $x\in X$ is called \emph{weakly exceptional}, if its plt blow-up is unique up to birational isomorphism, see Definition \ref{defin-weakly-exceptional}.
We have the following result.

\begin{propositionint}[{see Proposition~\ref{proposition: coregularity exceptional high dim}}]
\label{propint-lct}
Let $\overline{G} \subset \mathrm{SL}_n (\mathbb{K})$ be a finite subgroup.
Let ${G}$ be the image of $\overline{G}$ in $\mathrm{PGL}_n(\mathbb{K})=\mathrm{Aut}(\mathbb{P}^{n-1})$ via the natural projection map $\mathrm{SL}_n (\mathbb{K})\to  \mathrm{PGL}_n (\mathbb{K})$. Then
\begin{enumerate}
\item
$\mathrm{lct}_G(\mathbb{P}^{n-1})>1$ if and only if the singularity $0\in\mathbb{A}^n/\overline{G}$ is exceptional. This condition is equivalent to the condition that $\coregG(\mathbb{P}^{n-1})=n-1$;
\item
$\mathrm{lct}_G(\mathbb{P}^{n-1})=1$ if and only if the singularity $0\in\mathbb{A}^n/\overline{G}$ is weakly exceptional and not exceptional. 
\end{enumerate}
\end{propositionint}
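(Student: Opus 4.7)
The plan is to invoke the classical orbifold cone (plt blow-up) construction relating the quotient singularity $0\in X:=\mathbb{A}^n/\overline{G}$ to the $G$-variety $\mathbb{P}^{n-1}$, and then to close the loop using Proposition~\ref{proposition: coregularity=lct}. Since $\overline{G}\subset\mathrm{SL}_n(\mathbb{K})$ acts freely in codimension one on $\mathbb{A}^n\setminus\{0\}$, the quotient $X$ is Gorenstein canonical. Blowing up the origin in $\mathbb{A}^n$ and descending yields a plt blow-up $\pi\colon Y\to X$ whose unique exceptional divisor $E$ is isomorphic to $\mathbb{P}^{n-1}/G$ and satisfies $K_Y+E=\pi^*K_X$. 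This is the standard setup in the Prokhorov--Markushevich theory of plt blow-ups of (weakly) exceptional singularities.

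The next step is to translate $G$-invariant boundaries across this correspondence. A $G$-invariant effective $\mathbb{Q}$-divisor $D\sim_{\mathbb{Q}} -K_{\mathbb{P}^{n-1}}$ is cut out (after rescaling) by an $\overline{G}$-invariant homogeneous polynomial whose degree is a multiple of $n$, whose vanishing locus descends to an effective $\mathbb{Q}$-divisor $D_X\sim_{\mathbb{Q}} 0$ on $X$ passing through $0$. A direct application of inversion of adjunction along $E$ for the plt blow-up $\pi$ yields, for any $c\in\mathbb{Q}_{\geq 0}$, that $(X,cD_X)$ is log-canonical (respectively klt) at $0$ if and only if $(\mathbb{P}^{n-1},cD)$ is log-canonical (respectively klt). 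In particular $\lct_0(X,D_X)=\lct(\mathbb{P}^{n-1},D)$, so taking the infimum over $G$-invariant $D$ controls simultaneously the equivariant threshold $\lct_G(\mathbb{P}^{n-1})$ and the class of boundaries admissible at $0\in X$.

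With this dictionary in place, (weak) exceptionality can be read off directly from the definitions. By definition, $0\in X$ is exceptional iff no $\mathbb{Q}$-complement of $(X,0)$ possesses a non-klt place; via the cone correspondence this is equivalent to \emph{every} $G$-invariant pair $(\mathbb{P}^{n-1},D)$ with $D\sim_{\mathbb{Q}} -K_{\mathbb{P}^{n-1}}$ being klt, that is, $\lct_G(\mathbb{P}^{n-1})>1$. Similarly, $0\in X$ is weakly exceptional iff $\pi$ is the unique plt blow-up up to birational isomorphism, which translates into every such $(\mathbb{P}^{n-1},D)$ being log-canonical, i.e.\ $\lct_G(\mathbb{P}^{n-1})\geq 1$. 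This supplies the $\lct$-halves of (1) and (2); the remaining half of (1) is the immediate application of Proposition~\ref{proposition: coregularity=lct} to the Fano variety $\mathbb{P}^{n-1}$.

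The main obstacle is the careful bookkeeping in the inversion-of-adjunction step: one has to verify that log-canonical places over $X$ with centre $0$ but distinct from $E$ correspond precisely to log-canonical places of the $G$-invariant boundary on $\mathbb{P}^{n-1}$, and not merely on the quotient $E\cong\mathbb{P}^{n-1}/G$, which differs from $\mathbb{P}^{n-1}$ by the ramification divisor of the quotient map. Proposition~\ref{lem-G-lct-attained} enters here to guarantee that when $\lct_G(\mathbb{P}^{n-1})\leq 1$ the infimum is attained by an actual $G$-invariant $\mathbb{Q}$-divisor, so that the ``weakly exceptional but not exceptional'' dichotomy in part (2) matches exactly the dichotomy $\lct_G(\mathbb{P}^{n-1})=1$ versus $\lct_G(\mathbb{P}^{n-1})>1$.
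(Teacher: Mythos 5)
Your top-level strategy agrees with the paper's: reduce both parts to a statement about $G$-invariant boundaries $D\sim_{\mathbb{Q}}-K_{\mathbb{P}^{n-1}}$, use Lemma~\ref{lem-G-lct-attained} to convert ``all such pairs are klt (resp.\ lc)'' into $\mathrm{lct}_G>1$ (resp.\ $\geqslant 1$), and get the coregularity equivalence from Proposition~\ref{proposition: coregularity=lct}. The difference is that the paper obtains the equivalence between (weak) exceptionality of $0\in\mathbb{A}^n/\overline{G}$ and the klt-ness/lc-ness of all $G$-invariant anticanonical pairs on $\mathbb{P}^{n-1}$ by directly citing \cite[Theorems 3.16 and 3.17]{ChSh11} (this is packaged as Proposition~\ref{prop-conj-is-true}), whereas you attempt to rederive that equivalence from scratch via the cone/plt blow-up construction and inversion of adjunction. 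That rederivation is where your write-up is incomplete.

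Two concrete problems. First, your restatement ``$0\in X$ is exceptional iff no $\mathbb{Q}$-complement of $(X,0)$ possesses a non-klt place'' is not Definition~\ref{defin-exceptional}, which permits exactly one exceptional divisor with log discrepancy $0$ --- and for these singularities that slack is essential: if $D_X$ is the cone over a $G$-invariant $D\sim_{\mathbb{Q}}-K_{\mathbb{P}^{n-1}}$, the exceptional divisor $E$ of the blow-up of the origin always has $a(E,X,D_X)=0$, so an exceptional quotient singularity \emph{does} admit complements with a non-klt place. The correct argument must show that a \emph{second} lc place over $X$ appears precisely when $(\mathbb{P}^{n-1},D)$ fails to be klt, which is a different (and more delicate) claim than the one you state. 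Second, the inversion-of-adjunction dictionary ``$(X,cD_X)$ is lc/klt at $0$ iff $(\mathbb{P}^{n-1},cD)$ is lc/klt'' is asserted rather than proved, and you yourself flag the ramification-divisor bookkeeping on $E\cong\mathbb{P}^{n-1}/G$ as ``the main obstacle'' without resolving it. That bookkeeping is exactly the content of the results of \cite{ChSh11} that the paper invokes; as written, your proof either needs to carry it out or to cite those theorems, at which point it collapses to the paper's argument.
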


The first statement of Proposition \ref{propint-lct}(1) proves a conjecture proposed in \cite{ChSh11}, see Conjecture~\ref{conj-ChSh}.
In dimension~$2$, we have a more precise result (in which we use the notation as in~Theorem~\ref{theorem: PGL3-classification}).

\begin{propositionint}[{see Proposition \ref{prop-dim-2-exceptionality-criterion}}]
\label{prop-dim-2-exceptionality-criterion-int}
Let $\overline{G} \subset \mathrm{SL}_3 (\mathbb{K})$ be a finite subgroup.
Let ${G}$ be the image of $\overline{G}$ in $\mathrm{PGL}_3(\mathbb{K})=\mathrm{Aut}(\mathbb{P}^{2})$ via the natural projection map $\mathrm{SL}_3(\mathbb{K})\to  \mathrm{PGL}_3 (\mathbb{K})$. Then
\begin{enumerate}
\item
the singularity $0\in\mathbb{A}^3/\overline{G}$ is exceptional if and only if ${G}$ has type {\rm{(F)}}, {\rm{(G)}}, {\rm{(I)}}, or {\rm{(K)}};
\item
the singularity $0\in\mathbb{A}^3/\overline{G}$ is weakly exceptional and not exceptional if and only if ${G}$ has type {\rm{(C)}} but is not isomorphic to $\CG_2^2\rtimes \CG_3$, or has type {\rm{(D)}} but is not isomorphic to $\CG_2^2\rtimes \SG_3$, or has type {\rm{(E)}};
\item
the singularity $0\in\mathbb{A}^3/\overline{G}$ is not weakly exceptional if and only if ${G}$ has type {\rm{(A)}}, {\rm{(B)}}, {\rm{(H)}}, or is isomorphic to $\CG_2^2\rtimes \CG_3$ or $\CG_2^2\rtimes \SG_3$.
\end{enumerate}
\end{propositionint}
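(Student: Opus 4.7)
The plan is to combine Proposition~\ref{propint-lct}, which translates (weak) exceptionality of $0\in\mathbb{A}^3/\overline{G}$ into bounds on $\mathrm{lct}_G(\mathbb{P}^2)$, with the coregularity classification of Proposition~\ref{proposition: dP9-int} and the Mitchell-type classification of Theorem~\ref{theorem: PGL3-classification}. The whole argument reduces to a type-by-type search for $G$-invariant divisors on $\mathbb{P}^2$ that obstruct $\mathrm{lct}_G(\mathbb{P}^2)\geq 1$.

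Part~(1) is immediate: Proposition~\ref{propint-lct}(1) identifies exceptionality with the condition $\mathrm{lct}_G(\mathbb{P}^2)>1$, equivalently $\coregG(\mathbb{P}^2)=2$, and Proposition~\ref{proposition: dP9-int} tells us that this holds precisely for types~(F), (G), (I), (K). For parts~(2) and~(3), Proposition~\ref{propint-lct}(2) reduces us to distinguishing $\mathrm{lct}_G(\mathbb{P}^2)=1$ from $\mathrm{lct}_G(\mathbb{P}^2)<1$ among the remaining types. The decisive observation is that $-K_{\mathbb{P}^2}\sim 3H$, so a $G$-invariant line $L$ produces the pair $(\mathbb{P}^2, 3L)$ with $\mathrm{lct}=1/3$, while a $G$-invariant smooth conic $Q$ produces $(\mathbb{P}^2, \tfrac{3}{2}Q)$ with $\mathrm{lct}=2/3$; in either case $\mathrm{lct}_G(\mathbb{P}^2)<1$. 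Conversely, if $G$ admits no invariant line and no invariant conic, every $G$-invariant effective $\mathbb{Q}$-divisor in $|-K_{\mathbb{P}^2}|_{\mathbb{Q}}$ is a positive $\mathbb{Q}$-combination of invariant cubics, whose log canonicity can be checked directly.

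Going through Theorem~\ref{theorem: PGL3-classification}, groups of types~(A), (B1), (B2) fix a flag, hence an invariant line, so $\mathrm{lct}_G(\mathbb{P}^2)<1$; type~(H) likewise admits an invariant line or singular invariant conic. For the monomial types~(C) and~(D), which preserve the coordinate triangle $\{xyz=0\}$ and permute its sides transitively, the space of $G$-invariant conics lies inside the two-dimensional space spanned by $x^2+y^2+z^2$ and $xy+yz+zx$; a direct diagonal computation shows that the torus part of $G$ preserves $xy+yz+zx$ only trivially and preserves $x^2+y^2+z^2$ only when contained in the sign-change subgroup $V_4\subset(\mathbb{K}^*)^2$, so the sole cases with a $G$-invariant conic are $G\cong \CG_2^2\rtimes\CG_3$ and $G\cong \CG_2^2\rtimes\SG_3$. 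In these two cases the conic $\{x^2+y^2+z^2=0\}$ yields $\mathrm{lct}_G\leq 2/3$; for all other groups of types~(C), (D) the invariant triangle provides the lc pair witnessing $\mathrm{lct}_G=1$. For type~(E) one uses that the group preserves the Hesse pencil of cubics but no line and no conic, and the singular members of the pencil are transversal triangles of lines, giving $\mathrm{lct}_G=1$.

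The main obstacle is the lower bound $\mathrm{lct}_G(\mathbb{P}^2)\geq 1$ for the ``generic'' groups in types~(C), (D), (E): one must enumerate all $G$-invariant effective $\mathbb{Q}$-divisors $D\sim_{\mathbb{Q}}-K_{\mathbb{P}^2}$ and verify that none develops a non-log-canonical singularity. Since no $G$-invariant components of degree $\leq 2$ exist in these cases, the enumeration reduces to $G$-invariant cubics and their $\mathbb{Q}$-combinations, where the $G$-symmetry forces enough genericity (smooth or nodal cubics, transversal triangles, Hesse-type configurations) to preserve log canonicity. Combining this with the upper bound from the invariant triangle (or Hesse pencil) gives $\mathrm{lct}_G=1$, which together with the preceding cases yields the full dichotomy in~(2) and~(3).
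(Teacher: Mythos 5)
Your route is genuinely different from the paper's: the paper proves this proposition in three lines by citing the existing classifications of exceptional and weakly exceptional three-dimensional quotient singularities, namely Theorem \ref{thm-exc-dim2} (\cite[Theorem 3.13]{MP99}) for part (1) and Theorem \ref{thm-weakly-exc-dim2} (\cite[Theorem 1.14]{Sa12}) for part (2), with part (3) obtained by complementation in the list of Theorem \ref{theorem: PGL3-classification}. You instead try to re-derive these classifications from Proposition \ref{propint-lct} by computing $\mathrm{lct}_G(\mathbb{P}^2)$ type by type. For part (1) this works and is not circular, since Proposition \ref{proposition: dP9} is proved in the paper without appeal to exceptionality; the chain ``exceptional $\Leftrightarrow$ $\mathrm{lct}_G(\mathbb{P}^2)>1$ $\Leftrightarrow$ $\coregG(\mathbb{P}^2)=2$ $\Leftrightarrow$ type (F), (G), (I), (K)'' is exactly the comparison recorded in Table \ref{table:P2}. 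The upper bounds $\mathrm{lct}_G<1$ coming from an invariant line or conic are also correct, with one small slip: the type (H) group $\AG_5$ is transitive and admits neither an invariant line nor a singular invariant conic; it preserves a \emph{smooth} conic, which still gives $\mathrm{lct}_G(\mathbb{P}^2)\leqslant 2/3$.

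The genuine gap is the lower bound $\mathrm{lct}_G(\mathbb{P}^2)\geqslant 1$ for the generic groups of types (C), (D), (E), which by Proposition \ref{propint-lct}(2) is precisely the assertion of weak exceptionality and is the hard content of \cite{Sa12}. Your reduction of this bound to ``$\mathbb{Q}$-combinations of invariant cubics'' is incorrect: a $G$-invariant effective $\mathbb{Q}$-divisor $D\sim_{\mathbb{Q}}-K_{\mathbb{P}^2}$ is a sum $\sum a_iC_i$ over $G$-\emph{orbits} of irreducible curves with $\sum a_i\deg C_i=3$, so it includes, for example, $\tfrac{1}{k}$ times an orbit of $3k$ lines, or $\tfrac{1}{m}$ times an invariant irreducible curve of degree $3m$, for arbitrary $k$ and $m$. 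Ruling out a non-log-canonical point for all such divisors requires controlling the small $G$-orbits of points and the invariant curves of all degrees (via multiplicity estimates or the explicit invariant theory of $\overline{G}$), and the phrase ``the $G$-symmetry forces enough genericity'' does not substitute for this verification. As written, parts (2) and (3) are therefore not proved; either carry out the computation in the spirit of \cite{ChSh11} and \cite{Sa12}, or simply cite Theorem \ref{thm-weakly-exc-dim2} as the paper does.
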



Let $X$ be a $G\mathbb{Q}$-Fano variety, see Section \ref{sec-mfs} for definition.
Recall that $X$ is called \emph{$G$-birationally rigid} if for any birational $G$-equivariant map $f\colon X\dashrightarrow X'$, where $X'$ is a $G$-Mori fiber space, $X'$ is a $G\mathbb{Q}$-Fano variety, and
there exists a $G$-birational automorphism $g\colon X\dashrightarrow X$ such that $f \circ g$ is a $G$-isomorphism.
Furthermore, if~the above assumptions imply that the map $f$ is an isomorphism then we say that~$X$ \mbox{is \emph{$G$-birationally}} \emph{super-rigid}.
Collecting results of \cite{Sa19} we obtain the following.

\begin{propositionint}[{see Proposition \ref{prop-dim-2-rigidity-criterion}}]
\label{prop-dim-2-rigidity-criterion-int}
Let $G \subset \operatorname{PGL}_3 (\mathbb{K}) =\operatorname{Aut}(\mathbb{P}^{2})$ be a finite subgroup. Then
\begin{enumerate}
\item
$\mathbb{P}^2$ is $G$-birationally super-rigid if and only if ${G}$ has type {\rm{(F)}}, {\rm{(G)}}, {\rm{(I)}}, or {\rm{(K)}};
\item
$\mathbb{P}^2$ is $G$-birationally rigid and not $G$-birationally super-rigid if and only if ${G}$ has type {\rm{(C)}} but is not isomorphic to $\CG_2^2\rtimes \CG_3$, or has type {\rm{(D)}} but is not isomorphic to $\CG_2^2\rtimes \SG_3$, or has type {\rm{(E)}}, or~type~{\rm{(H)}};
\item
$\mathbb{P}^2$ is not $G$-birationally rigid if and only if ${G}$ has type {\rm{(A)}}, {\rm{(B)}}, or is isomorphic to $\CG_2^2\rtimes \CG_3$ or $\CG_2^2\rtimes \SG_3$.
\end{enumerate}
\end{propositionint}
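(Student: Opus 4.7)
The proof plan is to translate the results on $G$-birational (super-)rigidity of $\mathbb{P}^2$ established by Sakovics in~\cite{Sa19} into the type labels of Theorem~\ref{theorem: PGL3-classification}, using the exceptionality dictionary furnished by Proposition~\ref{prop-dim-2-exceptionality-criterion-int}.

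First, I would observe that $\mathbb{P}^2$ has Picard number one, so it is automatically $G$-minimal and a $G\mathbb{Q}$-Fano variety for every finite $G\subset\mathrm{PGL}_3(\mathbb{K})$; in particular both $G$-birational rigidity and $G$-birational super-rigidity are meaningful properties. By the $G$-equivariant Sarkisov program on rational surfaces, these properties are governed by the $G$-equivariant Noether--Fano inequality: a non-trivial $G$-Sarkisov link out of $\mathbb{P}^2$ to another $G$-Mori fiber space exists precisely when there is a $G$-invariant mobile linear system $\mathcal{M}\subset|mH|$ for which the pair $\bigl(\mathbb{P}^2,\tfrac{1}{m}\mathcal{M}\bigr)$ fails to be canonical along some $G$-orbit of smooth points or $G$-invariant curve.

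Second, I would quote the main theorems of~\cite{Sa19}. Sakovics shows that $\mathbb{P}^2$ is $G$-birationally super-rigid if and only if the quotient singularity $0\in\mathbb{A}^3/\overline{G}$ is exceptional, and that $\mathbb{P}^2$ is $G$-birationally rigid if and only if this singularity is weakly exceptional, with the sole exception of subgroups of type (H). For these, the quotient fails to be weakly exceptional, yet an explicit analysis in~\cite{Sa19} exhibits the unique non-trivial $G$-Sarkisov link out of $\mathbb{P}^2$ and verifies that its target is again $G$-isomorphic to $\mathbb{P}^2$ after composing with a suitable $G$-birational self-map; hence $G$-birational rigidity survives while super-rigidity fails.

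Third, I would assemble the three statements by applying Proposition~\ref{prop-dim-2-exceptionality-criterion-int}. Part (1) of that proposition translates \emph{exceptional} into types (F), (G), (I), (K), giving statement (1). Part (2) translates \emph{weakly exceptional but not exceptional} into type (C) excluding $\CG_2^2\rtimes\CG_3$, type (D) excluding $\CG_2^2\rtimes\SG_3$, and type (E); augmenting this list with type (H) yields statement (2). The remaining groups, enumerated in part (3) as type (A), type (B), $\CG_2^2\rtimes\CG_3$, and $\CG_2^2\rtimes\SG_3$, then constitute statement (3).

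The principal obstacle is the idiosyncratic behavior of type (H), where the general correspondence ``$G$-birationally rigid $\Leftrightarrow$ weakly exceptional quotient singularity'' breaks down; isolating this case is what forces the argument to appeal to the direct Sarkisov-link computation in~\cite{Sa19} rather than to a uniform exceptionality criterion.
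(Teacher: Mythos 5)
Your conclusion matches the paper's, and both arguments ultimately rest on Sakovics's work \cite{Sa19}, but the route you take through the middle is different and has a weak joint. The paper's proof uses exactly two inputs from \cite{Sa19}: (a) Theorem~\ref{prop-dim-2-rigidity-semicriterion} ($\mathbb{P}^2$ is $G$-birationally rigid iff $G$ is transitive and not isomorphic to $\AG_4$ or $\SG_4$), which immediately gives assertion~(3) after matching transitivity with the type labels; and (b) the orbit computations of \cite[Propositions 3.16, 3.17, 3.19]{Sa19}, combined with the criterion that a $G$-equivariant Sarkisov link out of $\mathbb{P}^2$ exists iff $G$ has an orbit of at most $8$ points in general position. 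Types (C), (D) (orbits of three non-collinear points), (E) and (H) (orbits of six points in general position) therefore fail super-rigidity, while (F), (G), (I), (K) have no such small orbits and are super-rigid. Exceptionality plays no role in the paper's proof; the equivalence with exceptionality is a \emph{consequence} obtained afterwards by comparing this proposition with Proposition~\ref{prop-dim-2-exceptionality-criterion}, whose source is \cite{MP99} and \cite{Sa12}, not \cite{Sa19}.

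The gap in your version is the attribution in your second step. You assert that Sakovics \emph{proves} that $\mathbb{P}^2$ is $G$-birationally super-rigid iff $0\in\mathbb{A}^3/\overline{G}$ is exceptional, and rigid iff the singularity is weakly exceptional up to the type~(H) anomaly. The paper's own framing (``the results of \cite{Sa19} also allow one to classify\dots and obtain the following'') indicates that the super-rigidity classification is not stated in \cite{Sa19} but has to be derived from its orbit propositions; and the equivalence with (weak) exceptionality is precisely the content of the corollary this paper is assembling, so taking it as an input from \cite{Sa19} is close to assuming what is to be shown. Both equivalences are true as statements about the final lists, but to make your argument a proof you would need to replace the appeal to these attributed theorems by the actual mechanism: the Noether--Fano/Sarkisov-link criterion in terms of orbits of at most $8$ points in general position, applied type by type. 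Your handling of type~(H) via an explicit link whose target returns to $\mathbb{P}^2$ is also more than is needed --- rigidity of type~(H) already follows from Theorem~\ref{prop-dim-2-rigidity-semicriterion}, and failure of super-rigidity follows from the existence of an orbit of six points in general position.
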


Comparing Proposition \ref{proposition: dP9-int}, Proposition \ref{prop-dim-2-exceptionality-criterion-int}, and Proposition \ref{prop-dim-2-rigidity-criterion-int} and applying Proposition \ref{propint-lct} we obtain the following result.

\begin{corollaryint}
Let $\overline{G} \subset \mathrm{SL}_3 (\mathbb{K})$ be a finite subgroup.
Let ${G}$ be the image of $\overline{G}$ in ${\mathrm{PGL}_3(\mathbb{K})=\mathrm{Aut}(\mathbb{P}^{2})}$ via the natural projection map $\mathrm{SL}_3(\mathbb{K})\to  \mathrm{PGL}_3 (\mathbb{K})$.
Then the following conditions are equivalent:
\begin{enumerate}
\item
$\mathrm{coreg}_G(\mathbb{P}^2)=2$;
\item
$\mathrm{lct}_G(\mathbb{P}^{2})>1$;
\item
the singularity $0\in\mathbb{A}^3/\overline{G}$ is exceptional;
\item
$\mathbb{P}^2$ is $G$-birationally super-rigid.
\end{enumerate}
\end{corollaryint}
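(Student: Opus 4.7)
The plan is to observe that this corollary follows almost immediately by combining three of the characterizations already proved in the paper, all of which cut the finite subgroups of $\mathrm{PGL}_3(\mathbb{K})$ along the same dividing line in the Dobrovolski-type classification recalled in Theorem \ref{theorem: PGL3-classification}. So the proof is essentially an assembly argument: I would not reprove anything, just line up the statements.

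First, the equivalence of (1), (2), and (3) is an immediate specialization of Proposition \ref{propint-lct}(1) to the case $n=3$, which asserts in dimension $n$ that $\mathrm{lct}_G(\mathbb{P}^{n-1})>1$, exceptionality of $0\in\mathbb{A}^n/\overline{G}$, and $\coregG(\mathbb{P}^{n-1})=n-1$ are equivalent. No further argument is needed for this portion; I would simply cite it.

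Second, to bring (4) into the equivalence, I would compare Proposition \ref{prop-dim-2-exceptionality-criterion-int}(1) and Proposition \ref{prop-dim-2-rigidity-criterion-int}(1): both single out the same list of types, namely (F), (G), (I), and (K). Concretely, Proposition \ref{prop-dim-2-exceptionality-criterion-int}(1) says that the singularity $0\in\mathbb{A}^3/\overline{G}$ is exceptional if and only if $G$ has one of these four types, while Proposition \ref{prop-dim-2-rigidity-criterion-int}(1) says $\mathbb{P}^2$ is $G$-birationally super-rigid if and only if $G$ has one of these same four types. Therefore (3) $\Leftrightarrow$ (4), and combined with the previous paragraph this closes the cycle of implications. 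Finally, for consistency I would cross-check against Proposition \ref{proposition: dP9-int}(3), which asserts $\coregG(\mathbb{P}^2)=2$ precisely for types (F), (G), (I), (K); this confirms (1) sits on the same list and gives an independent verification of (1) $\Leftrightarrow$ (3) that does not go through Proposition \ref{propint-lct}.

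There is no real obstacle here, since all the heavy lifting is done in the cited propositions. The only thing to be careful about is the labeling of types: I would double-check that the ``exceptional'' list in Proposition \ref{prop-dim-2-exceptionality-criterion-int}(1), the ``super-rigid'' list in Proposition \ref{prop-dim-2-rigidity-criterion-int}(1), and the $\coregG=2$ list in Proposition \ref{proposition: dP9-int}(3) really are the identical four-element set $\{(\mathrm{F}),(\mathrm{G}),(\mathrm{I}),(\mathrm{K})\}$, with no hidden exceptional subcases analogous to the $\CG_2^2\rtimes\CG_3$ and $\CG_2^2\rtimes\SG_3$ carve-outs appearing in types (C) and (D). Once this is verified, the corollary is proved.
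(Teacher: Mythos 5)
Your proposal is correct and is essentially the paper's own argument: the paper derives this corollary in one line by comparing Proposition~\ref{proposition: dP9-int}, Proposition~\ref{prop-dim-2-exceptionality-criterion-int}, and Proposition~\ref{prop-dim-2-rigidity-criterion-int} and applying Proposition~\ref{propint-lct}, exactly the assembly you describe. Your cross-check that all three lists coincide with $\{(\mathrm{F}),(\mathrm{G}),(\mathrm{I}),(\mathrm{K})\}$ is the only content needed, and it checks out.
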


We summarize the obtained results together with what was already known on finite subgroups~$G$ in~$\mathrm{Aut}(\mathbb{P}^2)$ in the following table, in~which we use the notation as in Theorem \ref{theorem: PGL3-classification}.
\label{sec-the-table}

\begin{longtable}{|c|c|c|c|c|}

\hline Group & $G$-coregularity & $\mathrm{lct}_G$  & Exceptionality & $G$-rigidity \\

\hline A & $0$ & $\leqslant 1$ & no  & no   \\

\hline B1 & $0$ & $\leqslant 1$ & no  & no  \\

\hline B2 & $1$ & $\leqslant 1$ & no  & no  \\

\hline $\AG_4 \cong \CG_2^2 \rtimes \CG_3$ & $0$ & $\leqslant 1$ & no  & no  \\

\hline C but not $\AG_4$ & $0$ & $\leqslant 1$ & weakly & yes  \\

\hline $\SG_4 \cong \CG_2^2 \rtimes \SG_3$ & $0$ & $\leqslant 1$ & no & no  \\

\hline D but not $\SG_4$ & $0$ & $\leqslant 1$ & weakly & yes  \\

\hline E & $1$ & $\leqslant 1$ & weakly & yes  \\

\hline F & $2$ & $>1$  & yes & super  \\

\hline G & $2$ & $>1$  & yes  & super  \\

\hline H & $1$ & $\leqslant 1$ & no  & yes  \\

\hline I & $2$ & $\frac{4}{3}$  & yes & super  \\

\hline K & $2$ & $2$  & yes & super \\
\hline
\caption{Properties of finite subgroups $G\subset\mathrm{Aut}(\mathbb{P}^2$)}\label{table:P2}\\

\end{longtable}

Also, we introduce a more general notion of $G$-coregularity for (possibly infinite) groups acting on~pairs with linear systems, see Definition \ref{defin-regularity-linear-systems}. In Proposition~\ref{proposition: coregularity linear systems} we prove that this notion coincide with $G$-coregularity for a finite group $G$. However, this version of $G$-coregularity could be more suitable for infinite groups, see Example \ref{ex-with-infinite-group}.

\medskip


The paper is organized as follows.
In Section \ref{sec-prelim} we collect some preliminary results and fix the~notation.
In Section \ref{sec-G-coreg}, we define $G$-coregularity and study its properties. We also study \mbox{$G$-log-canonical} thresholds, exceptional quotient singularities, and $G$-birational rigidity.
In Section \ref{sec-conic-bundles}, we~study $G$-coregularity of conic bundles and characterize the groups $G$ which have $G$-coregularity~$0$.
In~Section \ref{sec-del-pezzo-surfaces}, we study $G$-coregularity of smooth del Pezzo surfaces of degree at least $6$.
In Section \ref{sec-coreg-linear-systems} we define $G$-coregularity for possibly infinite groups acting on pairs with linear systems and study its properties.
Finally, in~Section \ref{sec-examples-questions} we provide some examples and formulate questions.

\medskip

\textbf{Acknowledgement.}
The article was prepared within the framework of the project ``International academic cooperation'' at HSE University.
The authors thank Jihao Liu and Constantin Shramov for useful conversations. The authors are grateful to anonymous referees for the careful reading of the manuscript and many helpful suggestions.

\section{Preliminaries}
\label{sec-prelim}
We work over an algebraically closed field $\mathbb{K}$ of characteristic $0$.
 All the varieties are
projective and defined over $\mathbb{K}$ unless  stated otherwise. We
use the language of the minimal model program (the MMP for short), see
e.g. \cite{KM98}.

\subsection{Contractions} By a \emph{contraction} we mean a projective
morphism $f\colon X \to Y$ of normal varieties such that $f_*\OOO_X =
\OOO_Y$. In particular, $f$ is surjective and has connected fibers. A
\emph{fibration} is~defined as a contraction $f\colon X\to Y$ such
that $\dim Y<\dim X$.

Let $G$ be a group. By a \emph{$G$-variety} $X$ we mean a variety
$X$ together with a map $G\to \mathrm{Aut}(X)$ (not necessarily injective). If $f\colon X\to Y$ is a
$G$-equivariant contraction (resp., a $G$-equivariant fibration) of
$G$-varieties, we call it a \emph{$G$-contraction} (resp., a
\emph{$G$-fibration}).

\subsection{Pairs and singularities}
\label{subsec-types-of-singularities}
Let $X$ be a normal variety.
A $\mathbb{Q}$-divisor $B$ with
coefficients in $[0, 1]$ (resp., in $(-\infty, 1]$) is called a \emph{boundary} (resp., a \emph{sub-boundary}), if $K_X + B$ is $\mathbb{Q}$-Cartier.
In this case, $(X, B)$ is called a \emph{pair} (resp., a \emph{sub-pair}).
If $B$ is $G$-invariant, we call $(X, B)$ a \emph{$G$-pair} (resp., \mbox{a~\emph{$G$-sub-pair}}).
{A \emph{pair} $(X, B)$ \emph{over $Z$} is a pair $(X, B)$ together
with a projective morphism~${X\to Z}$}.
If $(X, B)$ is a $G$-pair and the morphism $X\to Z$ is $G$-equivariant, we call it a
\emph{$G$-pair over $Z$}.

For a birational contraction $f\colon W \to X$, by the \emph{log pullback} (or the \emph{crepant pullback}) of~the~sub-pair $(X, B)$ we mean the data $(W, B_W)$ such that
\[
K_W +B_W = f^*(K_X +B).
\]
We say that $f\colon W \to X$ is a
\emph{log resolution} of a sub-pair $(X,B)$ if $W$ is smooth, $\mathrm{Exc}(f)$ is a divisor, and the divisor $\mathrm{Exc}(f)\cup \mathrm{supp}(f^{-1}_*B)$ has simple normal crossings.

Let $\phi\colon W \to X$ be a
log resolution of $(X,B)$, and let $(W, B_W)$ be the log pullback of $(X, B)$.
The \emph{log discrepancy} of a prime divisor $D$ on $W$ with respect
to $(X, B)$ is the number
\[
a(D, X, B) = 1 - \mathrm{coeff}_D B_W.
\]
We
say that the pair $(X, B)$ is
\begin{itemize}
\item
\emph{lc} (resp., \emph{klt}) if $a(D, X, B)\geqslant 0$ (resp., $> 0$) for any log resolution~$\phi\colon W\to X$ and any divisor $D$ on $W$, 
\item
\emph{plt}, if $a(D, X, B)>0$ for any log resolution $\phi\colon W\to X$ and any $\phi$-exceptional divisor $D$ on~$W$,
\item
\emph{dlt}, if $a(D, X, B)>0$ for some log resolution~$\phi\colon W\to X$ and any $\phi$-exceptional divisor $D$ on $W$.
\end{itemize}




\subsection{Log Fano and log Calabi--Yau pairs}
\label{sect-log-CY}
Let $(X, B)$ be an lc pair over $Z$. We say $(X,B)$ is~\emph{log Fano
over $Z$} if $-(K_X+B)$ is ample over $Z$. If $Z$ is a point then $(X,
B)$ is called a \emph{log Fano pair}. If~$B=0$ then $X$
is called a \emph{Fano variety over} $Z$. If $B=0$ and $Z$ is a point we say that $X$ is~a~\emph{Fano variety}.
A Fano variety of dimension $2$ is~called a \emph{del Pezzo surface}.

We say that an lc pair $(X,B)$ over $Z$ is \emph{log Calabi--Yau over $Z$} if $K_X + B
\sim_{\mathbb{Q}} 0$ over $Z$. If $Z$ is a point then $(X, B)$ is~called a \emph{log Calabi--Yau pair}.

\subsection{Mori fiber space}
\label{sec-mfs}
A $G\mathbb{Q}$-\emph{Mori fiber space} is a $G\mathbb{Q}$-factorial (that is, any $G$-invariant Weil divisor is Cartier, up to some multiple)
$G$-variety $X$ with at worst terminal singularities together with a
$G$-contraction $f\colon X\to Z$ to a variety $Z$ such that for the $G$-invariant relative Picard rank we have $\rho(X/Z)^G=1$,
and
$-K_X$ is ample over $Z$.
If $Z$ is a point, we say that $X$ is a $G\mathbb{Q}$-Fano variety.
If $X$ is $G$-factorial (that is, any $G$-invariant Weil divisor is Cartier, e.g. if $X$ is
smooth), we call it $G$-\emph{Mori fiber space}.

For a $G\mathbb{Q}$-Mori fiber space $f\colon X\to Z$,
if $\dim X=2$ and $\dim Z=0$, then we say that $X$ is a \emph{$G$-minimal del Pezzo surface}.
If $\dim X=2$ and $\dim Z=1$, then we say that $X$ is a \emph{$G$-minimal conic bundle}.
We reserve the terms \emph{del Pezzo $G$-surface} and \emph{$G$-conic bundle} for a del Pezzo surface and a conic bundle (that is, a Fano variety~$X$ over $\mathbb{P}^1$ such that $X$ is a smooth surface), respectively, equipped with an action of a group $G$, but~without the condition $\rho(X/Z)^G=1$. We note that in the case $\dim X=2$ we have that $X$ is smooth.

\subsection{Crepant $G$-birational equivalence}
\begin{definition}\label{def:crep-bir-isom}
{\em
Let $(X,D)$ and $(Y,D_Y)$ be two $G$-pairs.
We say that these two pairs
are {\em crepant $G$-birationally equivalent} (or \emph{$G$-crepant equivalent}) if there exists a $G$-equivariant commutative diagram
\begin{equation}
\begin{tikzcd}
& (V, D_V) \ar[rd, "\psi"] \ar[dl, swap, "\phi"] & \ \\
(X, D) \ar[rr, dashed, "\alpha"] & & (Y, D_Y)
\end{tikzcd}
\end{equation}
where $\alpha$ is a $G$-birational map, $\phi$ and $\psi$ are $G$-birational contractions, $\phi_*(D_V)=D_X$, ${\psi_*(D_V) = D_Y}$, and
\[
K_V + D_V=\phi^*(K_X+D) = \psi^*(K_Y + D_Y).
\]
Note that here $D_V$ is not necessarily a boundary since it may have negative coefficients.
In~the~previous setting, we write $(X,D)\sim_{G\text{-}\mathrm{cbir}} (Y,D_Y)$. If $G$ is the trivial group, we say that the pairs $(X, D)$ and $(Y, D_Y)$ are \emph{crepant equivalent},
and write $(X,D)\sim_{\mathrm{cbir}} (Y,D_Y)$.
}
\end{definition}

Note that the crepant equivalence is sometimes called K-equivalence in the literature, cf.~\cite{Wa98}.

\subsection{Dual complex}
\label{sec-dual-complex}
Let $D=\sum D_i$ be a Cartier divisor on a smooth variety $X$ where $D_i$ are irreducible components of $D$. Recall that $D$ has \emph{simple normal crossings} (snc for short), if all the components $D_i$ of $D$ are smooth, and any point in $D$ has an open neighborhood in the analytic topology that is analytically equivalent to~the~union of coordinate hyperplanes.

\emph{The dual complex}, denoted by $\mathcal{D}(D)$, of a simple normal crossing divisor $D=\sum_{i=1}^{r} D_i$ on~a~smooth variety $X$ is a regular CW-complex (more precisely, a $\Delta$-complex, so it admits a natural \mbox{PL-struc}\-ture) constructed as follows. 
The simplices $v_Z$ of $\mathcal{D}(D)$ are in bijection with irreducible components~$Z$ of~the~intersection $\bigcap_{i\in I} D_i$ for any subset $I\subset \{ 1, \ldots, r\}$, and the dimension of $v_Z$ is equal to~${\#I-1}$.
The gluing maps are constructed as follows.
For any subset $I\subset \{ 1, \ldots, r\}$, let~${Z\subset \bigcap_{i\in I} D_i}$ be~any irreducible component, and for any $j\in I$ let $W$ be a unique component of~${\bigcap_{i\in I\setminus\{j\}} D_i}$ containing~$Z$. Then the gluing map is the inclusion of $v_W$ into $v_Z$ as a face of $v_Z$ that does not contain the vertex~$v_j$.

Note that the dimension of $\mathcal{D}(D)$ does not exceed $\dim X-1$. If $\mathcal{D}(D)$ is empty, we say that~${\dim \mathcal{D}(D)=-1}$. In what follows, for a divisor $D$ by $D^{=1}$ we denote the sum of the components of $D$ that have coefficient $1$ in it. For an lc sub-pair $(X, D)$, we define its dual complex $\mathcal{D}(X, D)$ as $\mathcal{D}(D_Y^{=1})$ where $f\colon (Y, D_Y)\to (X, D)$ is a log resolution of $(X, D)$, so the formula
\[
K_{Y} + D_Y= f^*(K_X + D)
\]
is satisfied. It is known that the PL-homeomorphism class of $\mathcal{D}(D_Y^{=1})$ does not depend on~the~choice of a log resolution
, see \cite{dFKX17}.
For more results on the topology of dual complexes of log Calabi--Yau pairs see \cite{KX16}. 

\begin{remark}
\label{rem-group-acts-on-dual-complex}
Assume that $G$ is a finite group and $X$ is a $G$-variety. Let $(X, D)$ be an lc $G$-pair. Then $G$ acts on the dual complex $\mathcal{D}(X, D)$ by PL-homeomorphisms. Indeed, pick \mbox{a $G$-equivariant} log resolution $(Y, D_Y)$ of $(X, D)$. Then $\mathcal{D}(Y, D_Y)$ is a simplicial complex endowed with a simplicial action of $G$. Note that the action of $G$ on the topological space $\mathcal{D}(X, D)$ does not depend on~the~choice of a log resolution.
\end{remark}

\begin{theorem}[{\cite[Proposition 5]{KX16}}]
\label{thm-kollar-xu}
Let $(S, D)$ be a log Calabi--Yau pair with $n=\dim S \leqslant 4$. Then the dual complex $\mathcal{D}(S,D)$ is either empty, or homeomorphic to the quotient of a sphere $\mathbb{S}^{k-1}/\Gamma$ where $\Gamma\subset \mathrm{O}_k(\mathbb{R})$ is a finite group and~$k\leqslant n$.
\end{theorem}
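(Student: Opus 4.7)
The plan is to reduce to a dlt log Calabi--Yau pair, verify that the dual complex is a PL pseudo-manifold via adjunction on lc strata, bound its fundamental group, and then invoke the low-dimensional topological classification of closed simply connected manifolds.

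First, I would apply a $\mathbb{Q}$-factorial dlt modification $\pi\colon (S', D') \to (S, D)$ so that $(S', D')$ is dlt and $K_{S'} + D' = \pi^*(K_S + D) \sim_{\mathbb{Q}} 0$. Since the PL-homeomorphism class of the dual complex is preserved under such crepant modifications of lc pairs, it suffices to handle the case where $(S, D)$ is dlt and $D$ is reduced. In this setting the faces of $\Delta := \mathcal{D}(S, D)$ are in bijection with the lc strata of $(S, D)$, and $\dim \Delta = k - 1$ for some $1 \leqslant k \leqslant n$ (or $\Delta$ is empty).

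Second, I would show that $\Delta$ is a PL pseudo-manifold, i.e., every codimension-one face lies in exactly two top-dimensional faces. This is a local statement at a face $v_Z$ corresponding to an lc stratum $Z$ of codimension $k-1$ in $S$; applying adjunction (using $K_S + D \sim_{\mathbb{Q}} 0$) produces a dlt log Calabi--Yau pair structure on $Z$, and the boundary components meeting $Z$ correspond to top-dimensional faces of $\Delta$ incident to $v_Z$. In the minimal situation $\dim Z = 1$, the log Calabi--Yau condition on a smooth curve forces exactly two boundary components with coefficient one, which gives the desired pseudo-manifold condition.

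Third, I would control the fundamental group of $\Delta$ and verify that it is a genuine manifold. The Koll\'ar--Shokurov connectedness theorem applied iteratively to the non-klt loci of the adjoint pairs, together with rational connectedness of the lc strata of a dlt Calabi--Yau pair (Hacon--McKernan, Fujino), forces $\pi_1(\Delta)$ to be finite via an induction on dimension. Moreover, adjunction along the star of each vertex of $\Delta$ produces a lower-dimensional dlt log Calabi--Yau pair whose dual complex is the link of the vertex; by induction on $n$, each such link is a sphere, so $\Delta$ is in fact a closed PL manifold. The universal cover $\widetilde{\Delta}$ is then a simply connected closed PL manifold of dimension $k - 1 \leqslant n - 1 \leqslant 3$, and $\Delta = \widetilde{\Delta}/\Gamma$ for some finite group $\Gamma$ acting freely.

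Finally, I would classify $\widetilde{\Delta}$ by dimension: it is a point or $\mathbb{S}^0$ when $k-1=0$; a circle $\mathbb{S}^1$ when $k-1=1$; the $2$-sphere $\mathbb{S}^2$ when $k-1=2$, by the classification of closed surfaces. The hard part will be $k-1=3$: there one must invoke Perelman's resolution of the Poincar\'e conjecture to identify a simply connected closed PL $3$-manifold with $\mathbb{S}^3$, and then realize the finite deck group $\Gamma$ as a subgroup of $\mathrm{O}_4(\mathbb{R})$ via the classification of free finite group actions on $\mathbb{S}^3$ as spherical space forms. In each case one obtains $\Delta \cong \mathbb{S}^{k-1}/\Gamma$ with $\Gamma \subset \mathrm{O}_k(\mathbb{R})$ and $k \leqslant n$, as required.
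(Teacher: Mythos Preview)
The paper does not prove this statement; it is quoted verbatim from \cite[Proposition~5]{KX16} and used as a black box. So there is no argument in the paper to compare against---your outline is really a sketch of the Koll\'ar--Xu proof itself.

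As such a sketch, your global strategy (dlt modification, pseudomanifold structure via adjunction, finiteness of $\pi_1$, low-dimensional classification invoking the Poincar\'e conjecture) is the right one, but two steps do not go through as written. First, the inductive hypothesis only tells you that the link of a vertex is a \emph{quotient} $\mathbb{S}^{j-1}/\Gamma'$, not a sphere; hence $\Delta$ need not be a closed PL manifold. Concretely, a segment is $\mathbb{S}^1/\CG_2$ with $\CG_2$ acting by reflection, and the links at its endpoints are single points, not $\mathbb{S}^0$. Second, and relatedly, the group $\Gamma\subset\mathrm{O}_k(\mathbb{R})$ in the statement is not required to act freely, so passing to a universal cover and invoking deck transformations does not produce $\Gamma$; reflections must be allowed. (There is also a minor glitch in the $k-1=1$ case: the universal cover of $\mathbb{S}^1$ is $\mathbb{R}$, not a closed manifold.) The actual Koll\'ar--Xu argument shows instead that $\Delta$ is an orientable pseudomanifold, possibly with boundary, carrying the rational cohomology of a sphere or a disk, and then treats each dimension $\leqslant 3$ separately; the identification of $\Gamma$ inside $\mathrm{O}_k(\mathbb{R})$ is handled case by case rather than via a free deck action.
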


\begin{lemma}
\label{lemma: dual complex simple}
Let $S$ be a surface, and let $(S,D)$ be a log Calabi--Yau pair. The following holds.
\begin{itemize}
\item[(i)] The complex $\mathcal{D}(S,D)$ is homeomorphic either
to $\mathbb{S}^1$, or to a segment, or to two points, or~to~one point, or to an empty set.
\item[(ii)] If $D$ contains a component with coefficient less than $1$, then $\mathcal{D}(S,D)$ is not homeomorphic to a circle.
\end{itemize}
\end{lemma}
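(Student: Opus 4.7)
Both parts rest on Theorem~\ref{thm-kollar-xu}, which in dimension $2$ leaves only a short list of possibilities for $\mathcal{D}(S,D)$.

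For part~(i), I would just enumerate the Koll\'ar--Xu list. By Theorem~\ref{thm-kollar-xu}, either $\mathcal{D}(S,D)$ is empty, or it is homeomorphic to $\mathbb{S}^{k-1}/\Gamma$ for some finite $\Gamma \subset \mathrm{O}_{k}(\mathbb{R})$ and some $k\leqslant 2$. If $k=1$, then $\mathbb{S}^{0}$ consists of two points and $\Gamma \subset \{\pm 1\}$, yielding either two points or one point. If $k=2$, every finite subgroup of $\mathrm{O}_{2}(\mathbb{R})$ is cyclic or dihedral; the cyclic (rotational) quotient of $\mathbb{S}^{1}$ is a circle, while the dihedral quotient is a closed segment. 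Combined with the empty case, this gives the five PL-types claimed.

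For part~(ii), the plan is to argue by contradiction: assume $\mathcal{D}(S,D)\cong\mathbb{S}^{1}$ but $D$ has a component $B$ with coefficient $c_{B}\in(0,1)$. Pick a log resolution $f\colon (Y,D_{Y})\to(S,D)$. Then $E:=D_{Y}^{=1}$ is a cycle $E_{1}+\ldots+E_{n}$ of smooth rational curves meeting pairwise transversely in the cyclic order, while $M:=D_{Y}^{<1}$ is an effective $\mathbb{Q}$-divisor containing $f^{-1}_{*}B$, so $M\neq 0$. The key computation is log adjunction on each $E_{i}$: since $K_{Y}+D_{Y}\sim_{\mathbb{Q}} f^{*}(K_{S}+D)\sim_{\mathbb{Q}} 0$,
\[
0 \;=\; \deg\bigl((K_{Y}+D_{Y})|_{E_{i}}\bigr) \;=\; -2 \;+\; 2 \;+\; \sum_{C\subset \Supp M} c_{C}\,(C\cdot E_{i}),
\]
where $-2=\deg K_{E_{i}}$ and the middle $+2$ records the two cycle-neighbors of $E_{i}$. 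Every summand in the sum is non-negative, so each $C\cdot E_{i}=0$, and hence $M$ is disjoint from $E$ on $Y$.

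The remaining step, which I expect to be the main obstacle, is upgrading this disjointness into $M=0$. I would push the disjointness down via $f$ to conclude that the coefficient-$<1$ components of $D$ do not meet $D^{=1}$ on $S$; thus the sub-pair $(S,D^{=1})$ is lc, still has circle dual complex, and satisfies $K_{S}+D^{=1}\sim_{\mathbb{Q}}-D^{<1}$ with $D^{<1}$ effective and nonzero. Running the $(K_{S}+D^{=1})$-MMP on a $\mathbb{Q}$-factorial dlt modification, the same adjunction on the cycle forbids any extremal ray from meeting $E$ negatively, so the cycle survives and the procedure terminates at a toric pair in the sense of Gross--Hacking--Keel~\cite{GHK15}. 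On a toric pair the image of $D^{<1}$ is both effective and $\mathbb{Q}$-linearly trivial, hence zero, contradicting $D^{<1}\neq 0$.
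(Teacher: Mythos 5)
Your part (i) is exactly the paper's argument: enumerate the possibilities left by Theorem~\ref{thm-kollar-xu} in dimension $2$. No issues there. For part (ii) the paper simply cites \cite[Proposition~3]{KX16}, so you are attempting to reprove a black-boxed result from scratch; that is legitimate in principle, but your sketch has two genuine gaps.

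First, the assertion that $M:=D_Y^{<1}$ is \emph{effective} on a log resolution is false in general. The crepant pullback $D_Y$ of a boundary assigns to an $f$-exceptional divisor $F$ the coefficient $1-a(F,S,D)$, which is negative whenever $a(F,S,D)>1$ (e.g.\ blow up a node of a component of $D$ carrying a small coefficient). Consequently the sum $\sum_{C\subset\Supp M}c_C\,(C\cdot E_i)$ may contain negative terms, and the step ``every summand is non-negative, hence each $C\cdot E_i=0$'' does not go through. This is repairable — replace the log resolution by a $\mathbb{Q}$-factorial dlt modification, where the crepant pullback is a genuine boundary and the same adjunction (now via the different) forces $g(E_i)=0$ and $D_Y^{<1}\cdot E_i=0$ — but as written the key inequality is unjustified. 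Second, the final step does not actually produce a contradiction. Granting that $M\geqslant 0$, $M\neq 0$, $M\cap E=\varnothing$ and $K_Y+E\sim_{\mathbb{Q}}-M$, your plan is to run the $(K_Y+E)$-MMP to a toric model and conclude $\bar M=0$ there because $\bar M$ is effective and $\mathbb{Q}$-linearly trivial. But $\bar M=0$ is only a contradiction if some component of $M$ (say the strict transform of $B$) is known to survive to the toric model, and the MMP you describe is precisely an anti-$M$-MMP, so it is allowed to contract every component of $M$; nothing in your argument rules this out. The standard way to close this is the negativity bookkeeping: each step of the $(K_Y+E)$-MMP is $(K+E)$-negative, so $K_Y+E=g^*(K_{\bar Y}+\bar E)+\sum a_iF_i$ with all $a_i>0$; if the end product has $K_{\bar Y}+\bar E\sim_{\mathbb{Q}}0$ this gives $-M=\sum a_iF_i\geqslant 0$, forcing $M=0$ and yielding the contradiction directly (the Mori fibre space outcome has to be excluded separately using $M\cdot E_i=0$). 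You flag this step as ``the main obstacle,'' and indeed the sketch provided does not resolve it.
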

\begin{proof}
By~Theorem \ref{thm-kollar-xu}, the dual complex $\mathcal{D}(S,D)$ is either empty, or homeomorphic to~the~quotient of a sphere of dimension $0$ or $1$ by a finite group.
In the former case, we get either a point, or two points.
In the latter case, we have a quotient of $\mathbb{S}^1$ by a finite subgroup of $\mathrm{O}_{2}(\mathbb{R})$. Thus we~have that $\mathcal{D}(S,D)$ is homeomorphic either to a circle $\mathbb{S}^1$, or to a line segment. This proves the~first assertion.


To establish the second assertion, we apply~\cite[Proposition 3]{KX16} to the pair $(S, D)$.
\end{proof}

We need the following result.
\begin{proposition}[{cf. \cite[16]{KX16}}]
\label{prop-disconnnected-case}
Let $(X, D)$ be a log Calabi--Yau pair where $X$ has dimension~$n$. Assume that $\mathcal{D}(X, D)$ is disconnected. Then we have 
\[
(X, D)\sim_{\mathrm{cbir}}(Y, D_Y)\times (\mathbb{P}^1, \{0\}+\{\infty \})
\] where $(Y, D_Y)$ is a klt log Calabi--Yau pair. In particular, $\mathcal{D}(X, D)\cong \mathbb{S}^0$, and~${\coreg(X, D)=n-1}$.
\end{proposition}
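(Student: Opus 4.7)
The plan is to extract a $\mathbb{P}^1$-fibration from $(X,D)$ via the MMP, in which the two connected components of the dual complex appear as two disjoint multisections, and then show this fibration is birationally trivial. By \cite{dFKX17}, the dual complex is a crepant-birational invariant of lc pairs, so I would first replace $(X,D)$ by a $\mathbb{Q}$-factorial dlt modification. Since $(X,D)$ is now dlt and $\mathcal{D}(X,D^{=1})$ is disconnected, no lc stratum meets more than one component of the dual complex; in particular, writing $D^{=1}=D_1+D_2$ according to the two connected components, we obtain $D_1\cap D_2=\emptyset$.

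Next I would run the MMP on $K_X+D-\epsilon D_1\sim_{\mathbb{Q}}-\epsilon D_1$ for small $\epsilon>0$. Since $-D_1$ is not pseudo-effective, the MMP must terminate at a Mori fiber space $g\colon X'\to C$ on which the strict transform $D_1'$ is $g$-ample and hence horizontal. A Shokurov--Koll\'ar connectedness argument applied to $g$, combined with the persistent disjointness $D_1'\cap D_2'=\emptyset$ (any extremal curve contracted has nonnegative intersection with $D_2$, so $D_2$ remains unaffected), shows that $D_2'$ is also horizontal and that the general fiber $F$ of $g$ is one-dimensional. Adjunction then makes $(F,(D_1'+D_2')|_F)$ a log Calabi--Yau pair on a curve, so $F\cong\mathbb{P}^1$ and $(D_1'+D_2')|_F=\{p_0\}+\{p_\infty\}$.

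The main obstacle will be showing that this $\mathbb{P}^1$-fibration is birationally a product over $C$. For this I would pass to the generic fiber: $X'_\eta$ is a smooth conic over $k(C)$ equipped with two distinct $k(C)$-rational points (one from each horizontal section $D_i'$), so $X'_\eta\cong\mathbb{P}^1_{k(C)}$ via an isomorphism sending these points to $0$ and $\infty$. Spreading out yields a birational identification
\[
(X',D')\sim_{\mathrm{cbir}}(Y,D_Y)\times(\mathbb{P}^1,\{0\}+\{\infty\})
\]
for a pair $(Y,D_Y)$ on a model $Y$ birational to $C$. Comparing canonical classes via $K_X+D\sim_{\mathbb{Q}}0$ shows $K_Y+D_Y\sim_{\mathbb{Q}}0$, and $(Y,D_Y)$ is klt because every component of $D$ with coefficient $1$ has been absorbed into the $\mathbb{P}^1$ factor. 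The conclusions $\mathcal{D}(X,D)\cong\mathbb{S}^0$ and $\coreg(X,D)=n-1$ then follow from the product structure, since $\mathcal{D}(Y,D_Y)=\emptyset$ and the dual complex of a product is the join.
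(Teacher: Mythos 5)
The paper does not actually prove Proposition~\ref{prop-disconnnected-case}: it is imported as a known result, with the citation ``cf.\ \cite[16]{KX16}'' standing in for the argument. Your proposal is, in effect, a reconstruction of the proof of \cite[Proposition 16]{KX16}, and the overall strategy (dlt modification using \cite{dFKX17}, MMP on $K_X+D-\epsilon D_1$ terminating in a Mori fiber space, relative connectedness to force both pieces of $D^{=1}$ to be horizontal sections of a conic bundle, then trivialization of the generic fiber via its two rational points) is exactly the right one and is what the cited reference does. Two points of justification deserve tightening. First, you assume from the outset that $\mathcal{D}(X,D)$ has exactly two connected components; a priori it could have more, so you should take $D_1$ to be one connected component and $D_2$ the union of all the others, and obtain ``exactly two'' (i.e.\ $\mathcal{D}(X,D)\cong\mathbb{S}^0$) as an output of the argument, from $\deg\bigl(D'|_F\bigr)=2$ on the general fiber $F\cong\mathbb{P}^1$.

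Second, and more substantively, your reason for the persistence of $D_1'\cap D_2'=\varnothing$ along the MMP --- ``any extremal curve contracted has nonnegative intersection with $D_2$, so $D_2$ remains unaffected'' --- is not right: the contracted rays satisfy $D_1\cdot R>0$, but nothing is forced about $D_2\cdot R$, and nonnegativity would not prevent $D_2$ from being modified or from colliding with $D_1$ in any case. The correct mechanism is that every step of the MMP is crepant for $K_X+D\sim_{\mathbb{Q}}0$, so the output pair $(X',D')$ remains lc and crepant birationally equivalent to $(X,D)$; hence its dual complex is still disconnected, and if $D_1'$ and $D_2'$ met at a point one would apply the connectedness principle to a common log resolution over that point to produce a chain of lc places joining the two components of the dual complex, a contradiction. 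The same crepant-invariance is what lets you transport the connectedness argument to the Mori fiber space and rule out vertical components of $D'^{=1}$, which is needed both for the klt-ness of $(Y,D_Y)$ (the discriminant of the lc-trivial fibration has coefficients strictly less than $1$) and for the final identification of the dual complex with $\mathbb{S}^0$. With these repairs your argument matches the proof in \cite{KX16}.
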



\subsection{Automorphisms of projective spaces}

We will need the following.
\begin{lemma}[cf. {\cite[Lemma 4]{Po14}}]
\label{lem-faithful-action}
Assume that $G$ is a finite group such that $G\subset \mathrm{Aut}(X)$ where~$X$ is an algebraic variety.
Let $p\in X$ be a fixed point of the $G$-action.
Then the induced action of $G$ on the tangent space $T_p X$ is faithful.
\end{lemma}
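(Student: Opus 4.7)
The plan is to pass to the complete local ring at $p$, linearize the action of a single element $g\in G$ by averaging (which works because $\mathrm{char}\,\mathbb{K}=0$), and then conclude using faithful flatness of completion plus irreducibility of $X$. Fix $g\in G$ whose image in $\mathrm{GL}(T_pX)$ is trivial; the goal is to show $g=\mathrm{id}_X$. Let $\mathfrak{m}\subset \mathcal{O}_{X,p}$ be the maximal ideal, let $A=\widehat{\mathcal{O}}_{X,p}$ with maximal ideal $\widehat{\mathfrak{m}}$, and note that $g^\ast$ is a continuous $\mathbb{K}$-algebra automorphism of $A$ preserving $\widehat{\mathfrak{m}}$; by hypothesis it acts trivially on $\widehat{\mathfrak{m}}/\widehat{\mathfrak{m}}^2\cong (T_pX)^\vee$.

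The key step is to construct a system of $g$-invariant topological generators of $A$. Let $n$ be the order of $g$, and choose arbitrary lifts $\bar y_1,\ldots,\bar y_N\in \widehat{\mathfrak{m}}$ of a basis of $\widehat{\mathfrak{m}}/\widehat{\mathfrak{m}}^2$, where $N=\dim_{\mathbb{K}} T_pX$. Using $\tfrac{1}{n}\in\mathbb{K}$, put
\[
y_i=\frac{1}{n}\sum_{k=0}^{n-1} (g^k)^\ast(\bar y_i), \qquad i=1,\ldots,N.
\]
Each $y_i$ is $g^\ast$-fixed by construction, and the triviality of $g^\ast$ modulo $\widehat{\mathfrak{m}}^2$ gives $y_i\equiv \bar y_i \pmod{\widehat{\mathfrak{m}}^2}$, so $y_1,\ldots,y_N$ still form a basis of $\widehat{\mathfrak{m}}/\widehat{\mathfrak{m}}^2$. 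By Nakayama's lemma they generate $\widehat{\mathfrak{m}}$, hence they generate $A$ as a complete topological $\mathbb{K}$-algebra. A continuous $\mathbb{K}$-algebra endomorphism of $A$ fixing $\mathbb{K}$ is determined by its values on a set of topological generators, so $g^\ast=\mathrm{id}_A$.

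Finally, the completion map $\mathcal{O}_{X,p}\to A$ is faithfully flat and in particular injective, so $g^\ast$ acts trivially on $\mathcal{O}_{X,p}$ as well. Therefore $g$ restricts to the identity on a nonempty Zariski open neighborhood of $p$ in $X$, and since $X$ is an irreducible algebraic variety, an automorphism coinciding with the identity on a nonempty open subset must equal the identity globally; hence $g=\mathrm{id}_X$. The only place where anything nontrivial is used is the averaging step, which relies on both the invertibility of $n$ in $\mathbb{K}$ (granted by $\mathrm{char}\,\mathbb{K}=0$) and the hypothesis that $g^\ast$ is trivial on the cotangent space; this is what makes the construction of $g$-invariant lifts compatible with descent to a basis of $\widehat{\mathfrak{m}}/\widehat{\mathfrak{m}}^2$.
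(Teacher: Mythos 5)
Your argument is correct. Note that the paper does not prove this lemma at all: it is quoted with a reference to \cite[Lemma 4]{Po14}, where the standard argument runs differently. There one observes that $g$ acts on each truncation $\mathcal{O}_{X,p}/\mathfrak{m}^{k+1}$, that the induced action on the associated graded pieces $\mathfrak{m}^j/\mathfrak{m}^{j+1}$ (quotients of $\mathrm{Sym}^j(\mathfrak{m}/\mathfrak{m}^2)$) is trivial, so $g$ is unipotent on each truncation; since $g$ has finite order and $\mathrm{char}\,\mathbb{K}=0$, it is also semisimple there, hence trivial, and one passes to the limit. Your route replaces the ``unipotent plus semisimple implies trivial'' step by the averaging construction of $g$-invariant topological coordinates $y_i$, which is equally standard and arguably more self-contained; both versions use the same two inputs (invertibility of $|\langle g\rangle|$ in $\mathbb{K}$ and triviality on $\mathfrak{m}/\mathfrak{m}^2$), and both end with the same descent from $\widehat{\mathcal{O}}_{X,p}$ to $\mathcal{O}_{X,p}$ and then to $X$ via irreducibility and separatedness. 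Two small points you gloss over but which are routine: to pass from triviality of $g^\ast$ on $\mathcal{O}_{X,p}$ to triviality on a neighborhood you should restrict to a $g$-invariant affine open such as $\bigcap_k g^{-k}(V)$ for an affine $V\ni p$, where $\mathcal{O}(W)\hookrightarrow\mathcal{O}_{X,p}$ is $g^\ast$-equivariant; and the hypothesis that $X$ is irreducible (or at least that the fixed open set is dense) is genuinely needed for the final globalization, consistent with the paper's conventions on varieties.
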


\begin{definition}[{\cite[Chapter I, \S14, \S44]{Bl17}}]
\label{definition: linear groups}
If the representation of the linear group $G$ on $V$ is~reducible then the group $G$ is called {\it intransitive}.
Otherwise the group $G$ is called {\it transitive}.

Let $G$ be a transitive group. If there exists a non-trivial decomposition $V = V_1 \oplus \dots \oplus V_l$ for~${l>1}$ to~subspaces such that for any element $g \in G$ and $i \in \{1, \dots, l\}$ one has \mbox{$gV_i = V_j$} for~some~${j \in \{1, \dots, l\}}$ then the group $G$ is called {\it imprimitive}. Otherwise the group $G$ is called {\it primitive}.

Let $f: \mathrm{SL}_n(\mathbb{K}) \rightarrow \mathrm{PGL}_n(\mathbb{K})$ be the natural surjective map. A group $G \subset \mathrm{PGL}_n(\mathbb{K})$ is called {\it intransitive} (resp. {\it imprimitive}, {\it primitive}, etc.) if so is the group $f^{-1}(G) \subset \mathrm{SL}_n(\mathbb{K})$.
\end{definition}

The following lemma is well-known.

\begin{lemma}
\label{lemma: representations over K}
Any representation of a finite group $G$ in $\mathrm{GL}_n(\mathbb{K})$ is conjugate to a representation of the group~$G$ in $\mathrm{GL}_n(\overline{\mathbb{Q}}) \subset \mathrm{GL}_n(\mathbb{K})$.
\end{lemma}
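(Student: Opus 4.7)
The plan is to use the character theory of finite groups in characteristic zero. Let $\rho\colon G\to \mathrm{GL}_n(\mathbb{K})$ be the given representation and let $\chi=\chi_\rho$ be its character. Since $G$ is finite, each matrix $\rho(g)$ has finite order, so its eigenvalues are roots of unity of order dividing $|G|$. In particular $\chi(g)$ is a sum of such roots of unity, and therefore $\chi(g)\in\overline{\mathbb{Q}}\subset \mathbb{K}$ for every $g\in G$.

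Next I would construct a representation $\rho'\colon G\to \mathrm{GL}_n(\overline{\mathbb{Q}})$ with the same character $\chi$. The field $\overline{\mathbb{Q}}$ is an algebraically closed field of characteristic zero containing all roots of unity of order dividing $|G|$, so by Maschke's theorem combined with the Wedderburn decomposition one has
\[
\overline{\mathbb{Q}}[G]\cong \prod_{i} \mathrm{Mat}_{d_i}(\overline{\mathbb{Q}}),
\]
which produces a complete set of irreducible $\overline{\mathbb{Q}}$-representations of $G$. The same decomposition carried out over $\mathbb{K}$ yields irreducibles with matching characters (both decompositions are governed by the same group-algebra structure and the same cyclotomic data). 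Decomposing $\rho$ into $\mathbb{K}$-irreducibles and replacing each $\mathbb{K}$-irreducible summand by the corresponding $\overline{\mathbb{Q}}$-irreducible with the same character then produces a representation $\rho'\colon G\to \mathrm{GL}_n(\overline{\mathbb{Q}})$ with $\chi_{\rho'}=\chi$.

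Finally, I would invoke the classical fact that two finite-dimensional representations of a finite group over an algebraically closed field of characteristic zero are isomorphic if and only if they have the same character. Applied to $\rho$ and to the composition of $\rho'$ with the inclusion $\mathrm{GL}_n(\overline{\mathbb{Q}})\hookrightarrow\mathrm{GL}_n(\mathbb{K})$, this gives an intertwining matrix $P\in\mathrm{GL}_n(\mathbb{K})$ satisfying $P\rho(g)P^{-1}=\rho'(g)$ for all $g\in G$, which is exactly the required conjugation.

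The proof is essentially a formal consequence of character theory, and no step presents a serious obstacle; the only point worth stating carefully is the existence of $\rho'$ over $\overline{\mathbb{Q}}$ with prescribed character, which one could alternatively derive from Brauer's theorem (realizing $\rho$ already over $\mathbb{Q}(\zeta_{|G|})$). An even more direct route, if one prefers to avoid any appeal to global structure of $\overline{\mathbb{Q}}[G]$, is to note that the entries of $\rho(g)$ generate a finitely generated extension $L$ of $\mathbb{Q}$ inside $\mathbb{K}$, choose an embedding $\sigma\colon L\hookrightarrow\mathbb{C}$, transport the representation, invoke the analogous statement over $\mathbb{C}$, and then pull back; but the character-theoretic argument sketched above is the cleanest.
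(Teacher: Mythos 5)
Your argument is correct, but there is nothing in the paper to compare it against: the authors simply declare Lemma~\ref{lemma: representations over K} to be well-known and give no proof at all. Your character-theoretic route is a standard and valid way to fill this in. The one step that deserves to be said precisely is the existence of $\rho'$ over $\overline{\mathbb{Q}}$ with the same character: the cleanest formulation is that $\mathbb{K}[G]\cong\overline{\mathbb{Q}}[G]\otimes_{\overline{\mathbb{Q}}}\mathbb{K}$, so base change carries the Wedderburn factors $\mathrm{Mat}_{d_i}(\overline{\mathbb{Q}})$ to $\mathrm{Mat}_{d_i}(\mathbb{K})$; hence every irreducible $\mathbb{K}$-representation is the extension of scalars of a (unique) irreducible $\overline{\mathbb{Q}}$-representation, with the same character. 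Combined with the fact that in characteristic zero equality of characters implies isomorphism, this yields the conjugating matrix $P\in\mathrm{GL}_n(\mathbb{K})$ exactly as you say. Your alternative sketch (spreading out over a finitely generated subfield $L$ and embedding into $\mathbb{C}$) also works, but note that the isomorphism one obtains there is a priori over $\mathbb{C}$, and one must still invoke character equality (or the Noether--Deuring theorem) to descend the conjugation back to $\mathbb{K}$; the direct character argument avoids this detour.
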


According to Lemma \ref{lemma: representations over K}, it is sufficient to know the classification of finite subgroups of~$\mathrm{PGL}_n(\mathbb{C})$ to classify finite subgroups of $\mathrm{PGL}_n(\mathbb{K})$.

The following result is well-known.
\begin{lemma}[{\cite[Chapter III]{Bl17}}]
\label{lemma: action on P1}
Assume that a finite group $G$ acts faithfully on $\mathbb{P}^1$. Then $G$ is isomorphic to a cyclic group $\CG_n$, a dihedral group $\DG_{2n}$, an~alternating group $\AG_4$, a~symmetric group~$\SG_4$, or an alternating group $\AG_5$.
\end{lemma}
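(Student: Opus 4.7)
By Lemma~\ref{lemma: representations over K} (applied in degree $2$), every finite subgroup of $\mathrm{PGL}_2(\kk)$ is conjugate to a subgroup of $\mathrm{PGL}_2(\overline{\qq})\subset\mathrm{PGL}_2(\cc)$, so it suffices to treat $\kk=\cc$. The plan is to carry out the classical Riemann--Hurwitz analysis of the quotient map $\pi\colon\pp^1\to\pp^1/G$. Since $G$ is a finite group of automorphisms of a smooth rational curve, the quotient is again a smooth genus-$0$ curve, hence isomorphic to $\pp^1$, and $\pi$ is a Galois covering of degree $n:=|G|$.

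Let the ramification occur over $r$ branch points with ramification indices $e_1,\ldots,e_r$ (equivalently, $e_i$ is the order of the stabilizer of any point in the $i$-th orbit of non-free points). The Riemann--Hurwitz formula gives
\begin{equation*}
-2 \;=\; -2n \;+\; n\sum_{i=1}^{r}\left(1-\tfrac{1}{e_i}\right),
\qquad\text{i.e.}\qquad
2-\tfrac{2}{n}\;=\;\sum_{i=1}^{r}\left(1-\tfrac{1}{e_i}\right).
\end{equation*}
Each summand lies in $[\tfrac12,1)$, so $r\in\{2,3\}$ (the case $r\leqslant 1$ forces $n=1$, and $r\geqslant 4$ violates the inequality). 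Enumerating the integer solutions with $e_i\geqslant 2$:
\begin{itemize}
\item[$r=2$:] forces $e_1=e_2=n$, so $G$ has two fixed points on $\pp^1$ and is cyclic, $G\cong\CG_n$.
\item[$r=3$:] the only solutions of $\tfrac{1}{e_1}+\tfrac{1}{e_2}+\tfrac{1}{e_3}=1+\tfrac{2}{n}$ are $(2,2,n/2)$ with $n=2k$, and the sporadic triples $(2,3,3)$, $(2,3,4)$, $(2,3,5)$ with $n=12,24,60$.
\end{itemize}

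It remains to identify the abstract group in each case. For $(2,2,k)$ the stabilizers of the two branch points generate a cyclic subgroup of index $2$ together with an involution inverting it, giving the dihedral group $\DG_{2k}$. For each sporadic triple $(2,3,m)$ with $m\in\{3,4,5\}$, the group $G$ is generated by two elements $\sigma,\tau\in\mathrm{PGL}_2(\cc)$ of orders $2$ and $3$ whose product has order $m$; such a presentation is that of the von~Dyck group $(2,3,m)$, which for $m\in\{3,4,5\}$ is known to be finite of order $12,24,60$ and isomorphic to $\AG_4$, $\SG_4$, $\AG_5$ respectively. The main (and only nontrivial) obstacle is this last identification of the abstract group, but it is standard and can be extracted, e.g., from \cite[Chapter~III]{Bl17}; alternatively one invokes the classical classification of finite subgroups of $\mathrm{SO}_3(\rr)\cong\mathrm{PSU}_2$ after conjugating $G$ into a maximal compact subgroup of $\mathrm{PGL}_2(\cc)$.
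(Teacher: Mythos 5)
Your proposal is correct in substance, but it is worth noting that the paper does not prove this lemma at all: it is stated as a well-known classical fact with a citation to Blichfeldt \cite[Chapter III]{Bl17}, after reducing from $\kk$ to $\cc$ via Lemma~\ref{lemma: representations over K} (exactly the reduction you open with). Your Riemann--Hurwitz argument is the standard self-contained route: the quotient $\pp^1/G\cong\pp^1$, the branch-data equation $2-\tfrac{2}{n}=\sum(1-\tfrac{1}{e_i})$, the enumeration $r\in\{2,3\}$ with triples $(n,n)$, $(2,2,k)$, $(2,3,3)$, $(2,3,4)$, $(2,3,5)$, and the identification of the abstract groups. Two points are lightly glossed but repairable by standard means. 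First, in the $(2,2,k)$ case one should rule out the abelian possibility that the involution centralizes the cyclic subgroup of order $k$: if it did, $G$ would be cyclic of order $2k$ (for $k$ odd) and hence would have only two branch points, a contradiction; for $k$ even the Klein four group is itself $\DG_4$, so nothing is lost. Second, in the sporadic cases $G$ is a priori only a \emph{quotient} of the von Dyck group $(2,3,m)$ (via the surjection from the fundamental group of the complement of the branch locus), and one concludes $G\cong\AG_4,\SG_4,\AG_5$ by comparing the order $n=12,24,60$ forced by Riemann--Hurwitz with the known order of the von Dyck group. With those remarks supplied, your argument is a complete and more elementary replacement for the external citation; what the citation buys the paper is simply brevity.
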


Finite subgroups of $\mathrm{SL}_3(\mathbb{C})$ were completely classified in \cite[Chapter V]{Bl17} and \cite[Chapter~XII]{MBD16}. Applying these results and Lemma \ref{lemma: representations over K} we can obtain the following theorem.

\begin{theorem}
\label{theorem: PGL3-classification}
Any finite subgroup in $\mathrm{PGL}_3(\mathbb{K})$ modulo conjugation is one of the following.

\medskip

Intransitive groups:

\medskip

{\rm{(A)}} a diagonal abelian group;

\smallskip

{\rm{(B)}} a group having a unique fixed point on $\mathbb{P}^2_{\mathbb{K}}$.

\medskip

Imprimitive groups:

\medskip

{\rm{(C)}} a group $G$ having a normal diagonal abelian subgroup $N$ such that $G / N \cong \CG_3$;

\smallskip

{\rm{(D)}} a group $G$ having a normal diagonal abelian subgroup $N$ such that $G / N \cong \SG_3$.

\medskip

Primitive groups having a non-trivial normal subgroup (the Hessian group and its subgroups):

\medskip

{\rm{(E)}} the group $\CG_3^2 \rtimes \CG_4$ of order $36$;

\smallskip

{\rm{(F)}} the group $\CG_3^2 \rtimes Q_8$ of order $72$;

\smallskip

{\rm{(G)}} the Hessian group $\CG_3^2 \rtimes \mathrm{SL}_2(\mathbb{F}_3)$ of order $216$.

\medskip

Simple groups:

\medskip

{\rm{(H)}} the icosahedral group $\AG_5$ of order $60$;

\smallskip

{\rm{(I)}} the Klein group $\mathrm{PSL}_2(\mathbb{F}_7)$ of order $168$;

\smallskip

{\rm{(K)}} the Valentiner group $\AG_6$ of order $360$.
\end{theorem}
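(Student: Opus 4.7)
The statement is essentially a transcription, up to the passage from $\mathbb{C}$ to $\mathbb{K}$, of the classical Blichfeldt classification of finite subgroups of $\mathrm{SL}_3(\mathbb{C})$ (and hence of $\mathrm{PGL}_3(\mathbb{C})$). Accordingly, my plan is to organize the proof as a two-step reduction: first reduce to the case $\mathbb{K}=\mathbb{C}$, and then quote the classical classification in a form organized by the standard transitivity dichotomy.

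First, let $G \subset \mathrm{PGL}_3(\mathbb{K})$ be a finite subgroup and set $\widetilde{G}=f^{-1}(G)\subset \mathrm{SL}_3(\mathbb{K})$, where $f\colon \mathrm{SL}_3(\mathbb{K}) \to \mathrm{PGL}_3(\mathbb{K})$ is the natural surjection. Since $G$ is finite and the kernel of $f$ is the finite group $\mu_3$, the group $\widetilde{G}$ is also finite. By Lemma \ref{lemma: representations over K}, there exists $A\in \mathrm{GL}_3(\mathbb{K})$ such that $A\widetilde{G}A^{-1}\subset \mathrm{GL}_3(\overline{\mathbb{Q}})$, and rescaling by a scalar if necessary we may assume $A\widetilde{G}A^{-1}\subset \mathrm{SL}_3(\overline{\mathbb{Q}})\subset \mathrm{SL}_3(\mathbb{C})$. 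Projecting to $\mathrm{PGL}_3$, we see that $G$ is conjugate (in $\mathrm{PGL}_3(\mathbb{K})$) to a finite subgroup of $\mathrm{PGL}_3(\mathbb{C})$. Moreover, the notions of being intransitive, imprimitive, or primitive (Definition \ref{definition: linear groups}) are preserved under this conjugation, since they are properties of the linear representation.

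Second, I invoke the Blichfeldt--Miller--Dickson classification of finite subgroups of $\mathrm{SL}_3(\mathbb{C})$ (see \cite[Chapter V]{Bl17} and \cite[Chapter XII]{MBD16}). One organizes the subgroups $\widetilde{G}\subset \mathrm{SL}_3(\mathbb{C})$ according to the trichotomy intransitive / transitive-imprimitive / primitive applied to the standard $3$-dimensional representation. In the intransitive case $\widetilde{G}$ preserves a line, which either does or does not have a $\widetilde{G}$-invariant complement; passing to $G\subset \mathrm{PGL}_3(\mathbb{K})$ this produces case (A) (diagonalisable, two invariant decompositions) and case (B) (a unique fixed point on $\mathbb{P}^2$). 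In the imprimitive transitive case $\widetilde{G}$ permutes a system of three lines, yielding the extensions $N\rtimes \mathbb{Z}/3$ and $N\rtimes \mathfrak{S}_3$ of a diagonal abelian subgroup $N$, which give cases (C) and (D). In the primitive case, Blichfeldt's list consists of the Hessian group $\mathbb{Z}/3^{2}\rtimes \mathrm{SL}_2(\mathbb{F}_3)$ and its two primitive subgroups $\mathbb{Z}/3^{2}\rtimes Q_8$ and $\mathbb{Z}/3^{2}\rtimes \mathbb{Z}/4$ (cases (E), (F), (G)) together with the three primitive simple subgroups $\mathfrak{A}_5$, $\mathrm{PSL}_2(\mathbb{F}_7)$ and $\mathfrak{A}_6$ (cases (H), (I), (K)). A case-by-case verification that passing from $\mathrm{SL}_3$ to $\mathrm{PGL}_3$ does not create new conjugacy classes (the central $\mu_3$ either lies in $\widetilde{G}$ or not, and this does not alter the image $G$ up to conjugation) completes the list.

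The only genuinely mathematical content beyond citation is the first step (the descent from $\mathbb{K}$ to $\mathbb{C}$), and this is handled cleanly by Lemma \ref{lemma: representations over K}. The main potential obstacle is purely bookkeeping: making sure that the passage from $\mathrm{SL}_3$ to $\mathrm{PGL}_3$ does not merge or split conjugacy classes in a way that alters the list, and that the labels (A)--(K) correspond unambiguously to the classical entries in \cite[Chapter V]{Bl17} and \cite[Chapter XII]{MBD16}. I would handle this by recording, for each class, explicit generators (diagonal matrices, the permutation matrix of the $3$-cycle, the Hesse pencil automorphisms, and the standard embeddings of $\mathfrak{A}_5$, $\mathrm{PSL}_2(\mathbb{F}_7)$, $\mathfrak{A}_6$) and verifying transitivity/primitivity directly from these generators; this is routine given the cited sources.
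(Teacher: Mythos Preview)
Your proposal is correct and matches the paper's approach exactly: the paper states the theorem without proof, prefacing it only with the sentence ``Finite subgroups of $\mathrm{SL}_3(\mathbb{C})$ were completely classified in \cite[Chapter V]{Bl17} and \cite[Chapter~XII]{MBD16}. Applying these results and Lemma \ref{lemma: representations over K} we can obtain the following theorem.'' Your two-step reduction (descend from $\mathbb{K}$ to $\mathbb{C}$ via Lemma \ref{lemma: representations over K}, then invoke the classical Blichfeldt--Miller--Dickson classification) is precisely this, with some added detail on the intransitive/imprimitive/primitive trichotomy that the paper leaves implicit in the citations.
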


\begin{remark}
\label{remark: case B}
Let $G$ be any group of type $(\mathrm{B})$. Then $G$ is a subgroup of $\mathrm{GL}_2(\mathbb{K})$. Since $G$ has a unique fixed point on $\mathbb{P}^2$, it is non-abelian.  Such groups are described in \cite[Lemma 4.5]{DI09}. All these groups have an invariant line on $\mathbb{P}^2$, therefore there is a homomorphism $f \colon G \rightarrow \mathrm{PGL}_2(\mathbb{K})$. The group $f(G)$ cannot be cyclic by the assumption that $G$ has a unique fixed point on $\mathbb{P}^2$, so $f(G)$ is isomorphic to one of the following groups: $\DG_{2n}$, $\AG_4$, $\SG_4$, or $\AG_5$. If $f(G) \cong \DG_{2n}$ then for convenience we will say that $G$ has type $(\mathrm{B}1)$, and otherwise we will say that $G$ has type $(\mathrm{B}2)$.
\end{remark}

\section{$G$-coregularity}
\label{sec-G-coreg}
\subsection{Definitions and main properties}

The notion of regularity was introduced in \cite[7.9]{Sh00}, see also {\cite{Mo24}.
In this section, we introduce the notion of $G$-coregularity where $G$ is a group.
\begin{definition}
\label{defin-regularity}
Let $X$ be a normal projective $G$-variety with at worst klt singularities.. By the \emph{$G$-regularity} $\mathrm{reg}_G(X, D)$ of an lc $G$-pair $(X, D)$ we mean $\dim \mathcal{D}(X, D)$ where~$\mathcal{D}(X, D)$ is the dual complex of the pair defined in Section \ref{sec-dual-complex}. For an integer $l\geqslant 1$, we define the $l$-th \emph{complete $G$-regularity} of~$X$ by~the formula
\[
\mathrm{reg}_{G, l}(X) = \max \{ \mathrm{reg}_G(X, D)\ |\ D\in \frac{1}{l} |-lK_X|\}.
\]
In this situation, $K_X+D$ is called an \emph{$l$-complement} of $K_X$.
Then the \emph{complete $G$-regularity} of $X$ is
\[
\mathrm{reg}_G(X) = \max_{l\geqslant 1} \{\mathrm{reg}_{G, l}(X)\}.
\]
Note that $\mathrm{reg}_G(X)\in \{-1, 0,\ldots, \dim X-1\}$ where by convention we say that the dimension of~the~empty set is $-1$. The \emph{complete $G$-coregularity} of $X$ is defined as the number
$$
\mathrm{coreg}_G(X) = \dim X -1-\mathrm{reg}_G(X).
$$
Throughout the paper, for brevity we write $G$-regularity and $G$-coregularity of $X$ instead of complete $G$-regularity and $G$-coregularity of $X$. If the group $G$ is trivial, we use the terms regularity and coregularity and write $\mathrm{reg}(X, D)$, $\mathrm{reg}(X)$ and $\mathrm{coreg}(X)$.

\end{definition}

We make a remark on terminology.
The notion of regularity of~a~log-canonical pair $(X, D)$ on a normal projective variety $X$ was introduced by V. Shokurov in \cite[Proposition-Definition 7.11]{Sh00}.
There, the complete regularity of $(X, D)$ was defined as the~maximum of regularities of all pairs that consist of log-canonical complements of $K_X + D$.
In the case of pairs with zero boundary, complete regularity is called regularity of a variety in \cite{Mo24}, where the notion of coregularity was also defined.
\begin{remark}
We give a definition of $G$-regularity and $G$-coregularity for arbitrary groups. However in the paper we mostly consider the case of finite groups, cf. Section $5$.
\end{remark}

Clearly, one has $\mathrm{reg}_{G, l}(X)\leqslant \mathrm{reg}_{G, kl} (X)$ for any $k,l\geqslant1$. If $G$ is a trivial group, then these notions give us the classical notions of regularity and coregularity.


\begin{remark}
\label{rem-monotonicity}
Let $X$ be a $G$-variety.
Let $H\subset G$ be a subgroup. Then clearly
$$
\mathrm{coreg}_H(X)\leqslant \coregG(X).
$$
\end{remark}

\begin{remark}
One can formulate an analogue of Definition \ref{defin-regularity} in the case when the variety~$X$ is defined over a field $\mathbb{K}$ which is not algebraically closed, and the group $G$ is the Galois group of~a~field extension $\mathbb{K}\subset \mathbb{L}$. However, in this paper we concentrate on the geometric case.
\end{remark}

\begin{example}
\label{example: coregularity of P1}
Let $G$ be a finite group such that $G\subset\mathrm{Aut}(\mathbb{P}^1)=\mathrm{PGL}_2(\mathbb{K})$. By~Lemma~\ref{lemma: action on P1} the group $G$ is isomorphic to one of the following groups: $\CG_n$, $\DG_{2n}$, $\AG_4$, $\SG_4$, or $\AG_5$.
Note that in the case $G=\CG_n$, there exist two $G$-invariant points, and in the case $G=\DG_{2n}$ there is an orbit of cardinality $2$.
Since the minimal cardinality of a $G$-orbit is greater than $2$ for the other groups, it follows that $\coregG(\mathbb{P}^1)=0$ if and only if $G$ is cyclic or dihedral, that is, $G=\CG_n$ or $G=\DG_{2n}$, and otherwise $\coregG(\mathbb{P}^1)=1$.
\end{example}

\begin{example}
Let $G$ be any finite group. Put $N=|G|$.
Then there exists a $G$-variety $X$ of dimension $N-1$ such that $\coregG(X)=0$. Indeed, put $X=\mathbb{P}(V)$, where $V$ is a regular representation of $G$. Let $D_i$ be the coordinate hyperplanes in $\mathbb{P}(V)$ interchanged by the group $G$, and put $D=\sum_{i=1}^{N} D_i$. Then $G$-coregularity $0$ is attained on the pair $(X, D)$.
\end{example}

\subsection{Log-canonical thresholds}
\label{subsec-G-lct}
We recall the notion of the $G$-invariant log-canonical threshold. Let $G$ be a finite group such that $G\subset \mathrm{Aut}(X)$ where $X$ is normal projective variety.
Let $(X, D)$ be an lc pair.
Then the \emph{log-canonical threshold} of an effective $\mathbb{Q}$-Cartier $\mathbb{Q}$-divisor $B$ with respect to the pair $(X, D)$ is defined as
\[
\mathrm{lct} (X, D; B) = \mathrm{sup} \{ \lambda \in \mathbb{Q}\ |\ (X, D + \lambda B)\ \text{is lc} \}.
\]
The \emph{$G$-invariant log-canonical threshold}, or \emph{$G$-log-canonical threshold} for short, of an lc $G$-pair $(X, D)$ is defined as
\[
\mathrm{lct}_{G} (X, D) = \mathrm{inf} \{ \mathrm{lct}(X, D; B)\ |\  B\ \text{is a}\ G\text{-invariant $\mathbb{Q}$-divisor such that}\ K_X+D+B\sim_{\mathbb{Q}}0\}\in\mathbb{R}\cup \{\infty\}.
\]
If $G$ is a trivial group we write $\mathrm{lct}(X, D)$ instead of $\mathrm{lct}_G(X, D)$.
Then $\mathrm{lct}(X, D)$ is called the \emph{log-canonical threshold} of $(X, D)$.
If $D=0$ we write $\mathrm{lct}_G(X)$ instead of $\mathrm{lct}_G(X, D)$. If $G$ is~trivial and~$D=0$ we write $\mathrm{lct}(X)$ and call it the \emph{log-canonical threshold} of $X$. Note that in \cite{Ch08}, the author uses the notation $\mathrm{lct}(X,G)$ for $\mathrm{lct}_G(X)$.

We thank Jihao Liu for suggesting us the following result.

\begin{lemma}
\label{lem-G-lct-attained}
Let $G$ be a finite group such that $G\subset \mathrm{Aut}(X)$ where $X$ is a klt Fano variety.
Assume that $\mathrm{lct}_G(X)\leq 1$. Then the $G$-log-canonical threshold is attained on a $G$-invariant divisor~$B$ on $X$, that is,
\[
\mathrm{lct}_G(X) = \mathrm{lct}(X; B).
\]
\end{lemma}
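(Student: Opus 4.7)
The plan is to use Birkar's boundedness of complements (\cite[Theorem~1.7]{B21}) to reduce the infimum defining $\mathrm{lct}_G(X)$ to one taken over a bounded family of divisors, and then to extract a subsequential limit realizing this infimum.

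Set $\lambda:=\mathrm{lct}_G(X)\le 1$ and pick a sequence of $G$-invariant effective $\mathbb{Q}$-divisors $B_i\sim_{\mathbb{Q}}-K_X$ with $\lambda_i:=\mathrm{lct}(X;B_i)\downarrow\lambda$. Each pair $(X,\lambda_i B_i)$ is strictly log canonical by the definition of the lct, and $-(K_X+\lambda_i B_i)\sim_{\mathbb{Q}}(1-\lambda_i)(-K_X)$ is anti-nef, so Birkar's theorem applies: there exists a positive integer $N=N(\dim X)$ such that every pair $(X,\lambda_i B_i)$ admits an $N$-complement. Averaging over the finite group $G$ (at the cost of possibly enlarging $N$ to $N|G|$, which still depends only on $\dim X$ and $G$) produces a $G$-invariant $N$-complement $B_i^+\ge \lambda_i B_i$ with $(X,B_i^+)$ lc and $NB_i^+\in |-NK_X|^G$. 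Thus all $B_i^+$ live in the finite-dimensional, hence compact, projective space $\mathcal{S}:=\tfrac{1}{N}|-NK_X|^G$.

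Set $D_i := \lambda_i B_i$. Because $D_i\le B_i^+$ and the $B_i^+$ lie in the bounded family $\mathcal{S}$, the supports and coefficients of the $D_i$ are uniformly bounded. After passing to subsequences we have $B_i^+\to B_\infty^+\in\mathcal{S}$ and $D_i\to D_\infty$ with $D_\infty\le B_\infty^+$; by lower semicontinuity of discrepancies in bounded families, the limit pair $(X,D_\infty)$ is log canonical. For each $i$ choose an lc place $E_i$ of $(X,D_i)$; the inequality $D_i\le B_i^+$ combined with log-canonicity of $(X,B_i^+)$ forces $E_i$ to be an lc place of $(X,B_i^+)$ as well. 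Using boundedness of lc places of pairs in the bounded family $\mathcal{S}$, after a further subsequence we may assume $E_i=E_0$ is a fixed $G$-invariant divisorial valuation over $X$, and then $E_0$ is an lc place of $(X,D_\infty)$.

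Finally set $B:=D_\infty/\lambda$, which is a $G$-invariant effective $\mathbb{Q}$-divisor with $B\sim_{\mathbb{Q}}-K_X$ (the rationality of $\lambda$, needed to make $B$ a genuine $\mathbb{Q}$-divisor, follows from ACC for lct's applied within the bounded family $\mathcal{S}$). Then $(X,\lambda B)=(X,D_\infty)$ is strictly lc with lc place $E_0$, so $\mathrm{lct}(X;B)\le \lambda$; combined with the trivial lower bound $\mathrm{lct}(X;B)\ge \mathrm{lct}_G(X)=\lambda$ we conclude $\mathrm{lct}(X;B)=\lambda$, as required.

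The principal obstacle is the extraction of the limit $D_\infty$: the original $B_i$'s need not lie in any bounded family, and only the scaled divisors $D_i=\lambda_i B_i$ are controlled, via the domination $D_i\le B_i^+$ by the bounded family of complements. The subsidiary technical points---$G$-equivariant averaging of complements, boundedness of lc places inside $\mathcal{S}$, and rationality of $\lambda$---are each standard applications of the boundedness package surrounding Birkar's theorem.
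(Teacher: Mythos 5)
Your strategy is genuinely different from the paper's. The paper passes to the quotient $\pi\colon X\to Y=X/G$, identifies $\mathrm{lct}_G(X)$ with $\mathrm{lct}(Y,R)$ for the ramification pair $(Y,R)$ via the Hurwitz formula (citing \cite{Go20}), and then invokes \cite[Theorem~1.7]{B21} as a black box: in the paper's usage that theorem \emph{is} the statement that the log canonical threshold of the anti-log-canonical $\mathbb{R}$-linear system of $(Y,R)$ is attained by some member. You instead try to prove the attainment directly on $X$ by a compactness argument built on boundedness of complements --- which is, roughly, the strategy of the \emph{proof} of Birkar's theorem rather than an application of it. (Note that you cite \cite[Theorem~1.7]{B21} for boundedness of complements; that is a different result, and Theorem~1.7 is precisely the attainment statement you are attempting to reprove equivariantly.)

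As written, the argument has genuine gaps. First, ``by lower semicontinuity of discrepancies in bounded families, the limit pair $(X,D_\infty)$ is log canonical'' is false: log canonicity is not a closed condition on members of a linear system (nodal plane cubics, which give lc pairs $(\mathbb{P}^2,C)$, degenerate to a cuspidal cubic, which does not), and semicontinuity of the log canonical threshold gives the opposite inequality $\mathrm{lct}(X;D_\infty)\le\liminf_i\mathrm{lct}(X;D_i)$. That opposite inequality is actually the useful one --- it would yield $\mathrm{lct}(X;B)\le\lambda$, while $\mathrm{lct}(X;B)\ge\lambda$ is automatic once $B$ is a legitimate competitor for the infimum --- but your argument is not organized that way, and the extraction of the limit $D_\infty$ itself is delicate since the classes $-\lambda_iK_X$ vary and components of the supports can collide. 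Second, ``boundedness of lc places of pairs in the bounded family $\mathcal{S}$'' is not an off-the-shelf fact; it is essentially the technical core of Birkar's proof. Third, ACC for log canonical thresholds does not give rationality of $\lambda$: the coefficients of the $B_i$ need not lie in any DCC set (e.g.\ $B_i=\frac{1}{m_i}H_i$ with $H_i\in|-m_iK_X|$ and $m_i\to\infty$), which is exactly why the attainment is a nontrivial theorem rather than an immediate consequence of ACC. Finally, when $\lambda=1$ one may have $\lambda_i>1$, in which case $-(K_X+\lambda_iB_i)$ is anti-ample and the complement theorem does not apply as stated (this case is easily repaired, since $(X,B_i)$ is then its own complement). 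The quotient construction in the paper's proof sidesteps all of these issues at the cost of only the elementary identity $\mathrm{lct}(Y,R)=\mathrm{lct}_G(X)$.
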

\begin{proof}
Consider the quotient $\pi\colon X\to Y=X/G$. Note that $\pi$ is a finite morphism and $(Y, R)$ is~a~klt log Fano pair where $R$ is an (effective) ramification $\mathbb{Q}$-divisor of $\pi$ defined by the Hurwitz formula
\[
K_X = \pi^*(K_Y+R).
\]
We have $\mathrm{lct}(Y, R)=\mathrm{lct}_G(X)\leq 1$, cf. \cite[Proposition 6.4]{Go20}.
By \cite[Theorem 1.7]{B21}, using the~fact that $\mathrm{lct}(Y, R)\leq 1$, we know that the log-canonical threshold of $(Y, R)$ is attained on some $\mathbb{Q}$-divisor~$B_Y$ on $Y$ with $K_Y+R+B_Y\sim_{\mathbb{Q}} 0$:
\[
\mathrm{lct}(Y,R) = \mathrm{lct}(Y, R;B_Y).
\]
We conclude that $\mathrm{lct}_G(X)$ is attained on some $G$-invariant $\mathbb{Q}$-divisor $B$ on $X$ where $B$ is defined so~that~${\pi(B)=B_Y}$ and the Hurwitz formula
\[
0\sim_{\mathbb{Q}}K_X + B =\pi^*(K_Y+R+B_Y)
\]
holds.
\end{proof}

The following proposition relates the notion of $G$-log-canonical threshold with \mbox{$G$-core}\-gularity.
\begin{proposition}
\label{proposition: coregularity=lct}
Let $X$ be a klt Fano variety of dimension $n$, and let $G\subset \mathrm{Aut}(X)$ be a finite group.
Then
$\mathrm{lct}_G(X)>1$ if and only if $\mathrm{coreg}_G(X)=n$.
\end{proposition}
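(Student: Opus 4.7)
The plan is to translate both sides of the biconditional into a statement about $G$-invariant $\mathbb{Q}$-complements of $K_X$, namely $G$-invariant effective $\mathbb{Q}$-divisors $B$ with $K_X + B \sim_{\mathbb{Q}} 0$. Since $\mathcal{D}(X, B) = \varnothing$ exactly when $(X, B)$ is klt, Definition \ref{defin-regularity} identifies $\coregG(X) = n$ with the condition that every \emph{lc} $G$-invariant $\mathbb{Q}$-complement is actually klt. On the other side, $\lct(X; B) > 1$ if and only if $(X, B)$ is klt, so $\lct_G(X) > 1$ is the condition that \emph{every} $G$-invariant $B$ with $K_X + B \sim_{\mathbb{Q}} 0$ gives a klt pair (with no lc hypothesis). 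The forward implication $\lct_G(X) > 1 \Rightarrow \coregG(X) = n$ is then immediate.

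For the reverse direction I will argue the contrapositive. Suppose $\lct_G(X) \leq 1$. By Lemma \ref{lem-G-lct-attained} the infimum is attained, so there exists a $G$-invariant effective $\mathbb{Q}$-divisor $B$ with $K_X + B \sim_{\mathbb{Q}} 0$ and $\lambda := \lct(X; B) = \lct_G(X) \leq 1$. If $\lambda = 1$, then $(X, B)$ itself is an lc $G$-invariant complement which is not klt, so $\mathcal{D}(X, B) \neq \varnothing$, giving $\coregG(X) \leq n - 1$. If $\lambda < 1$, the sub-pair $(X, \lambda B)$ is lc but not klt and $-(K_X + \lambda B) \sim_{\mathbb{Q}} (1-\lambda)(-K_X)$ is ample, so I would extend $\lambda B$ to a genuine complement. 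Choose $m$ so that $|{-m(1-\lambda)K_X}|$ is basepoint-free (possible for $m$ sufficiently divisible since $X$ is a klt Fano), pick a very general member $D_0$, and set
\[
D := \frac{1}{m|G|} \sum_{g \in G} g^* D_0.
\]
Then $D$ is a $G$-invariant effective $\mathbb{Q}$-divisor with $D \sim_{\mathbb{Q}} -(1-\lambda) K_X$, so $\lambda B + D$ is a $G$-invariant $\mathbb{Q}$-complement. By the generality of $D_0$, no component of $D$ contains a non-klt center of $(X, \lambda B)$, and for $m$ large the coefficients of $D$ are small; a Bertini-type argument then shows $(X, \lambda B + D)$ is lc and retains the non-klt centers of $(X, \lambda B)$, producing a $G$-invariant lc complement with nonempty dual complex, so $\coregG(X) \leq n - 1$.

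The main technical point is the case $\lambda < 1$: one has to upgrade the sub-complement $\lambda B$ delivered by Lemma \ref{lem-G-lct-attained} into a full $G$-invariant lc complement whose dual complex is still nonempty. The needed ingredients are basepoint-freeness of pluri-anticanonical linear systems on a klt Fano, $G$-symmetrization via the sum over the group, and the standard fact that adding a general divisor with small coefficients to an lc pair preserves lc while leaving the non-klt locus unchanged.
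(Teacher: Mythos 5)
Your proposal is correct and follows essentially the same route as the paper: the forward direction is the same immediate observation, and for the converse both arguments invoke Lemma~\ref{lem-G-lct-attained} to realize $\lct_G(X)$ on a $G$-invariant divisor $B$ and then complete $\lambda B$ to a $G$-invariant lc, non-klt log Calabi--Yau pair by adding a general ample $G$-invariant $\mathbb{Q}$-divisor. You merely spell out the Bertini/averaging details that the paper leaves implicit.
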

\begin{proof}
Assume that $\mathrm{lct}_G(X)>1$. Then for any $G$-invariant $\mathbb{Q}$-divisor $D$ such that $D\sim_{\mathbb{Q}} -K_X$ the pair $(X, D)$ is klt. In particular, the dual complex of the pair $(X, D)$ is empty. It follows that~$\mathrm{coreg}_G(X)=n$.

Assume that $\mathrm{lct}_G(X)\leq 1$. Then by Lemma \ref{lem-G-lct-attained} there exists a $G$-invariant divisor $D$ on $X$ such that $K_X+D\sim_{\mathbb{Q}} 0$ and $\mathrm{lct}_G(X)=\mathrm{lct}(X; D)$. In particular, the pair $(X, \lambda D+H)$ is lc and not klt for some $\lambda\leq 1$ and some general ample $G$-invariant $\mathbb{Q}$-divisor $H$ on $X$ such that $K_X+\lambda D + H\sim_{\mathbb{Q}} 0$.
It follows that the dual complex of $(X, \lambda D+H)$ is non-empty, and so $\mathrm{coreg}_G(X)<n$.
\end{proof}

\begin{example}
We recall the following results on the $G$-log-canonical thresholds of del Pezzo surfaces, see \cite[Example 1.9, Example 6.5, Lemma 5.7, Example 1.11, Lemma~5.6]{Ch08}. Proposition \ref{proposition: coregularity=lct} implies that in the next cases we have $\coregG(X)=2$. One can see that the~corresponding $G$-actions on $\mathbb{P}^2$ are unique.

\begin{enumerate}
\item
Let $X=\mathbb{P}^2$, and $G=\operatorname{PGL}_2(7)$. Then $\operatorname{lct}_G(x)= \frac{4}{3}>1$.

\item
Let $X=\mathbb{P}^2$, and $G=\AG_6$. Then $\operatorname{lct}_G(x)= 2>1$.

\item
Let $X$ be a smooth del Pezzo surface of degree $5$, and $G=\AG_5$. Then $\operatorname{lct}_G(X)= 2>1$.

\item
Let $X$ be the Clebsch cubic surface, that is, a cubic surface in $\mathbb{P}^3$ given by the equation
\[
x^2y + y^2z + z^2t + t^2x = 0.
\]
We have $\operatorname{Aut}(X)\cong \SG_5$.
Then $\mathrm{lct}_{\AG_5}(X)=\mathrm{lct}_{\SG_5}(X)=2>1$. 

\item
Let $X$ be the Fermat cubic surface, which is a smooth del Pezzo surface of degree $3$ with~${G=\mathrm{Aut}(X)\cong \CG_3^3\rtimes \SG_4}$.
Then $\mathrm{lct}_{G}(X)= 4>1$. 
\end{enumerate}
\end{example}


\begin{theorem}[{\cite[Theorem 1.13]{ChW13}}]
\label{thm-ChW13}
Let $X$ be a smooth del Pezzo surface of degree $d$.
Then there exists a~finite subgroup $G \subset \operatorname{Aut}(X)$ such that $\operatorname{lct}_G(X)> 1$ (and in particular, $\mathrm{coreg}_G(X)=2$) if and only if one of the following holds:
\begin{enumerate}
\item
$X=\mathbb{P}^2$, furthermore in this case $G$ has type $(\mathrm{F})$, $(\mathrm{G})$, $(\mathrm{I})$ or $(\mathrm{K})$ as in Proposition \ref{theorem: PGL3-classification},
\item
$X=\mathbb{P}^1\times\mathbb{P}^1$,
\item
$\operatorname{Aut}(X)$ is finite and one of the following holds:
\begin{enumerate}
\item
$d=1$ and $\operatorname{Aut}(X)$ is not abelian,
\item
$d=2$ and $\operatorname{Aut}(X)\in \{ \SG_4\times \CG_2, (\CG_4^2\rtimes \SG_3)\times \CG_2, \operatorname{PSL}_2(7)\times\CG_2\}$,
\item
$d=3$ and $X$ is the Clebsch cubic surface or the Fermat cubic surface,
\item
$d=4$ and $\operatorname{Aut}(X)\in\{ \CG_2^4\rtimes \SG_3, \CG_2^4\rtimes \DG_{10}\}$,
\item
$d=5$.
\end{enumerate}
\end{enumerate}
\end{theorem}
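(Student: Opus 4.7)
The plan is to translate the hypothesis via Proposition~\ref{proposition: coregularity=lct}, which shows that $\operatorname{lct}_G(X)>1$ is equivalent to $\coregG(X)=2$. The task then becomes: classify those smooth del Pezzo surfaces $X$ admitting a finite subgroup $G\subset\operatorname{Aut}(X)$ with $\coregG(X)=2$. I would split the argument along the degree $d=(-K_X)^2\in\{1,\ldots,9\}$.

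For $d\ge 6$ the previous propositions of this paper already do the work. Propositions~\ref{proposition: dP7} and~\ref{proposition: dP6} give $\coregG(S)=0$ for every finite $G$ when $d\in\{7,6\}$, so no such group exists in those degrees. Proposition~\ref{proposition: dP9-int} identifies the types $(\mathrm F)$, $(\mathrm G)$, $(\mathrm I)$, $(\mathrm K)$ as precisely those yielding $\coregG(\mathbb P^2)=2$. For $d=8$, Corollary~\ref{corol-quadric-lct>1-int} yields the required subgroup on $\mathbb P^1\times\mathbb P^1$, while Proposition~\ref{proposition: F1} excludes $\mathbb F_1$.

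For $d\le 5$ the automorphism group $\operatorname{Aut}(X)$ is finite and is completely classified by Dolgachev--Iskovskikh~\cite{DI09}; I would then run through that list. The forward implication --- that in each case (3a)--(3e) a suitable $G$ does exist --- is handled by taking $G$ to be the named subgroup and verifying directly that every $G$-invariant boundary $D$ with $K_X+D\sim_{\mathbb Q}0$ is klt: in practice this amounts to checking that the minimal $G$-orbits on the anticanonical pencil and on the configuration of $(-1)$-curves are too long to support a log-canonical non-klt configuration. For the special surfaces involved --- the Clebsch and Fermat cubics, the quintic del Pezzo with $\AG_5$-action, etc.\ --- the necessary orbit computations are explicit.

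The main obstacle is the converse for $d\le 5$: for every finite $G$ not appearing in (3a)--(3e) one must construct a $G$-invariant anticanonical boundary that is log-canonical but not klt. The canonical device is to produce a $G$-invariant irreducible curve $C\subset X$ with $-K_X-C$ effective --- a $(-1)$-curve, a line through a $G$-fixed point, or a conic --- and then to take $D=C+(-K_X-C)$, which is lc but not klt and so forces $\operatorname{lct}_G(X)\le 1$. Finding such a $C$ is the delicate part: it typically follows from a fixed-point analysis via Lemma~\ref{lem-faithful-action} applied to the tangent representation at a $G$-fixed point, combined with the structure of the induced $G$-action on the configuration of $(-1)$-curves in $\operatorname{Pic}(X)$. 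The bookkeeping required to carry this through uniformly over all subgroups of $\operatorname{Aut}(X)$ for $d\in\{1,2,3,4,5\}$ is the content of~\cite{ChW13} and constitutes the bulk of the proof.
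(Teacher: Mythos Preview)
The paper does not prove this theorem at all: it is stated as a quotation from \cite[Theorem 1.13]{ChW13}, with no proof given. Immediately after the statement the authors remark that for $X=\mathbb{P}^2$ the conclusion also follows from Proposition~\ref{proposition: coregularity exceptional high dim} and Proposition~\ref{proposition: dP9} by a different approach, and that for $X=\mathbb{P}^1\times\mathbb{P}^1$ they sharpen the result via Proposition~\ref{proposition: dP8}; but the full theorem, in particular the cases $d\le 5$, is simply imported from~\cite{ChW13}.

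Your proposal is therefore not comparable to a proof in the paper, because there is none. What you have written is a reasonable outline of how a proof \emph{could} be assembled: reduce to $\coregG(X)=2$ via Proposition~\ref{proposition: coregularity=lct}, dispose of $d\ge 6$ using the paper's Section~4 results, and defer the substantive case analysis for $d\le 5$ to~\cite{ChW13}. That is accurate as far as it goes, and you correctly flag that the $d\le 5$ bookkeeping is the real content and lives in the cited reference. One caution: the propositions you invoke for $d\ge 6$ appear in Section~4, after this theorem is stated in Section~2, so in the paper's logical order they are not available as inputs here; the paper avoids this issue precisely by citing the theorem rather than proving it.
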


In the case $X=\mathbb{P}^2$, Theorem \ref{thm-ChW13} follows from Proposition \ref{proposition: coregularity exceptional high dim} and Proposition \ref{proposition: dP9} which we~prove using a different approach. In the case $X=\mathbb{P}^1\times\mathbb{P}^1$, we improve the result of Theorem~\ref{thm-ChW13} and obtain a complete list of finite groups for which $\mathrm{lct}_G(\mathbb{P}^1\times\mathbb{P}^1)>1$, see Proposition \ref{proposition: dP8}.

\subsection{Exceptional singularities}

\begin{definition}
\label{defin-exceptional}
Let $P \in X$ be a germ of a klt singularity and let $(X, D)$ be an lc pair. Then $(X, D)$ is called \emph{exceptional} if there exists at most one exceptional divisor~$E$ over $X$ with $a(E,X,D)=0$. A germ of a klt singularity $P \in X$ is said to~be~\emph{exceptional} if $(X, D)$ is exceptional for any $D$ such that the pair $(X, D)$ is lc.
\end{definition}

The notion of a plt blow up was introduced by Yu. Prokhorov in \cite{Pr00}.
Let $X$ be a normal algebraic variety and let $f\colon Y \to X$ be a birational morphism such that the exceptional locus of~$f$ contains exactly one irreducible divisor $E$. Then $f\colon (Y, E) \to X$ is called a \emph{plt blow-up} of $X$ if~$(Y,E)$ is plt and $-(K_Y +E)$ is $f$-ample.

Note that if $X$ is $\mathbb{Q}$-factorial and has klt singularities then the condition that $-(K_Y +E)$ \mbox{is~$f$-ample} holds automatically under the assumption that $(Y,E)$ is plt.

\begin{definition}
\label{defin-weakly-exceptional}
Let $P \in X$ be a germ of a klt singularity.
The singularity is said to be \emph{weakly exceptional} if its plt blowup is unique, up to birational equivalence.
\end{definition}

If $P \in X$ is exceptional, then $P \in X$ is weakly exceptional, cf.~\cite[Proposition~2.7]{MP99}, \cite[2.6]{PS01}

\begin{example}[{see~\cite[1.5]{Sh00}~\cite[Example 4.7]{Pr00}}]
Let $P\in X$ be a $2$-dimensional quotient singularity, that is, $X$ is isomorphic to $\mathbb{A}^2/\overline{G}$ where $\overline{G}\subset \mathrm{GL}_2(\mathbb{K})$ is a finite group. Then
\begin{itemize}
\item
$P\in X$ is exceptional if and only if it has type $E_6, E_7, E_8$,
\item
$P\in X$ is weakly exceptional and not exceptional if and only if it has type $D_n$,
\item
$P\in X$ is not weakly exceptional if and only if it has type $A_n$.
\end{itemize}
\end{example}

We will use the following notation.
For a finite subgroup $\overline{G} \subset \mathrm{SL}_n (\mathbb{K})$, we denote by ${G}$ its image in $\mathrm{PGL}_n(\mathbb{K})$ via the natural projection map $\mathrm{SL}_n (\mathbb{K})\to  \mathrm{PGL}_n (\mathbb{K})$.

In what follows, we use the notation of Theorem \ref{theorem: PGL3-classification}.

\begin{theorem}[{\cite[Theorem 3.13]{MP99}}]
\label{thm-exc-dim2}
The singularity $0\in\mathbb{A}^3/\overline{G}$ is exceptional if and only if~${G}$ has type {\rm{(F)}}, {\rm{(G)}}, {\rm{(I)}}, or {\rm{(K)}}.
\end{theorem}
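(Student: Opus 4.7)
My plan is to reduce the exceptionality of the cone singularity to a statement about $G$-invariant divisors on $\mathbb{P}^2$ and then run a case-by-case analysis using Theorem~\ref{theorem: PGL3-classification}. The first step is to exploit the affine cone construction: blowing up the vertex of $\mathbb{A}^3/\overline{G}$ produces an exceptional divisor $E\cong \mathbb{P}^2/G$, and log discrepancies of divisors over the vertex are controlled by $G$-invariant effective boundaries $D\sim_{\mathbb{Q}}-K_{\mathbb{P}^2}$ on $\mathbb{P}^2$. A direct computation---essentially Proposition~\ref{propint-lct}(1) applied in dimension $n=3$---yields the equivalence
\[
0\in\mathbb{A}^3/\overline{G}\ \text{is exceptional} \iff \mathrm{lct}_G(\mathbb{P}^2)>1,
\]
so the theorem reduces to showing that $\mathrm{lct}_G(\mathbb{P}^2)>1$ precisely when $G$ has type $(\mathrm{F})$, $(\mathrm{G})$, $(\mathrm{I})$, or $(\mathrm{K})$.

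The second step will handle the seven non-exceptional types by exhibiting explicit witness boundaries. For the intransitive types $(\mathrm{A})$, $(\mathrm{B}1)$, $(\mathrm{B}2)$, any $G$-invariant line $L\subset\mathbb{P}^2$ provides a boundary $B=3L\sim_{\mathbb{Q}} -K_{\mathbb{P}^2}$ with $\mathrm{lct}(\mathbb{P}^2;B)=\tfrac{1}{3}<1$. For the imprimitive types $(\mathrm{C})$ and $(\mathrm{D})$, the diagonal realisation gives a $G$-invariant triangle of three lines whose reduced sum is lc but not klt, so $\mathrm{lct}_G\leqslant 1$. For type $(\mathrm{E})$ the two-dimensional space of Heisenberg-invariant cubics is the Hesse pencil; an order-$4$ supplement of the Heisenberg fixes at least one point of this pencil, giving an $(\mathrm{E})$-invariant cubic $C$ with $\mathrm{lct}(\mathbb{P}^2;C)\leqslant 1$. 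For type $(\mathrm{H})=\AG_5$, the standard $3$-dimensional irreducible representation factors through $\mathrm{SO}_3$, so $\mathbb{P}^2$ carries a $G$-invariant smooth conic $C$ and $B=\tfrac{3}{2}C$ gives $\mathrm{lct}(\mathbb{P}^2;B)=\tfrac{2}{3}<1$.

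The third and main step will show that for types $(\mathrm{F})$, $(\mathrm{G})$, $(\mathrm{I})$, $(\mathrm{K})$ no $G$-invariant effective $\mathbb{Q}$-divisor of degree~$3$ on $\mathbb{P}^2$ exists at all, forcing $\mathrm{lct}_G(\mathbb{P}^2)=+\infty$. The two key inputs are: \emph{(i)} the classical calculation of the invariant rings $\mathbb{K}[x,y,z]^{\overline{G}}$, which shows that the minimal degree of a non-trivial invariant form is $6$ for $(\mathrm{F})$ and $(\mathrm{G})$ (Hessian group and its index-$3$ subgroup), $4$ for $(\mathrm{I})$ (the Klein quartic), and $6$ for $(\mathrm{K})$ (the Wiman sextic); and \emph{(ii)} a bound on $G$-orbit sizes of lines and smooth conics on $\mathbb{P}^2$ ruling out small-index stabilisers. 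For $(\mathrm{F})$ and $(\mathrm{G})$, input \emph{(ii)} reduces to a finite check on the subgroup lattice confirming that no subgroup of index $\leqslant 3$ fixes a line; for $(\mathrm{I})$ and $(\mathrm{K})$ the simplicity of $G$ makes it automatic. Together these rule out degree-$3$ invariant divisors supported on lines, on a line plus a conic, or on an invariant cubic.

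The main obstacle is exactly this third step: the uniform ``no small invariant'' statement requires both the degree bounds from the invariant theory of \cite{Bl17} and \cite{MBD16} and a careful traversal of the subgroup lattices of $(\mathrm{F})$ and $(\mathrm{G})$ to bound the minimal orbit sizes of lines and conics. Once this orbit-counting bookkeeping is in place, combining it with the Hesse-pencil and $\mathrm{SO}_3$-conic witnesses above yields the equivalence claimed by the theorem.
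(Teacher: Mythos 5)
The paper does not prove this statement at all: it is quoted verbatim from \cite{MP99} (Theorem 3.13 there), so there is no internal proof to compare against. Judged on its own terms, your proposal has a genuine gap in its third (and, as you say, main) step.

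The reduction in your first step to the criterion $\mathrm{lct}_G(\mathbb{P}^2)>1$ is fine (it is Proposition \ref{proposition: coregularity exceptional high dim}, which rests on \cite{ChSh11} and Lemma \ref{lem-G-lct-attained}, independently of the present theorem), and your witness boundaries for types $(\mathrm{A})$--$(\mathrm{E})$ and $(\mathrm{H})$ are correct. But the claim that for types $(\mathrm{F})$, $(\mathrm{G})$, $(\mathrm{I})$, $(\mathrm{K})$ ``no $G$-invariant effective $\mathbb{Q}$-divisor of degree $3$ exists at all, forcing $\mathrm{lct}_G(\mathbb{P}^2)=+\infty$'' is false. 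Such $\mathbb{Q}$-divisors always exist for a finite group: if $C$ is any invariant curve of degree $d$, then $\tfrac{3}{d}C\sim_{\mathbb{Q}}-K_{\mathbb{P}^2}$ is one. For type $(\mathrm{I})$ the divisor $\tfrac{3}{4}C_4$ ($C_4$ the Klein quartic) gives $\mathrm{lct}_G(\mathbb{P}^2)\leqslant\tfrac{4}{3}$, and indeed the paper's own table records $\mathrm{lct}_G=\tfrac{4}{3}$ for $(\mathrm{I})$ and $\mathrm{lct}_G=2$ for $(\mathrm{K})$ --- finite values strictly greater than $1$, not $+\infty$. Your enumeration of possible supports (``lines, a line plus a conic, or an invariant cubic'') only addresses components of degree at most $3$, whereas a degree-$3$ $\mathbb{Q}$-divisor may be supported on curves of any degree with small coefficients. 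What is actually needed is to show that \emph{every} $G$-invariant effective $\mathbb{Q}$-divisor $D\sim_{\mathbb{Q}}-K_{\mathbb{P}^2}$ gives a klt pair $(\mathbb{P}^2,D)$: one must bound the coefficient of each irreducible component (via the degree of its $G$-orbit) strictly below $1$, and bound $\mathrm{mult}_P D$ at every point (via the minimal $G$-orbit lengths and intersection with auxiliary lines or conics) strictly below the lc threshold. This is the substance of the arguments in \cite{Ch08} and \cite{ChSh11}, and it is qualitatively different from, and strictly harder than, the nonexistence statement your step three relies on.
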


\begin{theorem}[{\cite[Theorem 1.14]{Sa12}}]
\label{thm-weakly-exc-dim2}
The singularity $0\in\mathbb{A}^3/\overline{G}$ is weakly exceptional but not~exceptional if and only if ${G}$ has type {\rm{(C)}}, but is not~isomorphic to $\CG_2^2\rtimes \CG_3$, or has type {\rm{(D)}} but is not~isomorphic to $\CG_2^2\rtimes \SG_3$, or type {\rm{(E)}}.

\end{theorem}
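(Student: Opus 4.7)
The theorem will follow by combining Proposition \ref{propint-lct} with the Mitchell--Blichfeldt classification (Theorem \ref{theorem: PGL3-classification}) and a case-by-case analysis of invariants in low-degree symmetric powers of the defining three-dimensional representation. The plan is as follows.

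First, by Proposition \ref{propint-lct} the quotient singularity $0\in\mathbb{A}^3/\overline{G}$ is weakly exceptional if and only if $\mathrm{lct}_G(\mathbb{P}^2)\geqslant 1$, and it is exceptional if and only if $\mathrm{lct}_G(\mathbb{P}^2)>1$. Combined with Theorem \ref{thm-exc-dim2}, which already identifies the exceptional types as $(\mathrm{F})$, $(\mathrm{G})$, $(\mathrm{I})$, $(\mathrm{K})$, the task reduces to isolating the types $G$ for which $\mathrm{lct}_G(\mathbb{P}^2)=1$ exactly.

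Next I would handle the easy direction (``not weakly exceptional''). If $G$ is of type $(\mathrm{A})$ or $(\mathrm{B})$ then by Remark \ref{remark: case B} the group preserves a line $L\subset\mathbb{P}^2$, so the $G$-invariant $\mathbb{Q}$-divisor $B=3L$ is anti-canonical with a coefficient exceeding $1$, giving $\mathrm{lct}_G(\mathbb{P}^2)\leqslant\tfrac{1}{3}<1$. For type $(\mathrm{H})$, i.e.\ $G\cong\AG_5$, the standard three-dimensional irreducible representation carries a (unique up to scale) invariant nondegenerate symmetric bilinear form, producing a $G$-invariant smooth conic $C$; then $B=\tfrac{3}{2}C$ is an anti-canonical boundary with coefficient exceeding $1$, so $\mathrm{lct}_G(\mathbb{P}^2)\leqslant\tfrac{2}{3}<1$. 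Finally, for $G\cong\AG_4\cong \CG_2^2\rtimes\CG_3$ inside type $(\mathrm{C})$ and $G\cong\SG_4\cong \CG_2^2\rtimes\SG_3$ inside type $(\mathrm{D})$, a direct character computation
\[
\dim\bigl(\mathrm{Sym}^2V\bigr)^{G}=\frac{1}{|G|}\sum_{g\in G}\tfrac{1}{2}\bigl(\chi_V(g)^2+\chi_V(g^2)\bigr)=1
\]
shows there is again an invariant conic $C$, and the same trick $B=\tfrac{3}{2}C$ yields $\mathrm{lct}_G(\mathbb{P}^2)<1$.

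What remains is the hard direction: for $G$ of type $(\mathrm{C})$ not isomorphic to $\AG_4$, of type $(\mathrm{D})$ not isomorphic to $\SG_4$, or of type $(\mathrm{E})$, one must verify $\mathrm{lct}_G(\mathbb{P}^2)\geqslant 1$. The strategy is to enumerate the spaces $(\mathrm{Sym}^k V)^G$ for $k=1,2,3$ and to check that every anti-canonical $G$-invariant $\mathbb{Q}$-divisor is log-canonical. For type $(\mathrm{E})$ (the group $\CG_3^2\rtimes \CG_4$) the invariant cubics are precisely the Hesse pencil $\{x^3+y^3+z^3-3\lambda xyz=0\}$, whose members are either smooth elliptic curves or triangles of lines; both are lc, and triangle members realise the equality $\mathrm{lct}=1$. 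For the remaining groups in $(\mathrm{C})$ and $(\mathrm{D})$ the enlarged diagonal normal subgroup $N\triangleleft G$ kills the invariant conics that appear for $\AG_4$, $\SG_4$, while the invariant cubics reduce to linear combinations of $xyz$ and $x^3+y^3+z^3$, all of which are either a toric triangle or smooth, hence lc.

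The main obstacle is this last step: precisely determining the $G$-invariant homogeneous forms of degrees $1,2,3$ for each of the infinitely many groups in types $(\mathrm{C})$ and $(\mathrm{D})$ (they form two infinite series parametrised by the diagonal subgroup $N$), and confirming that no non-lc anti-canonical boundary is missed. The dichotomy between $\AG_4$/$\SG_4$ and the rest hinges on exactly which diagonal subgroups $N$ are large enough to force the vanishing of $(\mathrm{Sym}^2V)^G$ while still preserving the existence of the Hesse-type invariant cubics; this delicate representation-theoretic analysis is what Sakovics carries out in \cite{Sa12}.
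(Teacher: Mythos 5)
The paper does not prove this statement at all: it is imported verbatim as \cite[Theorem 1.14]{Sa12}, so there is no internal argument to compare yours against. Judged on its own terms, your reduction is sensible and partially non-circular: passing through Proposition \ref{propint-lct} to translate ``weakly exceptional'' into $\mathrm{lct}_G(\mathbb{P}^2)\geqslant 1$ is legitimate (that proposition rests on \cite{ChSh11} and \cite{B21}, not on the dimension-two classification), and your treatment of the negative cases is essentially complete: an invariant line for types $(\mathrm{A})$, $(\mathrm{B})$ gives $\mathrm{lct}_G\leqslant\tfrac13$, and an invariant conic for $(\mathrm{H})$, $\AG_4$, $\SG_4$ (which does exist, since these groups lift to $\mathrm{SO}_3\subset\mathrm{SL}_3$) gives $\mathrm{lct}_G\leqslant\tfrac23$. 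Likewise the invariant triangle $\{xyz=0\}$ for types $(\mathrm{C})$, $(\mathrm{D})$ and an invariant Hesse cubic for type $(\mathrm{E})$ give the easy inequality $\mathrm{lct}_G\leqslant 1$ in the remaining cases.

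The genuine gap is in the positive direction $\mathrm{lct}_G(\mathbb{P}^2)\geqslant 1$ for types $(\mathrm{C})\setminus\{\AG_4\}$, $(\mathrm{D})\setminus\{\SG_4\}$, $(\mathrm{E})$. The infimum defining $\mathrm{lct}_G$ runs over \emph{all} $G$-invariant effective $\mathbb{Q}$-divisors $B\sim_{\mathbb{Q}}-K_{\mathbb{P}^2}$, not over invariant integral curves of degree at most $3$; a non-lc boundary could be, say, $\tfrac{3}{d}$ times an invariant curve of large degree $d$ with a point of high multiplicity, or a weighted orbit of many lines through a common point. Computing $(\mathrm{Sym}^kV)^G$ for $k\leqslant 3$ therefore does not rule out non-lc invariant boundaries; the actual argument (Cheltsov--Shramov, Sakovics) analyses the non-klt locus of a hypothetical non-lc anticanonical pair via connectedness and multiplicity bounds to extract a small $G$-orbit or an invariant low-degree curve, and only then invokes the invariant theory. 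You explicitly defer exactly this step to \cite{Sa12}, so what you have is an outline whose essential content is still the cited theorem. (Two smaller inaccuracies that do not affect the structure: for type $(\mathrm{E})$ the individually $G$-invariant members of the Hesse pencil form a proper subset of the pencil, and for general groups of types $(\mathrm{C})$, $(\mathrm{D})$ the invariant cubics need not be spanned by $xyz$ and $x^3+y^3+z^3$ --- only $\{xyz=0\}$ is invariant in general.)
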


We note that in Theorem \ref{thm-exc-dim2} and Theorem \ref{thm-weakly-exc-dim2} it is important that $\overline{G}$ is assumed to be a subgroup of $\mathrm{SL}_3(\mathbb{K})$.
Recall that a non-trivial element $\overline{g}\in\overline{G}\subset \mathrm{GL}_n (\mathbb{K})$ is called a reflection (or sometimes a quasi-reflection) if there is a hyperplane in $\mathbb{C}^n$ that is fixed pointwisely by $\overline{g}$. 
Note that if $G\subset \mathrm{SL}_n(\mathbb{K})$ then $G$ does not contain reflections.
In~\cite{ChSh11}, the following conjecture was proposed:

\begin{conjecture}[{\cite[Conjecture 1.23]{ChSh11}}]
\label{conj-ChSh}
Let $\overline{G} \subset \mathrm{GL}_n (\mathbb{K})$ be a finite subgroup that does not contain reflections.
Let ${G}$ be the image of $\overline{G}$ in $\mathrm{PGL}_n(\mathbb{K})$ via the natural map~${\mathrm{GL}_n (\mathbb{K})\to  \mathrm{PGL}_n (\mathbb{K})}$. Then
$\mathrm{lct}_G(\mathbb{P}^{n-1})>1$ if and only if the singularity $0\in\mathbb{A}^n/\overline{G}$ is exceptional.
\end{conjecture}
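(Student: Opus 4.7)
The plan is to translate exceptionality of the quotient singularity $0\in X:=\mathbb{A}^n/\overline{G}$ into the non-existence of $G$-invariant non-klt log Calabi--Yau complements of $\mathbb{P}^{n-1}$, by means of the cone construction. Since $\overline{G}$ contains no reflections in $\mathrm{GL}_n(\mathbb{K})$, the quotient map $\widetilde{\pi}\colon\mathbb{A}^n\to X$ is quasi-\'etale, so $X$ has canonical (in particular klt) singularities and $K_{\mathbb{A}^n}=\widetilde{\pi}^*K_X$. Let $\pi\colon\mathbb{P}^{n-1}\to Y:=\mathbb{P}^{n-1}/G$ be the induced projective quotient and $R$ the Hurwitz ramification divisor, so that $K_{\mathbb{P}^{n-1}}=\pi^*(K_Y+R)$ and $\mathrm{lct}_G(\mathbb{P}^{n-1})=\mathrm{lct}(Y,R)$ (as in \cite[Proposition 6.4]{Go20}, used already in the proof of Lemma~\ref{lem-G-lct-attained}). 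The conjecture then becomes the equivalence: $\mathrm{lct}(Y,R)>1$ if and only if $0\in X$ is exceptional.

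The geometric input I would use is the plt blow-up $f\colon\widetilde{X}\to X$ of the vertex of the cone, whose unique exceptional divisor $E$ is identified with $Y$ and carries, by adjunction, the boundary $\mathrm{Diff}_E(0)=R$, so that $(E,\mathrm{Diff}_E(0))\cong(Y,R)$. I would then invoke the standard classification of divisors of log discrepancy zero over the vertex of a cone (in the spirit of the Prokhorov--Shokurov theory of complements and plt blow-ups, cf.~\cite{Pr00}): any divisor $F$ over $X$ centered at the vertex is, after passing to a higher model dominating $\widetilde{X}$, either equal to $E$ itself or is exceptional over $E$; in the latter case, inversion of adjunction identifies the log discrepancy $a(F,X,D)$ with $a(F,Y,R+B)$, where $B$ is the descent to $Y$ of the complement $D$ of $K_X$.

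These ingredients close both directions. If $\mathrm{lct}(Y,R)>1$, every lc pair $(Y,R+B)$ with $K_Y+R+B\sim_{\mathbb{Q}}0$ is klt, so no divisor $F\neq E$ can achieve $a(F,X,D)=0$ for any complement $D$; hence $E$ is the unique divisor of log discrepancy zero and $0\in X$ is exceptional. Conversely, if $\mathrm{lct}(Y,R)\leqslant 1$, Lemma~\ref{lem-G-lct-attained} furnishes a $G$-invariant boundary $B_0$ on $\mathbb{P}^{n-1}$ attaining $\mathrm{lct}_G(\mathbb{P}^{n-1})$; descending to $Y$ gives an lc but non-klt complement of $(Y,R)$, and a log-canonical center of this complement produces a divisor $F\neq E$ over $X$ with log discrepancy $0$, violating exceptionality.

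The main obstacle is the precise form of the cone/adjunction correspondence in the middle step, especially when $R\neq 0$; this occurs exactly for elements of $\overline{G}$ whose image in $\mathrm{PGL}_n$ fixes a hyperplane in $\mathbb{P}^{n-1}$ even though they are not reflections in $\mathrm{GL}_n$ (eigenvalue pattern $(\lambda,\mu,\ldots,\mu)$ with $\lambda\neq\mu$). One must carefully match divisors of log discrepancy zero over $X$ with log-canonical centers of complements of the orbifold pair $(Y,R)$ using inversion of adjunction for the plt pair $(\widetilde{X},E)$, and verify the compatibility of complements on $X$ with complements on $(Y,R)$. Once this correspondence is in place, the rest of the argument is a routine assembly of the theory of complements together with Proposition~\ref{proposition: coregularity=lct} and Lemma~\ref{lem-G-lct-attained}.
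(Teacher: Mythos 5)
Your overall strategy is viable and your identification of the genuinely new ingredient is exactly right: both you and the paper reduce the equivalence to the statement ``every lc $G$-invariant complement of $\mathbb{P}^{n-1}$ is klt iff $\mathrm{lct}_G(\mathbb{P}^{n-1})>1$'' and discharge the nontrivial direction of that statement with Lemma~\ref{lem-G-lct-attained} (i.e.\ Birkar's attainment of the log canonical threshold). The difference is in how the cone correspondence is handled. The paper's proof of Proposition~\ref{prop-conj-is-true} is two lines: it first reduces to $\overline{G}\subset\mathrm{SL}_n(\mathbb{K})$ by \cite[Remark 1.16]{ChSh11}, and then simply cites \cite[Theorem 3.17]{ChSh11}, which already states that $0\in\mathbb{A}^n/\overline{G}$ is exceptional if and only if every lc $G$-invariant pair $(\mathbb{P}^{n-1},D)$ is klt. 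What you propose in your middle step --- the plt blow-up of the vertex with exceptional divisor $(E,\mathrm{Diff}_E(0))\cong(\mathbb{P}^{n-1}/G,R)$, and the matching of divisors of log discrepancy zero over the vertex with lc centres of complements of $(Y,R)$ via inversion of adjunction --- is essentially a re-derivation of that cited theorem. This buys self-containedness, but it is also where your write-up stops short: you explicitly defer ``the precise form of the cone/adjunction correspondence'' rather than proving it, and that correspondence is the entire technical content of \cite[Theorem 3.17]{ChSh11}.

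Two concrete points you would need to address to close your version. First, you work with $\overline{G}\subset\mathrm{GL}_n(\mathbb{K})$ throughout instead of first reducing to $\mathrm{SL}_n(\mathbb{K})$ as the paper does; even after that reduction, scalar elements of $\overline{G}$ act trivially on the exceptional $\mathbb{P}^{n-1}$ of the blow-up of $0\in\mathbb{A}^n$ and hence become quasi-reflections along the exceptional divisor upstairs, so the identification of $\mathrm{Diff}_E(0)$ with the Hurwitz divisor $R$ and the computation of $a(E,X,D)$ require an argument you do not supply. Second, your assertion that a divisor $F$ over $X$ centred at the vertex is ``either $E$ or exceptional over $E$'' should be replaced by: the centre of $F$ on $\widetilde{X}$ is contained in $E$, and one then applies adjunction/inversion of adjunction for the plt pair $(\widetilde{X},E)$ to compare $a(F,X,D)$ with log discrepancies of $(E,\mathrm{Diff}_E(D_{\widetilde{X}}))$; as stated your dichotomy is not quite the right one. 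If you prefer not to fill these in, the efficient fix is the paper's: quote \cite[Theorem 3.17]{ChSh11} for the cone correspondence and keep your (correct) use of Lemma~\ref{lem-G-lct-attained} and the perturbation trick from Proposition~\ref{proposition: coregularity=lct} for the remaining equivalence.
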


\begin{proposition}
\label{prop-conj-is-true}
Conjecture \ref{conj-ChSh} holds true.
\end{proposition}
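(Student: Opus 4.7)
The plan is to reduce Conjecture \ref{conj-ChSh} to the $\mathrm{SL}_n$-case already handled by Proposition \ref{propint-lct}. Let $\overline{H} \subset \mathrm{SL}_n(\mathbb{K})$ denote the full preimage of $G$ under the projection $\mathrm{SL}_n(\mathbb{K}) \to \mathrm{PGL}_n(\mathbb{K})$; this is a finite subgroup of $\mathrm{SL}_n(\mathbb{K})$ whose image in $\mathrm{PGL}_n(\mathbb{K})$ is $G$, and which automatically contains no pseudo-reflections (since any pseudo-reflection has exactly one non-unit eigenvalue, contradicting $\det = 1$). By Proposition \ref{propint-lct} applied to $\overline{H}$ and $G$, the condition $\mathrm{lct}_G(\mathbb{P}^{n-1}) > 1$ is equivalent to $0 \in \mathbb{A}^n/\overline{H}$ being exceptional. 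Since the $G$-log canonical threshold depends only on $G \subset \mathrm{PGL}_n(\mathbb{K})$, it remains to establish that exceptionality of $0 \in \mathbb{A}^n/\overline{G}$ coincides with exceptionality of $0 \in \mathbb{A}^n/\overline{H}$ whenever $\overline{G}$ has no reflections.

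To prove this, I would construct a canonical plt blow-up of each quotient by first blowing up the origin of $\mathbb{A}^n$ to obtain $\sigma \colon \tilde{\mathbb{A}}^n \to \mathbb{A}^n$, and then taking the quotient of $\tilde{\mathbb{A}}^n$ by $\overline{G}$ (respectively by $\overline{H}$). This yields morphisms $f_{\overline{G}} \colon Y_{\overline{G}} \to X_{\overline{G}} := \mathbb{A}^n/\overline{G}$ and $f_{\overline{H}} \colon Y_{\overline{H}} \to X_{\overline{H}} := \mathbb{A}^n/\overline{H}$. In either case the exceptional divisor is canonically isomorphic to the orbifold quotient $\mathbb{P}^{n-1}/G$, since the scalar subgroup of each of $\overline{G}$, $\overline{H}$ acts trivially on the exceptional $\mathbb{P}^{n-1}$ of $\sigma$. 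Combining adjunction with the Hurwitz formula for $\mathbb{P}^{n-1} \to \mathbb{P}^{n-1}/G$, the log-pair structure at the exceptional divisor in either case is $(\mathbb{P}^{n-1}/G, B_G)$, where $B_G$ is the branch $\mathbb{Q}$-divisor of the $G$-action on $\mathbb{P}^{n-1}$; crucially, $B_G$ depends only on $G \subset \mathrm{PGL}_n(\mathbb{K})$ and not on the chosen lift.

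The no-reflections hypothesis on $\overline{G}$ (which is automatic for $\overline{H} \subset \mathrm{SL}_n(\mathbb{K})$) ensures that $\tilde{\mathbb{A}}^n \to Y_{\overline{G}}$ is étale in codimension one outside the exceptional divisor, so $(Y_{\overline{G}}, E_{\overline{G}})$ is plt and $f_{\overline{G}}$ is a plt blow-up in the sense of \cite{Pr00}. By inversion of adjunction, exceptionality of $0 \in X_{\overline{G}}$ is equivalent to the property that for every effective $\mathbb{Q}$-divisor $\Delta$ on $\mathbb{P}^{n-1}/G$ with $K + B_G + \Delta \sim_{\mathbb{Q}} 0$ the pair $(\mathbb{P}^{n-1}/G, B_G + \Delta)$ has no additional non-klt place; pulling back to $\mathbb{P}^{n-1}$, this says precisely that every $G$-invariant effective anticanonical $\mathbb{Q}$-divisor on $\mathbb{P}^{n-1}$ gives a klt pair, equivalently $\mathrm{lct}_G(\mathbb{P}^{n-1}) > 1$. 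The same characterization applies verbatim to $0 \in X_{\overline{H}}$, completing the reduction.

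I expect the main technical obstacle to be the verification that $f_{\overline{G}}$ is genuinely a plt blow-up (and not merely a partial resolution producing additional non-klt singularities on $Y_{\overline{G}}$), and the careful adjunction bookkeeping identifying the boundary with $B_G$. This is exactly where the no-reflections hypothesis enters: it forces $\tilde{\mathbb{A}}^n \to Y_{\overline{G}}$ to have codimension-one ramification only along $E$, so by Hurwitz the induced boundary is the canonical orbifold branch divisor $B_G$ determined by $G$ alone. An alternative shortcut is to bypass the explicit plt blow-up by combining Proposition \ref{proposition: coregularity=lct} with a Kollár-style cone correspondence between log Calabi--Yau pair structures on $\mathbb{P}^{n-1}$ and lc boundaries at the vertex of the quotient cone $\mathbb{A}^n/\overline{G}$.
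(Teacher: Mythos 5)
Your central reduction is circular in the logical structure of this paper. You invoke Proposition \ref{propint-lct} (that is, Proposition \ref{proposition: coregularity exceptional high dim}) to handle the $\mathrm{SL}_n$ case, but the paper \emph{derives} that proposition from Proposition \ref{prop-conj-is-true} combined with Proposition \ref{proposition: coregularity=lct}; it is a consequence of the statement you are asked to prove, not an available input. The paper's actual proof is much shorter: it cites \cite[Remark 1.16]{ChSh11} to reduce to $\overline{G}\subset \mathrm{SL}_n(\mathbb{K})$ (the same reduction you attempt to re-prove by hand via the plt blow-up of the cone), then cites \cite[Theorem 3.17]{ChSh11}, which states that $0\in\mathbb{A}^n/\overline{G}$ is exceptional if and only if every lc $G$-invariant anticanonical pair $(\mathbb{P}^{n-1},D)$ is klt, and finally converts that condition into $\mathrm{lct}_G(\mathbb{P}^{n-1})>1$ using Lemma \ref{lem-G-lct-attained}. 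Your plt-blow-up/adjunction sketch is essentially an outline of a re-proof of \cite[Theorem 3.17]{ChSh11}, which is legitimate in spirit but not needed.

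The second, independent gap is your final step: you pass from ``every $G$-invariant effective anticanonical $\mathbb{Q}$-divisor on $\mathbb{P}^{n-1}$ gives a klt pair'' to ``$\mathrm{lct}_G(\mathbb{P}^{n-1})>1$'' with the word ``equivalently.'' This is not automatic: $\mathrm{lct}_G$ is defined as an infimum, so one could a priori have every individual pair klt while the infimum still equals $1$. Closing this gap is exactly the role of Lemma \ref{lem-G-lct-attained}, which rests on Birkar's theorem \cite[Theorem 1.7]{B21} to show the threshold is attained on an actual $G$-invariant divisor when $\mathrm{lct}_G\leq 1$. Any correct proof of Proposition \ref{prop-conj-is-true} must address this attainment issue explicitly; your proposal does not.
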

\begin{proof}
By \cite[Remark 1.16]{ChSh11} we may assume that $\overline{G}\subset \mathrm{SL}_n(\mathbb{K})$.
By \cite[Theorem~3.17]{ChSh11} we have that $\mathbb{A}^n/\overline{G}$ is exceptional if and only if the following holds: 
for any ${G}$-invariant divisor~$D$ on~$\mathbb{P}^{n-1}$ if the pair $(\mathbb{P}^{n-1}, D)$ is lc then it is klt. However, this condition is equivalent to~${\mathrm{lct}_{{G}}(\mathbb{P}^{n-1})>1}$ by Lemma \ref{lem-G-lct-attained}.
\end{proof}

\begin{proposition}
\label{proposition: coregularity exceptional high dim}
Let $\overline{G} \subset \mathrm{SL}_n (\mathbb{K})$ be a finite group.
Let ${G}$ be the image of $\overline{G}$ in
$$
{\mathrm{PGL}_n(\mathbb{K})=\mathrm{Aut}(\mathbb{P}^{n-1})}
$$
via the natural projection map $\mathrm{SL}_n (\mathbb{K})\to  \mathrm{PGL}_n (\mathbb{K})$. Then
\begin{enumerate}
\item
$\mathrm{lct}_G(\mathbb{P}^{n-1})>1$ if and only if the singularity $0\in\mathbb{A}^n/\overline{G}$ is exceptional. This condition is equivalent to the condition that $\coregG(\mathbb{P}^{n-1})=n-1$;
\item
$\mathrm{lct}_G(\mathbb{P}^{n-1})=1$ if and only if the singularity $0\in\mathbb{A}^n/\overline{G}$ is weakly exceptional and not exceptional.
\end{enumerate}
\end{proposition}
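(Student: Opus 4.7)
The plan is to deduce both parts of this proposition by combining results already stated in the paper with one further input from \cite{ChSh11}. The key observation is that $\mathbb{P}^{n-1}$ is a smooth (in particular klt) Fano variety of dimension $n-1$, and that the hypothesis $\overline{G}\subset \mathrm{SL}_n(\mathbb{K})$ means $\overline{G}$ contains no (quasi-)reflections, so results phrased for reflection-free subgroups of $\mathrm{GL}_n(\mathbb{K})$ apply directly.

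For part (1), I would first handle the equivalence between $\mathrm{lct}_G(\mathbb{P}^{n-1})>1$ and exceptionality of $0\in\mathbb{A}^n/\overline{G}$. This is precisely Conjecture \ref{conj-ChSh} specialized to the case $\overline{G}\subset \mathrm{SL}_n(\mathbb{K})$, and it has already been verified in Proposition \ref{prop-conj-is-true} (which reduces via \cite[Remark~1.16]{ChSh11} to the $\mathrm{SL}_n$ case and then invokes \cite[Theorem~3.17]{ChSh11} together with Lemma \ref{lem-G-lct-attained}). So this half is an immediate citation. Next, the equivalence with $\coregG(\mathbb{P}^{n-1})=n-1$ is exactly the content of Proposition \ref{proposition: coregularity=lct} applied with $X=\mathbb{P}^{n-1}$, since $\dim \mathbb{P}^{n-1}=n-1$. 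Here one simply notes that $\mathbb{P}^{n-1}$ is klt Fano and $G\subset\mathrm{Aut}(\mathbb{P}^{n-1})$ is finite, so the hypotheses of that proposition are satisfied.

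For part (2), I would proceed by invoking the weakly exceptional analogue of \cite[Theorem~3.17]{ChSh11}, namely \cite[Theorem~3.16]{ChSh11}, which asserts that $0\in\mathbb{A}^n/\overline{G}$ is weakly exceptional if and only if $\mathrm{lct}_G(\mathbb{P}^{n-1})\geqslant 1$. Combining this with part (1), the singularity $0\in\mathbb{A}^n/\overline{G}$ is weakly exceptional and not exceptional if and only if $\mathrm{lct}_G(\mathbb{P}^{n-1})\geqslant 1$ but not $>1$, i.e.\ $\mathrm{lct}_G(\mathbb{P}^{n-1})=1$. To make the argument fully rigorous, one should check that Lemma \ref{lem-G-lct-attained} gives the required attainment of the $G$-log-canonical threshold by a $G$-invariant divisor when $\mathrm{lct}_G\leqslant 1$, which is exactly the bridge between the inequality formulation in \cite{ChSh11} and the infimum definition used here.

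The main obstacle, in my view, is purely bibliographic rather than mathematical: one must carefully quote \cite[Theorems 3.16, 3.17]{ChSh11} together with \cite[Remark 1.16]{ChSh11} and verify that the reflection-free hypothesis in those statements is automatic for $\overline{G}\subset\mathrm{SL}_n(\mathbb{K})$. Once these references are lined up, both parts follow formally, and the proof becomes essentially a two-sentence assembly of Proposition \ref{prop-conj-is-true}, Proposition \ref{proposition: coregularity=lct}, and the weakly exceptional criterion of \cite{ChSh11}.
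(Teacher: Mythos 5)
Your proposal is correct and follows essentially the same route as the paper: the first claim is obtained by combining Proposition \ref{prop-conj-is-true} with Proposition \ref{proposition: coregularity=lct}, and the second by combining Proposition \ref{prop-conj-is-true} with \cite[Theorem 3.16]{ChSh11}, after noting that $\overline{G}\subset\mathrm{SL}_n(\mathbb{K})$ contains no reflections. The only difference is that you spell out the bookkeeping (the role of Lemma \ref{lem-G-lct-attained} and the reflection-free hypothesis) slightly more explicitly than the paper does.
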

\begin{proof}
Note that since $\overline{G} \subset \mathrm{SL}_n (\mathbb{K})$, it does not contain reflections.
Then the first claim follows from Proposition \ref{prop-conj-is-true} combined with Proposition \ref{proposition: coregularity=lct}.
The second claim follows from Proposition \ref{prop-conj-is-true} and \cite[Theorem 3.16]{ChSh11}.
\end{proof}

In dimension $2$, we have a more precise result.

\begin{proposition}
\label{prop-dim-2-exceptionality-criterion}
Let $\overline{G} \subset \mathrm{SL}_3 (\mathbb{K})$ be a finite group.
Let ${G}$ be the image of $\overline{G}$ in
$$
{\mathrm{PGL}_3(\mathbb{K})=\mathrm{Aut}(\mathbb{P}^{2})}
$$
via the natural projection map $\mathrm{SL}_3(\mathbb{K})\to  \mathrm{PGL}_3 (\mathbb{K})$. Then
\begin{enumerate}
\item
the singularity $0\in\mathbb{A}^3/\overline{G}$ is exceptional if and only if ${G}$ has type {\rm{(F)}}, {\rm{(G)}}, {\rm{(I)}}, or {\rm{(K)}};
\item
the singularity $0\in\mathbb{A}^3/\overline{G}$ is weakly exceptional and not exceptional if and only if ${G}$ has type {\rm{(C)}} but is not isomorphic to $\CG_2^2\rtimes \CG_3$, or has type {\rm{(D)}} but is not isomorphic to $\CG_2^2\rtimes \SG_3$, or has type~{\rm{(E)}};
\item
the singularity $0\in\mathbb{A}^3/\overline{G}$ is not weakly exceptional if and only if ${G}$ has type {\rm{(A)}}, {\rm{(B)}}, {\rm{(H)}}, or~is isomorphic to $\CG_2^2\rtimes \CG_3$ or $\CG_2^2\rtimes \SG_3$.
\end{enumerate}
\end{proposition}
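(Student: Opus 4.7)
The plan is to deduce the proposition as a direct corollary of the two classification results recalled immediately above, namely Theorem \ref{thm-exc-dim2} from \cite{MP99} and Theorem \ref{thm-weakly-exc-dim2} from \cite{Sa12}, combined with the exhaustive list in Theorem \ref{theorem: PGL3-classification}.

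First, part (1) is nothing but a restatement of Theorem \ref{thm-exc-dim2}: the singularity $0\in\mathbb{A}^3/\overline{G}$ is exceptional if and only if $G$ has type (F), (G), (I), or (K). Similarly, part (2) is a verbatim restatement of Theorem \ref{thm-weakly-exc-dim2}: the weakly exceptional but not exceptional case corresponds precisely to type (C) with $G \not\cong \CG_2^2 \rtimes \CG_3$, type (D) with $G \not\cong \CG_2^2 \rtimes \SG_3$, or type (E). So for (1) and (2) there is genuinely nothing to prove beyond invoking these two results, and the hypothesis $\overline{G}\subset\mathrm{SL}_3(\mathbb{K})$ (which guarantees that $\overline{G}$ contains no reflections) is exactly what is required in both cited theorems.

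For part (3) I would argue by complementation. For any germ of a klt quotient singularity one has a trichotomy: it is either exceptional, or weakly exceptional but not exceptional, or not weakly exceptional, with the first property implying the second, cf.\ \cite[Proposition 2.7]{MP99}. By Theorem \ref{theorem: PGL3-classification}, every finite subgroup of $\mathrm{PGL}_3(\mathbb{K})$ has one of the types (A), (B), (C), (D), (E), (F), (G), (H), (I), (K). Removing from this list the types (F), (G), (I), (K) (handled in part (1)) together with type (C) excluding $\CG_2^2 \rtimes \CG_3$, type (D) excluding $\CG_2^2 \rtimes \SG_3$, and type (E) (handled in part (2)), one is left with exactly types (A), (B), (H), the group $\CG_2^2 \rtimes \CG_3$, and the group $\CG_2^2 \rtimes \SG_3$. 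This is the desired list.

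There is no real obstacle here: the only bookkeeping concern is to check that the partition of finite subgroups provided by Theorem \ref{theorem: PGL3-classification} really is exhaustive and that the two excluded groups $\CG_2^2 \rtimes \CG_3 \cong \AG_4$ and $\CG_2^2 \rtimes \SG_3 \cong \SG_4$ are correctly identified as the only subgroups of types (C) and (D) respectively that fail to be weakly exceptional -- both of which are immediate from the statements of Theorem \ref{thm-exc-dim2} and Theorem \ref{thm-weakly-exc-dim2}.
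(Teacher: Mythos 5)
Your proof is correct and follows essentially the same route as the paper: parts (1) and (2) are direct citations of Theorem \ref{thm-exc-dim2} and Theorem \ref{thm-weakly-exc-dim2}, and part (3) is obtained by complementation against the exhaustive list of Theorem \ref{theorem: PGL3-classification}. If anything, your explicit appeal to the trichotomy (exceptional, weakly exceptional but not exceptional, not weakly exceptional) makes the complementation step for (3) cleaner than the paper's own one-line justification.
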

\begin{proof}
The first claim follows from Theorem \ref{thm-exc-dim2}. The second claim follows from Theorem \ref{thm-weakly-exc-dim2}. Finally, the third claim follows from Theorem \ref{theorem: PGL3-classification} and the fact that $\mathrm{coreg}_G(\mathbb{P}^2)\in\{0,1,2\}$.
\end{proof}

\subsection{Rigidity}

\begin{definition}
Let $X$ be a $G\mathbb{Q}$-Fano variety, see Section \ref{sec-mfs} for definition.
Recall that $X$ is called \emph{$G$-birationally rigid} if for any birational $G$-equivariant map $f\colon X\dashrightarrow X'$, where $X'$ is a $G$-Mori fiber space, $X'$ is a $G\mathbb{Q}$-Fano variety, and
there exists a $G$-birational automorphism $g\colon X\dashrightarrow X$ such that $f \circ g$ is a $G$-isomorphism.
Furthermore, if~the above assumptions imply that the map $f$ is an isomorphism then we say that~$X$ \mbox{is \emph{$G$-birationally}} \emph{super-rigid}.
\end{definition}



For finite subgroups of $\operatorname{PGL}_3(\mathbb{K}) = \operatorname{Aut} (\mathbb{P}^2)$ birational rigidity is studied in \cite{Sa19}. The main result is the following.

\begin{theorem}[{\cite[Theorem 1.3]{Sa19}}]
\label{prop-dim-2-rigidity-semicriterion}
The projective plane $\mathbb{P}^2$ is $G$-birationally rigid if and only if the action of $G$ is transitive and $G$ is not isomorphic to $\AG_4$ or $\SG_4$.
\end{theorem}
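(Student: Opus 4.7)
The plan is to establish both directions via the $G$-equivariant Noether--Fano method and an analysis of $G$-equivariant Sarkisov links out of $\mathbb{P}^2$.

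For the ``only if'' direction, I would argue that if $G$ is intransitive (types $(\mathrm{A})$ or $(\mathrm{B})$ of Theorem \ref{theorem: PGL3-classification}), then $G$ admits either a fixed point or an invariant line on $\mathbb{P}^2$. Blowing up a $G$-fixed point produces a $G$-equivariant Hirzebruch surface $\mathbb{F}_1$, and its ruling gives a $G$-equivariant conic bundle $\mathbb{F}_1\to\mathbb{P}^1$ that is a $G$-Mori fiber space not $G$-isomorphic to $\mathbb{P}^2$; the composition $\mathbb{P}^2\dashrightarrow\mathbb{F}_1\to\mathbb{P}^1$ obstructs $G$-rigidity. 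The invariant-line case is dual, handled by a $G$-equivariant pencil of lines through the fixed point dual to that line. For the transitive exceptions $G\cong\AG_4$ and $G\cong\SG_4$ (sitting inside types $(\mathrm{C})$ and $(\mathrm{D})$ as $\CG_2^2\rtimes\CG_3$ and $\CG_2^2\rtimes\SG_3$), I would exhibit a short $G$-orbit on $\mathbb{P}^2$ (the four coordinate points of a $G$-invariant tetrahedron) whose blowup is a $G$-equivariant del Pezzo surface of degree $5$; the anticanonical linear system, together with contraction of a different $G$-invariant set of $(-1)$-curves, produces an explicit $G$-Sarkisov link to a $G$-Mori fiber space not $G$-isomorphic to $\mathbb{P}^2$.

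For the ``if'' direction, let $G$ be transitive and not isomorphic to $\AG_4$ or $\SG_4$. Suppose $\phi\colon\mathbb{P}^2\dashrightarrow X'$ is a $G$-birational map to a $G$-Mori fiber space $X'$. Choose a sufficiently divisible $G$-invariant very ample linear system $\mathcal{H}'$ on $X'$, pull it back to a $G$-invariant mobile linear system $\mathcal{H}\subset|nH|$ on $\mathbb{P}^2$. If $\phi$ is not a $G$-isomorphism, the Noether--Fano inequality forces the pair $\bigl(\mathbb{P}^2,\tfrac{1}{n}\mathcal{H}\bigr)$ to have a non-canonical center; this center, being canonically associated to $\mathcal{H}$ and hence $G$-invariant, is a $G$-orbit of points (or a $G$-invariant curve). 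The Bezout-type multiplicity estimate $\sum_{p\in\mathrm{orbit}}\mathrm{mult}_p(\mathcal{H})^2\leq\mathcal{H}^2=n^2$ together with $\mathrm{mult}_p(\mathcal{H})>n$ bounds the size of such an orbit by $1$. Then the classification of minimal $G$-orbits on $\mathbb{P}^2$ for transitive types $(\mathrm{C})$--$(\mathrm{K})$ (excluding $\AG_4,\SG_4$) rules out such a short orbit; transitivity excludes invariant curves of low degree as well. This contradiction shows $\phi$ must be a $G$-biregular isomorphism, composed with a $G$-birational automorphism of $\mathbb{P}^2$.

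The main obstacle is the orbit-length inventory: one needs sharp knowledge of the smallest $G$-orbit on $\mathbb{P}^2$ and of the smallest degree of a $G$-invariant curve, for each of the primitive Hessian-type and simple groups of Theorem \ref{theorem: PGL3-classification}, and the verification that precisely $\AG_4$ and $\SG_4$ lie on the boundary between rigidity and non-rigidity because they admit a $G$-orbit of length exactly $4$ that feeds into a genuine Sarkisov link, while $\CG_3^2\rtimes\CG_4$, $\mathrm{PSL}_2(\mathbb{F}_7)$, $\AG_6$ and their companions do not. The invariant-theoretic computation of these orbits is classical (going back to the references in Theorem \ref{theorem: PGL3-classification}) and, combined with the Noether--Fano inequality, completes the proof.
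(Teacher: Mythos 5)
The paper does not actually prove this statement: it is quoted verbatim from \cite[Theorem 1.3]{Sa19} and used as a black box, so there is no internal proof to compare against. Judged on its own terms, your sketch has a genuine gap in the ``if'' direction. The Noether--Fano computation is off by a factor of $3$, and this is not cosmetic: for a mobile $G$-invariant system $\mathcal{H}\subset|nH|$ on $\mathbb{P}^2$ one has $K_{\mathbb{P}^2}+\tfrac{3}{n}\mathcal{H}\sim_{\mathbb{Q}}0$, so failure of canonicity at $p$ gives $\operatorname{mult}_p\mathcal{H}>n/3$, not $\operatorname{mult}_p\mathcal{H}>n$. The intersection bound $\sum_p\operatorname{mult}_p(\mathcal{H})^2\leqslant n^2$ then limits a $G$-orbit of maximal centers to at most $8$ points, not to at most one; with your bound the orbit would be empty and every finite $G$ would make $\mathbb{P}^2$ super-rigid, which is false already for the trivial group (the standard Cremona map). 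With the correct bound, the strategy ``show there is no short orbit'' cannot work for the rigid-but-not-super-rigid groups: types $(\mathrm{C})$ and $(\mathrm{D})$ other than $\AG_4,\SG_4$ have orbits of $3$ points in general position, and types $(\mathrm{E})$ and $(\mathrm{H})$ have orbits of $6$ points in general position --- exactly the orbits the paper invokes in the proof of Proposition \ref{prop-dim-2-rigidity-criterion} to show these groups are \emph{not} super-rigid. For these groups maximal centers and $G$-Sarkisov links genuinely exist; rigidity holds because every such link either returns to $\mathbb{P}^2$ or can be untwisted by a $G$-birational self-map. That untwisting analysis is the actual content of \cite{Sa19}, and it is absent from your sketch; as written, your argument would wrongly conclude that the type $(\mathrm{E})$ and $(\mathrm{H})$ groups make $\mathbb{P}^2$ super-rigid.

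The ``only if'' direction is essentially right but one construction is wrong in detail. Blowing up the length-$4$ orbit of $\AG_4$ or $\SG_4$ gives a degree-$5$ del Pezzo surface with $\rho^G=2$, but the second $G$-extremal contraction is not a contraction of another $G$-invariant set of $(-1)$-curves (the six lines $L_{ij}$ through pairs of the four points are not pairwise disjoint); it is the fibration given by the pencil of conics $|2H-\sum E_i|$, i.e.\ a link of type I to a $G$-conic bundle over $\mathbb{P}^1$. That conic bundle is a strict $G$-Mori fiber space, not a $G\mathbb{Q}$-Fano variety, which is what breaks rigidity under the paper's definition. The intransitive cases (types $(\mathrm{A})$ and $(\mathrm{B})$ of Theorem \ref{theorem: PGL3-classification}), where a $G$-fixed point yields the link to $\mathbb{F}_1\to\mathbb{P}^1$, are fine as you describe them.
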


Note that in the notation of Theorem \ref{theorem: PGL3-classification} intransitive groups have types {\rm{(A)}} and {\rm{(B)}}, and groups of the other types are transitive. Moreover, the groups $\AG_4 \cong \CG_2^2\rtimes \CG_3$ and $\SG_4 \cong \CG_2^2\rtimes \SG_3$ have types {\rm{(C)}} and {\rm{(D)}} respectively (actually these groups are special cases of types {\rm{(C)}} and {\rm{(D)}} mentioned in Proposition \ref{prop-dim-2-exceptionality-criterion}).

Therefore we can say that for a finite group $G \subset \operatorname{PGL}_3 (\mathbb{K})$ the projective plane is \mbox{not $G$-bira}\-tionally rigid if and only if $G$ has type {\rm{(A)}}, {\rm{(B)}} or is isomorphic to $\CG_2^2\rtimes \CG_3$ or $\CG_2^2\rtimes \SG_3$.

The results of \cite{Sa19} also allow one to classify finite subgroups of $\operatorname{PGL}_3 (\mathbb{K})$ such that $\mathbb{P}^2$ \mbox{is $G$-birationally} super-rigid, and obtain the following.

\begin{proposition}
\label{prop-dim-2-rigidity-criterion}
Let $G \subset \operatorname{PGL}_3 (\mathbb{K}) =\operatorname{Aut}(\mathbb{P}^{2})$ be a finite group. Then
\begin{enumerate}
\item
$\mathbb{P}^2$ is $G$-birationally super-rigid if and only if ${G}$ has type {\rm{(F)}}, {\rm{(G)}}, {\rm{(I)}}, or {\rm{(K)}};
\item
$\mathbb{P}^2$ is $G$-birationally rigid and not $G$-birationally super-rigid if and only if ${G}$ has type {\rm{(C)}} but is not isomorphic to $\CG_2^2\rtimes \CG_3$, or has type {\rm{(D)}} but is not isomorphic to $\CG_2^2\rtimes \SG_3$, or has type {\rm{(E)}}, or~type~{\rm{(H)}};
\item
$\mathbb{P}^2$ is not $G$-birationally rigid if and only if ${G}$ has type {\rm{(A)}}, {\rm{(B)}}, or is isomorphic to $\CG_2^2\rtimes \CG_3$ or $\CG_2^2\rtimes \SG_3$.
\end{enumerate}
\end{proposition}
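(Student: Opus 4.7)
The plan is to derive Proposition \ref{prop-dim-2-rigidity-criterion} by combining Theorem \ref{prop-dim-2-rigidity-semicriterion} with a finer case-by-case analysis of $G$-birational self-maps from \cite{Sa19}. Part (3) follows directly from Theorem \ref{prop-dim-2-rigidity-semicriterion}: by Theorem \ref{theorem: PGL3-classification}, the intransitive finite subgroups of $\operatorname{PGL}_3(\mathbb{K})$ have types (A) and (B); and inside types (C) and (D) the subgroups isomorphic to $\AG_4$ and $\SG_4$ are exactly $\CG_2^2 \rtimes \CG_3$ and $\CG_2^2 \rtimes \SG_3$ respectively. For parts (1) and (2), I restrict to the rigid cases (types (C) not $\AG_4$, (D) not $\SG_4$, (E), (F), (G), (H), (I), (K)) and factor any $G$-birational self-map $g\colon\mathbb{P}^2\dashrightarrow\mathbb{P}^2$ via the $G$-equivariant Sarkisov program. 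Super-rigidity is then equivalent to the absence of nontrivial elementary $G$-Sarkisov links originating from $\mathbb{P}^2$.

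For the super-rigid cases (F), (G), (I), (K), the plan is to invoke the orbit and invariant-curve analysis in \cite{Sa19}: these groups have no $G$-orbit of cardinality at most three on $\mathbb{P}^2$, and no $G$-invariant curve of small degree (a direct check using the tables for the Hessian, Klein, and Valentiner groups). Since an equivariant Sarkisov link from $\mathbb{P}^2$ must be initiated by a blow-up of a $G$-invariant zero- or one-dimensional subscheme whose degree is controlled by a Noether--Fano-type inequality, the absence of such data forces every $G$-birational self-map of $\mathbb{P}^2$ to be induced by an element of $\operatorname{PGL}_3(\mathbb{K})$ normalizing $G$, giving super-rigidity.

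For the remaining rigid cases (C) not $\AG_4$, (D) not $\SG_4$, (E), and (H), the plan is to exhibit an explicit $G$-birational self-map of $\mathbb{P}^2$ that is not projective, thereby witnessing failure of super-rigidity. For (C), (D), (E), all sitting inside the Hessian group $\CG_3^2 \rtimes \operatorname{SL}_2(\mathbb{F}_3)$, the required involution is produced from the classical Hessian configuration of nine flex points of a smooth cubic (permuted by the normal $\CG_3^2$) via a standard equivariant Cremona transformation. For type (H) $\cong \AG_5$ the required $G$-birational involution arises from a Geiser- or Bertini-type construction based on a small invariant point configuration coming from the $\AG_5$-action on $\mathbb{P}^2$; again the explicit link is available in \cite{Sa19}. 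The main obstacle I anticipate is the super-rigid half for types (F), (G), (I), (K): while non-super-rigidity needs only one explicit link, super-rigidity requires ruling out \emph{all} possible equivariant Sarkisov links, and this is handled by citing the detailed orbit classification of \cite{Sa19} rather than reproducing its character-theoretic computations.
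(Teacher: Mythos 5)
Your overall strategy coincides with the paper's: part (3) is read off from Theorem \ref{prop-dim-2-rigidity-semicriterion} together with Theorem \ref{theorem: PGL3-classification}, and parts (1)--(2) are settled by deciding for which transitive $G$ a $G$-equivariant Sarkisov link can be initiated from $\mathbb{P}^2$. However, the numerical criterion you state for the super-rigid half is wrong, and the error is not cosmetic. You claim that super-rigidity for types (F), (G), (I), (K) follows because these groups have no orbit of cardinality at most \emph{three} and no invariant curve of small degree. The correct criterion (this is \cite[Theorem 7.7]{DI09}, used in the paper's proof) is that a $G$-equivariant Sarkisov link from $\mathbb{P}^2$ exists if and only if $G$ has an orbit of at most \emph{eight} points in general position; the Noether--Fano inequality bounds the number of maximal base points by $8$, not by $3$. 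With your bound the argument proves too much: groups of types (E) and (H) also have no orbits of cardinality $\leqslant 3$, yet they do have orbits of six points in general position (\cite[Propositions 3.16 and 3.17]{Sa19}), which is exactly what makes them fail to be super-rigid in part (2). As written, your part (1) argument would ``establish'' super-rigidity for (E) and (H) as well, contradicting your own part (2). The fix is to replace the bound by $8$ and then cite \cite[Propositions 3.16 and 3.19]{Sa19} for the fact that (F), (G), (I), (K) have no orbits of at most $8$ points. A secondary slip: on a smooth surface a Sarkisov link is never initiated by blowing up a one-dimensional center, so that clause should be dropped.

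For part (2) your route is more explicit than the paper's (which only exhibits the small orbits and invokes the link criterion), but two of your proposed witnesses are shaky. For types (C) and (D) the quadratic Cremona involution based at the $G$-invariant triple of non-collinear points is fine. For type (E), however, the ``standard equivariant Cremona transformation'' based at the nine inflection points does not exist as described --- nine points is not the base locus of any elementary link from $\mathbb{P}^2$, and the relevant configuration is the orbit of six points in general position, which initiates a link through a cubic surface. Since you ultimately defer to \cite{Sa19} for the explicit links, the statement can be repaired, but the nine-point construction should not be presented as the mechanism.
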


\begin{proof}
Assertion $(3)$ follows from Theorem \ref{prop-dim-2-rigidity-semicriterion}. Therefore we need to find for which types \mbox{of~groups {\rm{(C)}}--{\rm{(K)}}} the projective plane is $G$-birationally super-rigid.

A $G$-minimal surface is $G$-birationally super-rigid if and only if one cannot construct a~$G$-equivariant Sarkisov link from this surface (cf. \cite[Theorem 7.7]{DI09}). Moreover, one can construct \mbox{a~$G$-equi}\-variant Sarkisov link from $\mathbb{P}^2$ if and only if $G$ has an orbit on $\mathbb{P}^2$ consisting of up to $8$ points in~general position.

Any group of type {\rm{(C)}} and {\rm{(D)}} has an orbit consisting of three non-collinear fixed points of $N$ (see Theorem \ref{theorem: PGL3-classification}) and permuted by $G / N$, since $N$ is a diagonalizable abelian group. Groups of~type~{\rm{(E)}} and {\rm{(H)}} have orbits consisting of six points in general position (see \cite[Propositions~3.16 and~3.17]{Sa19}). Therefore for a group $G$ of type {\rm{(C)}}, {\rm{(D)}}, {\rm{(E)}}, or {\rm{(H)}} the projective plane is \mbox{not $G$-birationally} super-rigid. This implies assertion $(2)$.

Groups of type {\rm{(F)}}, {\rm{(G)}}, {\rm{(I)}}, and {\rm{(K)}} do not have orbits consisting of up to $8$ points (see \cite[Propositions 3.16 and 3.19]{Sa19}). Therefore for a group $G$ of type {\rm{(F)}}, {\rm{(G)}}, {\rm{(I)}}, or {\rm{(K)}} the~projective plane is $G$-birationally super-rigid. This implies assertion $(1)$.
\end{proof}

\subsection{Properties of $G$-coregularity}

\begin{definition}
Let $(X,D)$ be an lc $G$-pair 
such that $\mathcal{D}(X,D)\neq \varnothing$.
Let $(\widetilde{X},\widetilde{D})$ be the crepant pullback of $(X, D)$ via a $G$-equivariant log resolution of $(X,D)$. Denote by $N$ the subgroup of $G$, consisting of all elements preserving each component of $\widetilde{D}$. Then we have the following exact sequence:
\begin{equation}
\label{exact-sequence-two-parts-of-G}
1 \rightarrow N \rightarrow G \rightarrow G_{\mathcal{D}} \rightarrow 1,
\end{equation}
where the group $G_{\mathcal{D}} \cong G / N$ acts faithfully on the topological space $\mathcal{D}(X,D)$ by PL-homeomor\-phisms, cf. Remark \ref{rem-group-acts-on-dual-complex}.
Since the topological space $\mathcal{D}(X,D)$ does not depend on the choice of~a~log resolution of $(X,D)$, it follows that the groups $N$ and $G_\mathcal{D}$ do not depend on the choice of~a~log resolution of $(X,D)$.
\end{definition}

\begin{proposition}
\label{prop-coreg-0-arbitrary-dimension}
Consider a finite group $G$ and a $G$-variety $X$ of dimension $n$. Assume that~\mbox{$\coregG(X)=0$}.
Then the group $N$ defined in \eqref{exact-sequence-two-parts-of-G} has a fixed point on $X$. Moreover, $N$ is an abelian group of rank at most $n$.
\end{proposition}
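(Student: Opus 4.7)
The plan is to extract from the hypothesis $\coregG(X)=0$ a concrete $0$-dimensional stratum fixed by $N$, and then read off the structure of $N$ from its linear action on the tangent space at that point.

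First I would unpack $\coregG(X)=0$: there exists a $G$-invariant lc $\mathbb{Q}$-complement $(X,D)$ with $K_X+D\sim_\mathbb{Q} 0$ and $\dim \mathcal{D}(X,D)=n-1$. Passing to a $G$-equivariant log resolution $f\colon(\widetilde{X},\widetilde{D})\to (X,D)$, the top-dimensional simplex of $\mathcal{D}$ corresponds to an irreducible $0$-dimensional component of an intersection of $n$ components of $\widetilde{D}^{=1}$, i.e.\ a single point $p\in\widetilde{X}$ at which exactly $n$ reduced boundary divisors $D_1,\dots,D_n$ meet with simple normal crossings. Since $N$ by construction acts trivially on the topological space $\mathcal{D}(X,D)$ (the quotient $G/N=G_\mathcal{D}$ acts faithfully), $N$ preserves the top simplex corresponding to $p$, and because this simplex is indexed by the singleton $\{p\}$ inside $\bigcap_i D_i$, the group $N$ must fix $p$. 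The image $f(p)\in X$ is then an $N$-fixed point, proving the first assertion.

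For the second assertion, I would use Lemma~\ref{lem-faithful-action}: since $\widetilde{X}$ is smooth at $p$ (as $p$ is an snc $0$-stratum), the representation
\[
N \hookrightarrow \mathrm{GL}(T_p\widetilde{X})
\]
is faithful, and $\dim T_p\widetilde{X}=n$. Each $D_i$ is $N$-invariant by the definition of $N$, so each tangent hyperplane $T_p D_i\subset T_p\widetilde{X}$ is $N$-invariant. By transversality $\bigcap_{i=1}^n T_p D_i=0$, so the $n$ lines
\[
L_i \;:=\; \bigcap_{j\neq i} T_p D_j \quad (i=1,\dots,n)
\]
are $N$-invariant and satisfy $T_p\widetilde{X}=L_1\oplus\cdots\oplus L_n$. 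Hence $N$ acts diagonally in this basis, giving an embedding $N\hookrightarrow (\mathbb{K}^*)^n$. A finite subgroup of $(\mathbb{K}^*)^n$ is abelian of rank at most $n$, which is the desired bound.

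The main obstacle I anticipate is the fixed-point step: one must be careful that the top-dimensional cell of $\mathcal{D}$ really is attached to the single point $p$ and not to a positive-dimensional subset of $0$-strata that $N$ might permute. The way I plan to handle this is to invoke the declared faithfulness of the $G_\mathcal{D}$-action on $\mathcal{D}$ as a topological space (so that $N$ fixes each cell pointwise) together with the bijective correspondence between top cells of $\mathcal{D}$ and irreducible $0$-strata; if further refinement is needed to make $\mathcal{D}$ simplicial, one performs an additional $G$-equivariant snc blow-up that does not change the top dimension of $\mathcal{D}$. Once this point is pinned down, the linear-algebra argument on the tangent space is routine.
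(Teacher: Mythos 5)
Your proof is correct and follows essentially the same route as the paper's: pick the point $p$ underlying a top-dimensional simplex of the dual complex on a $G$-equivariant log resolution, observe that $N$ fixes $p$, and use Lemma~\ref{lem-faithful-action} together with the $N$-invariance of the tangent hyperplanes $T_pD_i$ to split $T_p\widetilde{X}$ into $n$ one-dimensional subrepresentations. Your explicit construction of the invariant lines $L_i=\bigcap_{j\neq i}T_pD_j$ and your attention to why $N$ fixes the $0$-stratum $p$ (rather than permuting components of $\bigcap_i D_i$) are slightly more careful than the paper's write-up, but the argument is the same.
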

\begin{proof}
Let $(X, D)$ be a log Calabi--Yau pair such that $\mathrm{coreg}_G(X,D)=0$. 
Let~$(\widetilde{X}, \widetilde{D})$ be a crepant pullback of $(X, D)$ via a $G$-equivariant log resolution. So we have ${\dim \mathcal{D}(\widetilde{X}, \widetilde{D})=n-1}$.
Consider a simplex of a regular CW complex $\mathcal{D}(\widetilde{X}, \widetilde{D})$ of dimension $n-1$. It corresponds to a~point~${p\in \widetilde{X}}$ such that precisely $n$ components of $\widetilde{X}^{=1}$ meet at $p$ (here we use the fact that the pair $(\widetilde{X}, \widetilde{D})$ is log smooth). Note that the point $p$ is fixed by $N$, so its image on~$X$ is a point fixed by~$N$.
By Lemma \ref{lem-faithful-action}, the action of $N$ on the tangent space $T_p \widetilde{X}$ is faithful. Also, for each component $\widetilde{D}_i$ of $\widetilde{D}^{=1}$ the tangent space $T_p \widetilde{D}_i \subset T_p \widetilde{X}$ is $N$-invariant. Therefore, the representation of~$N$ on $T_p \widetilde{X}$ splits into the direct sum of $n$ one-dimensional subrepresentations. We~conclude that~$N$ is an abelian group of rank at most~$n$.
\end{proof}

\begin{lemma}
\label{corollary: coreg-0 criterion}
Let $G$ be a finite group and let $S$ be a $G$-surface. Assume that $\coregG(S)=0$.
Then in the exact sequence \eqref{exact-sequence-two-parts-of-G}, the group $N$ is an abelian group of rank at most $2$, and $G_{\mathcal{D}}$ is~either trivial, or cyclic, or a dihedral group. In particular, for any non-abelian simple group acting on~a~surface $S$ one has $\coregG(S)>0$.
\end{lemma}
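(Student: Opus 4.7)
The first statement about $N$ is just the case $n=2$ of Proposition~\ref{prop-coreg-0-arbitrary-dimension}, so there is nothing to do beyond citing it.

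For the statement about $G_{\mathcal{D}}$, the plan is to combine the structural description of the dual complex in dimension $2$ with the classification of finite groups of self-homeomorphisms of the circle or the segment. More precisely, since $\coregG(S)=0$, by definition there exists a log Calabi--Yau $G$-pair $(S,D)$ with $\dim \mathcal{D}(S,D) = 1$. By Lemma~\ref{lemma: dual complex simple}, the topological space $\mathcal{D}(S,D)$ is then homeomorphic to either a circle $\mathbb{S}^1$ or to a closed line segment. By the construction in the preceding definition, $G_{\mathcal{D}}$ acts faithfully on $\mathcal{D}(S,D)$ by PL-homeomorphisms, so I only need to recall that a finite group of self-homeomorphisms of $\mathbb{S}^1$ is cyclic or dihedral (with cyclic ones acting by rotations and dihedral ones including a reflection), while a finite group of self-homeomorphisms of a segment is either trivial or $\CG_2$ (acting by the nontrivial involution). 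In all cases $G_{\mathcal{D}}$ is trivial, cyclic, or dihedral.

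For the final assertion, let $G$ be a non-abelian simple group acting faithfully on $S$, and suppose for contradiction that $\coregG(S)=0$. Since $N$ is a normal subgroup of $G$ and $G$ is simple, either $N=\{1\}$ or $N=G$. By Proposition~\ref{prop-coreg-0-arbitrary-dimension} the group $N$ is abelian, so if $N=G$ then $G$ is abelian, contradicting the assumption. Hence $N=\{1\}$ and the sequence \eqref{exact-sequence-two-parts-of-G} yields $G \cong G_{\mathcal{D}}$. But by what we just proved, $G_{\mathcal{D}}$ is trivial, cyclic, or dihedral, and none of these groups is non-abelian simple (cyclic groups are abelian, and for $n\geqslant 2$ the dihedral group $\DG_{2n}$ has a proper normal cyclic subgroup of order $n$). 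This gives the desired contradiction, hence $\coregG(S)>0$.

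The only step requiring any care is the classification of finite PL-actions on $\mathbb{S}^1$ and on a segment; this is elementary and can be stated without proof, so there is no real obstacle. Everything else is formal.
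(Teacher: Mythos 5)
Your proposal is correct and follows essentially the same route as the paper: cite Proposition~\ref{prop-coreg-0-arbitrary-dimension} for the statement about $N$, use Lemma~\ref{lemma: dual complex simple} to identify $\mathcal{D}(S,D)$ with a circle or a segment and hence bound $G_{\mathcal{D}}$ by a dihedral group or $\CG_2$, and derive the non-abelian simple case by the normality of $N$. Your write-up of the last step is merely a little more explicit than the paper's one-line conclusion.
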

\begin{proof}
Let $(S, D)$ be a log Calabi--Yau pair such that $\mathrm{coreg}_G(X,D)=0$.
By Proposition \ref{prop-coreg-0-arbitrary-dimension}, in~the~exact sequence \eqref{exact-sequence-two-parts-of-G}, the group $N$ is an abelian group of rank at most $2$.
By Lemma \ref{lemma: dual complex simple}, the dual complex $\mathcal{D}(S, D)$ is homeomorphic either to a circle, or to a segment. In the former case, $G_{\mathcal{D}}$ is~a~subgroup of a dihedral group. In the latter case, $G_{\mathcal{D}}$ is a subgroup of $\CG_2$. None of these groups are simple non-abelian. The claim follows.
\end{proof}

The following observation allows us to study subgroups and quotient groups to show that a~group~$G$ does not satisfy the conditions of Lemma \ref{corollary: coreg-0 criterion}.

\begin{remark}
\label{remark: passing to subgroups}
Assume that a finite group $G$ contains a normal abelian subgroup $N$ of rank at~most~$2$, such that the quotient $H = G / N$ is either trivial, or cyclic, or dihedral. Then for any subgroup $G_1 \subset G$ the group $N_1 = N \cap G_1$ is abelian of rank at most $2$ and normal in $G_1$. Moreover, the quotient $H_1 = G_1 / N_1$ is a subgroup of $H$, and therefore $H_1$ is either trivial, or cyclic, or dihedral.

If $f \colon G \rightarrow G_2$ is a surjective homomorphism, then the group $N_2 = f(N)$ is abelian of rank at~most~$2$ and normal in $G_2$. Moreover, the quotient $H_2 = G_2 / N_2$ is isomorphic to a quotient of~$H$, and therefore $H_2$ is either trivial, or cyclic, or dihedral.
\end{remark}

Now, we consider $G$-varieties of positive $G$-coregularity.

\begin{lemma}
\label{lemma: coreg 1 fixed}
Let $(X, D)$ be a log Calabi--Yau $G$-pair. 
Assume that $\coregG(X)=n-1$. Then either there exists a $G$-invariant irreducible subvariety on $X$ of dimension at most $n-1$, or there exists a $G$-invariant pair of irreducible subvarieties on $X$ of dimension at most $n-1$.
\end{lemma}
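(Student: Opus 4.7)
The plan is to choose a log Calabi--Yau $G$-pair realizing $\mathrm{reg}_G(X)=0$ and to extract the required subvarieties from the components of $D^{=1}$ on a $G$-equivariant log resolution. Since $\coregG(X)=n-1$ means $\mathrm{reg}_G(X)=0$, I may replace the given $(X,D)$ by a log Calabi--Yau $G$-pair realizing $\dim\mathcal{D}(X,D)=0$. Passing to a $G$-equivariant log resolution $f\colon Y\to X$ of $(X,D)$ with crepant pullback $(Y,D_Y)$, the complex $\mathcal{D}(D_Y^{=1})$ coincides by definition with $\mathcal{D}(X,D)$, so it still has dimension $0$; in particular, the components of $D_Y^{=1}$ are pairwise disjoint on $Y$.

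The key input is Proposition \ref{prop-disconnnected-case}: because $(Y,D_Y)$ is log Calabi--Yau, if its dual complex is disconnected then it is homeomorphic to $\mathbb{S}^0$, i.e., consists of exactly two points. Combined with the dimension count, this leaves only two possibilities, namely that $D_Y^{=1}$ has exactly one irreducible component or exactly two. From here I would do a short case analysis. In the one-component case, the unique component $E$ is automatically $G$-invariant, and $f(E)\subset X$ is a $G$-invariant irreducible subvariety of dimension at most $n-1$. In the two-component case, write $D_Y^{=1}=E_1+E_2$; then $G$ either preserves each component individually, so $f(E_1)$ alone gives the required single subvariety, or $G$ swaps $E_1$ and $E_2$, in which case $\{f(E_1),f(E_2)\}$ is the desired $G$-invariant pair of irreducible subvarieties of dimension at most $n-1$.

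The only conceptually nontrivial step is the appeal to Proposition \ref{prop-disconnnected-case}, which is precisely what rules out $0$-dimensional dual complexes with more than two vertices; without the log Calabi--Yau constraint one could in principle have arbitrarily many components of $D_Y^{=1}$ permuted by $G$, with no mechanism to force a small orbit. The remaining ingredients --- existence of a $G$-equivariant log resolution, the induced simplicial $G$-action on the dual complex as in Remark \ref{rem-group-acts-on-dual-complex}, and the $G$-equivariance of pushforward along $f$ --- are standard and should not require more than a line or two each.
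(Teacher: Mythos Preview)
Your argument is correct and follows essentially the same route as the paper: pass to a $G$-equivariant log resolution, use Proposition~\ref{prop-disconnnected-case} to conclude that the zero-dimensional dual complex has at most two points, and push forward the one or two components of $D_Y^{=1}$ to $X$. Your write-up is slightly more careful than the paper's in that you explicitly replace $(X,D)$ by a pair realizing $\mathrm{reg}_G(X)=0$ and split the two-component case into the fixed versus swapped subcases, but the substance is identical.
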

\begin{proof}
By assumption, we have $\dim \mathcal{D}(X, D)=0$.
If $\mathcal{D}(X, D)$ is connected then $\mathcal{D}(X, D)$ is a point. 
Otherwise $\mathcal{D}(X, D)$ is a pair of points by Proposition \ref{prop-disconnnected-case}.
Let $\varphi\colon (\widetilde{X}, \widetilde{D})\to (X,D)$ be a crepant $G$-equivariant log resolution. Then~$\widetilde{D}^{=1}$ consists of one component or two disjoint components. Moreover, $\widetilde{D}^{=1}$ is obviously $G$-invariant. Therefore the image of~$\widetilde{D}^{=1}$ on $X$ is $G$-invariant and consists of one or two irreducible subvarieties of dimension at most $n-1$.
\end{proof}

\begin{lemma}
\label{lemma:fixedcurve}
Let $G$ be a finite group, and let $(S, \Delta)$ be an lc $G$-pair where $S$ is a klt Fano variety.
Put $L=-K_S-\Delta$.
Assume that the linear system $|L|$ is non-empty and base-point-free. Then~${\coregG(S)\leqslant \coregG(S, \Delta)}$. In particular, if $\Delta$ is non-zero and integral then $\mathrm{reg}_G(S)\geqslant 0$.
\end{lemma}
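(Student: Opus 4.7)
The plan is to prove the equivalent inequality $\mathrm{reg}_G(S) \geqslant \mathrm{reg}_G(S, \Delta)$ by producing a $G$-invariant $\mathbb{Q}$-complement of $K_S$ whose dual complex contains $\mathcal{D}(S, \Delta)$ as a subcomplex. The natural candidate is $D := \Delta + H$, where $H$ is a $G$-invariant effective $\mathbb{Q}$-divisor with $H \sim_{\mathbb{Q}} L$, so that $K_S + D \sim_{\mathbb{Q}} 0$.

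First I would construct $H$ by averaging: pick a general $H_0 \in |L|$ and set
\[
H := \frac{1}{|G|}\sum_{g \in G} g^{*}H_0.
\]
This is a $G$-invariant effective $\mathbb{Q}$-divisor $\mathbb{Q}$-linearly equivalent to $L$, whose coefficient on each $G$-orbit equals $|\mathrm{Stab}_G(H_0)|/|G| \leqslant 1$. Clearing denominators, $D = \Delta + H$ lies in $\tfrac{1}{l}|-lK_S|$ for some $l \geqslant 1$, so it qualifies as a candidate $l$-complement once log-canonicity is established.

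Second I would verify that $(S, D)$ is lc. Take a $G$-equivariant log resolution $\pi \colon \widetilde{S} \to S$ of $(S, \Delta)$. The pulled-back linear system $|\pi^{*}L|$ is base-point-free on the smooth variety $\widetilde{S}$. For sufficiently general $H_0$, a Bertini-type argument shows that the $|G|$ divisors $g^{*}H_0$ are smooth, pairwise transverse, transverse to all components of $\widetilde{\Delta}$, and have strict transforms on $\widetilde{S}$ that avoid the centers of all $\pi$-exceptional prime divisors. Consequently $\pi^{*}H$ has no $\pi$-exceptional component, and
\[
\widetilde{D} := \widetilde{\Delta} + \pi^{*}H, \qquad K_{\widetilde{S}} + \widetilde{D} = \pi^{*}(K_S + D),
\]
has snc support with all coefficients in $[0,1]$: those inherited from $\widetilde{\Delta}$ are $\leqslant 1$ by lc-ness of $(S,\Delta)$, and the new ones equal $|\mathrm{Stab}_G(H_0)|/|G| \leqslant 1$. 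Thus $(S, D)$ is lc, and $\pi$ is also a log resolution of $(S, D)$.

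Third I would compare dual complexes. Since $\pi^{*}H$ only contributes new strict-transform components of coefficient $\leqslant 1$, while the coefficient-one components of $\widetilde{\Delta}$ remain coefficient-one in $\widetilde{D}$ (because $H_0$ shares no prime component with $\Delta$ and $\pi^{*}H$ has no exceptional component), there is an inclusion $\widetilde{\Delta}^{=1} \subseteq \widetilde{D}^{=1}$ of reduced snc divisors. This induces a $G$-equivariant inclusion of CW-complexes $\mathcal{D}(S, \Delta) \hookrightarrow \mathcal{D}(S, D)$, so $\dim \mathcal{D}(S, D) \geqslant \dim \mathcal{D}(S, \Delta)$, i.e., $\mathrm{reg}_G(S, D) \geqslant \mathrm{reg}_G(S, \Delta)$. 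Since $D$ is a $G$-invariant $l$-complement, $\mathrm{reg}_G(S) \geqslant \mathrm{reg}_G(S, D)$, finishing the main inequality. For the in-particular statement, if $\Delta \neq 0$ is integral then lc-ness forces its coefficients to lie in $\{0,1\}$, so $\Delta^{=1} = \Delta \neq 0$; its strict transform contributes a vertex of $\mathcal{D}(S, \Delta)$, giving $\mathrm{reg}_G(S, \Delta) \geqslant 0$, and hence $\mathrm{reg}_G(S) \geqslant 0$.

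The main obstacle is step two: one must carefully verify that after averaging, the pair $(S, \Delta + H)$ remains lc, which requires simultaneous general position of all $|G|$ translates $g^{*}H_0$ with respect to $\widetilde{\Delta}$ and to the $\pi$-exceptional locus. Base-point-freeness of $|L|$ is used essentially here to invoke Bertini on $\widetilde{S}$ for the family of translates at once.
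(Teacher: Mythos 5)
Your proposal is correct and follows essentially the same route as the paper: take a general member of the base-point-free system $|L|$, average over $G$ to get a $G$-invariant complement $\Delta+D^G$, observe that $\mathcal{D}(S,\Delta)$ embeds as a subcomplex of $\mathcal{D}(S,\Delta+D^G)$, and deduce the inequality. Your additional care in checking log-canonicity of the averaged pair via Bertini on a $G$-equivariant log resolution is exactly the point the paper leaves implicit.
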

\begin{proof}
Since $L$ is base-point-free, for a general $D\in |L|$
the pair $(S, \Delta + D)$ is a log Calabi--Yau pair. Put
\[
D^G = \frac{1}{|G|} \sum_{g\in G} gD.
\]
Then the pair $(S, \Delta + D^G)$ is a log Calabi--Yau $G$-pair.
Note that the dual complex~${\mathcal{D}(S, \Delta)}$ is a subcomplex of $\mathcal{D}(S, \Delta + D^G)$. Hence
\[
\coregG(S)\leqslant \coregG(S,\Delta+D^G)\leqslant \coregG(S, \Delta).
\]
Note that if $\Delta\neq 0$ and $\Delta$ is integral then $\mathrm{reg}_G(S)\geq \mathrm{reg}_G(S, \Delta)\geqslant 0$,
and the claim follows.
\end{proof}

\section{Conic bundles}
\label{sec-conic-bundles}
Consider a smooth projective surface $S$ and a $G$-conic bundle structure $f\colon S\to B=\mathbb{P}^1$ on it. We have a~natural exact sequence
\[
1\to G_F\to G\to G_B\to 1,
\]
where the group $G_B$ acts faithfully on $B=\mathbb{P}^1$, and the group $G_F$ acts faithfully on the generic fiber considered as a conic over the function field of the base $\mathbb{K}(B)$. Note that by Lemma \ref{lemma: action on P1} each of~the groups $G_F$ and $G_B$ is isomorphic to one of the following groups: $\CG_n$, $\DG_{2n}$, $\AG_4$, $\SG_4$, $\AG_5$.

\begin{propositionsec}
\label{proposition: CB}
Let $f\colon S\to B=\mathbb{P}^1$ be a $G$-conic bundle. If $\mathrm{coreg}_G(S)=0$, then $G_F$ is either cyclic or dihedral, and $G_B$ is either cyclic or dihedral.
\end{propositionsec}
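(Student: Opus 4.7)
The plan is to choose a $G$-invariant log Calabi--Yau pair $(S, D)$ realizing $\coregG(S) = 0$, so that $\dim \mathcal{D}(S, D) = 1$. After passing to a $G$-equivariant dlt modification of $(S, D)$, I may assume that the reduced divisor $D^{=1}$ is simple normal crossing, and the conic bundle $f\colon S \to B$ partitions its components into a horizontal part $D^h$ (dominating $B$) and a vertical part $D^v$ (contained in fibers of $f$).

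For $G_F$ I would split into two cases. If $D^h \neq 0$, restricting to a general fiber $F \cong \mathbb{P}^1$ yields by adjunction a log Calabi--Yau $G_F$-pair $(F, D|_F)$, and any horizontal coefficient-$1$ component meets $F$ transversely in at least one point, producing a coefficient-$1$ point of $D|_F$. Hence $\mathrm{coreg}_{G_F}(\mathbb{P}^1) = 0$, so by Example~\ref{example: coregularity of P1} the group $G_F$ is cyclic or dihedral. If $D^h = 0$, then since distinct fibers of $f$ are disjoint on the smooth surface $S$, any $1$-simplex of $\mathcal{D}(S, D)$ must arise from two components of $D^v$ meeting in a common reducible fiber. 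A reducible fiber of a conic bundle on a smooth surface is a pair of $(-1)$-curves $C_1, C_2$ meeting at a node $p$, and both must lie in $D^{=1}$. The group $G_F$ acts trivially on $B$ and hence stabilizes this fiber. Either $G_F$ preserves each $C_i$, in which case it acts on $C_i \cong \mathbb{P}^1$ with fixed point $p$ and is cyclic (any finite subgroup of $\mathrm{PGL}_2(\mathbb{K})$ with a fixed point is cyclic); or $G_F$ swaps $C_1$ and $C_2$ and contains an index-$2$ cyclic subgroup, forcing $G_F$ to be cyclic or dihedral.

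For $G_B$ I would apply the canonical bundle formula to the $G$-equivariant lc-trivial fibration $f\colon (S, D) \to B$. Since a general fiber is a log Calabi--Yau pair on $\mathbb{P}^1$, the moduli part vanishes and the formula reads
\[
K_S + D \sim_{\mathbb{Q}} f^*(K_B + D_B)
\]
for an effective $G_B$-invariant $\mathbb{Q}$-divisor $D_B$ on $B$ of degree $2$. The coefficient of $p \in B$ in $D_B$ equals $1 - t_p$, where $t_p$ is the lc threshold of $f^{-1}(p)$ with respect to $(S, D)$ in a neighborhood of the fiber $F_p$. The analysis above shows that $D_B$ must have at least one coefficient-$1$ point: if $D^v \neq 0$, the image of any coefficient-$1$ vertical component gives such a point directly; if $D^v = 0$, a blow-up computation at the intersection point of two coefficient-$1$ horizontal components inside $F_p$ yields $t_p = 0$. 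Since the coefficients of $D_B$ lie in $[0,1]$ and sum to $2$, the set of coefficient-$1$ points has cardinality at most $2$ and is $G_B$-invariant. Applying Example~\ref{example: coregularity of P1} once more, $G_B$ is cyclic or dihedral.

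The main technical obstacle is verifying the coefficient-$1$ assertion for $D_B$ uniformly across all possible configurations of the $1$-simplex in $\mathcal{D}(S, D)$, via the canonical bundle formula and a careful snc/blow-up book-keeping; once this is carried out, the conclusion follows directly from the classification of finite subgroups of $\mathrm{PGL}_2(\mathbb{K})$ that admit a $G$-invariant subset of size at most two.
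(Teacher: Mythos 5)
Your strategy can be made to work, and your treatment of $G_B$ via the discriminant divisor of the lc-trivial fibration is a genuinely different route from the paper's: the paper instead runs an explicit case analysis on the components of $\widetilde{D}^{=1}$ (one or two horizontal coefficient-one components, intersecting or not) and exhibits a $G_B$-orbit of cardinality at most $2$ on $B$ by hand. Your approach buys a uniform argument at the cost of the canonical bundle formula machinery; note, though, that the claim that the moduli part vanishes is not justified by the general fiber being log Calabi--Yau — fortunately you only need $\deg M_B\geqslant 0$ (b-nefness) to get $\deg D_B\leqslant 2$, and the non-emptiness of the coefficient-one locus of $D_B$ follows uniformly from the observation that a zero-dimensional lc centre of $(S,D)$ lies over some $p\in B$, forcing $t_p=0$.

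There are, however, two genuine gaps in the $G_F$ part. First, your case division is drawn in the wrong place: you split on $D^h=0$ versus $D^h\neq 0$, but the relevant dichotomy is whether the coefficient-one part $(D^h)^{=1}$ vanishes. If $D^h\neq 0$ but every horizontal component has coefficient $<1$ (for instance $D=\tfrac12 H+C_1+C_2+\cdots$ with $H$ horizontal and $C_1\cup C_2$ a reducible fiber carrying the $1$-simplex), your first branch produces no coefficient-one point on the general fiber, so $\mathrm{coreg}_{G_F}(\mathbb{P}^1)=0$ does not follow, while your second branch is never invoked because it is conditioned on $D^h=0$; the paper's split on $\widetilde{D}_{hor}^{=1}=0$ versus $\neq 0$ is what covers this configuration (the same slip recurs when you test $D^v\neq 0$ rather than $(D^v)^{=1}\neq 0$). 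Second, in the reducible-fiber branch your claim that $G_F$ ``acts on $C_i\cong\mathbb{P}^1$ with fixed point $p$ and is cyclic'' silently assumes that $G_F$ acts faithfully on $C_i$. The kernel of $G_F\to\mathrm{Aut}(C_i)$, consisting of elements fixing $C_i$ pointwise, need not be trivial; one must argue, as the paper does via Lemma \ref{lem-faithful-action} using the faithful $G_F$-representation on $T_pS$ and its splitting into the tangent lines of $C_1$ and $C_2$, that this kernel is cyclic, so that $G_F$ is cyclic-by-cyclic, and then invoke the classification of Lemma \ref{lemma: action on P1} to exclude $\AG_4$, $\SG_4$, $\AG_5$. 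In particular your intermediate assertion that $G_F$ is cyclic in that sub-case is false in general (it can be dihedral), and the appeal to ``a reducible fiber is two $(-1)$-curves'' sits uneasily with your earlier passage to a dlt modification, on which fibers may be longer chains. Both gaps are repairable, but as written the $G_F$ argument does not close.
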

\begin{proof}
Assume that $\mathrm{coreg}_G(S)=0$. It follows that there exists a $G$-invariant $\mathbb{Q}$-divisor $D$ such that~$(S, D)$ is a log Calabi--Yau pair and $\mathrm{coreg}_G(S, D)=0$. Let $(\widetilde{S}, \widetilde{D})$ be a crepant pullback of~$(S, D)$ via a $G$-equivariant log resolution. Let $\widetilde{f}\colon \widetilde{S}\to B=\mathbb{P}^1$ be the induced morphism.
Let~$\widetilde{D}_{ver}$ be the sum of components of $\widetilde{D}$ that lie in the fibers of $\widetilde{f}$. Let $\widetilde{D}_{hor}=\widetilde{D}-\widetilde{D}_{ver}$. Note that each component of $\widetilde{D}_{hor}$ projects onto $B$ surjectively.

Let $F$ be a general fiber of $\widetilde{f}$. Then $F\cong \mathbb{P}^1$. By adjunction, we have
\begin{equation}
\label{eq-adjunction-to-fiber}
0\sim ( K_{\widetilde{S}}+\widetilde{D})|_F = K_F + \widetilde{D}|_F = K_{\mathbb{P}^1} +  \widetilde{D}|_F.
\end{equation}
Note that $\widetilde{D}|_F=\widetilde{D}_{hor}|_F$.
We have $\widetilde{D}|_F\sim 2P$, where $P$ is a point on $F\cong \mathbb{P}^1$. Consider two cases.

Assume that there are no components of $\widetilde{D}_{hor}$ with coefficient $1$, that is, $\widetilde{D}_{hor}^{=1}=0$. Then $\widetilde{D}^{=1}$ is a~union of components of the fibers of $\widetilde{f}$. By Proposition \ref{prop-disconnnected-case}, we see that $\mathcal{D}(\widetilde{S},\widetilde{D})$ is connected, and hence $\widetilde{D}^{=1}$ is connected as well. This means that $\widetilde{D}^{=1}=\sum D_i$ is a chain of smooth rational curves of length at least $2$ that belong to one fiber $F_1$ of $\widetilde{f}$. In particular, $\widetilde{f}(F_1)=Q\in B$ is a~$G$-invariant point. Therefore, $G_B$ is cyclic. Also, there exists a subgroup $H\subset G_F$ of index at most $2$ such that~$H$ preserves each component of $\widetilde{D}^{=1}$. In particular, $H$ has a fixed point $P$ on~$F_1$. By~Lemma~\ref{lem-faithful-action}, the action of $H$ on the tangent space $T_P \widetilde{S}$ is faithful. Since the groups~$\AG_4$, $\SG_4$, $\AG_5$ do not have faithful $2$-dimensional representations, we conclude that $H$ is either cyclic or~dihedral. Also, the groups $\AG_4$, $\SG_4$, $\AG_5$ do not have cyclic or dihedral subgroups of index $2$. Since by construction $H$ has index at~most~$2$ in $G_F$, we conclude that $G_F$ is itself cyclic or dihedral.

Now assume that there is at least one component of $\widetilde{D}_{hor}$ with coefficient $1$. By \eqref{eq-adjunction-to-fiber}, there are at most~$2$ components with coefficient $1$ in $\widetilde{D}_{hor}$. Hence, $G_F$ has an orbit of cardinality at most~$2$ on~$\mathbb{P}^1_{\overline{\mathbb{K}(B)}}$, where $\overline{\mathbb{K}(B)}$ is the algebraic closure of $\mathbb{K}(B)$. Thus, $G_F$ is either cyclic or dihedral.

Assume that $\widetilde{D}_{hor}^{=1}$ has only one component, say $D_1$. Note that $D_1$ is $G$-invariant. Since the dual complex $\mathcal{D}(\widetilde{S},\widetilde{D})$ is either a circle or segment, we have that $D_1$ intersects either one or two components of $\widetilde{D}_{ver}^{=1}$ (see Lemma \ref{lemma: dual complex simple}). It follows that this component or this pair of components is $G$-invariant. Thus, $G_B$ has an orbit of cardinality at most $2$ on $B$, and so $G_B$ is either cyclic or~dihedral.

Finally, assume that $\widetilde{D}_{hor}^{=1}$ has two components, say $D_1$ and $D_2$. If $D_1$ and $D_2$ intersect, the~intersection has cardinality at most $2$ (because $\mathcal{D}(\widetilde{S},\widetilde{D})$ is either a circle or a line segment by~Lemma~\ref{lemma: dual complex simple}). Hence, the image of $D_1\cap D_2$ gives us an orbit of cardinality at most $2$ on $B$, and so $G_B$ is either a cyclic or dihedral. So we are left with the case when $D_1$ and $D_2$ do not intersect. Since $\mathcal{D}(\widetilde{S},\widetilde{D})$ is either circle or a line segment, there exist one or two fibers of $\widetilde{f}$ containing components of $\widetilde{D}_{ver}^{=1}$ that intersect $D_1$ and also containing components of $\widetilde{D}_{ver}^{=1}$ that intersect $D_2$. Then the image of these fibers on $B$ is $G_B$-orbit of cardinality at most $2$, and so $G_B$ is either cyclic or~dihedral. This completes the proof.
\end{proof}

The next example shows that there exist $G$-conic bundles with positive coregularity, even in the case when $G$ is the trivial group. This shows that the converse to Proposition \ref{proposition: CB} does not hold in~general.

\begin{examplesec}
Consider a  blow up $S$ of $\mathbb{P}^1 \times \mathbb{P}^1$ in $N$ points in a general position.
Then $S$ admits a~structure of a conic bundle $S\to \mathbb{P}^1$ induced by one of the natural projections on $\mathbb{P}^1 \times \mathbb{P}^1$.
One checks that $|-K_S|=|-2K_S|=\varnothing$ for $N\gg0$. Thus, by \cite[Theorem 4]{FFMP22}, one has~${\mathrm{coreg}(S)>0}$.
\end{examplesec}

\section{Del Pezzo surfaces of higher degree}
\label{sec-del-pezzo-surfaces}
In this section we study automorphism groups of smooth del Pezzo surfaces of degree at least $6$ and~compute $G$-coregularity for
various finite groups $G$. Note that automorphisms groups of smooth del Pezzo surfaces over $\mathbb{C}$ are described in \cite{DI09}, and the classification is the same for any algebraically closed field of characteristic~$0$.

\subsection{Automorphisms of $\mathbb{P}^2$}

A smooth del Pezzo surface of degree $9$ is isomorphic to $\mathbb{P}^2$. In this subsection we compute $\coregG(\mathbb{P}^2)$ for any finite group $G\subset \mathrm{Aut}(\mathbb{P}^2)$. Note that finite subgroups of~${\mathrm{PGL}_3(\mathbb{K}) \cong \operatorname{Aut}\left(\mathbb{P}^2\right)}$ are described in Theorem \ref{theorem: PGL3-classification}.

At first we prove the following auxiliary lemma.

\begin{lemma}
\label{lem-B2-positive-coreg}
Let $S$ be a $G$-surface where $G$ is a group isomorphic to a group of type $(\mathrm{B}2)$ (see Remark \ref{remark: case B}). Then $\coregG(S)>0$.
\end{lemma}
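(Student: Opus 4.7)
The plan is to argue by contradiction. Suppose $\coregG(S)=0$; by Lemma~\ref{corollary: coreg-0 criterion}, $G$ admits a normal abelian subgroup $N$ of rank at most $2$ such that $G/N$ is trivial, cyclic, or dihedral. I would show that no abstract group isomorphic to a type $(\mathrm{B}2)$ group can admit such an $N$, so that the contradiction is purely group-theoretic.

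By Remark~\ref{remark: case B}, a type $(\mathrm{B}2)$ group embeds in $\mathrm{GL}_2(\mathbb{K})$ with image in $\mathrm{PGL}_2(\mathbb{K})$ isomorphic to some $Q\in\{\AG_4,\SG_4,\AG_5\}$. The resulting faithful $2$-dimensional representation $\rho\colon G\hookrightarrow\mathrm{GL}_2(\mathbb{K})$ is irreducible, since point-stabilizers in $\mathrm{PGL}_2(\mathbb{K})$ are cyclic while $\AG_4,\SG_4,\AG_5$ are not. Hence we may fit $G$ into a central extension
\[
1\to C\to G\xrightarrow{\pi} Q\to 1,
\]
where $C=G\cap\mathbb{K}^*$ is cyclic. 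Set $\bar N=\pi(N)$, a normal abelian subgroup of $Q$. Inspection of the normal subgroup lattice of $\AG_4$, $\SG_4$, and $\AG_5$ gives $\bar N\in\{1,\CG_2^2\}$, with $\bar N=1$ forced when $Q=\AG_5$.

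If $\bar N=1$ then $N\subseteq C$, so $G/N$ surjects onto $Q$. But $Q$ is not trivial, cyclic, or dihedral, whereas every quotient of a trivial, cyclic, or dihedral group is again of that form, contradicting the choice of $N$.

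The main obstacle is the case $\bar N=\CG_2^2$. Here $N$ lies inside the preimage $H=\pi^{-1}(\CG_2^2)$ and surjects onto $\CG_2^2$. I would introduce the commutator pairing
\[
\omega\colon\CG_2^2\times\CG_2^2\to C,\qquad \omega(v_1,v_2)=[\tilde v_1,\tilde v_2],
\]
which, by centrality of $C$, is well-defined independently of the lifts $\tilde v_i\in G$, and is bilinear and alternating. Were $\omega$ trivial, $H$ would be abelian, hence simultaneously diagonalizable on $\mathbb{K}^2$; the common eigenline would be $H$-invariant, and since $H\trianglelefteq G$ also $G$-invariant, contradicting the irreducibility of $\rho$. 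So $\omega$ is non-trivial. Since $\CG_2^2\cong\mathbb{F}_2^2$ carries a one-dimensional space of alternating bilinear forms, the non-trivial $\omega$ is non-degenerate: $\omega(v_1,v_2)\neq 1$ for any two distinct non-identity $v_1,v_2$. Because $N$ surjects onto $\CG_2^2$, it contains lifts $\tilde v_1,\tilde v_2\in N$ of two such elements; but then $[\tilde v_1,\tilde v_2]=\omega(v_1,v_2)\neq 1$ contradicts the commutativity of $N$. This completes the argument.
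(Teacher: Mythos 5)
Your proof is correct in substance but takes a genuinely different route from the paper's. The paper's argument is much shorter: it quotes from \cite{DI09} that a type $(\mathrm{B}2)$ group contains a copy of the binary tetrahedral group $\overline{T}$ (a central extension of $\AG_4$ by $\CG_2$), observes that the only non-trivial normal abelian subgroup of $\overline{T}$ is its center $\CG_2$ with $\overline{T}/\CG_2\cong\AG_4$, and concludes $\mathrm{coreg}_{\overline{T}}(S)>0$ from Lemma~\ref{corollary: coreg-0 criterion}, whence $\coregG(S)>0$ by the monotonicity of Remark~\ref{rem-monotonicity}. You instead run the obstruction on all of $G$ through the central extension $1\to C\to G\to Q\to 1$, ruling out $\bar N=1$ by a quotient argument and $\bar N=\CG_2^2$ by the non-degeneracy of the commutator pairing. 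This is longer but self-contained (it does not import the existence of $\overline{T}\subset G$ from the classification), and the pairing argument is a clean way to see that $N$ cannot surject onto $\CG_2^2$; the paper's route buys brevity by delegating the structure theory to \cite{DI09} and to the already-proved reduction to subgroups.

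One step needs repair: the assertion that an $H$-eigenline is $G$-invariant because $H\trianglelefteq G$ is false as stated --- normality only guarantees that $G$ permutes the set of $H$-eigenlines. The fix is routine: since $H$ surjects onto $\CG_2^2$ it is non-scalar, so if $H$ were abelian its weight decomposition would consist of exactly two lines in $\mathbb{K}^2$, which $G$ permutes; hence a subgroup of index at most $2$ in $G$ fixes a line, and its image in $\mathrm{PGL}_2(\mathbb{K})$ is a cyclic subgroup of index at most $2$ in $Q\in\{\AG_4,\SG_4\}$, which does not exist. With this correction your argument goes through.
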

\begin{proof}
Note that a group $G$ of type $(\mathrm{B}2)$ is a subgroup of $\mathrm{GL}_2(\mathbb{K})$, such that $G$ contains a~subgroup~$G_1$ isomorphic to a binary tetrahedral group $\overline{T}$ (see \cite[Subsection 4.2]{DI09}). Observe that a~unique non-trivial normal abelian subgroup in $\overline{T}$ is isomorphic to $\CG_2$, and $\overline{T} / \CG_2 \cong \AG_4$. Therefore by Lemma \ref{corollary: coreg-0 criterion} one has $\operatorname{coreg}_{\overline{T}}(\mathbb{P}^2) > 0$, and by Remark~\ref{rem-monotonicity} one has $\coregG(\mathbb{P}^2) > 0$.
\end{proof}

Now we prove the following proposition.

\begin{proposition}
\label{proposition: dP9}
Let $G$ be a finite group in $\mathrm{PGL}_3(\mathbb{K})=\mathrm{Aut}(\mathbb{P}^2)$.
\begin{enumerate}
\item
If $G$ has type $(\mathrm{A})$, $(\mathrm{B}1)$, $(\mathrm{C})$, or $(\mathrm{D})$ then $\coregG(\mathbb{P}^2)=0$;
\item
if $G$ has type $(\mathrm{B}2)$, $(\mathrm{E})$, or $(\mathrm{H})$ then $\coregG(\mathbb{P}^2)=1$;
\item
if $G$ has type $(\mathrm{F})$, $(\mathrm{G})$, $(\mathrm{I})$, or $(\mathrm{K})$ then $\coregG(\mathbb{P}^2)=2$.
\end{enumerate}
\end{proposition}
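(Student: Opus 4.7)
\bigskip

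\noindent\textbf{Proof plan.} The proposition splits into three parts according to the value of $\coregG(\mathbb{P}^2)\in\{0,1,2\}$. For each value I will use a different set of tools from the preceding sections: exceptional singularities for the value $2$, explicit invariant triangles for the value $0$, and a combination of obstructions with Lemma \ref{lemma:fixedcurve} for the value $1$.

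\medskip

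The cases $(\mathrm{F})$, $(\mathrm{G})$, $(\mathrm{I})$, $(\mathrm{K})$ are immediate. By Theorem \ref{thm-exc-dim2} these are precisely the types for which the singularity $0\in\mathbb{A}^3/\overline{G}$ is exceptional, and then Proposition \ref{proposition: coregularity exceptional high dim}(1) yields $\coregG(\mathbb{P}^2)=n-1=2$. For the cases $(\mathrm{A})$, $(\mathrm{B}1)$, $(\mathrm{C})$, $(\mathrm{D})$, I will produce a $G$-invariant boundary $D\sim -K_{\mathbb{P}^2}$ consisting of three lines in general position forming an SNC triangle, so that $\mathcal{D}(\mathbb{P}^2,D)$ is a cycle (dimension $1$) and $\coregG(\mathbb{P}^2)=0$. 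In type $(\mathrm{A})$ this is the coordinate triangle preserved by the diagonal group. In types $(\mathrm{C})$ and $(\mathrm{D})$ the same triangle works, since the normal diagonal subgroup $N$ fixes the vertices and $G/N\in\{\CG_3,\SG_3\}$ permutes the three lines. In type $(\mathrm{B}1)$, writing $\ell$ for the invariant line, $p$ for the fixed point (with $p\notin\ell$ by Remark~\ref{remark: case B}), and $\{q_1,q_2\}$ for the $\DG_{2n}$-orbit of size $2$ on $\ell$, the three lines $\ell$, $\overline{pq_1}$, $\overline{pq_2}$ form the desired $G$-invariant SNC triangle.

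\medskip

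For the intermediate cases $(\mathrm{B}2)$, $(\mathrm{E})$, $(\mathrm{H})$ I need both bounds. The lower bound $\coregG(\mathbb{P}^2)>0$ follows from Lemma \ref{lem-B2-positive-coreg} in case $(\mathrm{B}2)$ and from Lemma \ref{corollary: coreg-0 criterion} in case $(\mathrm{H})$, since $\AG_5$ is simple non-abelian. In case $(\mathrm{E})$, I will instead invoke Proposition \ref{prop-coreg-0-arbitrary-dimension}: a short analysis of the semidirect product $\CG_3^2\rtimes \CG_4$ shows that its only normal abelian subgroups are $\{e\}$ and $\CG_3^2$ (using that the $\CG_4$-action on $\CG_3^2$ is given by an order-$4$ element of $\mathrm{GL}_2(\mathbb{F}_3)$, whose characteristic polynomial $x^2+1$ is irreducible over $\mathbb{F}_3$); among these only $N=\CG_3^2$ yields a cyclic quotient, and $\CG_3^2$ has no fixed point on $\mathbb{P}^2$ (its smallest orbit has size $3$), so Proposition \ref{prop-coreg-0-arbitrary-dimension} rules out $\coregG=0$. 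For the upper bound $\coregG(\mathbb{P}^2)\leq 1$, I will apply Lemma \ref{lemma:fixedcurve} to a $G$-invariant integral divisor $\Delta$: the invariant line $\ell$ in case $(\mathrm{B}2)$ (with $|-K-\ell|=|2H|$ base-point-free), a smooth $\AG_5$-invariant conic $Q$ in case $(\mathrm{H})$, whose existence follows from the decomposition $\mathrm{Sym}^2 V\cong \mathbf{1}\oplus\mathbf{5}$ of the standard $3$-dimensional representation (with $|-K-Q|=|H|$ base-point-free), and in case $(\mathrm{E})$ a smooth $G$-invariant cubic $C$ from the Hesse pencil: the $\CG_4$-quotient acts on the pencil through its image $\CG_2\subset\AG_4$, which fixes two points of the pencil, and these fixed points avoid the four triangular fibers because the Klein four-group $V_4\subset\AG_4$ acts on the four triangles as its regular representation without fixed points.

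\medskip

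The main obstacle is type $(\mathrm{E})$, where I must simultaneously rule out $\coregG=0$ via a careful inspection of normal abelian subgroups of $\CG_3^2\rtimes \CG_4$ and verify the existence of a $G$-invariant smooth cubic by analysing the induced action on the Hesse pencil; the other types reduce cleanly to already-established lemmas.
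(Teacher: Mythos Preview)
Your proposal is correct, and for parts (1) and (2) it follows essentially the same route as the paper, with only cosmetic differences (e.g.\ you obtain the $\AG_5$-invariant conic from the decomposition $\mathrm{Sym}^2 V\cong\mathbf{1}\oplus\mathbf{5}$ rather than from the Veronese embedding $\mathbb{P}^1\hookrightarrow\mathbb{P}^2$, and in type $(\mathrm{E})$ you verify smoothness of the invariant Hesse cubic by analysing the $V_4$-action on the four triangles rather than by invoking primitivity; both arguments give the same conclusion). One small point of phrasing: in your treatment of type $(\mathrm{E})$, when you write ``only $N=\CG_3^2$ yields a cyclic quotient'' you should allow dihedral quotients as well, since Lemma~\ref{corollary: coreg-0 criterion} permits $G_{\mathcal D}$ to be dihedral; this is harmless because $\CG_3^2\rtimes\CG_4$ is visibly not dihedral (it has no element of order $18$), but the sentence should be adjusted.

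The one genuine divergence is in part (3). The paper argues directly: for $G$ of type $(\mathrm{F}),(\mathrm{G}),(\mathrm{I}),(\mathrm{K})$ it first shows $\coregG(\mathbb{P}^2)>0$ (via the type-$(\mathrm{E})$ subgroup or simplicity), and then rules out $\coregG(\mathbb{P}^2)=1$ by showing that a $G$-invariant curve $C$ with $-K_{\mathbb{P}^2}-C$ effective would have to be a smooth cubic on which $G$ acts, forcing $G$ to have a normal abelian translation subgroup with cyclic quotient --- impossible for these groups. You instead invoke Theorem~\ref{thm-exc-dim2} together with Proposition~\ref{proposition: coregularity exceptional high dim}(1), which is legitimate (no circularity: those results rest on \cite{MP99}, \cite{ChSh11}, and \cite{B21}, not on Proposition~\ref{proposition: dP9}). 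This shortcut is cleaner, and indeed the paper itself remarks immediately after the proof that one could have used Theorem~\ref{thm-ChW13} for this case. The trade-off is that the paper's direct argument is self-contained within elementary projective geometry, whereas yours imports the machinery of exceptional quotient singularities and Birkar's result on log canonical thresholds.
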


\begin{proof}
At first assume that $G$ has type $(\mathrm{A})$, $(\mathrm{B}1)$, $(\mathrm{C})$, or $(\mathrm{D})$. Then there exists a $G$-invariant triple of points not lying on a line. Indeed, in case $(\mathrm{A})$ we can choose three non-collinear fixed points of~$G$, since $G$ is diagonalizable in this case, and in cases $(\mathrm{C})$ or $(\mathrm{D})$ we can choose three non-collinear fixed points of $N$ (see Theorem \ref{theorem: PGL3-classification}) permuted by $G / N$, since $N$ is a diagonalizable abelian group. For case $(\mathrm{B}1)$ we consider the unique fixed point of $G$, and a pair of points on~the~invariant line fixed by the normal cyclic subgroup in $\DG_{2n}$.

Consider three lines $L_1$, $L_2$, and $L_3$ passing through pairs of these points. Then $D = \sum_{i=1}^{3} L_i$ is a $G$-invariant divisor equivalent to $-K_{\mathbb{P}^2}$.
Thus $(\mathbb{P}^2,D)$ is a log Calabi--Yau pair on which zero $G$-coregularity is attained.

Consider case $(\mathrm{B}2)$. 
By Lemma \ref{lem-B2-positive-coreg} we have $\coregG(\mathbb{P}^2)>0$. Moreover, there is a unique \mbox{$G$-invariant} line $L$ on $\mathbb{P}^2$, and the linear system $|-K_{\mathbb{P}^2} - L|$ is base point free. Thus~${\coregG(\mathbb{P}^2) \leqslant 1}$ by Lemma \ref{lemma:fixedcurve}. Therefore $\coregG(\mathbb{P}_2) = 1$.

Consider case $(\mathrm{E})$. Assume that $\coregG(\mathbb{P}^2) = 0$. Then the conditions of Lemma \ref{corollary: coreg-0 criterion} should be satisfied, that can be done only if $N \cong \CG_3^2$, and $G_{\mathcal{D}} \cong \CG_4$ in the notation of Lemma \ref{corollary: coreg-0 criterion}. But by Proposition \ref{prop-coreg-0-arbitrary-dimension}, the normal group $N$ must have a fixed point, which is not true (see~\cite[Chapter~V]{Bl17}). We obtain a contradiction. Therefore $\coregG(\mathbb{P}^2) > 0$. Moreover, for cases $(\mathrm{F})$ and~$(\mathrm{G})$ one has $\coregG(\mathbb{P}^2) > 0$, since such groups contain a subgroup isomorphic to a group of~type~$(\mathrm{E})$ (see Remark \ref{rem-monotonicity}).

For cases $(\mathrm{H})$, $(\mathrm{I})$, and $(\mathrm{K})$ the group $G$ is simple. Therefore $\coregG(\mathbb{P}^2) > 0$ by Lemma \ref{corollary: coreg-0 criterion}.

Note that a group of type $(\mathrm{E})$ is a subgroup of a group of type $(\mathrm{G})$. Note that the latter group is the group of automorphisms of the Hesse pencil of cubics
\[
\lambda(x^3 + y^3 + z^3) + \mu xyz = 0
\]
on $\mathbb{P}^2$ (see \cite[Section 4]{AD06}). Moreover, both of these groups contain a normal subgroup $H \cong \CG_3^2 \rtimes \CG_2$ preserving each cubic in the Hesse pencil. If $G$ has type $(\mathrm{E})$ then the quotient $G / H \cong \CG_2$ acts faithfully on the Hesse pencil. A cyclic group acting on a projective line always has a fixed point. Therefore there is a $G$-invariant cubic $C$. Note that the group $G$ is transitive, thus $G$ does not have fixed points, invariant pairs of points and lines. Moreover, $G$ is primitive, hence $C$ is not a union of three lines. Therefore the only possibility for $C$ is to be a smooth cubic curve, and $(\mathbb{P}^2,C)$ is a~log~Calabi--Yau pair on which $G$-coregularity one is attained.

The action of a group of type $(\mathrm{H})$ has the following description. Consider the action of $\AG_5$ on~$\mathbb{P}^1$. Then the anticanonical linear system $|-K_{\mathbb{P}^1}|$ gives a $G$-equivariant embedding $\mathbb{P}^1 \hookrightarrow \mathbb{P}^2$. Thus we have a $G$-invariant smooth conic $C$ on $\mathbb{P}^2$, and the linear system $|-K_{\mathbb{P}^2} - C|$ is base point free. Therefore $\coregG(\mathbb{P}^2) \leqslant 1$ by Lemma \ref{lemma:fixedcurve}. Thus $\coregG(\mathbb{P}^2) = 1$.

Now let us show that for types $(\mathrm{F})$, $(\mathrm{G})$, $(\mathrm{I})$ and $(\mathrm{K})$ one has $\coregG(\mathbb{P}^2)=2$. Assume that~${\coregG(\mathbb{P}^2)=1}$ and $(\mathbb{P}^2,D)$ is a log Calabi--Yau pair on which $G$-coregularity one is attained.

By Lemma \ref{lemma: coreg 1 fixed} there is either a $G$-invariant irreducible subvariety of dimension $0$ or $1$ on $X$, or~a~$G$-invariant pair of such subvarieties. Note that $G$ is intransitive, and therefore $G$ does not have fixed points and invariant lines. Moreover, $G$ does not have invariant pairs of points and pairs of lines, since the line passing through a $G$-invariant pair of fixed points is $G$-invariant, and a point of intersection of a $G$-invariant pair of lines is $G$-fixed.

Therefore $D$ has a $G$-invariant component $C$ with coefficient~$1$, where $C$ is an irreducible conic or a cubic. If $C$ is a conic then $G$ acts faithfully on $C$ since an element of $\mathrm{PGL}_3(\mathbb{K})$ cannot preserve a conic pointwisely. But $G$ cannot act faithfully on a conic by Lemma \ref{lemma: action on P1}.

Now assume that $C$ is an irreducible cubic curve. If $C$ is singular then the singular point \mbox{is $G$-fixed} which is impossible. Therefore $C$ is an elliptic curve, and $G$ should contain a normal abelian subgroup $N$ (corresponding to translations on the elliptic curve) of rank at most $2$, such that the quotient $H = G / N$ is either trivial, or cyclic. For cases $(\mathrm{F})$ and $(\mathrm{G})$ the only non-trivial normal abelian  subgroup is $\CG_3^2$, and for cases $(\mathrm{I})$ and $(\mathrm{K})$ there are no non-trivial normal subgroups in $G$. Therefore $G / N$ is not a cyclic group. The obtained contradiction shows that $\coregG(\mathbb{P}^2) = 2$.
\end{proof}

\begin{remark}
Note that in Proposition \ref{proposition: dP9} we could have used Theorem \ref{thm-ChW13} to characterize groups $G\subset \mathrm{Aut}(\mathbb{P}^2)$ such that $\mathrm{coreg}_G(\mathbb{P}^2)=2$.
\end{remark}

\subsection{Automorphisms of $\mathbb{P}^1 \times \mathbb{P}^1$}

A smooth del Pezzo surface of degree $8$ is isomorphic either to $\mathbb{P}^1 \times \mathbb{P}^1$, or~to~the Hirzebruch surface $\mathbb{F}_1$. In this subsection we consider automorphisms of $\mathbb{P}^1 \times \mathbb{P}^1$. We~recall some results on these automorphisms (for details see \cite[Subsection 4.3]{DI09}).

Put $X \cong C_1 \times C_2$, where $C_i \cong \mathbb{P}^1$, and let $\pi_i\colon X \rightarrow C_i$ be the projections. The group $\operatorname{Pic}\left(X\right) \cong \mathbb{Z}^2$ is generated by two classes of $F_i = \pi_i^{-1}(p_i)$, where $p_i$ is a point on $C_i$. One has $-K_X \sim 2F_1 + 2F_2$. Note that any $g \in \operatorname{Aut}(X)$ acts on $\operatorname{Pic}\left(X\right)$, and preserves $-K_X$ and the intersection form. Therefore either $g$ acts trivially on $\operatorname{Pic}\left(X\right)$, or $g$ permutes $F_1$ and $F_2$.

For any group $G \subset \operatorname{Aut}(X)$ denote by $G_0$ its subgroup consisting of elements, acting trivially on~$\operatorname{Pic}\left(X\right)$. Then $G / G_0$ is either trivial, or is isomorphic to $\CG_2$. Moreover, $G_0$ preserves $F_i$, and each projection $\pi_i \colon X \rightarrow C_i$ is $G_0$-equivariant. These projections induce homomorphisms
$$
f_i \colon G_0 \rightarrow \operatorname{Aut}(C_i) \cong \mathrm{PGL}_2(\mathbb{K}).
$$
Denote the image $f_i(G_0)$ by $A_i$. We can consider $A_i$ as a subgroup of $\operatorname{Aut}_0(X)$ acting faithfully on~$C_i$ and acting trivially on $C_j$, where $i \neq j$. One can easily show that $G_0$ is a subgroup of $A_1 \times A_2$ considered as a subgroup of $\operatorname{Aut}_0(X)$. Moreover, if $A_1$ and $A_2$ are not isomorphic then $G = G_0$. The description of subgroups of direct products $A_1 \times A_2$ is well-known.

We will use the notation and facts introduced above in the whole Section $4.2$ not referring to~them.

We need the~following definition.

\begin{definition}
Let $\alpha_1\colon A_1 \rightarrow R$ and $\alpha_2\colon A_2 \rightarrow R$ be surjective homomorphisms of groups. The~\textit{diagonal product} $A_1 \triangle_R A_2$ of $A_1$ and $A_2$ over their common homomorphic image $R$ is~the~subgroup of $A_1 \times A_2$ of pairs $(a_1, a_2)$ such that $\alpha_1(a_1) = \alpha_2(a_2)$. In other words $A_1 \triangle_R A_2$ is a fibered product of $A_1$ and $A_2$ over $R$.
\end{definition}

By Goursat’s Lemma (cf. \cite[Lemma 4.1]{DI09}) for a subgroup $G_0$ of $A_1 \times A_2 \subset \mathrm{PGL}_2(\mathbb{K}) \times \mathrm{PGL}_2(\mathbb{K})$ such that $f_i(G_0) \cong A_i$, one has $G_0 \cong A_1 \triangle_R A_2$ for a~certain group $R$. Moreover, if $G_0$ is finite then the groups $A_1$ and $A_2$ are finite subgroups of~${\mathrm{PGL}_2(\mathbb{K})}$. Therefore each of the groups $A_1$ and $A_2$ is isomorphic to one of the following groups:~$\CG_n$, $\DG_{2n}$, $\AG_4$, $\SG_4$, or $\AG_5$.

Also we need the following definition.

\begin{definition}
Let $G_0$ be a finite subgroup in $\operatorname{Aut}(C_1 \times C_2)$ acting trivially on $\operatorname{Pic}\left(C_1 \times C_2\right)$, such that $G_0$ acts faithfully on $C_1$ and $C_2$. We will say that the action of $G_0$ on $C_1 \times C_2$ is \emph{diagonal} if~there exists a $G_0$-equivariant isomorphism between $C_1$ and $C_2$.
\end{definition}

\begin{remark}
\label{remark: diagonal-act}
Note that if $G_0$ acts diagonally on $\mathbb{P}^1 \times \mathbb{P}^1$ then $G_0$ is isomorphic to a diagonal product $A \triangle_{A} A$ for some group $A$. But the converse is not true. For example one can consider a~group~${G_0 \cong \CG_5 \triangle_{\CG_5} \CG_5}$ such that a generator of $G_0$ acts on $C_1$ as $(x : y) \mapsto (e^{\frac{2\pi\mathrm{i}}{5}}x : y)$ and on $C_2$ as~${(x : y) \mapsto (e^{\frac{4\pi\mathrm{i}}{5}}x : y)}$.
\end{remark}

\begin{remark}
\label{remark: diagonal-graph}
Also note that if a group $G_0$ acts diagonally on $C_1 \times C_2$, and $\varphi \colon C_1 \rightarrow C_2$ is the corresponding $G_0$-equivariant isomorphism, then the graph of $\varphi$, consisting of points $(x, \varphi(x))$, is~a~$G_0$-invariant subvariety of $C_1 \times C_2$.
\end{remark}

Now we can state the main result of this section.

\begin{proposition}
\label{proposition: dP8}
Let $G$ be a finite subgroup in $\operatorname{Aut}(\mathbb{P}^1 \times \mathbb{P}^1)$, and $G_0$, $A_1$, and $A_2$ are defined as above.
\begin{enumerate}
\item
If each $A_i$ is either cyclic or dihedral group, then $\coregG(\mathbb{P}^1 \times \mathbb{P}^1)=0$.
\item
If one of the groups $A_i$ is either cyclic or dihedral, and the other is not, then
$$
{\coregG(\mathbb{P}^1 \times \mathbb{P}^1)=1}.
$$
\item
If $A_1 \cong A_2 \cong A$, where $A$ is isomorphic to $\AG_4$, $\SG_4$, or $\AG_5$, and $G_0 \cong A \triangle_{A} A$ acts diagonally on $\mathbb{P}^1 \times \mathbb{P}^1$, then $\coregG(\mathbb{P}^1 \times \mathbb{P}^1)=1$.
\item
If each $A_i$ is isomorphic to $\AG_4$, $\SG_4$, or $\AG_5$, and the action of $G_0 \cong A_1 \triangle_R A_2$ on $\mathbb{P}^1 \times \mathbb{P}^1$ is not diagonal (in particular it holds if at least two groups among $A_1$, $A_2$, and $R$ are not~isomorphic) then one has $\coregG(\mathbb{P}^1 \times \mathbb{P}^1)=2$.
\end{enumerate}
\end{proposition}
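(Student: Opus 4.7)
For case (1), the plan is to construct an explicit $G$-invariant log Calabi--Yau divisor whose dual complex is a circle. Each cyclic or dihedral $A_i$ has an invariant two-point subset $\{p_i,p_i'\}\subset C_i$ (two fixed points when cyclic, a two-point orbit when dihedral). The divisor
\[
D=\pi_1^{-1}(p_1)+\pi_1^{-1}(p_1')+\pi_2^{-1}(p_2)+\pi_2^{-1}(p_2')
\]
is then a $G_0$-invariant snc chain of four rational curves in $|-K_X|$ with $\mathcal{D}(X,D)\cong\mathbb{S}^1$, so $\coregG(X)=0$. When $G/G_0\cong\CG_2$ acts by swapping the factors (forcing $A_1\cong A_2$), I choose the pairs $\{p_i,p_i'\}$ compatibly with the swap isomorphism so that $D$ remains $G$-invariant.

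For cases (2) and (3) the claim $\coregG(X)=1$ is proved by combining a Lemma \ref{lemma:fixedcurve}-based upper bound with a uniform lower bound. For the upper bound $\coregG\le 1$, in (2), with $G=G_0$, take $\Delta_0=\pi_1^{-1}(p)+\pi_1^{-1}(p')$ for an $A_1$-invariant pair on $C_1$, so that $-K_X-\Delta_0\sim 2F_2$ is base-point-free; in (3), take the graph $\Delta$ of a $G_0$-equivariant isomorphism $C_1\to C_2$, whose complement $-K_X-\Delta\sim F_1+F_2$ is base-point-free (when $G\supsetneq G_0$, the triviality of the centralizer of $A\in\{\AG_4,\SG_4,\AG_5\}$ in $\operatorname{PGL}_2(\mathbb{K})$ makes the equivariant isomorphism unique, forcing the swap to preserve $\Delta$). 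For the lower bound $\coregG\ge 1$, assume $\coregG=0$ and invoke Proposition \ref{prop-coreg-0-arbitrary-dimension} to obtain a normal abelian subgroup $N\lhd G$ with $G/N$ trivial, cyclic, or dihedral, and a fixed point on $X$. Then $f_2(N)$ is an abelian normal subgroup of $A_2$ with a fixed point on $C_2$, hence cyclic; since each of $\AG_4,\SG_4,\AG_5$ has only the trivial cyclic normal subgroup, $f_2(N)=1$, making $A_2$ a quotient of $G/N$ and therefore cyclic, dihedral, or trivial---contradiction. In case (3) with $G\supsetneq G_0$, monotonicity (Remark \ref{rem-monotonicity}) reduces the lower bound to the case of $G_0$.

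For case (4) the plan is to show $\coregG=2$ by excluding both $0$ and $1$. The exclusion of $0$ is literally the argument from the previous paragraph. For $\coregG=1$, Lemma \ref{lemma: coreg 1 fixed} produces a $G$-invariant irreducible subvariety of $X$ of dimension at most one, or such a $G$-invariant pair. The minimum orbit size $\geq 4$ of $A_i$ on $C_i$ rules out $G$-invariant points and pairs of points. A $G$-invariant irreducible curve $C$ must dominate both factors (otherwise it would be a fiber over an $A_i$-fixed point, which does not exist), and effectivity of $-K_X-C$ confines its bidegree to $\{(1,1),(1,2),(2,1),(2,2)\}$. Bidegree $(1,1)$ is exactly a diagonal action, excluded by hypothesis. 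Bidegrees $(1,2)$ and $(2,1)$ yield a $G_0$-equivariant degree-two map $C_1\to C_2$ whose Riemann--Hurwitz branch locus is an $A_i$-invariant pair on $C_i$, again contradicting the orbit bound. For bidegree $(2,2)$ the curve is smooth elliptic or nodal rational (otherwise $(X,C)$ is not lc). The nodal case is excluded since $\operatorname{Aut}(C)=\mathbb{G}_m\rtimes\CG_2$ has only cyclic and dihedral finite subgroups while $A_i\in\{\AG_4,\SG_4,\AG_5\}$ is a quotient of $G_0\hookrightarrow\operatorname{Aut}(C)$. The smooth elliptic case splits further: for $A_i\in\{\SG_4,\AG_5\}$, these groups admit no normal abelian subgroup with cyclic quotient of order at most $6$, hence cannot be quotients of any finite subgroup of $\operatorname{Aut}(E)=E\rtimes\mu_k$; for $A_1=A_2=\AG_4$, the group $G_0=\AG_4\triangle_R\AG_4$ with $R\in\{1,\CG_3\}$ always contains $\CG_2^2\times\CG_2^2\cong\CG_2^4$ (the Klein four-subgroup of each $\AG_4$-factor), whereas every elementary abelian $2$-subgroup of $\operatorname{Aut}(E)$ has rank at most three, its intersection with $E$ lying in $E[2]\cong\CG_2^2$ and being joined by at most one extra coset via the involution in $\mu_k$. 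Invariant pairs of curves reduce by bidegree additivity to the same alternatives: same-class pairs reduce to the single-curve case, while swap-pairs force either an $A_i$-orbit of size $\le 2$ or an index-two diagonally acting subgroup, both already excluded.

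The principal obstacle is the smooth elliptic bidegree-$(2,2)$ subcase for $A_1=A_2=\AG_4$, where one must combine the fibered-product structure of $G_0=\AG_4\triangle_R\AG_4$ with the rank bound on elementary abelian $2$-subgroups of $\operatorname{Aut}(E)$. All other subcases reduce mechanically to orbit-size bounds, Riemann--Hurwitz, and explicit divisor constructions.
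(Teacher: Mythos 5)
Your overall architecture matches the paper's: explicit four-fiber boundary for case (1), Lemma~\ref{lemma:fixedcurve} for the upper bounds in (2)--(3), Lemma~\ref{lemma: coreg 1 fixed} plus orbit-size and bidegree bookkeeping for (4), and the automorphism group of an elliptic curve for the bidegree-$(2,2)$ subcase. Two of your sub-arguments genuinely differ from the paper's: for the lower bound ``some $A_i\notin\{\CG_n,\DG_{2n}\}$ implies $\coregG>0$'' the paper simply applies Proposition~\ref{proposition: CB} to the projection $\pi_i$ (viewed as a $G_0$-conic bundle), whereas you argue directly from Proposition~\ref{prop-coreg-0-arbitrary-dimension} and Lemma~\ref{corollary: coreg-0 criterion} via normal cyclic subgroups of $A_i$ (valid, but note $N$ need not lie in $G_0$, so you should replace $N$ by $N\cap G_0$ or first pass to $G_0$ by Remark~\ref{rem-monotonicity}); and for the uniqueness of the invariant $(1,1)$-curve you use triviality of the centralizer of $A$ in $\mathrm{PGL}_2(\mathbb{K})$, where the paper (Lemma~\ref{lemma: dP8-diagonal-invariant}) decomposes $\rho_1\otimes\rho_2$ of the binary tetrahedral group. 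Both alternatives work and the conic-bundle route is arguably the cheaper one for the first step.

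There is, however, one genuine gap, in the smooth elliptic bidegree-$(2,2)$ subcase of (4). You treat only $G_0\cong\AG_4\triangle_R\AG_4$ with $R\in\{1,\CG_3\}$, where $G_0\supset\CG_2^4$ kills the embedding into $\operatorname{Aut}(E)$. But the hypothesis of (4) also allows, a priori, $A_1\cong A_2\cong R\cong\AG_4$ with a \emph{non-diagonal} action (cf.\ Remark~\ref{remark: diagonal-act} and the remark after Corollary~\ref{corollary: dP8-class}, where exactly this phenomenon occurs for $\AG_5$). In that subcase $G_0\cong\AG_4$, which contains no $\CG_2^4$, has the normal abelian subgroup $\CG_2^2$ with cyclic quotient $\CG_3$, and does embed into $\operatorname{Aut}(E)$ for $j(E)=0$ as $E[2]\rtimes\CG_3$ --- so none of your exclusions apply. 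The subcase is in fact vacuous, but only because every automorphism of the tetrahedral subgroup of $\mathrm{PGL}_2(\mathbb{K})$ is realized by conjugation in its normalizer $\SG_4$, so $\AG_4\triangle_{\AG_4}\AG_4$ always acts diagonally; you neither state nor prove this. The paper closes exactly this loophole by a different route: Lemma~\ref{lemma: dP8-diagonal-invariant0} pins down $G_0\cong\AG_4\triangle_{\AG_4}\AG_4$, Lemma~\ref{lemma: dP8-diagonal-invariant} then produces a (unique) invariant irreducible $(1,1)$-curve for this group, and Lemma~\ref{lemma: dP8-diagonal-stabilizer} converts that curve into diagonality of the action, so the non-diagonal hypothesis of (4) is contradicted directly. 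You should either add the normalizer computation for $\AG_4\subset\mathrm{PGL}_2(\mathbb{K})$ or adopt the paper's reduction to the $(1,1)$-class.
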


From Proposition \ref{proposition: dP8} and Proposition \ref{proposition: coregularity=lct} we immediately obtain the following.

\begin{corollary}
\label{corol-quadric-lct>1}

One has $\mathrm{lct}_G (\mathbb{P}^1 \times \mathbb{P}^1)>1$ if and only if each $A_i$ is isomorphic to $\AG_4$, $\SG_4$, or~$\AG_5$, and the action of $G_0$ on $\mathbb{P}^1 \times \mathbb{P}^1$ is not diagonal (in particular it holds if at least two groups among~$A_1$, $A_2$, and $G_0$ are not isomorphic).
\end{corollary}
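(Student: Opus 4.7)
The plan is a short two-step reduction. First I would invoke Proposition~\ref{proposition: coregularity=lct} with $X = \mathbb{P}^1 \times \mathbb{P}^1$: since this is a smooth (hence klt) Fano surface of dimension $n = 2$ and $G$ is finite, the proposition gives
\[
\mathrm{lct}_G(\mathbb{P}^1 \times \mathbb{P}^1) > 1 \iff \coregG(\mathbb{P}^1 \times \mathbb{P}^1) = 2.
\]
This converts the statement about log-canonical thresholds into a statement about $G$-coregularity being maximal.

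Second, I would appeal to the complete trichotomy $\coregG(\mathbb{P}^1 \times \mathbb{P}^1) \in \{0,1,2\}$ obtained in Proposition~\ref{proposition: dP8}. Its four cases exhaust all finite subgroups $G \subset \operatorname{Aut}(\mathbb{P}^1 \times \mathbb{P}^1)$: cases (1)--(3) produce coregularity $0$ or $1$, while case (4) produces coregularity $2$. Consequently, $\coregG(\mathbb{P}^1 \times \mathbb{P}^1) = 2$ if and only if we are in case (4), namely each $A_i$ is isomorphic to $\AG_4$, $\SG_4$, or $\AG_5$, and $G_0 \cong A_1 \triangle_R A_2$ does not act diagonally on $\mathbb{P}^1 \times \mathbb{P}^1$. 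Concatenating with the previous equivalence gives the corollary.

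The parenthetical sharpening --- that having two of $A_1$, $A_2$, $G_0$ non-isomorphic already forces non-diagonality --- unwinds directly from the definition of a diagonal action: diagonality requires $A_1 \cong A_2 \cong A$ together with $G_0 \cong A \triangle_A A \cong A$, so all three groups must agree up to isomorphism. There is essentially no obstacle in this argument: all the geometric and representation-theoretic content is packaged into Propositions~\ref{proposition: dP8} and~\ref{proposition: coregularity=lct}, and the corollary is a one-paragraph bookkeeping step. The only point worth a moment's check is the klt Fano hypothesis of Proposition~\ref{proposition: coregularity=lct}, which is automatic since $\mathbb{P}^1 \times \mathbb{P}^1$ is smooth and Fano.
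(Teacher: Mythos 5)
Your proposal is correct and matches the paper's own proof, which likewise derives the corollary immediately by combining Proposition~\ref{proposition: coregularity=lct} (reducing $\mathrm{lct}_G>1$ to $\coregG=2$) with case (4) of Proposition~\ref{proposition: dP8}. Your extra remark unwinding the parenthetical about $A_1$, $A_2$, $G_0$ from the definition of a diagonal action is a harmless and accurate addition.
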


For convenience of a reader we decompose the proof of this proposition into several lemmas.

\begin{lemma}
\label{lemma: dP8coreg0}
Let $G$ be a finite subgroup in $\operatorname{Aut}(\mathbb{P}^1 \times \mathbb{P}^1)$, and $G_0$, $A_1$, and $A_2$ are defined as~above. One has $\coregG(\mathbb{P}^1 \times \mathbb{P}^1)=0$ if and only if each $A_i$ is either cyclic, or dihedral group.
\end{lemma}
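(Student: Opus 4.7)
My plan is to prove the two implications separately, using Proposition \ref{proposition: CB} for the converse and an explicit construction of a log Calabi--Yau pair for the forward direction. For the converse, assume $\coreg_G(X)=0$ where $X=C_1\times C_2$. By Remark \ref{rem-monotonicity} applied to $G_0\subset G$, I get $\coreg_{G_0}(X)\leqslant \coreg_G(X)=0$, so $\coreg_{G_0}(X)=0$. Each projection $\pi_i\colon X\to C_i$ is a $G_0$-equivariant $\mathbb{P}^1$-bundle, which I view as a $G_0$-conic bundle. Proposition \ref{proposition: CB} applied to $\pi_1$ forces its base group, which is exactly $A_1$, to be cyclic or dihedral; by symmetry applied to $\pi_2$, the group $A_2$ is also cyclic or dihedral.

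For the forward direction, assume each $A_i$ is cyclic or dihedral. By Example \ref{example: coregularity of P1}, each $A_i$ admits an invariant reduced divisor $D_i\subset C_i$ of degree $2$ consisting of two distinct points --- either the pair of fixed points in the cyclic case, or the unique invariant pair in the dihedral case. I would set
\[
D=\pi_1^{-1}(D_1)+\pi_2^{-1}(D_2),
\]
an effective reduced divisor of class $2F_1+2F_2\sim -K_X$ made of four smooth rational curves: two disjoint $\pi_1$-fibers and two disjoint $\pi_2$-fibers, with each curve of one type meeting each curve of the other type transversely in one point. Thus $(X,D)$ is log smooth and its dual complex is a $4$-cycle graph, homeomorphic to $\mathbb{S}^1$, giving $\coreg(X,D)=2-1-1=0$. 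When $G=G_0$, the divisor $D$ is manifestly $G$-invariant by construction of $D_1$ and $D_2$.

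The main (mild) obstacle is to guarantee $G$-invariance of $D$ when $G\supsetneq G_0$. In this case $[G:G_0]=2$, and any $\sigma\in G\setminus G_0$ has the form $\sigma(x,y)=(\psi(y),\phi(x))$ for isomorphisms $\phi\colon C_1\to C_2$ and $\psi\colon C_2\to C_1$. Normality of $G_0$ in $G$ together with the description of the action implies $\phi A_1\phi^{-1}=A_2$. I therefore replace $D_2$ by $\phi(D_1)$, which remains $A_2$-invariant since $D_1$ is $A_1$-invariant. Then $\sigma$ sends $\pi_1^{-1}(D_1)$ to $\pi_2^{-1}(\phi(D_1))=\pi_2^{-1}(D_2)$, while it sends $\pi_2^{-1}(D_2)$ to $\pi_1^{-1}(\psi\phi(D_1))=\pi_1^{-1}(D_1)$, because $\sigma^2\in G_0$ forces $\psi\phi\in A_1$, which preserves $D_1$. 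Hence $D$ is $\sigma$-invariant, and combined with $G_0$-invariance this yields $G$-invariance, completing the proof.
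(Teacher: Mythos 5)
Your proof is correct and follows essentially the same route as the paper: the converse via Proposition \ref{proposition: CB} applied to the two projections (after passing to $G_0$ by Remark \ref{rem-monotonicity}), and the forward direction by exhibiting a $G$-invariant quadrilateral of four fibers whose dual complex is a circle. The only difference is cosmetic and occurs when $G\neq G_0$: the paper transports a normal cyclic subgroup $N_1\subset A_1$ to $A_2$ by conjugation and takes its fixed-point pair, whereas you transport the chosen invariant pair $D_1$ directly via the isomorphism $\phi$ underlying an element of $G\setminus G_0$ and verify $\sigma$-invariance by hand (your parenthetical claim that the invariant pair is \emph{unique} in the dihedral case fails for $\DG_4$, but uniqueness is never used).
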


\begin{proof}
If $\coregG(\mathbb{P}^1 \times \mathbb{P}^1)=0$ then $\coreg_{G_0}(\mathbb{P}^1 \times \mathbb{P}^1)=0$. For each $i$ the projection $\pi_i \colon X \rightarrow C_i$ defines a structure of a $G_0$-equivariant conic bundle on $X$, and the group $A_i$ acts faithfully on the base of this conic bundle. Thus $A_i$ is either cyclic, or dihedral group by Proposition \ref{proposition: CB}.

Now assume that each $A_i$ is either cyclic, or dihedral group. If we find a $G$-invariant divisor
$$
D = F_{11} + F_{12} + F_{21} + F_{22},
$$
where $F_{ij}$ is a fiber of $\pi_i$, then $D$ is equivalent to $-K_X$ and $(X, D)$ is~a~log Calabi--Yau pair on~which zero $G$-coregularity is attained.

Assume that $G = G_0$. Then there exists an $A_i$-invariant pair of points $p_{i1}$ and $p_{i2}$ on $C_i$: if $A_i$ is trivial we can choose any two points, if $A_i$ is cyclic we consider two $A_i$-fixed points, and if $A_i$ is dihedral we consider a pair of points fixed by the normal cyclic subgroup in $\DG_{2n}$. Put $F_{ij} = \pi_i^{-1}(p_{ij})$. Then $D = F_{11} + F_{12} + F_{21} + F_{22}$ is $G$-invariant divisor, such that $(X, D)$ is a log Calabi--Yau pair on which zero $G$-coregularity is attained.

Now assume that $G \neq G_0$. In this case one can see that $A_1 \cong A_2$. If $A_i$ is trivial then $G \cong \CG_2$. The generator $g$ of $G$ permutes the factors of $\mathbb{P}^1 \times \mathbb{P}^1$. Consider any fibers $F_{11}$ and $F_{12}$ of $\pi_1$, and put $F_{2j} = gF_{1j}$. Then $F_{21}$ and $F_{22}$ are fibers of $\pi_2$, and the quadruple $F_{ij}$ is $G$-invariant. Thus~$G$-coregularity zero is attained on the log Calabi--Yau pair $(X, F_{11} + F_{12} + F_{21} + F_{22})$.

If $A_1$ is nontrivial then consider a normal cyclic group $N_1$ in $A_1$. Then $N_1$ is normal in $A_1 \times A_2$. Let~$g$ and $h$ be elements of $G$ not lying in $G_0$. Then one has $gN_1g^{-1} = hN_1h^{-1}$, since $g^{-1}h$ preserves the factors of $\mathbb{P}^1 \times \mathbb{P}^1$ and therefore $g^{-1}h \in G_0 \subset A_1 \times A_2$ and $h^{-1}gN_1g^{-1}h = N_1$. Thus the group~${N_2 = gN_1g^{-1}}$ does not depend on $g$. Moreover, $N_2$ is a normal cyclic group in~$A_2$. For each~$i$ consider a pair of $N_i$-fixed points $p_{i1}$ and $p_{i2}$ on $C_i$. This pair is $A_i$-invariant. Put~${F_{ij} = \pi_i^{-1}(p_{ij})}$. Then one can check that $D = F_{11} + F_{12} + F_{21} + F_{22}$ is a $G$-invariant divisor. Thus $G$-coregularity zero is attained on the log Calabi--Yau pair $(X, D)$.
\end{proof}

\begin{lemma}
\label{lemma: dP8coreg1_1}
Let $G$ be a finite subgroup in $\operatorname{Aut}(\mathbb{P}^1 \times \mathbb{P}^1)$, and $G_0$, $A_1$, and $A_2$ are defined as above. If one of the groups $A_i$ is either cyclic, or dihedral, and the other is not then $\coregG(\mathbb{P}^1 \times \mathbb{P}^1)=1$.
\end{lemma}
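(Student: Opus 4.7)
The plan is to establish the inequality in both directions. The lower bound $\coregG(\mathbb{P}^1\times \mathbb{P}^1)\geqslant 1$ is immediate from Lemma~\ref{lemma: dP8coreg0}: since one of the factors $A_i$ is not cyclic or dihedral, the pair fails the necessary condition for $G$-coregularity zero. So the real content is to produce a $G$-invariant log Calabi--Yau pair whose dual complex is nonempty and $0$-dimensional, witnessing the upper bound $\coregG(\mathbb{P}^1\times\mathbb{P}^1)\leqslant 1$.

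Without loss of generality, $A_1$ is cyclic or dihedral, and $A_2\in\{\AG_4,\SG_4,\AG_5\}$. Because $A_1\not\cong A_2$, the discussion at the start of the subsection forces $G=G_0$, i.e.\ $G$ preserves each of the two rulings. Since $A_1$ is cyclic or dihedral, it admits an invariant set of two points on $C_1$: a cyclic group fixes two points, and a dihedral group contains a central cyclic subgroup whose two fixed points form an invariant pair. Let $p_{11},p_{12}\in C_1$ be such a pair and set $F_{1j}=\pi_1^{-1}(p_{1j})$. Then $\Delta=F_{11}+F_{12}$ is a $G$-invariant reduced divisor consisting of two disjoint smooth rational curves, so $(X,\Delta)$ is log smooth, and its dual complex $\mathcal{D}(X,\Delta)$ is the disjoint union of two vertices. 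In particular $\dim\mathcal{D}(X,\Delta)=0$, so $\coregG(X,\Delta)=1$.

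Now $L=-K_X-\Delta\sim 2F_2$, and $|2F_2|$ is base-point-free (it is even pulled back from~$C_2$). Therefore Lemma~\ref{lemma:fixedcurve} applies and yields
\[
\coregG(X)\leqslant \coregG(X,\Delta)=1.
\]
Combined with $\coregG(X)\geqslant 1$, this gives $\coregG(\mathbb{P}^1\times\mathbb{P}^1)=1$, as required.

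The argument is essentially structural; the only point that requires a small check is the existence of the $A_1$-invariant pair $\{p_{11},p_{12}\}$ in the dihedral case, which is the main (very mild) obstacle. The rest follows formally from Lemmas~\ref{lemma:fixedcurve} and~\ref{lemma: dP8coreg0} and the basic dual-complex computation for two disjoint rulings.
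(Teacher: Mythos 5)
Your proof is correct and follows essentially the same route as the paper's: lower bound from Lemma~\ref{lemma: dP8coreg0}, then $G=G_0$, an $A_1$-invariant pair of fibers $F_{11}+F_{12}$, and Lemma~\ref{lemma:fixedcurve} applied to the base-point-free system $|-K_X-F_{11}-F_{12}|$. The only quibble is that in the dihedral case the relevant cyclic subgroup is \emph{normal} rather than central in $\DG_{2n}$, but normality is all that is needed for its two fixed points to form an $A_1$-invariant pair.
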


\begin{proof}
One has $\coregG(\mathbb{P}^1 \times \mathbb{P}^1) > 0$ by Lemma \ref{lemma: dP8coreg0}. Moreover, $G = G_0$ since $A_1$ and $A_2$ are not isomorphic.

Without loss of generality we may assume that $A_1$ is cyclic or dihedral, and $A_2$ is not. Then there is an $A_1$-invariant pair of points $p_{11}$ and $p_{12}$ on $C_1$, and the pair of fibers $F_{1i} = \pi_1^{-1}(p_{1i})$ \mbox{is $G$-invariant}. The linear system $|-K_{\mathbb{P}^1 \times \mathbb{P}^1} - F_{11} - F_{12}|$ is base point free. Thus~${\coregG(\mathbb{P}^1 \times \mathbb{P}^1) \leqslant 1}$ by Lemma \ref{lemma:fixedcurve}. Therefore $\coregG(\mathbb{P}^1 \times \mathbb{P}^1) = 1$.
\end{proof}

Now it remains to compute $G$-coregularity for the case in which each group $A_i$ is isomorphic to $\AG_4$, $\SG_4$, or $\AG_5$. Note that any such group $G$ contains a subgroup $H = \AG_4 \triangle_{\AG_4} \AG_4 \cong \AG_4$ acting faithfully on both $C_i$. 

\begin{lemma}
\label{lemma: dP8coreg1_2}
Let $G$ be a finite subgroup in $\operatorname{Aut}(\mathbb{P}^1 \times \mathbb{P}^1)$, and $G_0$, $A_1$, and $A_2$ are defined as~above. Assume that each group $A_i$ is isomorphic to $\AG_4$, $\SG_4$, or $\AG_5$. Then $\coregG(\mathbb{P}^1 \times \mathbb{P}^1)=1$ if~and only if there exists a $G$-invariant irreducible curve on $\mathbb{P}^1 \times \mathbb{P}^1$ in the class $F_1 + F_2$ or $2F_1 + 2F_2$ in~$\operatorname{Pic}\left(\mathbb{P}^1 \times \mathbb{P}^1\right)$.
\end{lemma}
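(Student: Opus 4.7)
The plan is to prove both implications separately, exploiting the strong constraints on $G$-orbits coming from the hypothesis that each $A_i \in \{\AG_4,\SG_4,\AG_5\}$.

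\emph{Sufficiency.} Suppose $C\subset X=\mathbb{P}^1\times\mathbb{P}^1$ is a $G$-invariant irreducible curve with $[C]=F_1+F_2$ or $[C]=2F_1+2F_2$. I first show $C$ is smooth. If $[C]=F_1+F_2$ this is immediate since $C$ is the graph of an isomorphism $\mathbb{P}^1\to\mathbb{P}^1$. If $[C]=2F_1+2F_2$, adjunction gives $p_a(C)=1$; a singular irreducible curve with $p_a=1$ has a unique singular point, which would be $G$-fixed, contradicting the fact that $G$ has no fixed point on $X$ (projecting to $C_i$ would produce an $A_i$-fixed point, impossible for $A_i\in\{\AG_4,\SG_4,\AG_5\}$). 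So $C$ is smooth. In the first case $-K_X-C\sim F_1+F_2$ is base-point-free, so Lemma~\ref{lemma:fixedcurve} yields $\coregG(X)\leqslant\coregG(X,C)\leqslant 1$. In the second, $(X,C)$ is itself log canonical log Calabi--Yau with dual complex one vertex, so $\coregG(X)\leqslant 1$. Combined with $\coregG(X)\geqslant 1$ from Lemma~\ref{lemma: dP8coreg0}, we conclude $\coregG(X)=1$.

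\emph{Necessity.} Assume $\coregG(X)=1$ and fix a log Calabi--Yau $G$-pair $(X,D)$ with $\dim\mathcal{D}(X,D)=0$. Let $\pi\colon\tilde X\to X$ be a $G$-equivariant log resolution. I use three observations. First, neither $G$ nor $G_0$ has an orbit of size $\leqslant 2$ on $X$: projecting to a factor gives an orbit of the same size under $A_i$, impossible. Consequently no $\pi$-exceptional divisor can belong to $\tilde D^{=1}$, since such a divisor would be $G$-invariant and contract to a $G$-orbit of $\leqslant 2$ points; hence every component of $\tilde D^{=1}$ is the strict transform of an irreducible component of $D$ of coefficient one. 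Second, the possible classes of a $G_0$-invariant irreducible curve on $X$ bounded by $-K_X$ are exactly $F_1+F_2$ and $2F_1+2F_2$: writing such a class as $aF_1+bF_2$ with $0\leqslant a,b\leqslant 2$, the cases $(1,0),(0,1)$ force an $A_i$-fixed point; $(2,0),(0,2)$ admit no irreducible reduced representatives; and $(2,1),(1,2)$ correspond to graphs of degree-$2$ rational maps $C_1\to C_2$ whose pair of source ramification points would form an $A_i$-orbit of size two, again impossible. Third, there is at most one $G_0$-invariant irreducible curve of class $F_1+F_2$, since the space of $G_0$-equivariant isomorphisms $C_1\to C_2$ is a torsor over the centralizer of $A_i$ in $\mathrm{PGL}_2(\mathbb{K})$, which is trivial for $A_i\in\{\AG_4,\SG_4,\AG_5\}$.

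Using the observations I first rule out the disconnected case: if $\mathcal{D}(X,D)$ were disconnected then $\tilde D^{=1}$ would have two disjoint components, whose images on $X$ give two distinct $G_0$-invariant irreducible curves $C_1,C_2$; by the second observation each has class $F_1+F_2$ or $2F_1+2F_2$, and from $C_1+C_2\leqslant D\sim_{\mathbb{Q}}-K_X$ both must have class $F_1+F_2$, contradicting the third observation. Therefore $\tilde D^{=1}$ has a single component, the strict transform of a $G$-invariant irreducible curve $C\subset X$ of coefficient one in $D$. Since $C\leqslant D\sim_{\mathbb{Q}}-K_X$, its class satisfies $[C]\leqslant(2,2)$, and combining the second observation with the symmetry requirement $[C]=(a,a)$ forced by the swap when $G\neq G_0$, we obtain $[C]\in\{F_1+F_2,\,2F_1+2F_2\}$. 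The main subtlety is the third observation (uniqueness of $G_0$-invariant $(1,1)$-curves), which is pivotal in the exclusion of the disconnected dual complex and relies on the triviality of the centralizer of finite $\AG_4,\SG_4,\AG_5$ subgroups in $\mathrm{PGL}_2(\mathbb{K})$.
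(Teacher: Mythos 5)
Your proof follows essentially the same strategy as the paper's: sufficiency via Lemma~\ref{lemma:fixedcurve} for the class $F_1+F_2$ and a direct dual-complex computation for $2F_1+2F_2$ (your smoothness check for the $(2,2)$-curve is a welcome detail that the paper only supplies later, in Lemma~\ref{lemma: dP8-diagonal-invariant0}), and necessity by extracting from a zero-dimensional dual complex a $G$-invariant curve or pair of curves and then constraining its class using the fact that the diagonal subgroup $H=\AG_4 \triangle_{\AG_4} \AG_4\cong\AG_4$ of $G_0$ has no orbit of size at most $3$ on either factor. The one place where you genuinely diverge is the exclusion of the two-component (disconnected) case, and that is also where there is a gap: you assert that the two disjoint components of $\widetilde{D}^{=1}$ map to two distinct \emph{$G_0$-invariant} irreducible curves, but $G_0$ may interchange them --- acting trivially on $\operatorname{Pic}$ does not prevent an element from swapping two curves of the same class --- so your Observations 2 and 3 do not apply as stated. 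The gap is repairable: each component is invariant under its stabilizer $G_0'\subset G_0$, of index at most $2$, whose image in each $A_i$ is again $\AG_4$, $\SG_4$ or $\AG_5$ (the only index-$2$ inclusion occurring among these groups is $\AG_4\subset\SG_4$), so both observations go through with $G_0'$ in place of $G_0$; the same remark applies to your claim that an exceptional component of $\widetilde{D}^{=1}$ would be $G$-invariant. The paper sidesteps all of this with a cheaper argument: once both curves are forced into the class $F_1+F_2$, the intersection $D_1'\cap D_2'$ is a nonempty $H$-invariant set of at most $D_1'\cdot D_2'=2$ points, which is impossible; this makes the uniqueness of the invariant $(1,1)$-curve (your centralizer argument, the paper's representation-theoretic Lemma~\ref{lemma: dP8-diagonal-invariant}) unnecessary for the present lemma, though your centralizer argument is a valid alternative to the paper's proof of that uniqueness where it is actually needed.
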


\begin{proof}
Assume that $\coregG(\mathbb{P}^1 \times \mathbb{P}^1)=1$.
By Lemma \ref{lemma: coreg 1 fixed} there is either a $G$-invariant irreducible subvariety of dimension $0$ or $1$ on $X$, or a $G$-invariant pair of such subvarieties. Denote this subvariety or a pair of them by $D'$. Note that any $G$-invariant subvariety is $H$-invariant, where~${H = \AG_4 \triangle_{\AG_4} \AG_4 \cong \AG_4}$ is a subgroup of $G$.

Any $H$-orbit on $C_i \cong \mathbb{P}^1$ consists of~at~least~$4$ points. Therefore any $H$-orbit on $X$ consists of~at~least~$4$ points. Thus $D'$ does not contain isolated points. This means that $D'$ is either an~irreducible $H$-invariant curve, or $H$-invariant pair of irreducible curves.

Note that $D'$ is contained in $D$, where $(\mathbb{P}^1 \times \mathbb{P}^1 ,D)$ is a log Calabi--Yau pair \mbox{on which $G$-coregula}\-rity one is attained. Therefore $D' \sim a_1F_1 + a_2F_2$ in $\operatorname{Pic}\left(X\right)$, where $0 \leqslant a_i \leqslant 2$. Moreover, $a_i > 0$, since an $H$-orbit of a fiber $F_i$ of $\pi_i$ consists of at least $4$ fibers, so $D'$ cannot be contained in~the~fibers of $\pi_i$. Thus if $D'$ is a pair of curves $D'_1$ and $D'_2$ then $D'_1 \sim D'_2 \sim F_1 + F_2$. The~set~${D'_1 \cap D'_2}$ \mbox{is $H$-invariant} and consists of $1$ or $2$ points, since $D'_1 \cdot D'_2 = 2$. That is impossible. Therefore $D'$ is an $H$-invariant curve.

Assume that there exists an $H$-invariant irreducible curve $D_1$ in the class $F_1 + 2F_2$ in $\operatorname{Pic}\left(X\right)$. Then~${\pi_2 \colon D_1 \rightarrow C_2 \cong \mathbb{P}^1}$ is an isomorphism, since $D_1 \cdot F_2 = 1$, and $\pi_1 \colon D_1 \rightarrow C_1$ is a double cover branched at two points, since $D_1 \cdot F_1 = 2$ and $D_1 \cong \mathbb{P}^1$. This pair of points on $C_1$ must \mbox{be $H$-invariant}, which is impossible. Therefore there are no $H$-invariant irreducible curves in the class $F_1 + 2F_2$ in $\operatorname{Pic}\left(X\right)$. Similarly, one can show that there are no $H$-invariant irreducible curves in the class $2F_1 + F_2$ in $\operatorname{Pic}\left(X\right)$.

We obtained that either $D' \sim F_1 + F_2$, or $D' \sim 2F_1 + 2F_2$.

Now assume that there exists a $G$-invariant irreducible curve $D'$ on $\mathbb{P}^1 \times \mathbb{P}^1$ in the class $F_1 + F_2$ or $2F_1 + 2F_2$ in $\operatorname{Pic}\left(\mathbb{P}^1 \times \mathbb{P}^1\right)$. In the former case the linear system $|-K_{\mathbb{P}^1 \times \mathbb{P}^1} - D'|$ is base point free, and $\coregG(\mathbb{P}^1 \times \mathbb{P}^1) = 1$ by Lemma \ref{lemma:fixedcurve}. In the latter case $G$-coregularity one is attained on~a~log Calabi--Yau pair $(\mathbb{P}^1 \times \mathbb{P}^1 , D')$.
\end{proof}

\begin{lemma}
\label{lemma: dP8-diagonal-invariant0}
Let $G$ be a finite subgroup in $\operatorname{Aut}(\mathbb{P}^1 \times \mathbb{P}^1)$, and $G_0$, $A_1$, and $A_2$ are defined as~above. Assume that each group $A_i$ is isomorphic to $\AG_4$, $\SG_4$, or $\AG_5$. If there exists a $G$-invariant irreducible reduced curve $D'$ on $\mathbb{P}^1 \times \mathbb{P}^1$ in the class $2F_1 + 2F_2$ then $G_0 = \AG_4 \triangle_{\AG_4} \AG_4 \cong \AG_4$.
\end{lemma}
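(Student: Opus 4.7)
The plan is to study the action of $G_0$ on the $(2,2)$-curve $D'$, which has arithmetic genus $1$, and combine geometric constraints on $\operatorname{Aut}(D')$ with the group-theoretic constraint that $G_0$ must admit $A_1$ and $A_2$ as quotients.

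First I would establish that $G_0$ acts faithfully on $D'$. Both projections $\pi_i|_{D'}\colon D' \to C_i$ are finite of degree $2$ (since $D' \cdot F_i = 2$), hence surjective because $D'$ is irreducible; so an element $(g_1, g_2) \in G_0 \subset A_1 \times A_2$ fixing $D'$ pointwise must satisfy $g_i = \id$, giving $g = 1$. Next, I would rule out the singular cases. Since $D'$ is irreducible and reduced of arithmetic genus $1$, it is either smooth, nodal, or cuspidal. In each singular case the normalization is $\mathbb{P}^1$, and the faithful $G_0$-action lifts to $\mathbb{P}^1$ and preserves the preimage of the singular point, which is a single point (cusp) or a pair of points (node). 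By Lemma \ref{lemma: action on P1}, this forces $G_0$ to be cyclic or dihedral, contradicting the existence of a surjection onto $A_i \in \{\AG_4, \SG_4, \AG_5\}$.

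Thus $D'$ is a smooth elliptic curve $E$. I would then use that $\operatorname{Aut}(E) = E \rtimes \operatorname{Aut}_0(E)$ with $\operatorname{Aut}_0(E)$ cyclic of order in $\{2, 4, 6\}$, which makes $\operatorname{Aut}(E)$, and hence $G_0$, metabelian. Since every quotient of a metabelian group is metabelian, and since among $\{\AG_4, \SG_4, \AG_5\}$ only $\AG_4$ is metabelian (the other two have derived length $3$ or are non-solvable), it follows that $A_1 \cong A_2 \cong \AG_4$. Consequently $G_0 \cong \AG_4 \triangle_R \AG_4$ for some common quotient $R \in \{1, \CG_3, \AG_4\}$ of $\AG_4$.

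Finally, I would eliminate the cases $R = 1$ and $R = \CG_3$ by a rank argument. In both cases, since $V_4 = \ker(\AG_4 \to \CG_3)$, the group $G_0$ contains the abelian subgroup $V_4 \times V_4 \cong \CG_2^4$, which has rank $4$. But any finite abelian subgroup $H$ of $\operatorname{Aut}(E)$ satisfies $\rank(H \cap E) \leq 2$ (since the torsion of $E$ is $(\mathbb{Q}/\mathbb{Z})^2$) and $H / (H \cap E)$ embeds in the cyclic group $\operatorname{Aut}_0(E)$, so $\rank H \leq 3$. This excludes $R \in \{1, \CG_3\}$ and forces $R = \AG_4$, giving $G_0 = \AG_4 \triangle_{\AG_4} \AG_4 \cong \AG_4$. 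The main obstacle will be ensuring the singular-case analysis covers all possibilities and that the metabelian and rank arguments correctly pin down $G_0$.
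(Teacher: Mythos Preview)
Your proof is correct and follows the same overall strategy as the paper: establish that $G_0$ acts faithfully on $D'$, reduce to the case where $D'$ is a smooth elliptic curve, and then use the structure of $\operatorname{Aut}(E)$ to pin down $G_0$. The tactical differences are minor. For smoothness, the paper simply observes that the singular locus of $D'$ would be a $G$-orbit of size at most $2$, contradicting the fact (established earlier) that every $H$-orbit on $X$ has at least $4$ points for $H\cong\AG_4$; your normalization argument reaches the same conclusion by a different route. For excluding $\SG_4$ and $\AG_5$, the paper uses that $G_0$ has a normal abelian subgroup $N_0$ of rank $\le 2$ with \emph{cyclic} quotient (coming from $\operatorname{Aut}_0(E)$), while you phrase this as $G_0$ being metabelian; both are immediate from the same exact sequence. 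For eliminating $R\in\{1,\CG_3\}$, the paper notes that $[G_0,G_0]\cong\CG_2^4$ must lie inside $N_0$ (since $G_0/N_0$ is abelian), contradicting $\operatorname{rank} N_0\le 2$; your version bounds the rank of \emph{any} abelian subgroup of $\operatorname{Aut}(E)$ by $3$, which is a slightly coarser but equally sufficient bound.
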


\begin{proof}
The curve $D'$ is smooth since otherwise there is a $G$-orbit of cardinality less than $4$ consisting of singular points of $D'$. Thus $D'$ is an elliptic curve.

Note that any element $g \in \operatorname{Aut}(X)$ such that $g \notin \operatorname{Aut}_0(X)$ can fix no more than one point on~a~fiber~$K_1$ of $\pi_1 \colon X \rightarrow C_1$, since the curve $g(K_1)$ is a fiber of $\pi_2 \colon X \rightarrow C_2$, and the only possibility for a $g$-fixed point on $K_1$ is the point $K_1 \cap g(K_1)$. Therefore $g$ cannot pointwisely fix $D'$ since a~general fibre of $\pi_1$ meets $D'$ at two points.

Moreover, if $g \in \operatorname{Aut}_0(X) \subset \operatorname{Aut}(X)$ pointwisely fixes $D'$ then $g$ pointwisely fixes $\pi_i(D') = C_i$, that is possible only for trivial $g$. Therefore $G$ acts faithfully on $D'$, and $G$ should contain a~normal abelian subgroup $N$ corresponding to translations on the elliptic curve. The group $N$ has rank at~most~$2$, and the quotient $G / N$ is either trivial, or~cyclic. Therefore $G_0$ contains a normal abelian subgroup $N_0 = G_0 \cap N$ of rank at most $2$ such the quotient $G_0 / N_0$ is either trivial, or~cyclic. Moreover, for any surjective homomorphism $\pi \colon G_0 \rightarrow G'_0$ the group $N'_0 = \pi(N_0)$ is a normal abelian subgroup of $G'_0$ of rank at most $2$ and the quotient $G'_0 / N'_0$ is either trivial, or cyclic.

Note that the homomorphisms $G_0 \rightarrow A_i$ are surjective, and there are no normal abelian subgroups in $\SG_4$ and $\AG_5$ such that the quotient is cyclic. Therefore any $A_i$ cannot be isomorphic to $\SG_4$ or~$\AG_5$, and $G_0 \cong \AG_4 \triangle_R \AG_4$, where $R \cong \AG_4$, $R \cong \CG_3$, or $R = \{1\}$. Moreover, the groups $\AG_4 \triangle_{\CG_3} \AG_4$ \mbox{and $\AG_4 \times \AG_4$} do not contain normal abelian subgroups of rank $2$ such that the quotient is cyclic, since the commutator subgroups of these groups are isomorphic to $\CG_2^4$. Thus $G_0 = \AG_4 \triangle_{\AG_4} \AG_4 \cong \AG_4$.
\end{proof}

\begin{lemma}
\label{lemma: dP8-diagonal-invariant}
The group $H = \AG_4 \triangle_{\AG_4} \AG_4 \cong \AG_4$ acting on $X = \mathbb{P}^1 \times \mathbb{P}^1$ has a unique invariant curve $D'$ in the class $F_1 + F_2$ in $\operatorname{Pic}\left(X\right)$. Moreover, if for a group $G \subset \operatorname{Aut}(X)$ one has $G_0 = H$, then $D'$ is $G$-invariant.
\end{lemma}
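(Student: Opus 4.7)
The plan is to classify $(1,1)$-divisors, realize $H$-invariant ones as graphs of $H$-equivariant isomorphisms, and then invoke conjugacy and centralizer facts for $\AG_4 \subset \mathrm{PGL}_2(\mathbb{K})$ to get uniqueness. First I would observe that any effective divisor in the class $F_1+F_2$ on $X=\mathbb{P}^1\times\mathbb{P}^1$ is either a sum of two fibers (one from each ruling) or the graph $\Gamma_\varphi\subset X$ of an isomorphism $\varphi\colon C_1\to C_2$. The first possibility is incompatible with $H$-invariance: it would force an $H$-fixed point on each $C_i$, but $H\cong\AG_4$ acting faithfully on $\mathbb{P}^1$ has smallest orbit of size $4$ by Lemma \ref{lemma: action on P1}. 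So any $H$-invariant curve $D'$ in $|F_1+F_2|$ is the graph of some isomorphism $\varphi\colon C_1\to C_2$.

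Next I would translate $H$-invariance of $\Gamma_\varphi$ into a representation-theoretic condition. Writing $H=\AG_4\triangle_{\AG_4}\AG_4=\{(a,\sigma(a))\mid a\in\AG_4\}$ for an automorphism $\sigma$ of $\AG_4$ (the two projections $H\to A_i$ are both isomorphisms because $H=G_0$ acts faithfully on each factor), the invariance condition $(a,\sigma(a))\cdot\Gamma_\varphi=\Gamma_\varphi$ says $\varphi\circ a=\sigma(a)\circ\varphi$ as automorphisms of $\mathbb{P}^1$, i.e. $\varphi$ conjugates the $H$-action $\rho_1$ on $C_1$ to the $H$-action $\rho_2$ on $C_2$ (after twisting by $\sigma$). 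Existence of such a $\varphi$ is then the statement that any two faithful actions of $\AG_4$ on $\mathbb{P}^1$ are conjugate in $\mathrm{PGL}_2(\mathbb{K})$, which is classical (e.g. by Lemma~\ref{lemma: action on P1} and the fact that $\AG_4$ has a unique faithful $2$-dimensional projective representation up to conjugation).

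For uniqueness, if $\varphi,\varphi'$ are two such isomorphisms, then $\psi=\varphi'\circ\varphi^{-1}\in\mathrm{Aut}(C_2)$ centralizes $\rho_2(H)\cong\AG_4$. Any non-identity element of $\mathrm{PGL}_2(\mathbb{K})$ fixes one or two points of $\mathbb{P}^1$, so its fixed-point set is a $\rho_2(H)$-invariant set of cardinality $\leqslant 2$; but all $\AG_4$-orbits on $\mathbb{P}^1$ have size $\geqslant 4$, forcing $\psi=\mathrm{id}$. Hence $\varphi=\varphi'$ and the $H$-invariant curve $D'=\Gamma_\varphi$ is unique.

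Finally, suppose $G\subset\mathrm{Aut}(X)$ satisfies $G_0=H$. For any $g\in G$, the divisor $g(D')$ is again effective; if $g\in G_0$ it lies in $|F_1+F_2|$ trivially, and if $g\notin G_0$ it swaps the two rulings so $g(D')\in|F_2+F_1|=|F_1+F_2|$ as well. Moreover, since $G_0=H$ is normal in $G$, for any $h\in H$ we have $g^{-1}hg\in H$, so $h\cdot g(D')=g\cdot(g^{-1}hg)(D')=g(D')$, i.e. $g(D')$ is $H$-invariant. By the uniqueness established above, $g(D')=D'$, so $D'$ is $G$-invariant. The main point to be careful with is the conjugacy/centralizer step for $\AG_4\subset\mathrm{PGL}_2(\mathbb{K})$; everything else is a direct manipulation of the diagonal product structure of $H$.
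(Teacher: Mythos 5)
Your proof is correct, but it takes a genuinely different route from the paper's. The paper works through the Segre embedding $X\hookrightarrow\mathbb{P}^3$: it lifts the two actions to $2$-dimensional representations $\rho_i$ of the binary tetrahedral group, observes that $\rho_1\otimes\rho_2$ descends to a faithful $4$-dimensional representation of $\AG_4$ which splits as (one-dimensional) $\oplus$ (irreducible three-dimensional), and counts invariant hyperplanes via invariant lines in $V^*$; the final claim is then the same normality-plus-uniqueness argument you give. You instead classify effective $(1,1)$-divisors geometrically (sums of two fibers, excluded by the orbit-size bound, versus graphs of isomorphisms $C_1\to C_2$, using $p_a=0$ and the two degree-one projections), translate $H$-invariance of a graph into the equivariance condition $\varphi\circ a=\sigma(a)\circ\varphi$, get existence from conjugacy of faithful $\AG_4$-actions on $\mathbb{P}^1$, and get uniqueness from triviality of the centralizer of $\AG_4$ in $\mathrm{PGL}_2(\mathbb{K})$ via the fixed-locus argument. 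Both are complete. Your version has the advantage of making transparent both that the $H$-action is automatically diagonal and exactly where the statement fails for $\AG_5$ (two non-conjugate faithful actions on $\mathbb{P}^1$), points the paper handles in separate remarks; the paper's version avoids any case analysis on the shape of the divisor. The one step you should make explicit is the existence claim: you need not just that the two images $A_1,A_2\subset\mathrm{PGL}_2(\mathbb{K})$ are conjugate as subgroups, but that a conjugating element can be chosen to induce the prescribed isomorphism $\sigma=f_2\circ f_1^{-1}$; this holds because $N_{\mathrm{PGL}_2(\mathbb{K})}(\AG_4)\cong\SG_4$ maps onto $\mathrm{Aut}(\AG_4)\cong\SG_4$ (and is precisely what breaks for $\AG_5$). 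With that sentence added, your argument is airtight.
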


\begin{proof}
Any curve in the class $F_1 + F_2$ can be obtained as an intersection of $X \subset \mathbb{P}^3$ and a plane in~$\mathbb{P}^3$.

Any faithful action of $\AG_4$ on $\mathbb{P}^1$ corresponds to a faithful action of the binary tetrahedral group~$\overline{\AG}_4$ on the two-dimensional vector space over $\mathbb{K}$. Let $\rho_i \colon \overline{\AG}_4 \rightarrow \operatorname{GL}(V_i)$ be the two-dimensional representation corresponding to the action of $H \cong \AG_4$ on $C_i=\mathbb{P}(V_i)$. The embedding $X \cong C_1 \times C_2 \hookrightarrow \mathbb{P}^3$ corresponds to the four-dimensional representation $\rho_1 \otimes \rho_2 \colon \overline{\AG}_4 \rightarrow \operatorname{GL}(V_1 \otimes V_2)$. This representation is not faithful, since the image of the center of $\overline{\AG}_4$ for any faithful two-dimensional representation is generated by a scalar matrix corresponding to the multiplication by $-1$. Therefore~${\operatorname{Ker}(\rho_1 \otimes \rho_2) = Z}$ is the center of $\overline{\AG}_4$, and we obtain a faithful four-dimensional representation~${\rho \colon H \rightarrow \operatorname{GL}(V)}$, where~${H \cong \AG_4 \cong \overline{\AG}_4 / Z}$ and $V = V_1 \otimes V_2$.

Any faithful four-dimensional representation of $\AG_4$ can be decomposed into a direct sum of a~one-dimensional representation of $\AG_4$ and the irreducible three-dimensional representation of $\AG_4$.
The representation $\rho$ induces a representation $\rho^*\colon H \rightarrow \operatorname{GL}(V^*)$, which is also a direct sum of the irreducible three-dimensional representation of $\AG_4$ and a one-dimensional representation. Invariant hyperplanes of $H$ in $\mathbb{P}^3$ correspond to $H$-invariant one-dimensional subspaces of $V^*$. There is exactly one such subspace in $V^*$, and therefore there is exactly one $H$-invariant hyperplane in $\mathbb{P}^3$ and $H$-invariant curve $D'$ in the class $F_1 + F_2$ in $\operatorname{Pic}\left(X\right)$.

If for a group $G \subset \operatorname{Aut}(\mathbb{P}^1 \times \mathbb{P}^1)$ one has $G_0 = H$, then the group $G / G_0 \cong \CG_2$ acts on the set of~$G_0$-invariant curves in the class $F_1 + F_2$, but this set consists of a unique curve $D'$. Therefore~$D'$ is $G$-invariant.
\end{proof}

\begin{corollary}
\label{corollary: dP8-invariant-curves}
Let $G$ be a finite subgroup in $\operatorname{Aut}(\mathbb{P}^1 \times \mathbb{P}^1)$, and $G_0$, $A_1$, and $A_2$ are defined as~above. Assume that each group $A_i$ is isomorphic to $\AG_4$, $\SG_4$, or $\AG_5$. If there exists a $G$-invariant irreducible curve on $\mathbb{P}^1 \times \mathbb{P}^1$ in the class $2F_1 + 2F_2$ then there exists a $G$-invariant irreducible curve in the class $F_1 + F_2$.
\end{corollary}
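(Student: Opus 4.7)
The plan is to deduce the corollary almost immediately from the two preceding lemmas. Suppose $C \subset \mathbb{P}^1 \times \mathbb{P}^1$ is a $G$-invariant irreducible curve in the class $2F_1 + 2F_2$. Since $C$ is irreducible, it is in particular reduced, so the hypothesis of Lemma~\ref{lemma: dP8-diagonal-invariant0} is satisfied. First I would apply that lemma to conclude that
\[
G_0 = \AG_4 \triangle_{\AG_4} \AG_4 \cong \AG_4,
\]
i.e.\ $G_0$ is precisely the diagonal subgroup $H$ considered in Lemma~\ref{lemma: dP8-diagonal-invariant}.

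Next I would invoke Lemma~\ref{lemma: dP8-diagonal-invariant} directly. That lemma produces a (unique) $H$-invariant curve $D'$ in the class $F_1 + F_2$, and moreover states that whenever $G_0 = H$ the curve $D'$ is automatically $G$-invariant (the group $G/G_0$, which is either trivial or $\CG_2$, must permute the singleton set of $H$-invariant curves in $|F_1+F_2|$). Since we have just established $G_0 = H$, this gives the required $G$-invariant irreducible curve in the class $F_1 + F_2$, completing the proof.

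There is essentially no obstacle here, as the real content lies in the two preceding lemmas: Lemma~\ref{lemma: dP8-diagonal-invariant0} forces $G_0$ to be as small as possible (the diagonal $\AG_4$) by a normal-abelian-subgroup argument on the elliptic curve $C$, and Lemma~\ref{lemma: dP8-diagonal-invariant} isolates $D'$ via a representation-theoretic analysis of $\AG_4$ acting on $\mathbb{P}^3$. The corollary is just the concatenation of these two results, so the only thing to check is that the hypotheses match up, which is immediate.
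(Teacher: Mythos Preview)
Your proposal is correct and matches the paper's proof essentially verbatim: apply Lemma~\ref{lemma: dP8-diagonal-invariant0} to force $G_0 = \AG_4 \triangle_{\AG_4} \AG_4$, then apply Lemma~\ref{lemma: dP8-diagonal-invariant} to obtain the $G$-invariant curve in $F_1+F_2$.
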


\begin{proof}
If there exists a $G$-invariant irreducible curve on $\mathbb{P}^1 \times \mathbb{P}^1$ in the class $2F_1 + 2F_2$ then
$$
{G_0 = \AG_4 \triangle_{\AG_4} \AG_4 \cong \AG_4}
$$
by Lemma \ref{lemma: dP8-diagonal-invariant0}. Thus there exists a unique $G$-invariant curve in the class $F_1 + F_2$ by~Lemma~\ref{lemma: dP8-diagonal-invariant}.
\end{proof}

Now we want to find possibilities for finite subgroups $G \subset \operatorname{Aut}(C_1 \times C_2)$ preserving an irreducible curve $D'$ in the class $F_1 + F_2$. At first note that the subgroup $G_0 \subset G$ acts diagonally \mbox{on $X \cong C_1 \times C_2$}, since the projections $\pi_i \colon X \rightarrow C_i$ are $G_0$-equivariant, and therefore these projections induce \mbox{$G_0$-equivariant} isomorphism between $D'$ and $C_i$. Thus there exists a $G_0$-equivariant isomorphism between $C_1$ and $C_2$.

\begin{lemma}
\label{lemma: dP8-diagonal-stabilizer}
Let $D'$ be an irreducible curve in the class $F_1 + F_2$ on $\mathbb{P}^1 \times \mathbb{P}^1$. Then the subgroup~$G'$ of $\operatorname{Aut}(\mathbb{P}^1 \times \mathbb{P}^1)$ preserving $D'$ is isomorphic to $\operatorname{PGL}_2(\mathbb{K}) \times \CG_2$.
\end{lemma}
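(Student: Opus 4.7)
The plan is to first reduce to the case where $D'$ is the diagonal via a change of coordinates, then compute the stabilizer directly.

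Recall that $\operatorname{Aut}(\mathbb{P}^1\times\mathbb{P}^1)\cong (\operatorname{PGL}_2(\mathbb{K})\times\operatorname{PGL}_2(\mathbb{K}))\rtimes \CG_2$, where the $\CG_2$ factor is generated by the swap $\sigma\colon (x,y)\mapsto(y,x)$, and $\operatorname{Aut}_0(\mathbb{P}^1\times\mathbb{P}^1) = \operatorname{PGL}_2(\mathbb{K})\times\operatorname{PGL}_2(\mathbb{K})$. Since $D'\cdot F_1 = D'\cdot F_2 = 1$, the two projections $\pi_i\colon D'\to C_i$ are isomorphisms, so $D'$ is the graph of an isomorphism $\varphi\colon C_1\to C_2$. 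In particular, after applying the automorphism $(\mathrm{id},\varphi^{-1})\in\operatorname{PGL}_2(\mathbb{K})\times\operatorname{PGL}_2(\mathbb{K})$, we may identify $C_1=C_2=\mathbb{P}^1$ and assume that $D'=\Delta$ is the diagonal. Since conjugation by an element of $\operatorname{Aut}(\mathbb{P}^1\times\mathbb{P}^1)$ identifies the stabilizer of $D'$ with the stabilizer of $\Delta$, it suffices to determine the latter.

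An element $(\alpha,\beta)\in\operatorname{PGL}_2(\mathbb{K})\times\operatorname{PGL}_2(\mathbb{K})$ preserves $\Delta$ if and only if $(\alpha(x),\beta(x))\in\Delta$ for every $x\in\mathbb{P}^1$, that is, if and only if $\alpha=\beta$. Therefore the stabilizer of $\Delta$ in $\operatorname{Aut}_0(\mathbb{P}^1\times\mathbb{P}^1)$ is the diagonal subgroup $\Delta(\operatorname{PGL}_2(\mathbb{K}))$, isomorphic to $\operatorname{PGL}_2(\mathbb{K})$. The swap $\sigma$ clearly preserves $\Delta$ as well, so $G'$ contains the subgroup $\Delta(\operatorname{PGL}_2(\mathbb{K}))\cup\sigma\cdot\Delta(\operatorname{PGL}_2(\mathbb{K}))$, and this exhausts $G'$ since $G'/(G'\cap\operatorname{Aut}_0(\mathbb{P}^1\times\mathbb{P}^1))$ has order at most $2$.

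Finally, for any $\alpha\in\operatorname{PGL}_2(\mathbb{K})$ we compute $\sigma\cdot(\alpha,\alpha)\cdot\sigma^{-1} = (\alpha,\alpha)$, so $\sigma$ centralizes $\Delta(\operatorname{PGL}_2(\mathbb{K}))$. Hence the extension splits as a direct product and $G'\cong\operatorname{PGL}_2(\mathbb{K})\times\CG_2$, as claimed. The only mildly subtle point is verifying that the semidirect product structure degenerates to a direct product, which follows immediately from the explicit centralizer computation above.
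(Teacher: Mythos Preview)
Your proof is correct and follows essentially the same approach as the paper: reduce to the case where $D'$ is the diagonal, then determine the stabilizer and verify that the swap centralizes the diagonal $\operatorname{PGL}_2(\mathbb{K})$. The only cosmetic difference is that the paper organizes the computation via the pointwise stabilizer $N'\cong\CG_2$ and the quotient $G'/N'\hookrightarrow\operatorname{PGL}_2(\mathbb{K})$, whereas you work directly with the semidirect product decomposition of $\operatorname{Aut}(\mathbb{P}^1\times\mathbb{P}^1)$; both arrive at the same conclusion by the same underlying observations.
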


\begin{proof}
Note that the group $\operatorname{Aut}(\mathbb{P}^1 \times \mathbb{P}^1)$ acts transitively on the set of irreducible curves in the class~${F_1 + F_2}$. Therefore we may assume that $D'$ consists of points of type $((x : y), (x : y))$ in~$\mathbb{P}^1 \times \mathbb{P}^1$.

The group $G'$ contains a normal subgroup $N'$ that preserves $D'$ pointwise. If $g_0 \in N'$ preserves the projections $\pi_i$, then $g_0$ acts trivially on $C_i$. Thus $g_0$ is trivial, and $N'$ is either trivial or is isomorphic to $\CG_2$ permuting the factors of $\mathbb{P}^1 \times \mathbb{P}^1$. It is easy to see that the element
$$
g \colon ((x_1 : y_1), (x_2 : y_2)) \mapsto ((x_2 : y_2), (x_1 : y_1))
$$
belongs to $N'$. Thus $N' \cong \CG_2$.

The quotient $G' / N'$ acts faithfully on $D'$, and therefore $G' / N'$ is a subgroup of $\operatorname{PGL}_2(\mathbb{K})$. Consider an action of the group $\operatorname{PGL}_2(\mathbb{K})$ on $\mathbb{P}^1 \times \mathbb{P}^1$ such that the action on the both factors is the same. This action preserves $D'$, and any element of $\operatorname{PGL}_2(\mathbb{K})$ acts faithfully on $D'$ and commutes with $N'$. Thus $G' \cong \operatorname{PGL}_2(\mathbb{K}) \times \CG_2$.
\end{proof}

\begin{corollary}
\label{corollary: dP8-class}
Let $G$ be a finite subgroup in $\operatorname{Aut}(\mathbb{P}^1 \times \mathbb{P}^1)$, and let $G_0$, $A_1$, and $A_2$ be defined as above. Assume that each group $A_i$ is isomorphic to $\AG_4$, $\SG_4$, or $\AG_5$. Then $\coregG(\mathbb{P}^1 \times \mathbb{P}^1)=1$ if and only if $A_1 \cong A_2 \cong A$, $G_0 \cong A \triangle_{A} A$, and $G_0$ acts diagonally on $\mathbb{P}^1 \times \mathbb{P}^1$.
\end{corollary}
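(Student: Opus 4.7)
The plan is to combine Lemma~\ref{lemma: dP8coreg1_2}, which characterizes $\coregG(\mathbb{P}^1\times\mathbb{P}^1)=1$ via the existence of a $G$-invariant irreducible curve in class $F_1+F_2$ or $2F_1+2F_2$, with Corollary~\ref{corollary: dP8-invariant-curves}, which shows that the $2F_1+2F_2$ case always produces a $G$-invariant irreducible curve in class $F_1+F_2$. Thus the task reduces to showing that the existence of a $G$-invariant irreducible curve $D' \subset X$ in class $F_1+F_2$ is equivalent to the stated condition on $G_0$.

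For the forward direction, suppose such a curve $D'$ exists. Since $D'\cdot F_i=1$, the restrictions $\pi_i\vert_{D'}\colon D'\to C_i$ are $G_0$-equivariant isomorphisms, and their composition is a $G_0$-equivariant isomorphism $C_1\to C_2$; hence the action of $G_0$ is diagonal. An element of $G_0$ acting trivially on $D'\cong\mathbb{P}^1$ would act trivially on each $C_i$ and therefore on $X$, so $G_0$ embeds faithfully into $\operatorname{Aut}(D')$ via either projection. This forces both homomorphisms $f_i\colon G_0\to A_i$ to be isomorphisms, giving $A_1\cong A_2\cong G_0=:A$ and $G_0\cong A\triangle_A A$.

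For the backward direction, assume $A_1\cong A_2\cong A$, $G_0\cong A\triangle_A A$, and that a $G_0$-equivariant isomorphism $\varphi\colon C_1\to C_2$ witnesses the diagonal action. By Remark~\ref{remark: diagonal-graph}, the graph of $\varphi$ is a $G_0$-invariant irreducible curve $D'$ in class $F_1+F_2$. If $G=G_0$ we are done by Lemma~\ref{lemma: dP8coreg1_2}; otherwise any element $g\in G\setminus G_0$ normalizes $G_0$ and sends $D'$ to another $G_0$-invariant irreducible curve in the same class. It therefore suffices to establish uniqueness of such a curve, from which $G$-invariance of $D'$ and hence $\coregG=1$ follow.

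The uniqueness of $D'$ is the main point. Any irreducible curve in class $F_1+F_2$ arises as the graph of some isomorphism $\psi\colon C_1\to C_2$, and $G_0$-invariance of this graph translates into $G_0$-equivariance of $\psi$. Given two such $\varphi$ and $\psi$, the composition $\varphi^{-1}\circ\psi\in\operatorname{Aut}(C_1)\cong\operatorname{PGL}_2(\mathbb{K})$ centralizes $A_1$. Since the binary cover of $A\in\{\AG_4,\SG_4,\AG_5\}$ acts irreducibly on its natural two-dimensional representation (these are precisely the non-abelian simple and related finite subgroups of $\operatorname{PGL}_2$ that admit no invariant point on $\mathbb{P}^1$), Schur's lemma implies that the centralizer of $A_1$ in $\operatorname{PGL}_2(\mathbb{K})$ is trivial. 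Therefore $\psi=\varphi$, proving uniqueness of $D'$ and completing the argument.
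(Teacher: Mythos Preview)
Your proof is correct and follows essentially the same architecture as the paper's: reduce via Lemma~\ref{lemma: dP8coreg1_2} and Corollary~\ref{corollary: dP8-invariant-curves} to the existence of a $G$-invariant irreducible curve in class $F_1+F_2$, then argue forward via the projections and backward via the graph of the equivariant isomorphism together with uniqueness of the $G_0$-invariant curve.

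The only notable divergence is in how uniqueness is established. The paper invokes Lemma~\ref{lemma: dP8-diagonal-invariant}, which proves uniqueness for the subgroup $H\cong\AG_4\triangle_{\AG_4}\AG_4$ by decomposing the $4$-dimensional representation $\rho_1\otimes\rho_2$ as a sum of a one-dimensional and an irreducible three-dimensional $\AG_4$-representation, and then uses that every $G_0$ in question contains such an $H$. Your argument instead observes that two $G_0$-invariant graphs differ by an element of the centralizer of $A_1$ in $\operatorname{PGL}_2(\mathbb{K})$, which is trivial by Schur's lemma applied to the irreducible $2$-dimensional representation of the binary cover. This is a cleaner and more uniform route: it handles $\AG_4$, $\SG_4$, $\AG_5$ simultaneously without passing to a subgroup, and it avoids the explicit decomposition of the tensor product. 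The paper's approach, on the other hand, yields Lemma~\ref{lemma: dP8-diagonal-invariant} as a standalone fact that is also used in Corollary~\ref{corollary: dP8-invariant-curves}, so it fits the paper's internal economy.
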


\begin{proof}
By Lemma \ref{lemma: dP8coreg1_2} one has $\coregG(\mathbb{P}^1 \times \mathbb{P}^1)=1$ if and only if there exists a $G$-invariant irreducible curve $D'$ on $\mathbb{P}^1 \times \mathbb{P}^1$ in the class $F_1 + F_2$ or $2F_1 + 2F_2$ in $\operatorname{Pic}\left(\mathbb{P}^1 \times \mathbb{P}^1\right)$. Moreover, by~Corollary \ref{corollary: dP8-invariant-curves} we can assume that $D'$ has the class $F_1 + F_2$.

Assume that there exists a $G$-invariant irreducible curve $D'$ in the class $F_1 + F_2$. Then $G$ is a~subgroup of $\operatorname{PGL}_2(\mathbb{K}) \times \CG_2$, where $\operatorname{PGL}_2(\mathbb{K})$ acts diagonally on $\mathbb{P}^1 \times \mathbb{P}^1$, by Lemma \ref{lemma: dP8-diagonal-stabilizer}. Therefore the group $G_0$ acts diagonally on $\mathbb{P}^1 \times \mathbb{P}^1$ and by Remark \ref{remark: diagonal-act} one has $A_1 \cong A_2 \cong A$, $G_0 \cong A \triangle_{A} A$.

Now assume that $A_1 \cong A_2 \cong A$, $G_0 \cong A \triangle_{A} A$ and $G_0$ acts diagonally on $\mathbb{P}^1 \times \mathbb{P}^1$. Then by~Remark~\ref{remark: diagonal-graph} there exists $G_0$-invariant irreducible curve $D'$ on $\mathbb{P}^1 \times \mathbb{P}^1$ in the class $F_1 + F_2$. The subgroup $G_0$ is normal in $G$, therefore for any $g \in G$ the curve $gD'$ is $G_0$-invariant, since any element of $G_0$ can be written as $gg_0g^{-1}$, where $g_0 \in G_0$. But by Lemma \ref{lemma: dP8-diagonal-invariant} there is unique $G_0$-invariant curve in the class $F_1 + F_2$. Thus $D'$ is $G$-invariant.
\end{proof}

\begin{remark}
Note that we can describe groups appearing in Corollary \ref{corollary: dP8-class} more explicitly. By Lemma \ref{lemma: dP8-diagonal-stabilizer} if $\coregG(\mathbb{P}^1 \times \mathbb{P}^1)=1$, then $G$ is a finite subgroup of $\operatorname{PGL}_2(\mathbb{K}) \times \CG_2$, and the image of the projection $G \rightarrow \operatorname{PGL}_2(\mathbb{K})$ is $\AG_4$, $\SG_4$, or $\AG_5$. Let us recall that any subgroup of~a~direct product is a diagonal product $B_1 \triangle_{R} B_2$ for some groups $B_1$, $B_2$, and $R$. In our case the group~$B_1$ is $\AG_4$, $\SG_4$, or $\AG_5$, the group $B_2$ is either trivial, or is isomorphic to~$\CG_2$, and $R$ is either trivial, or is isomorphic to~$\CG_2$ (that is possible only for $B_1 \cong \SG_4$). Thus there are seven possibilities for $G$:
\begin{itemize}
\item $G = G_0 \cong \AG_4 \triangle_{\AG_4} \AG_4 \cong \AG_4$;
\item $G \cong (\AG_4 \triangle_{\AG_4} \AG_4) \times \CG_2 \cong \AG_4 \times \CG_2$;
\item $G = G_0 \cong \SG_4 \triangle_{\SG_4} \SG_4 \cong \SG_4$;
\item $G \cong (\SG_4 \triangle_{\SG_4} \SG_4) \triangle_{\CG_2} \CG_2 \cong \SG_4$;
\item $G \cong (\SG_4 \triangle_{\SG_4} \SG_4) \times \CG_2 \cong \SG_4 \times \CG_2$;
\item $G = G_0 \cong \AG_5 \triangle_{\AG_5} \AG_5 \cong \AG_5$;
\item $G \cong (\AG_5 \triangle_{\AG_5} \AG_5) \times \CG_2 \cong \AG_5 \times \CG_2$.
\end{itemize}
\end{remark}

\begin{remark}
Note that in Corollary \ref{corollary: dP8-class} conditions $A_1 \cong A_2 \cong A$ and $G_0 \cong A \triangle_{A} A$ do not imply that the action of $G_0$ is diagonal. One can consider two non-isomorphic three-dimensional representations of $\AG_5$, and two corresponding two-dimensional representations $\rho_i \colon \overline{\AG}_5 \rightarrow \operatorname{GL}(V_i)$ of~$\overline{\AG}_5$ obtained as the preimages of the double cover $\operatorname{SU}_2(\mathbb{C}) \rightarrow \operatorname{SO}_3(\mathbb{R})$. Consider the action of $\AG_5$ on $C_1$ and $C_2$ given by these representations.

The embedding $X \cong C_1 \times C_2 \hookrightarrow \mathbb{P}^3$ corresponds to the four-dimensional representation
$$
\rho_1 \otimes \rho_2 \colon \overline{\AG}_5 \rightarrow \operatorname{GL}(V_1 \otimes V_2).
$$
This representation is not faithful, since the image of the center of $\overline{\AG}_5$ for any faithful two-dimensional representation is generated by a scalar matrix corresponding to the multiplication by $-1$. Therefore~${\operatorname{Ker}(\rho_1 \otimes \rho_2)}$ is the center of $\overline{\AG}_5$, and we obtain a faithful four-dimensional representation ${\rho \colon \AG_5 \rightarrow \operatorname{GL}(V)}$, where $V = V_1 \otimes V_2$. But this representation is irreducible, and therefore there are no $\AG_5$-invariant curves in the class $F_1 + F_2$ in $\operatorname{Pic}\left(X\right)$.
\end{remark}



Now we can collect the obtained results and prove Proposition \ref{proposition: dP8}.

\begin{proof}[Proof of Proposition \ref{proposition: dP8}]
By Lemma \ref{lemma: dP8coreg0} one has~${\coregG(\mathbb{P}^1 \times \mathbb{P}^1)=0}$ if and only if each $A_i$ is either cyclic, or dihedral group. Moreover, by Lemma \ref{lemma: dP8coreg1_1} if one of the groups $A_i$ is either cyclic, or dihedral, and the other is not, then $\coregG(\mathbb{P}^1 \times \mathbb{P}^1)=1$.

Therefore we can assume that each $A_i$ is isomorphic to $\AG_4$, $\SG_4$, or $\AG_5$. By Corollary \ref{corollary: dP8-class} one has~${\coregG(\mathbb{P}^1 \times \mathbb{P}^1)=1}$ if and only if $A_1 \cong A_2 \cong A$, $G_0 \cong A \triangle_{A} A$ and $G_0$ acts diagonally on~$\mathbb{P}^1 \times \mathbb{P}^1$. Otherwise one has $\coregG(\mathbb{P}^1 \times \mathbb{P}^1)=2$, and the action of $G_0$ is not diagonal (in~particular it holds if at least two groups among $A_1$, $A_2$, and $R$ are not isomorphic).
\end{proof}

\subsection{Automorphisms of $\mathbb{F}_1$}

The surface $\mathbb{F}_1$ is given by the blowup $f\colon \mathbb{F}_1 \rightarrow \mathbb{P}^2$ of $\mathbb{P}^2$ at~a~point~$p$. Let $E$ be the exceptional divisor of this blowup. Note that $E$ is a unique $(-1)$-curve on $\mathbb{F}_1$. Therefore for any group $G$ acting on $\mathbb{F}_1$ the morphism $f$ is $G$-equivariant, and $G$ acts on $\mathbb{P}^2$ with a fixed point~$p$.
We can apply Theorem \ref{theorem: PGL3-classification} to classify groups acting on $\mathbb{F}_1$ as follows: the groups having fixed points correspond to cases $(\mathrm{A})$ and $(\mathrm{B})$. Also we will use Remark \ref{remark: case B} to divide case $(\mathrm{B})$ into two subcases $(\mathrm{B}1)$ and $(\mathrm{B}2)$.

\begin{proposition}
\label{proposition: F1}
Let $G$ be a finite group acting on $\mathbb{F}_1$. If $G$ has type $(\mathrm{A})$ or $(\mathrm{B}1)$, then one has~$\coregG(\mathbb{F}_1)=0$; if $G$ has type  $(\mathrm{B}2)$, then one has $\coregG(\mathbb{F}_1)=1$.
\end{proposition}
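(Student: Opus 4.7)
The plan is to exploit that $E$ is the unique $(-1)$-curve on $\mathbb{F}_1$, hence $G$-invariant, and that $f\colon\mathbb{F}_1\to\mathbb{P}^2$ is $G$-equivariant with $G$ fixing the center of the blow-up. After conjugation one can put this fixed point at $p=[1:0:0]$ and work inside $\mathrm{Stab}(p)\subset\mathrm{PGL}_3(\mathbb{K})$. In type $(\mathrm{A})$, $G$ is contained in the standard diagonal torus, so the three coordinate lines $L_i=\{x_i=0\}$ are $G$-invariant; exactly two of them pass through $p$. In type $(\mathrm{B})$, by Remark~\ref{remark: case B} one may assume $G\subset\mathrm{GL}_2(\mathbb{K})$, so the line $L_\infty=\{x_0=0\}$ is automatically $G$-invariant, and the natural homomorphism $f\colon G\to\mathrm{PGL}_2(\mathbb{K})$ describes both the action on $L_\infty$ and the action on the $\mathbb{P}^1$ of directions at $p$, i.e.\ on~$E$. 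The key numerical identity is
\[
-K_{\mathbb{F}_1} \sim 2E+3F \sim E+H+2F,
\]
where $F$ is a ruling fiber (the strict transform of a line through $p$) and $H$ is the strict transform class of a line not through $p$, so that $E\cdot H=0$, $E\cdot F=1$, $H\cdot F=1$, $F^2=0$.

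For types $(\mathrm{A})$ and $(\mathrm{B}1)$ I would exhibit a single $G$-invariant anti-canonical boundary with circular dual complex. In type $(\mathrm{A})$, take $D:=E+\widetilde{L}_0+\widetilde{L}_1+\widetilde{L}_2$, where $\widetilde{L}_0$ is the strict transform of the coordinate line not through $p$ (class $H$) and $\widetilde{L}_1,\widetilde{L}_2$ are the strict transforms of the two coordinate lines through $p$ (each of class $F$). In type $(\mathrm{B}1)$, the group $f(G)\cong\DG_{2n}$ has an invariant pair of points on $\mathbb{P}^1$, which corresponds to a $G$-invariant pair of lines $L_1,L_2$ through $p$; take $D:=E+\widetilde{L}_\infty+\widetilde{L}_1+\widetilde{L}_2$. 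In both cases $D\in|-K_{\mathbb{F}_1}|$ and a direct check of pairwise intersections (using $E\cdot H=0=F\cdot F$ and $E\cdot F=H\cdot F=1$, with no triple points on $\mathbb{F}_1$) shows that $(\mathbb{F}_1,D)$ is snc with dual complex a $4$-cycle $\widetilde{L}_1-E-\widetilde{L}_2-\widetilde{L}_0-\widetilde{L}_1$ (resp.\ $\widetilde{L}_1-E-\widetilde{L}_2-\widetilde{L}_\infty-\widetilde{L}_1$), homeomorphic to $\mathbb{S}^1$, so $\mathrm{reg}_G(\mathbb{F}_1)\geq 1$ and hence $\coreg_G(\mathbb{F}_1)=0$.

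For type $(\mathrm{B}2)$, the lower bound $\coreg_G(\mathbb{F}_1)>0$ is immediate from Lemma~\ref{lem-B2-positive-coreg} (whose proof goes through for any $G$-surface, since it invokes Lemma~\ref{corollary: coreg-0 criterion} applied to the binary tetrahedral subgroup $\overline{T}\subset G$). For the upper bound I would apply Lemma~\ref{lemma:fixedcurve} with $\Delta:=E$, which is $G$-invariant and integral. The residual class is
\[
L \;=\; -K_{\mathbb{F}_1}-E \;\sim\; E+3F \;\sim\; 3H-2E,
\]
and this linear system is base-point-free: it corresponds under $f$ to plane cubics with a double point at $p$, and by choosing such a cubic with prescribed tangent cone at $p$ (resp.\ avoiding any given point off $E$) one kills any base point on $E$ (resp.\ away from $E$). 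Lemma~\ref{lemma:fixedcurve} then gives $\mathrm{reg}_G(\mathbb{F}_1)\geq 0$, i.e.\ $\coreg_G(\mathbb{F}_1)\leq 1$, and combined with the lower bound yields equality.

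The main obstacle is essentially bookkeeping: verifying in types $(\mathrm{A})$ and $(\mathrm{B}1)$ that the constructed $D$ is genuinely snc (no triple intersection, which comes from $F\cdot F=0$ and from the fact that distinct invariant fibers $\widetilde{L}_1,\widetilde{L}_2$ are disjoint), and that the $4$-cycle does not degenerate, which requires the two invariant lines through $p$ to be distinct from one another and from $L_0$ (resp.\ $L_\infty$). All of these are automatic from the coordinate description. The only genuinely nontrivial ingredient, beyond applying the preceding lemmas, is the base-point-freeness of $|3H-2E|$ in the type $(\mathrm{B}2)$ case, which is a small local computation in the affine chart around $p$.
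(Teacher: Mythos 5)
Your proposal is correct and follows essentially the same route as the paper: for types $(\mathrm{A})$ and $(\mathrm{B}1)$ the paper also takes $D=E+\sum f^{-1}_*(L_i)$ for a $G$-invariant triangle of lines with vertex at $p$, and for type $(\mathrm{B}2)$ it likewise combines Lemma~\ref{lem-B2-positive-coreg} with Lemma~\ref{lemma:fixedcurve} applied to $\Delta=E$ and the base-point-free system $|-K_{\mathbb{F}_1}-E|$. Your extra verifications (the explicit $4$-cycle dual complex and the base-point-freeness computation) are correct but only make explicit what the paper leaves implicit.
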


\begin{proof}
At first assume that $G$ has type $(\mathrm{A})$ or $(\mathrm{B}1)$. Consider the action of $G$ on $\mathbb{P}^2$. Then there exists a $G$-invariant triple of points not lying on a line. In case $(\mathrm{A})$ the triple consists of $p$ and two other fixed points of $G$, we may assume that these three points are non-collinear, since $G$ is diagonalizable in this case. In case $(\mathrm{B}1)$ the triple is $p$ and a pair of points on the invariant line fixed by the normal cyclic subgroup in $\DG_{2n}$.

Consider three lines $L_1$, $L_2$, and $L_3$ passing through pairs of these points. Then
$$
{D = E + \sum_{i=1}^{3} f^{-1}_*(L_i)}
$$
is a $G$-invariant divisor equivalent to $-K_{\mathbb{F}_1}$.
Thus $(\mathbb{F}_1,D)$ is a log Calabi--Yau pair on which \mbox{$G$-coregularity} zero is attained.

Consider case $(\mathrm{B}2)$. By Lemma \ref{lem-B2-positive-coreg} one has $\coregG(\mathbb{F}_1) > 0$.
The linear system $|-K_{\mathbb{F}_1} - E|$ is base point free. Therefore $\coregG(\mathbb{F}_1) \leqslant 1$ by Lemma \ref{lemma:fixedcurve}. Thus $\coregG(\mathbb{F}_1) = 1$.
\end{proof}

\subsection{Automorphisms of del Pezzo surface of degree $7$}
\label{subsect-dp7}
\begin{proposition}
\label{proposition: dP7}
If $G$ is a finite group acting on a smooth del Pezzo surface $S_7$ of degree $7$ then one has~$\coregG(S_7)=0$.
\end{proposition}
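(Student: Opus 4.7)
The plan is to exhibit a $G$-invariant anticanonical simple normal crossings divisor on $S_7$ whose dual complex is a pentagon; this will give $\mathrm{reg}_G(S_7)\geq 1$ and hence $\mathrm{coreg}_G(S_7)=0$.

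First I would write $\pi\colon S_7\to\mathbb{P}^2$ for the blow-up at two points $p_1,p_2$, with exceptional divisors $E_1,E_2$, and let $\widetilde{L}_{12}$ denote the strict transform of the line $L_{12}$ through $p_1,p_2$. These are the three $(-1)$-curves on $S_7$; as $\widetilde{L}_{12}$ is the only one meeting each of the other two, $\widetilde{L}_{12}$ is $G$-invariant and $\{E_1,E_2\}$ forms a single $G$-orbit. Consequently, the subgroup $G_0\triangleleft G$ of index at most $2$ that fixes each $E_i$ also fixes $p_1,p_2\in\mathbb{P}^2$.

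The main step is to produce a $G$-invariant point $p_3\in\mathbb{P}^2\setminus L_{12}$. Lifting $G_0$ to $\operatorname{GL}_3(\mathbb{K})$, the $1$-dimensional subspaces $\mathbb{K}e_i$ ($i=1,2$, with $[e_i]=p_i$) are $G_0$-stable, so Maschke's theorem produces a $G_0$-invariant complement $\mathbb{K}e_3$; this simultaneously diagonalizes $G_0$ and gives a $G_0$-fixed point $p_3:=[e_3]\notin L_{12}$. If $G\neq G_0$, pick $\sigma\in G\setminus G_0$: it swaps $p_1,p_2$ and normalizes $G_0$, hence permutes the isotypic components of the $G_0$-representation on $\mathbb{K}^3$. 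Since $\sigma e_1\in\mathbb{K}e_2$ and $\sigma e_2\in\mathbb{K}e_1$, the characters $\chi_1,\chi_2$ of $G_0$ on $\mathbb{K}e_1,\mathbb{K}e_2$ must be swapped by $\sigma$, and the remaining isotypic component containing $e_3$ is forced to be $\sigma$-stable; in particular $\sigma(p_3)=p_3$. A short compatibility check shows that $\chi_3=\chi_i$ for $i\in\{1,2\}$ would force a $1$-dimensional isotypic component to map inside a $2$-dimensional one, a contradiction. The residual case $G_0=\{1\}$ is handled directly: a nontrivial involution of $\mathbb{P}^2$ swapping $p_1,p_2$ has fixed locus consisting of a line distinct from $L_{12}$ together with an isolated point, so $\sigma$-fixed points off $L_{12}$ exist in abundance.

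Once the $G$-invariant non-collinear triple $\{p_1,p_2,p_3\}$ is in place, I would write $L_{ij}$ for the line through $p_i,p_j$ and $\widetilde{L}_{ij}$ for its strict transform on $S_7$, and take
\[
D=\widetilde{L}_{12}+\widetilde{L}_{13}+\widetilde{L}_{23}+E_1+E_2,
\]
the toric boundary of $S_7$ with respect to the toric structure determined by $\{p_1,p_2,p_3\}$. A direct intersection computation gives $D\sim -K_{S_7}$, shows $D$ is simple normal crossings, and verifies that its components form the pentagonal cycle $\widetilde{L}_{13}-E_1-\widetilde{L}_{12}-E_2-\widetilde{L}_{23}-\widetilde{L}_{13}$; thus $\mathcal{D}(S_7,D)\cong\mathbb{S}^1$ has dimension $1$. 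As $D$ is $G$-invariant by construction, this yields $\mathrm{coreg}_G(S_7)=0$. The main obstacle is the isotypic-components argument producing a $G$-fixed point $p_3\notin L_{12}$ in the presence of a swapping involution; once $p_3$ is found, the rest is a routine intersection-theoretic verification of the toric boundary.
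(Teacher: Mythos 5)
Your proof is correct and follows essentially the same route as the paper: both reduce to producing a $G$-invariant non-collinear triple $\{p_1,p_2,p_3\}$ with $p_3\notin L_{12}$ and then take the same five-component anticanonical cycle $E_1+E_2+\widetilde{L}_{12}+\widetilde{L}_{13}+\widetilde{L}_{23}$, whose pentagonal dual complex gives $\coregG(S_7)=0$. The only difference is that where the paper obtains the fixed point $p_3$ by noting that $G$ must be of type $(\mathrm{A})$ or $(\mathrm{B})$ in Theorem \ref{theorem: PGL3-classification}, you derive its existence directly via Maschke's theorem and an isotypic-component argument (plus the elementary case $G_0=\{1\}$), which is a self-contained substitute for the same fact.
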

\begin{proof}
A smooth del Pezzo surface $S_7$ of degree $7$ is given by the blowup $f\colon S_7 \rightarrow \mathbb{P}^2$ of $\mathbb{P}^2$ at two distinct points $p_1$ and $p_2$. Let $E_1$ and~$E_2$ be the corresponding exceptional divisors of this blowup. We denote by $E$ the proper transform of~the line passing through $p_1$ and $p_2$. Note that there are exactly three $(-1)$-curves on $S_7$, and the pair~$E_1$ and~$E_2$ is a unique pair of disjoint $(-1)$-curves. Therefore for any group $G$ acting on $S_7$ the morphism $f$ is $G$-equivariant, and the pair of points $p_1$ and $p_2$ is $G$-invariant.

The line $f(E)$ passing through the points $p_1$ and $p_2$ is $G$-invariant, which is possible only if $G$ has type $(\mathrm{A})$ or $(\mathrm{B})$ of Theorem \ref{theorem: PGL3-classification}. For both these cases there exists a $G$-fixed point $p$, not lying on $f(E)$. Let $L_1$ and $L_2$ be the lines passing through $p$ and $p_1$ or $p_2$ respectively. Then this pair of lines is $G$-invariant. The $G$-invariant divisor $D = E_1 + E + E_2 + \sum_{i=1}^{2} f^{-1}_*(L_i)$ is equivalent to~$-K_{S_7}$. Thus $(S_7,D)$ is a log Calabi--Yau pair on which $G$-coregularity zero is attained.
\end{proof}

\subsection{Automorphisms of del Pezzo surface of degree $6$}
\begin{proposition}
\label{proposition: dP6}
If $G$ is a finite group acting on a smooth del Pezzo surface $S_6$ of degree $6$ then one has~$\coregG(S_6)=0$.
\end{proposition}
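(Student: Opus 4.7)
The plan is to exhibit a single $\operatorname{Aut}(S_6)$-invariant anticanonical divisor whose dual complex is $1$-dimensional; since this works for the full automorphism group, it works for any subgroup $G$, and no case analysis is needed.

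Recall that $S_6$ is the blowup $f\colon S_6\to\mathbb{P}^2$ of $\mathbb{P}^2$ at three non-collinear points $p_1,p_2,p_3$. It is a smooth toric surface. Let $E_i=f^{-1}(p_i)$ for $i=1,2,3$, and let $L_{ij}$ denote the proper transform of the line through $p_i$ and $p_j$. Each of these six curves is a $(-1)$-curve, and in fact these are the only $(-1)$-curves on $S_6$. Their intersection pattern is a hexagon: $E_i$ meets $L_{ij}$ and $L_{ik}$ (and no other $(-1)$-curve), while $L_{ij}$ meets $E_i$ and $E_j$. Since the set of $(-1)$-curves and the intersection form are preserved by any automorphism, any $g\in\operatorname{Aut}(S_6)$ permutes these six curves and in particular preserves the divisor
\[
D=E_1+L_{12}+E_2+L_{23}+E_3+L_{13}.
\]

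Next I would verify that $D\sim -K_{S_6}$ and that $(S_6,D)$ is log canonical. The first follows from a standard computation (or from the fact that $S_6$ is toric with boundary $D$, so $K_{S_6}+D\sim 0$); the second holds because $D$ is a simple normal crossing divisor (a cycle of six smooth rational curves meeting transversally at six nodes). Therefore $(S_6,D)$ is a log Calabi--Yau pair, and for any finite $G\subset\operatorname{Aut}(S_6)$ the pair $(S_6,D)$ is a $G$-pair.

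Finally, the dual complex $\mathcal{D}(S_6,D)$ is a hexagonal cycle, hence homeomorphic to $\mathbb{S}^1$ and of dimension $1=\dim S_6-1$. Consequently
\[
\operatorname{reg}_G(S_6)\geqslant \operatorname{reg}_G(S_6,D)=1,
\]
and since $\operatorname{reg}_G(S_6)\leqslant \dim S_6-1=1$ always, we conclude $\operatorname{coreg}_G(S_6)=0$. There is no real obstacle here; the only thing to be careful about is invoking that \emph{every} automorphism of $S_6$ preserves the hexagon of $(-1)$-curves, which is why no subdivision into group types is necessary (in contrast to Propositions \ref{proposition: dP9}, \ref{proposition: dP8}, and \ref{proposition: F1}).
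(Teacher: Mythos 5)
Your proof is correct and coincides with the paper's argument: both take $D$ to be the sum of the six $(-1)$-curves (the hexagon), observe it is invariant under the full automorphism group and anticanonical, and conclude that $G$-coregularity zero is attained on the log Calabi--Yau pair $(S_6,D)$. Your write-up just spells out the snc/log-canonicity check and the identification of the dual complex with $\mathbb{S}^1$ in slightly more detail.
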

\begin{proof}
A smooth del Pezzo surface $S_6$ of degree $6$ is given by the blowup of $\mathbb{P}^2$ at three distinct points not lying on a line. Note that there are exactly six $(-1)$-curves on $S_6$: the preimages of these points and the proper transforms of the lines passing through pairs of these points. Consider a divisor $D$ which is a sum of these six $(-1)$-curves. For any group $G$ acting on $S_6$ the divisor $D$ is obviously $G$-invariant. Moreover, $D$ is equivalent to~$-K_{S_6}$. Thus $(S_6,D)$ is a log Calabi--Yau pair on which $G$-coregularity zero is attained.
\end{proof}

\subsection{General results}
In this subsection we summarize obtained results on $G$-coregularity for different finite groups $G$ acting on smooth del Pezzo surfaces of degree at least $6$, and formulate some general statements based on these results.

Note that any smooth del Pezzo surface of degree $6$ or greater is a toric variety. The next proposition allows one to find $G$-coregularity for many groups $G$ acting on a toric variety of arbitrary dimension.

\begin{proposition}
\label{prop-toric-varieities}
Let $X$ be a projective toric variety.
Let $G$ be an algebraic group
that normalizes a maximal torus $T$ in $\mathrm{Aut}(X)$.
Then $\mathrm{coreg}_G(X)=0$.
\end{proposition}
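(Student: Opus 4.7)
The plan is to exhibit a $G$-invariant $1$-complement realising the maximum possible regularity $n-1 = \dim X - 1$, namely the reduced toric boundary. Let $D = D_1 + \ldots + D_r$ be the sum of the $T$-invariant prime divisors on $X$; equivalently, $D$ is the complement of the open $T$-orbit in $X$. The first thing I would check is that $D$ is $G$-invariant: conjugation by $g \in G$ is a group automorphism of $T$, so $g$ sends $T$-orbits to $T$-orbits and in particular permutes the codimension-one $T$-orbits, whose Zariski closures are precisely the $D_i$. Next, on any normal projective toric variety one has the classical relation $K_X + D \sim 0$, so $(X, D)$ is a log Calabi--Yau $G$-pair and $D$ is a $G$-invariant element of $|{-}K_X|$, giving a $1$-complement in the sense of Definition \ref{defin-regularity}.

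The main step is to show $\dim \mathcal{D}(X, D) = n - 1$. Because $G$ normalises $T$, the conjugation action induces a homomorphism $G \to \operatorname{Aut}(T) \cong \mathrm{GL}(N)$, where $N$ is the cocharacter lattice of $T$; since $\mathrm{GL}(N) \cong \mathrm{GL}_n(\mathbb{Z})$ is discrete and $G$ is an algebraic group, the image of this homomorphism is finite. Thus $G$ acts on the fan $\Sigma$ of $X$ through a finite group, and a standard iterated $G$-equivariant barycentric (or stellar) subdivision produces a smooth refinement $\Sigma'$ of $\Sigma$ on which this finite action extends. The associated toric resolution $f \colon Y \to X$ is a $G$-equivariant log resolution of $(X, D)$ with reduced SNC boundary $D_Y$ satisfying $f^{*}(K_X + D) = K_Y + D_Y$. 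By construction, the dual complex $\mathcal{D}(Y, D_Y)$ is the abstract simplicial complex in which the cones of $\Sigma'$ of dimension $k+1$ correspond to the $k$-simplices; since $\Sigma'$ is a complete fan in $N_{\mathbb{R}} \cong \mathbb{R}^n$, this complex is a triangulation of the unit sphere $\mathbb{S}^{n-1}$ and is in particular $(n-1)$-dimensional.

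Combining the two steps yields $\operatorname{reg}_G(X, D) = n-1$. Since $\operatorname{reg}_G(X) \leqslant n-1$ is automatic from the definition, we conclude $\operatorname{reg}_G(X) = n-1$, that is $\operatorname{coreg}_G(X) = 0$. The only non-trivial ingredient is the existence of a $G$-equivariant smooth refinement of the fan, which is exactly where the normaliser hypothesis on $G$ is used: once one knows the $G$-action on $N$ factors through a finite group, the existence of the refinement is standard toric geometry, and without this hypothesis one could not even guarantee that $D$ is $G$-invariant.
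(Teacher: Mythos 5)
Your proof is correct and follows essentially the same route as the paper: both take the reduced toric boundary $D$ as a $G$-invariant $1$-complement and compute the dual complex on a $G$-equivariant toric log resolution, where it has the maximal dimension $\dim X-1$. The only difference is cosmetic: you build the equivariant resolution explicitly via a $G$-invariant smooth refinement of the fan (after noting that the $G$-action on the cocharacter lattice factors through a finite group), whereas the paper simply invokes an $N_T$-equivariant log resolution and observes that the resulting toric boundary admits a zero-dimensional stratum.
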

\begin{proof}
Let $D=\sum D_i\sim -K_X$ be the sum of $T$-invariant divisors. It follows that $D$ is $G$-invariant.
It is well known that $\mathrm{Aut}(X)$ is a linear algebraic group, and the normalizer $N_T$ of the torus $T$ is its algebraic subgroup.
Let $(\widetilde{X}, \widetilde{D})$ be a $N_T$-equivariant log resolution of $(X, D)$, so in particular~$(\widetilde{X}, \widetilde{D})$ is a log smooth $G$-pair which is $G$-crepant birational to a log CY pair $(X, D)$. Note that the pair~$(\widetilde{X}, \widetilde{D})$ is toric, and
$\widetilde{D}$ admits a zero-dimensional stratum. It follows that
$$
\dim \mathcal{D}(\widetilde{X}, \widetilde{D})=\dim \mathcal{D}(X, D)=\dim X-1,
$$
and so $\mathrm{coreg}_G(X)=0$.
\end{proof}

\begin{corollary}
\label{cor-coreg-0-dp6}
Let $G$ be a finite group which acts on a smooth del Pezzo surface $X$ of degree $d\geqslant 6$. Then $\coregG(X)=0$ if and only if $G$ normalizes a torus $T \cong (\mathbb{K}^*)^2$ in $\mathrm{Aut}(X)$.
\end{corollary}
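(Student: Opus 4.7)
My plan is to treat the two implications separately. The \emph{if} direction follows directly from Proposition \ref{prop-toric-varieities}: every smooth del Pezzo surface $X$ of degree at least $6$ is toric, so its automorphism group contains a two-dimensional torus $T\cong(\mathbb{K}^*)^2$, and if the finite group $G$ normalizes $T$ then $\coregG(X)=0$ by that proposition.

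For the \emph{only if} direction, I would argue by degree, translating the condition of normalizing a maximal torus into a concrete combinatorial one and comparing with the classifications already established in this section. For $X=\mathbb{P}^2$, a finite subgroup of $\mathrm{PGL}_3(\mathbb{K})$ normalizes a maximal torus if and only if it preserves an unordered set of three non-collinear points (the coordinate points), which by Theorem \ref{theorem: PGL3-classification} and Remark \ref{remark: case B} characterizes precisely the types $(\mathrm{A})$, $(\mathrm{B}1)$, $(\mathrm{C})$, $(\mathrm{D})$; this coincides with the list from Proposition \ref{proposition: dP9}. For $X=\mathbb{P}^1\times\mathbb{P}^1$, a subgroup $G$ normalizes a maximal torus if and only if $G_0$ sits inside $(N_{\mathrm{PGL}_2(\mathbb{K})}(\mathbb{K}^*))^2$ (with any swap element mapping torus to torus), equivalently each $A_i$ is cyclic or dihedral, which is precisely Proposition \ref{proposition: dP8}(1). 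The case $X=\mathbb{F}_1$ is handled similarly: $\mathrm{Aut}(\mathbb{F}_1)$ is the stabilizer of a point in $\mathrm{PGL}_3(\mathbb{K})$, and the normalizer-of-torus condition translates into the existence of an invariant pair of points on the unique invariant line, i.e., the image in $\mathrm{PGL}_2(\mathbb{K})$ is cyclic or dihedral, matching types $(\mathrm{A})$ and $(\mathrm{B}1)$ in Proposition \ref{proposition: F1}.

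For degrees $7$ and $6$, Propositions \ref{proposition: dP7} and \ref{proposition: dP6} yield $\coregG(X)=0$ for \emph{every} finite $G\subset\mathrm{Aut}(X)$, so I must check that every such $G$ normalizes a torus. For $S_6$ this is automatic, since $\mathrm{Aut}(S_6)\cong(\mathbb{K}^*)^2\rtimes\DG_{12}$ itself equals the normalizer of its maximal torus. For $S_7$, the automorphism group has the form $((\mathbb{K}^*)^2\ltimes\mathbb{K}^2)\rtimes\CG_2$ with Levi complement $(\mathbb{K}^*)^2\rtimes\CG_2$ equal to the normalizer of a maximal torus; since finite groups in characteristic zero are linearly reductive, Mostow's conjugacy theorem for reductive subgroups places any finite $G\subset\mathrm{Aut}(S_7)$ in a conjugate of the Levi, and hence in the normalizer of some maximal torus. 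The main obstacle is the bookkeeping in degrees $8$ and $9$: one must also verify that the groups of positive $G$-coregularity genuinely fail to normalize \emph{any} maximal torus. This reduces to the standard facts that $\AG_4,\SG_4,\AG_5$ are not contained in $N_{\mathrm{PGL}_2(\mathbb{K})}(\mathbb{K}^*)$, and that the primitive finite subgroups of $\mathrm{PGL}_3(\mathbb{K})$ preserve no triple of non-collinear points; once these are recorded, the equivalence in all five cases falls out of the already-proven propositions.
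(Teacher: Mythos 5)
Your proposal is correct and follows essentially the same route as the paper: the ``if'' direction is exactly Proposition~\ref{prop-toric-varieities}, and the ``only if'' direction is the same degree-by-degree matching of the coregularity-zero groups from Propositions~\ref{proposition: dP9}, \ref{proposition: dP8}, \ref{proposition: F1}, \ref{proposition: dP7}, \ref{proposition: dP6} against torus-normalizing subgroups (the paper handles $S_7$ and $\mathbb{F}_1$ by exhibiting an invariant triangle of lines downstairs rather than by your Mostow/Levi argument, and handles $\mathbb{P}^1\times\mathbb{P}^1$ via the explicit invariant divisor $F_{11}+F_{12}+F_{21}+F_{22}$ built in the proof of Lemma~\ref{lemma: dP8coreg0}, which is where the compatibility of the two tori under the swap is actually checked). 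One remark: the step you single out as the ``main obstacle'' --- verifying that the groups of positive $G$-coregularity normalize no maximal torus --- is logically superfluous, since any group normalizing a maximal torus already has $G$-coregularity $0$ by Proposition~\ref{prop-toric-varieities}; for the ``only if'' direction you only need that each group on the coregularity-zero lists does normalize a torus.
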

\begin{proof}
By Proposition \ref{prop-toric-varieities}, if $G$ normalizes a torus $T$ in $\mathrm{Aut}(X)$ then $\mathrm{coreg}_G(X)=0$. Assume that $\mathrm{coreg}_G(X)=0$ for some finite group $G\subset \mathrm{Aut}(X)$.

If $X$ has degree $d=6$, then any subgroup of $\mathrm{Aut}(X)$ preserves a torus $(\mathbb{K}^*)^2$ which is the open orbit of a $(\mathbb{K}^*)^2$-action on $X$. Indeed, this torus is the complement to the union of six $(-1)$-curves whose sum is equivalent to $-K_X$. Hence the claim is clear in this case.


Assume that $X$ has degree $d=7$. 
As shown in Section \ref{subsect-dp7}, any finite subgroup $G$ of $\mathrm{Aut}(X)$ descends to a~subgroup of $\mathrm{Aut}(\mathbb{P}^2)$ via the map $f\colon X \rightarrow \mathbb{P}^2$, and preserves a triangle of lines on $\mathbb{P}^2$. Since the morphism
 is toric, it follows that $G$ normalizes a torus in $\mathrm{Aut}(X)$.

Assume that $X$ has degree $8$ and $X \cong \mathbb{F}_1$. Then any finite subgroup $G$ of $\mathrm{Aut}(X)$ descends to~a~subgroup of $\mathrm{Aut}(\mathbb{P}^2)$ via the map $f\colon \mathbb{F}_1 \rightarrow \mathbb{P}^2$. From  Proposition \ref{proposition: F1} it follows that the group~$G$ has type $(\mathrm{A})$ or $(\mathrm{B}1)$. In both cases, $G$ preserves a triangle of lines on $\mathbb{P}^2$. Since the morphism  is toric, it follows that $G$ normalizes a torus in $\mathrm{Aut}(X)$.

Assume that $X$ has degree $8$ and $X \cong \mathbb{P}^1\times \mathbb{P}^1$. From the proof of Lemma \ref{lemma: dP8coreg0} it follows that there exists a $G$-invariant divisor $D = F_{11} + F_{12} + F_{21} + F_{22}$, where $F_{ij}$ is a fiber of the projection of $X$ to the $i$-th factor. Moreover, the complement $X \setminus D$ is a torus, and $G$ preserves this torus. Therefore $G$ normalizes a torus in $\mathrm{Aut}(X)$.

Now assume that $X$ has degree $9$, so $X \cong \mathbb{P}^2$.
By Proposition \ref{proposition: dP9}, if $G$ has coregularity $0$ then~$G$ has type $(\mathrm{A})$, $(\mathrm{B}1)$, $(\mathrm{C})$, or $(\mathrm{D})$.
Clearly, in the types $(\mathrm{A})$, $(\mathrm{C})$, or $(\mathrm{D})$ the group $G$ belongs to a subgroup
$(\mathbb{K}^*)^2\rtimes \SG_3$
of $\mathrm{PGL}_2(\mathbb{K})$, so $G$ normalizes a torus. In the case $(\mathrm{B}1)$, the group $G$ preserves the union of three lines, so $G$ preserves its complement.
\end{proof}

Now we can prove the following result.
\begin{theorem}
\label{main-thm-del-pezzo-sec}
Assume that $S$ is a smooth del Pezzo surface of degree $d\geqslant 6$, and $G$ is a finite subgroup in $\operatorname{Aut}(S)$. Then
\begin{enumerate}
\item
$\mathrm{coreg}_G(S)=0$ if and only if $G$ belongs to the normalizer of a torus $(\mathbb{K}^*)^2$ in $\mathrm{Aut}(S)$;
\item
if $\mathrm{coreg}_G(S)>0$ then
$\mathrm{coreg}_G(S)=1$ if and only if there exists a $G$-invariant curve $C$ on $S$ such that $-K_S-C$ is effective;
\item
$\mathrm{coreg}_G(S)=2$ otherwise.
\end{enumerate}
\end{theorem}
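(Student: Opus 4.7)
The plan is to assemble the degree-by-degree analyses already carried out in Propositions \ref{proposition: dP6}, \ref{proposition: dP7}, \ref{proposition: F1}, \ref{proposition: dP8}, and \ref{proposition: dP9} with the characterization of $G$-coregularity zero in Corollary \ref{cor-coreg-0-dp6}. Part (1) is precisely Corollary \ref{cor-coreg-0-dp6}. For degrees $d = 6$ and $d = 7$, Propositions \ref{proposition: dP6} and \ref{proposition: dP7} force $\mathrm{coreg}_G(S) = 0$, so parts (2) and (3) are vacuous there. The substance of parts (2) and (3) therefore concerns only the cases $d = 8$ (i.e.\ $S \cong \mathbb{F}_1$ or $S \cong \mathbb{P}^1 \times \mathbb{P}^1$) and $d = 9$ (i.e.\ $S \cong \mathbb{P}^2$).

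For the forward implication of (2), I walk through each coregularity-one case and exhibit the invariant curve explicitly. On $\mathbb{F}_1$ in type $(\mathrm{B}2)$ take $C = E$, the unique $(-1)$-curve, for which $-K_{\mathbb{F}_1} - E$ is effective (in fact base-point-free); on $\mathbb{P}^1 \times \mathbb{P}^1$ take a $G$-invariant pair of fibers in case (2) of Proposition \ref{proposition: dP8} and the unique $G$-invariant curve of class $F_1 + F_2$ furnished by Lemma \ref{lemma: dP8-diagonal-invariant} in case (3); on $\mathbb{P}^2$ take the invariant line in type $(\mathrm{B}2)$, the invariant smooth conic in type $(\mathrm{H})$, and an invariant smooth cubic from the Hesse pencil in type $(\mathrm{E})$. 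Comparing the class of $C$ to $-K_S$ shows that $-K_S - C$ is effective in each case. For the converse implication, given a $G$-invariant curve $C$ with $-K_S - C$ effective, one $G$-symmetrizes a general element of $|-K_S - C|$ to obtain a $G$-invariant effective $\mathbb{Q}$-divisor $B$ with $C + B \sim_{\mathbb{Q}} -K_S$; Lemma \ref{lemma:fixedcurve} then yields $\mathrm{coreg}_G(S) \leqslant 1$, and combined with the hypothesis $\mathrm{coreg}_G(S) > 0$ this gives $\mathrm{coreg}_G(S) = 1$.

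Part (3) is the logical complement of (1) and (2), since the coregularity of a surface lies in $\{0, 1, 2\}$. To match the wording of the theorem, one must also verify that in the coregularity-two cases of Propositions \ref{proposition: dP8} (case (4)) and \ref{proposition: dP9} (types $(\mathrm{F}), (\mathrm{G}), (\mathrm{I}), (\mathrm{K})$), no $G$-invariant curve $C$ with $-K_S - C$ effective exists. This is essentially contained in the proofs of those propositions: for $\mathbb{P}^2$ the transitivity and primitivity of $G$ preclude invariant lines, pairs of lines, and conics, while the absence of a normal abelian subgroup of the correct form rules out an invariant smooth cubic; for case (4) of $\mathbb{P}^1 \times \mathbb{P}^1$, Lemma \ref{lemma: dP8coreg1_2} together with Lemma \ref{lemma: dP8-diagonal-invariant0} and Corollary \ref{corollary: dP8-class} rule out any $G$-invariant curve in bidegrees $(a, b)$ with $a, b \leqslant 2$.

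The main obstacle is the careful application of Lemma \ref{lemma:fixedcurve} in the backward direction of (2), since that lemma requires the auxiliary pair to be log canonical and the relevant linear system to be base-point-free, while an a priori invariant curve $C$ need not be smooth. Fortunately, in every coregularity-one situation in our classification the curve $C$ exhibited is automatically smooth (a line, conic, elliptic cubic, smooth fiber, or $(-1)$-curve) and $|-K_S - C|$ is base-point-free, except when $-K_S \sim C$ (the type $(\mathrm{E})$ case on $\mathbb{P}^2$), in which case $(S, C)$ is already a log Calabi--Yau pair with $\mathcal{D}(S, C)$ a single point, giving $\mathrm{coreg}_G(S) \leqslant 1$ directly. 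Combining all these inputs yields the theorem.
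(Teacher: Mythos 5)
Your decomposition matches the paper's for part (1) (which is exactly Corollary \ref{cor-coreg-0-dp6}) and for the forward implication of part (2), where the paper likewise reads off the invariant line/conic/cubic, the invariant pair of fibers or curve of class $F_1+F_2$, and the $(-1)$-curve on $\mathbb{F}_1$ from Propositions \ref{proposition: dP9}, \ref{proposition: dP8}, \ref{proposition: F1}, \ref{proposition: dP7}, \ref{proposition: dP6}. The divergence, and the one genuine gap, is in the converse implication of (2). You propose to apply Lemma \ref{lemma:fixedcurve} with $\Delta=C$, but that lemma requires $(S,C)$ to be log canonical and $|-K_S-C|$ to be non-empty and base-point-free, and neither hypothesis is available for an \emph{arbitrary} $G$-invariant curve $C$ with $-K_S-C$ effective (e.g.\ $C$ could be a cuspidal cubic on $\mathbb{P}^2$, for which $(\mathbb{P}^2,C)$ is not even lc, or $|-K_S-C|$ could have base points). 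Your attempted repair in the final paragraph is circular: it verifies the hypotheses only for the specific curves \emph{produced} in the coregularity-one cases of the classification, whereas in the converse direction $C$ is given in advance and you do not yet know which case you are in.

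The paper closes this direction uniformly through Proposition \ref{proposition: coregularity=lct}: if $\mathrm{coreg}_G(S)=2$ then $\mathrm{lct}_G(S)>1$, and the existence of any $G$-invariant curve $C$ with $-K_S-C\sim B\geqslant 0$ would contradict this, since $C+\frac{1}{|G|}\sum_{g\in G}gB$ is a $G$-invariant $\mathbb{Q}$-divisor $\mathbb{Q}$-linearly equivalent to $-K_S$ containing $C$ with coefficient at least $1$, hence not klt, forcing $\mathrm{lct}_G(S)\leqslant 1$. This argument needs no smoothness, no lc-ness of $(S,C)$, and no base-point-freeness, and it also disposes of reducible or non-reduced invariant curves. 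Your fallback route --- verifying case by case in part (3) that the coregularity-two groups admit no such $C$ --- can be made to work, but it requires more than is literally contained in the cited proofs: Lemma \ref{lemma: coreg 1 fixed} and the proofs of Propositions \ref{proposition: dP8} and \ref{proposition: dP9} exclude invariant \emph{irreducible} curves and invariant \emph{pairs}, so you would still have to rule out, say, an invariant triangle of lines on $\mathbb{P}^2$ or an invariant reducible member of $|aF_1+bF_2|$ on the quadric. I recommend replacing the Lemma \ref{lemma:fixedcurve} step by the $\mathrm{lct}_G$ argument above.
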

\begin{proof}
The first claim follows from Corollary \ref{cor-coreg-0-dp6}.


Assume that $\mathrm{coreg}_G(S)=1$.
If $S\cong\mathbb{P}^2$ then by Proposition \ref{proposition: dP9} the group $G$ has type $(\mathrm{B}2)$, $(\mathrm{E})$, or $(\mathrm{H})$. For these cases there exists a $G$-invariant line, cubic, or conic curve on $\mathbb{P}^2$, respectively. If~${S\cong\mathbb{P}^1\times\mathbb{P}^1}$, then from Proposition \ref{proposition: dP8} it follows that $G$ has on $\mathbb{P}^1\times \mathbb{P}^1$ either a $G$-invariant pair of fibers of a projection $\mathbb{P}^1\times \mathbb{P}^1\to\mathbb{P}^1$, or a $G$-invariant irreducible curve in the class
$$
-\frac{1}{2}K_{\mathbb{P}^1\times\mathbb{P}^1}\in \mathrm{Pic}(\mathbb{P}^1\times\mathbb{P}^1).
$$
If $S\cong \mathbb{F}_1$ then the unique $(-1)$-curve is $G$-invariant. If $S$ is a smooth del Pezzo surface of degree $7$ or $6$ then $\mathrm{coreg}_G(S)=1$ is not possible by Proposition \ref{proposition: dP7} and Proposition \ref{proposition: dP6}.

Assume that $\mathrm{coreg}_G(S)=2$. 
By Proposition \ref{proposition: coregularity=lct}, we have $\mathrm{lct}_G(\mathbb{P}^2)>1$. This implies that there is no $G$-invariant curve $C$ on $S$ such that $-K_S-C$ is effective.
\end{proof}

\section{$G$-coregularity with linear systems}
\label{sec-coreg-linear-systems}
In this section, we extend the theory of coregularity to the setting of linear systems, which allows us to give a more general definition of $G$-coregularity which could be useful for infinite groups.
We refer to \cite{FS23} for a relevant theory of dual complexes for generalized pairs.

Let $G$ be an algebraic 
group that acts on a normal projective $\mathbb{Q}$-factorial variety $X$.

\begin{definitionsec}
Following \cite{Al94}, by a \emph{pair} (resp., \emph{sub-pair}) \emph{with linear systems} we mean the data
\[
(X, \mathcal{H}+ B)=(X, \sum a_i \mathcal{H}_i+\sum b_j B_j)
\]
where $X$ is
a normal projective variety, $\mathcal{H}_i$ is a movable (not necessarily complete) linear system, $B_j$ is a Weil divisor, $a_i, b_j\in \mathbb{Q}$ and $a_i, b_j\in [0, 1]$ (resp., $a_i\in [0, 1]$, $b_j\in (-\infty, 1]$), and 
 for elements $H_i$ of $\mathcal{H}_i$ the divisor
\[
K_X + \sum a_i H_i+\sum b_j B_j
\]
is $\QQ$-Cartier (note that the latter condition does not depend on the choice of $H_i$).
We call the sum~$\sum b_j B_j$ \emph{the fixed divisor} of the pair.
If an algebraic group $G$ acts on the variety $X$, the sum of~linear systems $\sum a_i \mathcal{H}_i$ is $G$-invariant, and the sum of divisors $\sum b_i B_j$ is $G$-invariant, we call the pair (resp., sub-pair) $(X, \mathcal{H}+ B$) \emph{a $G$-pair with linear systems} (resp., \emph{a $G$-sub-pair with linear systems}).
\end{definitionsec}

By the \emph{log pull-back} of a $G$-sub-pair with linear systems
\[
(X, \mathcal{H}+ B)=(X, \sum a_i \mathcal{H}_i+\sum b_j B_j)
\]
via a projective birational $G$-morphism $f\colon Y\to X$
we mean the expression (which is not, generally speaking, a sub-pair)
\begin{equation}
\label{eq-log-pullback-linear-system}
(Y, \mathcal{H}_Y + B_Y) =
(Y, \sum a_i \widetilde{\mathcal{H}}_i + \sum b_j \widetilde{B_j}+ \sum c_k E_k)
\end{equation}
where $\mathcal{H}_Y=\sum a_i \widetilde{\mathcal{H}}_i $, $B_Y=\sum b_j \widetilde{B_j}+ \sum c_k E_k$,
and the formula
\begin{equation}
\label{eq-log-pullback-linear-systems}
K_Y + \sum a_i \widetilde{\mathcal{H}}_i + \sum b_j \widetilde{B_j}+ \sum c_k E_k = f^* (K_X + \sum a_i \mathcal{H}_i+\sum b_j B_j)
\end{equation}
holds, the divisors $E_k$ are $f$-exceptional, the linear systems $\widetilde{\mathcal{H}}_i$ are strict transforms of the linear systems $\mathcal{H}_i$, and $\widetilde{B}_j$ are strict transforms of $B_j$,

Let $(X, \mathcal{H}+ B)=(X, \sum a_i \mathcal{H}_i+\sum b_j B_j)$ be a $G$-sub-pair with linear systems. 
Then there exists a \emph{$G$-equivariant} (cf. \cite{We55}) \emph{log resolution} of $(X, \sum a_i \mathcal{H}_i+\sum b_j B_j)$, that is, a smooth projective $G$-variety $Y$ together with a projective birational $G$-morphism $f\colon Y\to X$ such that
in~the formula~\eqref{eq-log-pullback-linear-systems}, the linear systems $\widetilde{\mathcal{H}}_i$ are base-point-free, and the divisor $\sum \widetilde{H_i} +\sum \widetilde{B_j} + \sum E_k$ has simple normal crossings for a general element $\widetilde{H_i}$ of the linear system $\widetilde{\mathcal{H}}_i$.

We define different types of singularities (lc, klt, plt, dlt) for sub-pairs with linear systems in~analogy with the case of pairs, see Section~\ref{subsec-types-of-singularities}. More precisely, for a log resolution $f\colon Y\to X$ of~the sub-pair $(X, \mathcal{H}+ B)$ defined above, we define \emph{a log discrepancy} of a prime divisor $D$ on $Y$ with respect
to $(X, \mathcal{H}+ B)$, denoted by $a(D, X, \mathcal{H}+ B)$,  as
\[
1 - \mathrm{coeff}_D (\sum b_j \widetilde{B_j}+ \sum c_k E_k)
\]
where $b_j, \widetilde{B_j}, c_k, E_k$ are as in \eqref{eq-log-pullback-linear-system}.

We
say that $(X, \mathcal{H}+ B)$ is lc (resp. klt) if $a(D, X, \mathcal{H}+ B)\geqslant 0$ (resp., $> 0$) and for any log resolution $f$ and for any divisor $D$.
We say that the pair $(X, \mathcal{H}+ B)$ is \emph{plt}, if~$a(D, X, \mathcal{H}+ B)>0$ holds for any log resolution $f$ and for any $f$-exceptional divisor $D$. 
We say that the pair $(X, \mathcal{H}+ B)$ is \emph{dlt}, if~$a(D, X, \mathcal{H}+ B)>0$ holds for some log resolution $f$ and for any $f$-exceptional divisor $D$.

We can extend in the natural way the notion of $G$-crepant birational equivalence to the case of $G$-sub-pairs with linear systems, see Definition \ref{def:crep-bir-isom}. We say that two $G$-sub-pairs with linear systems $(X, \mathcal{H}+ B)$ and $(X', \mathcal{H}'+ B')$ are \emph{$G$-crepant birationally equivalent} (or simply \emph{$G$-crepant equivalent}) if there exists the following diagram
\begin{equation}
\begin{tikzcd}
& (X'', \mathcal{H}''+ B'') \ar[rd, "\psi"] \ar[dl, swap, "\phi"] & \ \\
(X, \mathcal{H}+ B) \ar[rr, dashed, "\alpha"] & & (X', \mathcal{H}'+ B')
\end{tikzcd}
\end{equation}
where $\alpha$ is a birational $G$-map, $\phi$ and $\psi$ are birational $G$-contractions,
$(X'', \mathcal{H}''+ B'')$ is the log pullback of $(X, \mathcal{H}+ B)$ via the map $\phi$, and $(X'', \mathcal{H}''+ B'')$ is also the log pullback of $(X', \mathcal{H}'+ B')$ via the map $\psi$. 

\begin{definitionsec}
We define the dual complex of an lc sub-pair with linear systems $(X, \mathcal{H}+ B)$ as~follows. Let $f\colon Y\to X$ be a log resolution of $(X, \mathcal{H}+ B)$, and let $(Y, \mathcal{H}_Y + B_Y)$ be its log pullback, which is a sub-pair by the assumption that $(X, \mathcal{H}+ B)$ is lc. Then the dual complex of~$(X, \mathcal{H}+ B)$ denoted by $\mathcal{D}(X, \mathcal{H}+ B)$ is the dual complex of the sub-pair $(Y, B_Y)$ in the sense of~Section~\ref{sec-dual-complex}.
\end{definitionsec}

In Proposition \ref{prop-dual-complex-correct-for-pairs} below we show that the dual complex is independent of the choice of a log resolution, up to PL-homeomorphism.

\begin{remarksec}
If in the lc sub-pair with linear systems $(X, \mathcal{H}+ B)$, all the linear systems $\mathcal{H}_i$ are base-point-free, then the dual complex of $(X, \mathcal{H}+ B)$ coincides with the dual complex of the lc~pair~$(X, B)$.
\end{remarksec}

On the other hand, if the linear systems in $\mathcal{H}$ have base-points, the dual complex of $(X, \mathcal{H}+ B)$ may differ from the dual complex of $(X, B)$.

\begin{examplesec}
\label{ex-linear-system1}
Consider the pair $(\mathbb{P}^2, \mathcal{H})$ where $\mathcal{H}$ is the linear system of cubics singular at some fixed point $P\in \mathbb{P}^2$. Then the dual complex of $(\mathbb{P}^2, \mathcal{H})$ is a point, while the dual complex of $(\mathbb{P}^2,0)$ is empty.
\end{examplesec}


\begin{propositionsec}[{cf. \cite{dFKX17},\cite[Lemma 2.34]{FS23}}]
\label{prop-dual-complex-correct-for-pairs}
The PL-homeomorphism class of the dual complex of~an~lc~sub-pair with linear systems does not depend on the choice of a log resolution.
\end{propositionsec}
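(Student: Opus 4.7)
The plan is to reduce the statement to the corresponding result for ordinary sub-pairs, namely \cite{dFKX17}, which is already cited in the paper.

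Let $f_1\colon Y_1\to X$ and $f_2\colon Y_2\to X$ be two log resolutions of the lc sub-pair with linear systems $(X,\mathcal{H}+B)$, and let $(Y_i,\mathcal{H}_{Y_i}+B_{Y_i})$ denote the corresponding log pullbacks. By Hironaka's theorem, one can construct a third log resolution $f_3\colon Y_3\to X$ that factors through both $f_1$ and $f_2$; so it suffices to treat the case in which there is a birational morphism $g\colon Y_2\to Y_1$ with $f_2=f_1\circ g$.

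The key observation is that, because $\mathcal{H}_{Y_1}$ is already base-point-free on $Y_1$, its strict transform under $g$ coincides with its total transform, i.e.\ $\mathcal{H}_{Y_2}=g^{*}\mathcal{H}_{Y_1}$ (no component of the $g$-exceptional locus can be contained in a general element of $g^{*}\mathcal{H}_{Y_1}$). Combining this with the identities
\[
K_{Y_i}+\mathcal{H}_{Y_i}+B_{Y_i}=f_i^{*}(K_X+\mathcal{H}+B),\qquad i=1,2,
\]
and $f_2^{*}=g^{*}\circ f_1^{*}$, one concludes that
\[
K_{Y_2}+B_{Y_2}=g^{*}(K_{Y_1}+B_{Y_1}),
\]
so the snc sub-pair $(Y_2,B_{Y_2})$ is precisely the log pullback of the snc sub-pair $(Y_1,B_{Y_1})$ under the birational morphism $g$ between smooth varieties.

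At this point the statement is reduced to the analogous invariance for ordinary sub-pairs: two snc sub-pair structures on log resolutions related by a log-crepant birational morphism have PL-homeomorphic dual complexes in the sense of Section~\ref{sec-dual-complex}. This is exactly the content of \cite{dFKX17} (one factors $g$ into blowups along smooth centers compatible with the snc structure and checks that each blowup either leaves the dual complex unchanged or subdivides/collapses it through an elementary PL move). The main potential obstacle is verifying that $\mathcal{H}_{Y_2}=g^{*}\mathcal{H}_{Y_1}$ so that the $B_{Y_i}$'s transform precisely as in the pair case; once this is in place, everything is a direct invocation of \cite{dFKX17}.
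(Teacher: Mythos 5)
Your proof is correct and takes essentially the same approach as the paper: pass to a common log resolution dominating both, use base-point-freeness of the resolved linear systems to conclude that the divisorial parts $B_{Y_i}$ pull back crepantly (reducing the problem to the ordinary sub-pair case), and then invoke \cite{dFKX17}. The only difference is cosmetic — you spell out why $\mathcal{H}_{Y_2}=g^{*}\mathcal{H}_{Y_1}$ for a general member, a step the paper leaves implicit in the sentence ``since the linear systems $\mathcal{H}_Y$ and $\mathcal{H}_Z$ are base-point-free\ldots''.
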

\begin{proof}
Let $f\colon Y\to X$ and $g\colon Z\to X$ be two log resolutions of an lc sub-pair with linear systems~$(X, \mathcal{H}+B)$. Let
$(Y, \mathcal{H}_Y+B_Y)$ and $(Z, \mathcal{H}_Z+B_Z)$ be the corresponding log pullbacks. Then there exists a log resolution $W$ of $(X, \mathcal{H}+B)$ such that $W$ dominates both $Y$ and $Z$. Let $(W, \mathcal{H}_W+B_W)$ be the log pullback of $(X, \mathcal{H}+B)$ on $W$. Since the linear systems $\mathcal{H}_Y$ and $\mathcal{H}_Z$ are base-point-free, we~have that $(W, B_W)$ is the log pullback of both $(Y, B_Y)$ and $(Z, B_Z)$. Hence, by \cite{dFKX17} we~have the PL-homeomorphisms
\[
 \mathcal{D}(Y, B_Y) \cong \mathcal{D}(W, B_W) \cong \mathcal{D}(Z, B_Z),
\]
and thus the dual complex of $(X, \mathcal{H}+B)$ does not depend on the choice of a log resolution, up~to~PL-homeomorphism.
\end{proof}

\begin{remarksec}
We note that a pair with a linear system is a special case of a generalized pair (see, for example, \cite[Example 4.5(4)]{B21b}). As such, Proposition 5.5 can be viewed as a special case of \cite[Lemma 2.34]{FS23}.
\end{remarksec}

We say that an lc $G$-pair $(X, \mathcal{H}+ B)=(X, \sum a_i \mathcal{H}_i+\sum b_j B_j)$ with linear system is \emph{log Calabi--Yau},~if
\[
K_X + \sum a_i H_i+\sum b_j B_j \sim_\mathbb{Q} 0
\]
where $H_i$ is a general element in $\mathcal{H}_i$.

\begin{definitionsec}
\label{defin-regularity-linear-systems}
We define the \emph{$G$-coregularity with linear systems} 
as follows (cf. Definition \ref{defin-regularity}). Let $X$ be a variety of dimension $n$ endowed with an action of an algebraic group $G$.
By \mbox{the~\emph{$G$-regu}}\-\emph{larity} \emph{with linear systems} $\mathrm{reg}^{\mathrm{ls}}_G(X, \mathcal{H}+B)$ of an lc $G$-pair $(X, \mathcal{H}+B)$ with linear systems we mean the number $\dim \mathcal{D}(X, \mathcal{H}+B)$.

For $l\geqslant 1$, we define the $l$-th \emph{$G$-regularity with linear systems} of $X$ by the formula
\begin{multline*}
\mathrm{reg}^{\mathrm{ls}}_{G, l}(X) = \max \{ \mathrm{reg}^{\mathrm{ls}}_G(X, \mathcal{H}+B)\ |\ (X, \mathcal{H}+B)=\\
=(X, \sum a_i\mathcal{H}_i+\sum b_jB)\ \text{is a log Calabi--Yau $G$-pair with linear systems and } la_i, lb_j\in \mathbb{Z})\}.
\end{multline*}
Then the $G$-\emph{regularity with linear systems} of $X$ is
\[
\mathrm{reg}^{\mathrm{ls}}_G(X) = \max_{l\geqslant 1} \{\mathrm{reg}_{G, l}(X)\}.
\]
Note that $\mathrm{reg}^{\mathrm{ls}}_G(X)\in \{-1, 0,\ldots, \dim X-1\}$ where by convention we say that the dimension of~the empty set is $-1$. The \emph{$G$-coregularity with linear systems} of an lc $G$-pair with linear systems~${(X, \mathcal{H}+B)}$ is defined as the number $n-1-\mathrm{reg}^{\mathrm{ls}}_G(\mathcal{H}+B)$. Also, we define:
\[
\mathrm{coreg}^{\mathrm{ls}}_{G, l}(X) = n-1-\mathrm{reg}^{\mathrm{ls}}_{G, l}(X).
\]
We define \emph{regularity with linear systems} (resp. \emph{coregularity with linear systems}) of $X$ as
$$
{\mathrm{reg}^{\mathrm{ls}}(X)=\mathrm{reg}^{\mathrm{ls}}_G(X)}
$$
(resp. $\mathrm{coreg}^{\mathrm{ls}}(X)=\mathrm{coreg}^{\mathrm{ls}}_G(X)$) for the trivial group $G$.
\end{definitionsec}

Clearly, one has $\mathrm{reg}^{\mathrm{ls}}_{G, l}(X)\leqslant \mathrm{reg}^{\mathrm{ls}}_{G, kl} (X)$ for any $k,l\geqslant1$.

\begin{examplesec}
\label{ex-with-infinite-group}
Let $X=\mathbb{P}^2$, and let $G$ be a stabilizer of a point $P$ on $\mathbb{P}^2$ in the group $\mathrm{PGL}(3, \mathbb{K})$. Consider an lc $G$-pair with linear systems $(\mathbb{P}^2, \mathcal{H}_1+\mathcal{H}_2)$, where $\mathcal{H}_1$ is a linear system of lines on~$\mathbb{P}^2$ that pass through $P$, and $\mathcal{H}_2$ is a linear system of conics on $\mathbb{P}^2$ that pass through $P$. Then the $G$-coregularity with linear systems of $(\mathbb{P}^2, \mathcal{H}_1+\mathcal{H}_2)$ is equal to $1$. Note that there are no $G$-invariant elements in $\mathcal{H}_1$ and $\mathcal{H}_2$, so the usual notion of coregularity could not be applicable here.
\end{examplesec}

\begin{propositionsec}
\label{proposition: coregularity linear systems}
Let $X$ be a $G$-variety, where $G$ is a finite group. Then $\mathrm{reg}_G(X)=\mathrm{reg}^{\mathrm{ls}}_G(X)$.
\end{propositionsec}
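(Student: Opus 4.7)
The plan is to establish the two inequalities separately. The direction $\mathrm{reg}_G(X)\leqslant \mathrm{reg}^{\mathrm{ls}}_G(X)$ is immediate: any $G$-invariant $\mathbb{Q}$-complement $(X,D)$ as in Definition~\ref{defin-regularity} is the same data as a $G$-pair with linear systems whose movable part is empty and whose fixed divisor is $D$, and the two notions of dual complex then coincide by construction.

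For the reverse direction, I would fix $l\geqslant 1$, start with an lc log Calabi--Yau $G$-pair with linear systems $(X, \sum_i a_i\mathcal{H}_i + \sum_j b_j B_j)$ with $la_i,lb_j\in \mathbb{Z}$ attaining $\mathrm{reg}^{\mathrm{ls}}_{G,l}(X)=r$, and produce from it a $G$-invariant $\mathbb{Q}$-divisor $D$ such that $(X,D)$ is an lc log Calabi--Yau pair with $\dim\mathcal{D}(X,D)\geqslant r$. The construction is orbit-averaging: choose a sufficiently general $H_i\in \mathcal{H}_i$ for each $i$ and set
\[
D := \sum_i \frac{a_i}{|G|}\sum_{g\in G} g\cdot H_i + \sum_j b_j B_j.
\]
By construction $D$ is $G$-invariant and $\mathbb{Q}$-linearly equivalent to $\sum_i a_i H_i+\sum_j b_j B_j\sim_{\mathbb{Q}} -K_X$, with denominators dividing $l|G|$, so it defines an $(l|G|)$-complement of $K_X$.

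To control the log pullback, I would work on a $G$-equivariant log resolution $f\colon Y\to X$ of $(X,\mathcal{H}+B)$, on which the strict transforms $\widetilde{\mathcal{H}}_i$ are $G$-invariant base-point-free linear systems. The key observation is that the generic multiplicity $m_{i,k}:=\ord_{E_k}(f^*\mathcal{H}_i)$ along each exceptional divisor $E_k$ depends only on the $G$-orbit of $E_k$: indeed, $G$-equivariance of $f$ together with $G$-invariance of $\mathcal{H}_i$ forces $m_{i,g(k)}=m_{i,k}$ for all $g\in G$. Averaging the pullbacks $f^*(g\cdot H_i)$ over $g$ therefore produces the same exceptional coefficient $\sum_i a_i m_{i,k}$ on $E_k$ as the log pullback of $(X,\mathcal{H}+B)$ in the sense of~\eqref{eq-log-pullback-linear-systems}; combined with $G$-invariance of the fixed part, this shows that the log pullback of $(X,D)$ has exactly the same coefficients on every fixed and exceptional component as the log pullback of $(X,\mathcal{H}+B)$. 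The new horizontal components $g\cdot\widetilde{H_i}$ appear with coefficient $a_i/|G|\leqslant 1$ and, by Bertini applied to the base-point-free systems $\widetilde{\mathcal{H}}_i$, form an snc configuration together with the rest for generic $H_i$; hence $(X,D)$ is lc.

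Finally, when $|G|\geqslant 2$ the horizontal coefficients $a_i/|G|$ are strictly less than $1$, so the coefficient-one stratum of the log pullback of $(X,D)$ is identical to that of $(X,\mathcal{H}+B)$, giving $\mathcal{D}(X,D)\cong \mathcal{D}(X,\mathcal{H}+B)$; when $|G|=1$ the stratum can only acquire extra components, so $\dim\mathcal{D}(X,D)\geqslant \dim\mathcal{D}(X,\mathcal{H}+B)$. In either case $\dim\mathcal{D}(X,D)\geqslant r$, which yields $\mathrm{reg}_G(X)\geqslant r$; taking the maximum over $l$ gives the desired inequality. The main technical point is the orbit-invariance of $m_{i,k}$, since this is precisely what makes the averaged pullback behave as though we had used a single $G$-invariant element of $\mathcal{H}_i$, which may not exist in general.
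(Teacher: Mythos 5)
Your proof is correct and takes essentially the same route as the paper's: the easy inequality by viewing a pair as a pair with empty movable part, and the reverse inequality by averaging general members $H_i\in\mathcal{H}_i$ over $G$ and comparing the coefficient-one strata of the two log pullbacks on a common $G$-equivariant log resolution. The only caveat is that a general $H_i$ may have a nontrivial stabilizer, so the horizontal coefficients need not equal $a_i/|G|$ and the dual complex need not be literally unchanged when $|G|\geqslant 2$; but since extra coefficient-one components can only enlarge the dual complex, the inequality $\dim\mathcal{D}(X,D)\geqslant r$ --- which is all that is needed, and is exactly what the paper records --- still holds.
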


\begin{proof}
Consider a log Calabi--Yau $G$-pair $(X,\sum b_i B_i)$ on which $G$-coregularity of $X$ is attained. 
Then we can consider  $(X,\sum b_i B_i)$ as a log Calabi--Yau $G$-pair $(X,\mathcal{H}+\sum b_i B_i)$ with linear systems where~${\mathcal{H}=0}$. 
Thus we obtain $\mathrm{reg}_G(X)\leqslant \mathrm{reg}_G^{\mathrm{ls}}(X)$.

Now let
\[
(X, \mathcal{H}+B)=(X, \sum a_i \mathcal{H}_i+\sum b_j B_j)
\]
be a log Calabi--Yau $G$-pair with linear systems on which the $G$-coregularity with linear systems of~$X$ is attained. We can compute the dimension of the dual complex of $(X, \mathcal{H}+B)$ on a log resolution~$Y$ of $(X, \mathcal{H}+B)$. Let
\[
(Y, \mathcal{H}_Y + B_Y) =
(Y, \sum a_i \widetilde{\mathcal{H}}_i + \sum b_j \widetilde{B_j}+ \sum c_k E_k)
\]
be the log pullback of $(X, \mathcal{H}+B)$ as in \eqref{eq-log-pullback-linear-system}. Consider general elements $H_i\in\mathcal{H}_i$,
and define
$$
H_i^G=\sum_{g\in G}\frac{gH_i}{|G|}, \quad \quad \quad H^G = \sum_i H_i^G.
$$
Note that all the of components $H^G$ have coefficients in $[0, 1]$. Then
\[
(Y,\sum a_i \widetilde{H}_i^G + \sum b_j \widetilde{B_j}+ \sum c_k E_k)
\]
is a log smooth log Calabi--Yau $G$-pair which
is a log pullback of
\[
(X,\sum a_i H_i^G + \sum b_j B_j),
\]
where $\widetilde{H}_i^G$ is the strict transform
of $H_i^G$.
By definition, the dual complex of the pair with linear systems $(X, \mathcal{H}+B)$ is homeomorphic to the dual complex of $(Y,  B_Y )= (Y,\sum b_j \widetilde{B_j}+ \sum c_k E_k)$.
However, note that
\[
\dim \mathcal{D}(Y,\sum b_j \widetilde{B_j}+ \sum c_k E_k)\leqslant \dim \mathcal{D}(Y,\sum a_i \widetilde{H}_i^G + \sum b_j \widetilde{B_j}+ \sum c_k E_k).
\]
This shows that $\mathrm{reg}^{\mathrm{ls}}_G(X)\leqslant \mathrm{reg}_G(X)$, which completes the proof.
\end{proof}

\section{Examples and questions}
\label{sec-examples-questions}
In this section we collect examples and questions. 

In \cite{GHK15} it is proven that any log Calabi--Yau surface pair $(X, D)$ of coregularity $0$ admits a toric model, that is, $(X, D)$ is crepant equivalent to a toric pair.
The following example shows that this is not true in the $G$-equivariant setting.

\begin{examplesec}
\label{ex-no-G-toric-model}
Let $X$ be a smooth del Pezzo surface of degree $2$, and $G\subset \mathrm{Aut}(X)$ be a subgroup generated by the Geiser involution $\sigma$. Let $D=D_1+D_2$ be the union of two $(-1)$-curves interchanged by $\sigma$. Note that $D_1$ and $D_2$ intersect at two points which implies $\mathrm{coreg}_G(X)=0$. Then $D\sim -K_X$, so $(X, D)$ is a log Calabi--Yau $G$-pair. It is well known that the surface $X$ is birationally $G$-super-rigid. In particular, $X$ is not $G$-birational to any toric surface with an action of the group $G$. So~the pair $(X, D)$ is not $G$-crepant equivalent to a toric pair with an action of the group $G$.
\end{examplesec}

For more results on the existence of toric models for higher-dimensional coregularity $0$ pairs, see~\cite{LMV24} and references therein.

Note that $G$-coregularity of a variety $X$ depends on both $X$ and $G$. Indeed, if we fix $X$ and consider different $G$ acting on $X$, then it follows from Proposition \ref{proposition: dP9-int}. Now let us fix $G$ and vary $X$ instead. Let $G$ be a finite cyclic group, then $G$-coregularity can be equal to $0$ (for example, if $G$ is a subgroup of a torus on a toric variety~$X$, cf. Proposition \ref{prop-toric-varieities}), or it can be greater than $0$ as the following example shows.

\begin{examplesec}
Let $X$ be a smooth del Pezzo surface of degree $1$, such that all singular elements in $|-K_X|$ are cuspidal curves.
Proof of~\cite[Proposition 2.3]{ALP24} implies that $\coreg(X) = 1$. We can consider a group $G \cong \CG_2$ generated by the Bertini involution. Then obviously one has~${\coregG(X) = 1}$.
\end{examplesec}

The following question is motivated by Theorem \ref{main-thm-del-pezzo}.

\begin{questionsec}
\label{question-coreg0-in-Pn}
Let $G$ be a finite group that acts faithfully on $\mathbb{P}^n$. Is the condition that $G$ normalizes a torus $(\mathbb{K}^*)^n$ in $\mathrm{Aut}(\mathbb{P}^n)$ equivalent to
$\mathrm{coreg}_G(\mathbb{P}^n)=0$?
\end{questionsec}

For a cyclic group $G$, Question \ref{question-coreg0-in-Pn} is related to Question 1.6 in \cite{LoZ24}. One can formulate Question \ref{question-coreg0-in-Pn} for an arbitrary toric varieity instead of $\mathbb{P}^n$.

We also ask whether is it possible to relate the notion of weakly exceptional quotient singularities with respect to the group $G$ with the notion of $G$-coregularity.
For the relation between the notions of exceptional quotient singularities and $G$-coregularity, see Proposition \ref{propint-lct}.

\begin{questionsec}
\label{question-weakly-exc-coreg}
Let ${G}$ be the image of $\overline{G}$ in $\mathrm{PGL}_n(\mathbb{K})$ via the natural projection map
$$
{\mathrm{SL}_n (\mathbb{K})\to  \mathrm{PGL}_n (\mathbb{K})}.
$$
Assume that the singularity $0\in\mathbb{A}^n/\overline{G}$ is weakly exceptional but not exceptional.
Does this condition impose any restrictions on $\mathrm{coreg}_G(\mathbb{P}^{n-1})$?
\end{questionsec}

We note that in  \cite[Question 1.5]{LX24} the authors ask how to describe the set of exceptional divisors of plt blow-ups in the dual complex of a klt singularity. This is relevant to Question \ref{question-weakly-exc-coreg}.


\begin{thebibliography}{}

\bibitem[Al94]{Al94}
V.\,Alexeev, {\em General elephants of {${\bf Q}$}-{F}ano 3-folds},
 Compositio Math. \textbf{91}:1 (1994), 91--116.

%
\bibitem[AD06]{AD06}
M.\,Artebani, I.\,V.\,Dolgachev,
{\em The Hesse pencil of plane cubic curves},
Enseign. Math. \textbf{55}:3/4 (2009), 235--273.

%
%
%

\bibitem[ALP24]{ALP24}
A.\,Avilov, L.\,Loginov, V.\,Przyjalkowski,
{\em Coregularity of smooth Fano threefolds},
Commun. Number Theory Phys. \textbf{9}:3 (2024), 509--577.


\bibitem[Bi21a]{B21}
C.\,Birkar,
{\em Singularities of linear systems and boundedness of Fano varieties},
Ann. Math. (2) \textbf{193} (2021), 347--405.

\bibitem[Bi21b]{B21b}
C.\,Birkar,
{\em Generalised pairs in birational geometry},
EMS Surv. Math. Sci. 8, 5--24 (2021).

\bibitem[Bl17]{Bl17}
H.\,F.\,Blichfeldt,
{\em Finite collineation groups},
The Univ. Chicago Press, Chicago (1917).
%
%
%

\bibitem[Ch08]{Ch08}
I.\,Cheltsov,
{\em Log Canonical Thresholds of Del Pezzo Surfaces},
Geom. Funct. Anal. \textbf{18} (2008), 1118--1144.

\bibitem[ChW13]{ChW13}
I.\,Cheltsov, A.\,Wilson,
{\em Del Pezzo Surfaces with Many Symmetries},
J. Geom. Anal. \textbf{23} (2013) 1257--1289.

\bibitem[ChSh11]{ChSh11}
I. Cheltsov, C. Shramov,
{\em On exceptional quotient singularities},
Geom. Topol. \textbf{15}:4 (2011), 1843--1882 (2011).

\bibitem[DI09]{DI09}
I.\,V.\,Dolgachev, V.\,A.\,Iskovskikh,
{\em Finite subgroups of the plane Cremona group},
In: Algebra, arithmetic, and geometry, vol. I: In Honor of Yu.\,I.\,Manin, Progr. Math. \textbf{269} (2009), Birkh\"auser, Basel, 443--548.


\bibitem[dFKX17]{dFKX17}
T.\,de Fernex, J.\,Koll\'ar, C.\,Xu,
{\em The dual complex of singularities},
Higher dimensional algebraic geometry, in honour of Professor Yujiro Kawamata’s sixtieth birthday, Adv. Stud. Pure Math. \textbf{74} (2017), Math. Soc. Japan, Tokyo, 103--129.

\bibitem[FS23]{FS23}
S.\,Filipazzi, R\,Svaldi R.
{\em  On the connectedness principle and dual complexes for generalized pairs},
Forum of Mathematics, Sigma, 11:e33 (2023).

\bibitem[FFMP22]{FFMP22}
F.\,Figueroa, S.\,Filipazzi, J.\,Moraga, J.\,Peng,
{\em Complements and coregularity of Fano varieties},
arXiv:2211.09187.

%
%

\bibitem[Go20]{Go20}
A.\,Golota,
{\em Delta-invariants for Fano varieties with large automorphism groups},
International Journal of Mathematics \textbf{31}:10 (2020), 2050077.

\bibitem[GHK15]{GHK15}
M.\,Gross, P.\,Hacking, S.\,Keel,
{\em Mirror symmetry for log {C}alabi–{Y}au surfaces},
Publications Mathématiques de l'IHÉS \textbf{122} (2015), 65--168.

\bibitem[GS19]{GrossSiebert}
M.\,Gross, B.\,Siebert,
{\em Intrinsic mirror symmetry},
arXiv:1909.07649.


\bibitem[HKP20]{HKP20}
A.\,Harder, L.\,Katzarkov, V.\,Przyjalkowski,
{\em P=W Phenomena},
Math. Notes \textbf{108}:1 (2020), 39--49.

%
%
%
%
%

\bibitem[KKP17]{KKP17}
L.\,Katzarkov, M.\,Kontsevich, T.\,Pantev,
{\em Bogomolov--Tian--Todorov theorems for Landau--Ginzburg models},
J. Differential Geom. \textbf{105} (2017), 55--117.


\bibitem[K13]{K13}
J.\,Koll\'ar,
{\em Singularities of the Minimal Model Program},
Cambridge tracts in mathematics (2013).

\bibitem[KM98]{KM98}
J.\,Koll\'{a}r, S.\,Mori,
\emph{Birational geometry of algebraic varieties}, with the
collabration of C. H. Clemens and A. Corti. Translated from the 1998
Japanese original. Cambridge Tracts in Mathematics \textbf{134} (1998),
Cambridge University Press, Cambridge.

%
%

\bibitem[KX16]{KX16}
J.\,Koll\'{a}r, C.\,Xu,
{\em The dual complex of Calabi--Yau pairs},
Invent. Math. \textbf{ 205}:3 (2016), 527--557.

\bibitem[LX24]{LX24}
Y.\,Liu, C.\,Xu,
{\em A note on Koll\'ar valuations},
Sci. China Math. (2024).

\bibitem[Lo24]{Lo24}
K.\,Loginov,
{\em Finite abelian groups acting on rationally connected threefolds I: Groups of product type},
arXiv:2408.11645.

\bibitem[LMV24]{LMV24}
K.\,Loginov, J.\,Moraga, A.\,Vasilkov,
{\em Toric models of smooth Fano threefolds},
arXiv:2407.09420.

\bibitem[LoZ24]{LoZ24}
K.\,Loginov, Z.\,Zhang,
{\em Birational invariants of volume preserving maps},
arXiv:2410.05036.

\bibitem[MP99]{MP99}
D.\,Markushevich, Yu.\,Prokhorov,
{\em Exceptional quotient singularities},
American Journal of Mathematics, \textbf{121}(6) (1999), 1179--1189.

%
\bibitem[MBD16]{MBD16}
G.\,A.\,Miller, H.\,F.\,Blichfeldt, L.\,E.\,Dickson,
{\em Theory and applications of finite groups},
Dover, New York, 1916.

%
%
%

\bibitem[Mo24]{Mo24}
J.\,Moraga,
{\em Coregularity of Fano varieties},
Geometriae Dedicata, 218, article number 40 (2024).

%

\bibitem[Po14]{Po14}
V.~L.\,{Popov},
{\em Jordan groups and automorphism groups of algebraic varieties},
In { {Automorphisms in birational and affine geometry. Papers
  based on the presentations at the conference, Levico Terme, Italy, October 29
  -- November 3, 2012}}, Springer, 2014, 185--213.

\bibitem[Pr00]{Pr00}
Yu.\,Prokhorov,
\emph{Blow-ups of canonical singularities},
In Algebra: proceedings of the International Algebraic Conference on the Occasion of the 90th Birthday of A.G. Kurosh, Moscow, Russia, May 25--30, 1998, page 301. Walter De Gruyter Inc, 2000.

\bibitem[PS01]{PS01}
Yu.\,Prokhorov, V.\,V.\,Shokurov,
\emph{The first main theorem on complements: from global to local},
Izv. Math., 65:6, 1169--1196 (2001).

\bibitem[Prz17]{Prz17}
V.\,Przyjalkowski,
\emph{Calabi--Yau compactifications of toric Landau--Ginzburg models for smooth Fano threefolds},
Sb. Math., \textbf{208}:7 (2017), 992--1013.

%

\bibitem[Sa12]{Sa12}
D.\,Sakovics,
\emph{Weakly-exceptional quotient singularities},
Cent. Eur. J. Math. \textbf{10}:3 (2012), 885--902.

\bibitem[Sa19]{Sa19}
D.\,Sakovics,
\emph{G-birational rigidity of the projective plane},
 Eur. J. Math. \textbf{5} (2019), 1090--1105.

%

\bibitem[Sh00]{Sh00}
V.\,V.\,Shokurov,
{\em Complements on surfaces},
Journal of Mathematical Sciences \textbf{102} (2000), 3876--3932.

\bibitem[Wa98]{Wa98}
C.-L.\,Wang,
{\em On the topology of birational minimal models},
J. Differential Geom. \textbf{50}:1 (1998), 129--146.

\bibitem[We55]{We55}
A.\,Weil,
{\em On algebraic groups of transformations},
Amer. J. Math., \textbf{77} (1955), 355--391.

\end{thebibliography}
\end{document}